\def\bu{\bullet}
\def\marker{\>\hbox{${\vcenter{\vbox{
    \hrule height 0.4pt\hbox{\vrule width 0.4pt height 6pt
    \kern6pt\vrule width 0.4pt}\hrule height 0.4pt}}}$}\>}
\def\gpic#1{#1
     \smallskip\par\noindent{\centerline{\box\graph}} \medskip}
\newtheorem{thm}{Theorem}[section]
\newtheorem{lem}[thm]{Lemma}
\newtheorem{conj}[thm]{Conjecture}
\theoremstyle{definition}
\newtheorem{definition}[thm]{Definition}
\newtheorem{remark}[thm]{Remark}
\newtheorem{example}[thm]{Example}
\newcommand{\comment}[1]{}    
\def\Mad{{\rm Mad}\,}
\def\esub{\subseteq}
\def\st{\colon\,}
\def\FR#1#2{\frac{#1}{#2}}
\def\SM#1#2{\sum_{#1\in #2}}
\def\FL#1{\left\lfloor{#1}\right\rfloor}
\def\CL#1{\left\lceil{#1}\right\rceil}
\def\pwp{\phi_{w'}}
\def\ppwp{\phi'_{w'}}
\def\rw#1#2{\rho_w(#1,#2)}
\def\rp#1#2{\rho_{w'}(#1,#2)}
\def\bA{\overline{{\bf A}}}
\def\bB{\overline{{\bf B}}}
\def\bC{\overline{{\bf C}}}
\def\bD{\overline{{\bf D}}}
\def\bE{\overline{{\bf E}}}
\def\bF{\overline{{\bf F}}}
\def\bG{\overline{{\bf G}}}
\def\bH{\overline{{\bf H}}}
\def\bI{\overline{{\bf I}}}
\def\bJ{\overline{{\bf J}}}
\def\bK{\overline{{\bf K}}}
\def\bZ{\overline{{\bf Z}}}
\def\nul{\varnothing}
\title{The $1,2,3$-Conjecture and $1,2$-Conjecture for Sparse Graphs}
\author{
Daniel W. Cranston\thanks{dcranston@vcu.edu; Virginia Commonwealth
University, Richmond, VA.}\,,
Sogol Jahanbekam\thanks{jahanbe1@illinois.edu; 
University of Illinois, Urbana, IL.  Research supported in part by National
Science Foundation grant DMS 09-01276.}\,, and
Douglas B. West\thanks{west@math.uiuc.edu; Zhejiang Normal University, Jinhua,
China, and University of Illinois, Urbana, IL.  Research supported in part by
National Security Agency grant H98230-10-1-0363.}
}
\date{\today}
\begin{document}
\maketitle

\vspace{-2pc}

\begin{abstract}
We apply the Discharging Method to prove the $1,2,3$-Conjecture and the
$1,2$-Conjecture for graphs with maximum average degree less than $8/3$.
Stronger results on these conjectures have been proved, but this is the first
application of discharging to them, and the structure theorems and reducibility
results are of independent interest.

\smallskip
\noindent
{\it Keywords}:   $1,2,3$-Conjecture,  $1,2$-Conjecture, discharging method.\\
{\it Mathematics Subject Classification}: 05C15, 05C22,  05C78.
\end{abstract}

\baselineskip16pt

\section{Introduction}

Variations on coloring problems in graph theory have involved many ways of
generating vertex colorings.  Without restrictions, the minimum number of
distinct colors needed to label the vertices of $G$ so that adjacent vertices
have different colors is the \textit{chromatic number} $\chi(G)$.  We consider
restricted colorings produced from weights on the edges and vertices.

Let $V(G)$ and $E(G)$ denote the vertex set and edge set of a graph $G$, and
let $\Gamma_G(v)$ denote the set of edges incident to a vertex $v$.  An
\textit{$S$-weighting} of a graph $G$ is a map $w\st E(G)\to S$.  A
\textit{total $S$-weighting} is a map $w\st E(G)\cup V(G)\to S$.  More
specifically, a $k$-weighting is an $S$-weighting with $S=\{1,\ldots,k\}$.
For a weighting $w$, let $\phi_w(v)=\sum_{e\in \Gamma_G(v)}w(e)$.
For a total weighting $w$, let $\phi_w(v)=w(v)+\sum_{e\in \Gamma_G(v)}w(e)$;
that is, each vertex is assigned the total of the weights it ``sees''.  A
weighting or total weighting $w$ is \textit{proper} if $\phi_w$ is a proper
coloring of $G$.  We seek proper $k$-weightings or proper total $k$-weightings
for small $k$.

\begin{conj}[\rm $1,2,3$-Conjecture; Kar\'onski--{\L}uczak--Thomason \cite{KLT}]
\label{123conj}
Every graph without isolated edges has a proper $3$-weighting.
\end{conj}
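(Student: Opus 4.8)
The plan is to attack Conjecture~\ref{123conj} in full generality rather than only for sparse graphs. I would first reduce to the essential case. Isolated vertices impose no constraint and components can be treated independently, so it suffices to produce a proper $3$-weighting of a connected graph $G$ with at least two edges (equivalently, a connected graph that is not an isolated edge). Since a proper weighting need only separate \emph{adjacent} vertices, the difficulty concentrates on edges $uv$ whose endpoints have degrees that force $\phi_w(u)$ and $\phi_w(v)$ into a narrow common range; pendant vertices and internal vertices of degree-$2$ paths are the most constrained, and I would orient the analysis so that such vertices are ``resolved'' against high-degree hubs where the sums have room to spread.

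The core engine I would use is an ordered, interval-based weighting in the spirit of the Kalkowski--Karo\'nski--Pfender method. Fix a linear order $v_1,\ldots,v_n$ of $V(G)$ and process the vertices in this order. When $v_i$ is processed, assign weights to the edges from $v_i$ to its already-processed neighbors so as to set $\phi_w(v_i)$ to a value distinct from every already-finalized neighbor sum, while maintaining the invariant that each not-yet-processed vertex still has its partial sum confined to a short interval of admissible totals. The point of the invariant is that a window of consecutive achievable values is exactly what a single remaining $\{1,2,3\}$-weighted edge can shift between, so that every later vertex retains a free choice with which to dodge its neighbors. Carrying this to the last vertex and making one final edge-choice per vertex yields a proper $3$-weighting.

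The hard part --- and the reason the full conjecture is so resistant --- is controlling the interval widths when only three weight values are available. With weights in $\{1,\ldots,5\}$ the per-step shifting range is generous enough to keep the windows disjoint and the greedy argument closes; restricting to $\{1,2,3\}$ leaves almost no slack, and a single high-degree vertex surrounded by neighbors of equal degree can force windows to overlap and break the invariant. My strategy for defeating this is precisely the program carried out here in the sparse regime: prove a \emph{structure theorem} listing the dense local configurations that collapse the intervals, together with \emph{reducibility lemmas} showing each listed configuration admits a local re-weighting, and then use discharging to prove the two lists are complementary, so that no admissible graph avoids every reducible configuration. Pushing this beyond $\Mad(G)<8/3$ to arbitrary $G$ is where the genuine obstruction sits: I expect it to demand either a global algebraic certificate --- for instance a Combinatorial Nullstellensatz or permanent computation on a suitable weight polynomial --- or a markedly finer potential than any single degree-based charge.
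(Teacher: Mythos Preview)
The statement you are addressing is a \emph{conjecture}, and the paper does not prove it; the paper only establishes the special case $\Mad(G)<8/3$ via discharging and reducibility. So there is no ``paper's own proof'' to compare against, and your task reduces to whether your proposal actually proves the full statement.

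It does not, and you say so yourself. Your write-up is a research program, not a proof: you describe the Kalkowski--Kar\'onski--Pfender interval-shifting scheme, correctly note that it closes for five weights but not three, propose to plug the gap with a structure/reducibility/discharging package, and then concede that pushing this ``beyond $\Mad(G)<8/3$ to arbitrary $G$ is where the genuine obstruction sits.'' That concession is exactly the missing step. The discharging method you outline is intrinsically a sparseness argument --- it needs average degree below some threshold so that charge can be redistributed to guarantee an unavoidable configuration --- and you offer no mechanism for dense graphs, where no such threshold is available. Invoking ``a global algebraic certificate'' or ``a markedly finer potential'' at the end is a statement of hope, not an argument. As written, your proposal reproduces the known landscape (KKP gives $k=5$; discharging handles sparse graphs) without adding a new idea that would close the gap to $k=3$ in general.
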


\begin{conj}[\rm  $1,2$-Conjecture; Przyby{\l}o--Wo\'zniak \cite{pw}]
\label{12conj}
Every graph has a proper total $2$-weighting.
\end{conj}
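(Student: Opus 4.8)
The plan is to attack the conjecture by the Discharging Method applied to a minimal counterexample. Suppose $G$ is a graph with fewest edges that admits no proper total $2$-weighting; since components are independent and isolated vertices impose no constraint, I may assume $G$ is connected with at least one edge. I would first record the structural consequences of minimality: any local configuration that can be ``peeled off,'' together with edges and vertex labels that can be weighted after invoking the conjecture on a smaller graph, cannot occur in $G$. The aim is a catalogue of forbidden \emph{reducible configurations} rich enough that \emph{no} graph avoids them all, which would force the contradiction.

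For the reductions I would exploit the flexibility inherent in total weightings. With edge and vertex weights in $\{1,2\}$, the sum $\phi_w(v)=w(v)+\sum_{e\in\Gamma_G(v)}w(e)$ can be made to take any of $\deg(v)+2$ consecutive integer values, and adjusting a single incident edge shifts both endpoints' sums simultaneously. Concretely, a leaf, a degree-$2$ vertex, or a vertex all of whose neighbors have small degree should be reducible: delete the offending piece, apply minimality to the smaller graph, reinstate the deleted weights, and choose the few remaining free weights so that $\phi_w(v)$ avoids each value already forced on a neighbor. Checking that the number of available choices always exceeds the number of forbidden neighbor-sums is the routine but delicate heart of each reduction.

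With such a stock in hand, I would assign each vertex $v$ the initial charge $\deg(v)$, so the total charge is $2|E(G)|$, and design rules that shift charge from high-degree vertices to nearby low-degree ones. The rules would be engineered so that, once every reducible configuration has been excluded, each vertex ends with charge at least some target $t$; then $2|E(G)|\ge t\,|V(G)|$, i.e.\ the average degree is at least $t$. Contrapositively, any graph of average degree below $t$ must contain a reducible configuration, contradicting the choice of $G$.

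The honest difficulty is that this scheme converts a \emph{global sparsity} hypothesis into a local obstruction, and therefore cannot by itself dispose of arbitrarily dense graphs: once $\Mad(G)$ is large, no fixed finite list of low-degree reducible configurations can be forced to appear, and the discharging inequality collapses. The dense regime is thus the genuine obstacle, and reaching it would require a fundamentally global tool — for instance a Combinatorial Nullstellensatz argument establishing that every graph is $(2,2)$-total-weight-choosable, or a Kalkowski-style iterative weighting algorithm that does not presuppose sparsity. Absent such an ingredient, the realistic outcome of the discharging approach is a proof of the $1,2$-Conjecture only for graphs of bounded maximum average degree (here $\Mad(G)<8/3$), with the unrestricted statement remaining open.
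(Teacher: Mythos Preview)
The statement is a \emph{conjecture}, not a theorem; the paper does not prove it and explicitly treats it as open, establishing only the restricted case $\Mad(G)<8/3$ via exactly the discharging-plus-reducibility programme you outline. Your proposal is therefore not a proof of the stated conjecture, and you say so yourself in the final paragraph: the method you describe yields only the bounded-$\Mad$ result, and the dense case needs a different idea. That assessment is accurate and matches the paper's own scope.

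Where your sketch aligns with the paper: the reducible configurations are obtained by deleting a small ``core,'' invoking minimality on the derived graph, and then choosing the few remaining weights to avoid finitely many forbidden sums; the discharging argument gives each vertex charge $\deg(v)$ and redistributes so that absence of all listed configurations forces average degree at least the target. The paper carries this out in detail for targets $5/2$ and $8/3$, with explicit configuration lists (Lemmas~\ref{2config}, \ref{83red2}) and discharging rules (Lemmas~\ref{52struct}, \ref{83disch2}). Your high-level description is faithful to that structure, but of course the substance is in the specific configurations and rules, which you do not supply.

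So: no gap beyond the one you already name. What you have written is a correct strategic summary of the paper's partial result, together with a correct diagnosis that the full Conjecture~\ref{12conj} lies beyond this method; it is not, and does not claim to be, a proof of the conjecture itself.
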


Toward the $1,2,3$-Conjecture, the original paper proved it for graphs with
chromatic number at most $3$.  Addario-Berry et al.~\cite{admrt} showed that
every graph without isolated edges has a proper $k$-weighting when $k=30$.
After improvements to $k=15$ in \cite{adr} and $k=13$ in \cite{wy}, Kalkowski,
Kar\'onski, and Pfender \cite{kkp} showed that every graph without isolated
edges has a proper $5$-weighting.  Toward the $1,2$-Conjecture, Przyby\l o and
Wo\'zniak \cite{pw} proved it for complete graphs and for graphs with chromatic
number at most $3$.  Kalkowski~\cite{K} showed that every graph has a proper
total $3$-weighting; furthermore, there is such a weighting with the edge
weights in one spanning tree fixed arbitrarily and the vertex weights chosen
from $\{1,2\}$.  Seamone~\cite{S} surveyed progress on these and related
problems.

List versions of the conjectures have been proposed.  A graph is
$k$-\textit{weight-choosable} if whenever each edge is given a list of $k$
available integers, a proper weighting can be chosen from the lists.
A graph is $(k,k')$-\textit{weight-choosable} if whenever each vertex has a
list of size $k$ and each edge has a list of size $k'$, a proper total
weighting can be chosen from the lists.

\begin{conj}[Bartnicki--Grytczuk--Niwczyk \cite{bgn}]\label{bgn}
Every graph without isolated edges is $3$-weight-choosable.
\end{conj}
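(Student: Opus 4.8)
The plan is to attack $3$-weight-choosability algebraically, via Alon's Combinatorial Nullstellensatz, since that tool is tailored to list (choosability) statements. Fix an arbitrary orientation of $G$, introduce for each edge $e$ a variable $x_e$, and for each vertex $v$ set $\phi(v)=\sum_{e\in\Gamma_G(v)}x_e$. Form the single polynomial
\[
  P \;=\; \prod_{uv\in E(G)}\bigl(\phi(u)-\phi(v)\bigr),
\]
one factor per edge, each factor linear in the $x_e$, so that $\deg P=|E(G)|$. For any assignment of lists $L(e)$ to the edges, a proper weighting with $w(e)\in L(e)$ for all $e$ is exactly a point at which $P$ does not vanish. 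By the Combinatorial Nullstellensatz, if $P$ contains a monomial $\prod_{e\in E(G)}x_e^{t_e}$ with $\sum_e t_e=|E(G)|$, every exponent $t_e\le 2$, and nonzero coefficient, then such a nonvanishing point exists for every choice of lists of size $3$; this would prove that $G$ is $3$-weight-choosable.

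Next I would interpret that coefficient combinatorially. Expanding the product, the coefficient of $\prod_e x_e^{t_e}$ is a signed count of the ways to choose, for each edge $uv$, an incident edge $e(uv)\in\Gamma_G(u)\cup\Gamma_G(v)$ (with sign $+$ or $-$ according as the chosen edge meets $u$ or $v$) so that each edge $e$ is chosen exactly $t_e$ times. This is the permanent method of Bartnicki--Grytczuk--Niwczyk \cite{bgn}: the nonvanishing of the coefficient is encoded by a permanent of a $0/\pm1$ incidence-type matrix, and one would try to choose the target exponent vector $(t_e)$ --- ideally a natural one such as $t_e=2$ on a well-chosen spanning forest and $t_e=1$ elsewhere --- so that the associated selections admit no destructive cancellation. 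For structured graphs (trees, complete graphs, complete bipartite graphs) such a vector can be exhibited directly and the permanent evaluated, so I would first reproduce those base cases and then attempt an inductive reduction: delete a low-degree vertex or a reducible local configuration, apply choosability to the smaller graph, and extend the chosen weighting across the deleted part using the extra freedom that lists of size $3$ provide at each remaining edge.

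The main obstacle --- and the reason this statement is a conjecture rather than a theorem --- is proving nonvanishing of the relevant coefficient for \emph{arbitrary} graphs. The orientation arguments of Alon--Tarsi type that succeed for ordinary list coloring do not transfer cleanly here, because each factor $\phi(u)-\phi(v)$ mixes the variables of all edges incident to $u$ and to $v$, so the sign-reversing involutions that usually cancel the ``bad'' orientations have no evident analogue, and for dense graphs the signed selection count is not known to escape total cancellation. To secure unconditional progress I would therefore restrict the graph class: bound the maximum average degree and apply the Discharging Method to produce a finite list of reducible configurations, each handled by a direct extension argument in the spirit of the sparse results proved in this paper. Pushing such local structural control past graphs of bounded average degree, so as to reach every graph without isolated edges, is the step I expect to remain the principal barrier.
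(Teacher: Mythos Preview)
The statement you were asked to prove is Conjecture~\ref{bgn}, and the paper does not prove it; it is quoted from \cite{bgn} as an open problem, with the remark that \cite{bgn} established it only for trees, complete graphs, and complete bipartite graphs via the Combinatorial Nullstellensatz. There is therefore no ``paper's own proof'' to compare against.

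Your write-up is not a proof either, and you say so: you lay out the standard Nullstellensatz/permanent framework of \cite{bgn}, note that it succeeds for those structured families, and then correctly identify the gap---no one knows how to force the relevant coefficient to be nonzero for an arbitrary graph, and the Alon--Tarsi orientation machinery does not port over because each linear factor $\phi(u)-\phi(v)$ involves all edges at both endpoints. That diagnosis is accurate and matches the current state of the problem. Your fallback suggestion, to restrict to bounded maximum average degree and use discharging plus local reducibility, is exactly the philosophy of the present paper, but note that the paper's reducibility arguments (Lemmas~\ref{3config} and~\ref{83red3}) explicitly exploit the concrete weight set $\{1,2,3\}$---for instance, setting $w(e)=3$ to force an inequality, or using parity---and therefore do \emph{not} extend to arbitrary lists; the authors say this outright after Lemma~\ref{assume2}. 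So even the sparse case of Conjecture~\ref{bgn} is not handled by the methods here, and your proposed inductive/discharging route would need genuinely new reducibility arguments that survive arbitrary lists.
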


\begin{conj}[Wong--Zhu \cite{wz}]\label{wz}
Every graph is $(2,2)$-weight-choosable. Every graph without isolated edges is
$(1,3)$-weight-choosable.
\end{conj}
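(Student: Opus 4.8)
The plan is to attack both halves of Conjecture~\ref{wz} by the polynomial method, through Alon's Combinatorial Nullstellensatz. Replace each edge weight $w(e)$ by a formal variable $x_e$ and each vertex weight $w(v)$ by a formal variable $y_v$, so that $\phi_w(v)$ becomes the linear form $y_v+\sum_{e\in\Gamma_G(v)}x_e$. A total weighting is proper exactly when the graph polynomial
\[
P_G=\prod_{uv\in E(G)}\bigl(\phi_w(u)-\phi_w(v)\bigr)
\]
is nonzero at the chosen values. Since each of the $|E(G)|$ factors is linear, $\deg P_G=|E(G)|$, and by the Combinatorial Nullstellensatz $G$ is $(k,k')$-weight-choosable whenever $P_G$ has a monomial $\prod_{v}y_v^{b_v}\prod_{e}x_e^{a_e}$ of total degree $|E(G)|$ with nonzero coefficient and with $b_v\le k-1$ and $a_e\le k'-1$ for all $v$ and $e$.

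I would treat the $(1,3)$ statement first, as the more approachable case. A vertex list of size $1$ forces each $w(v)$ to a constant $c_v$, so no $y_v$ may appear and I seek a monomial $\prod_e x_e^{a_e}$ with every $a_e\le 2$ and $\sum_e a_e=|E(G)|$. Because replacing the $y_v$ by constants leaves the top-degree part of $P_G$ unchanged, this reduces to the pure edge polynomial $\prod_{uv\in E(G)}\bigl(\sum_{e\in\Gamma_G(u)}x_e-\sum_{e\in\Gamma_G(v)}x_e\bigr)$, the same object governing $3$-weight-choosability in Conjecture~\ref{bgn}; the family of coefficients I need coincides with the one sought there, so a Combinatorial Nullstellensatz proof of that conjecture would also settle $(1,3)$-weight-choosability. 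The established route is to fix an orientation of $G$ and read off the coefficient of a prescribed monomial as a signed count of certain spanning substructures (for instance orientations with prescribed out-degrees, or Eulerian-type subgraphs); the goal becomes showing this count is nonzero for some monomial with all exponents at most $2$.

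For the $(2,2)$ statement both caps drop to $1$, so I would hunt for a \emph{multilinear} monomial of degree $|E(G)|$ using each edge and vertex variable at most once. The decisive new ingredient relative to edge weighting is that the variables $y_v$ are now available: from each factor $\phi_w(u)-\phi_w(v)=y_u-y_v+\sum_{e\in\Gamma_G(u)}x_e-\sum_{e\in\Gamma_G(v)}x_e$ one selects a single token, and a valid target corresponds to choosing these tokens so that no variable repeats. The coefficient is then a signed sum over such token assignments, which is exactly the setting of the permanent method behind Conjecture~\ref{bgn}: it suffices to produce an orientation whose associated $0,\pm1$ incidence matrix has nonzero permanent, since a permanent cannot suffer cancellation. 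The extra columns contributed by the $y_v$ enlarge this matrix and should supply the matchings (systems of distinct representatives) that the edge-only version lacks.

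The main obstacle in both halves is the same: proving nonvanishing of the relevant coefficient \emph{uniformly over all graphs}. The Combinatorial Nullstellensatz converts each conjecture into a clean algebraic statement, but the coefficient is an alternating sum over exponentially many structures, and forcing it to be nonzero for every graph is where the real difficulty lies. A realistic program is first to secure the coefficient bound for restricted families — bipartite graphs, graphs of small maximum average degree, or graphs with a convenient orientation — using the discharging and reducibility machinery developed in this paper, and then to seek cancellation-free reformulations (for instance working modulo $2$, where signs vanish, or via the permanent) that might be pushed to arbitrary graphs.
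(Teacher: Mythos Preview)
The statement you are addressing is a \emph{conjecture}, not a theorem, and the paper does not prove it. Conjecture~\ref{wz} is listed in the introduction alongside Conjectures~\ref{123conj}--\ref{bgn} as open problems that motivate the paper; the paper's actual results are restricted to the non-list versions (Conjectures~\ref{123conj} and~\ref{12conj}) for graphs with $\Mad(G)<8/3$, proved by discharging and reducibility arguments rather than by the Combinatorial Nullstellensatz. There is no ``paper's own proof'' of this statement to compare against.

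Your write-up is not a proof either, and you say so yourself: after setting up the polynomial $P_G$ and the standard Nullstellensatz reduction, you correctly identify that everything hinges on showing a suitable coefficient is nonzero for \emph{every} graph, and then you state that ``forcing it to be nonzero for every graph is where the real difficulty lies.'' That is the entire content of the conjecture. The reduction you describe is well known (indeed the paper cites~\cite{bgn,wz,wyz} for exactly this framework), and nothing you have written advances beyond the point where those papers already stood. In particular, your remark that the $(1,3)$ case ``coincides with the one sought'' in Conjecture~\ref{bgn} is an observation that one open problem implies another, not a proof of either; and your permanent heuristic for $(2,2)$ (``a permanent cannot suffer cancellation'') would need the matrix to be nonnegative, which it is not --- the incidence structure has $\pm1$ entries, so cancellation is very much in play. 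What you have written is a reasonable survey of the standard algebraic approach, but it contains no new step and should not be labeled a proof.
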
 

These conjectures are stronger than the original conjectures, which concern
the special case where the lists consist of the smallest positive integers.

Wong, Yang, and Zhu~\cite{wyz} proved that the complete multipartite graph
$K_{n,m,1,1,...,1}$ is $(2, 2)$-weight-choosable and that complete bipartite
graphs other than $K_2$ are $(1, 2)$-weight-choosable.
Bartnicki, Grytczuk, and Niwczyk \cite{bgn} applied the Combinatorial
Nullstellensatz \cite{a} to prove Conjecture \ref{bgn}  for complete graphs,
complete bipartite graphs, and trees.  Wong and Zhu \cite{wz} applied the
Combinatorial Nullstellensatz to prove Conjecture \ref{wz} for complements of
linear forests; this includes complete graphs.  They also proved that every
tree with an even number of edges is $(1,2)$-weight-choosable.  Wong, Yang, and
Zhu~\cite{wyz} continued this approach, proving Conjecture~\ref{wz} for graphs
with maximum degree $3$.  Finally, Wong and Zhu~\cite{wz2} proved that every
graph is $(2,3)$-weight-choosable.

Our results use the ``Discharging Method'' and apply to sparse graphs.
Sparseness is imposed by bounding the \textit{maximum average degree} of $G$,
denoted $\Mad(G)$, which is the largest average degree among the subgraphs of
$G$: $\Mad(G)=\max_{H\esub G} \FR{2|E(H)|}{|V(H)|}$.  A consequence of our
results is that Conjectures~\ref{123conj} and \ref{12conj} hold for every graph
$G$ such that $\Mad(G)<8/3$.  However, that consequence is already known,
since every subgraph of a graph $G$ with $\Mad(G)<8/3$ has a vertex with degree
at most $2$, and therefore by induction is $3$-colorable.  The original papers
proved the conjectures for $3$-colorable graphs.

The novelty of our results is thus in the structure theorems proved by
discharging (which may be of use in solving other problems) and in the 
reducibility theorems showing that various configurations cannot occur in
minimal counterexamples to the conjecture.

We note that proofs via the Combinatorial Nullstellensatz are nonconstructive,
in that the parameter space to be searched for the guaranteed weighting is
exponentially large.  Inductive proofs via discharging, such as ours, yield
polynomial-time algorithms to produce the solution, though implementation may
be complicated.  The original proofs of these conjectures for $3$-colorable
graphs are also constructive.

\medskip
Because our proofs for $\Mad(G)<8/3$ are fairly long, we first present in
Section~\ref{sec52} short proofs of the weaker results that the claims hold
when $\Mad(G)<5/2$.  The reducibility arguments in these proofs are used in
the stronger results.

To discuss both problems together, let {\it $j$-weighting} mean a $3$-weighting
when $j=3$ and a total $2$-weighting when $j=2$.  A graph is {\it $j$-bad} if
it has no proper $j$-weighting (and no isolated edge if $j=3$).  Configurations
forbidden from minimal $j$-bad graphs are {\it $j$-reducible}.

Our proofs of $j$-reducibility use the restriction of weights to values at most
$j$, so they do not extend to the list versions.  Also, unlike in most coloring
problems, vertices of degree $1$ do not immediately yield reducible
configurations, since the weight on a pendant edge affects whether its incident
edges are properly colored.  

In Section~\ref{sec52} we first obtain some $3$-reducible configurations.
We next use discharging to show that every graph with average degree less than
$5/2$ contains a $3$-reducible configuration.  We assign each vertex initial
charge equal to its degree and then shift charge so that if no specified
configuration occurs, then every vertex has final charge at least $c$.
Since $\Mad(G)<c$ and $G'\esub G$ imply $\Mad(G')<c$ (by definition), this
proves that $G$ has a proper $3$-weighting when $\Mad(G)<5/2$.
In Section~\ref{sec52} we also use this method to prove that $G$
has a proper (total) $2$-weighting when $\Mad(G)<5/2$.  Both results use the
same discharging argument, although the sets of reducible configurations are
different.

When $\Mad(G)\ge 5/2$, no longer must $G$ have a configuration among those in
Section~\ref{sec52}.  We need additional reducible configurations to complete
an unavoidable set when $\Mad(G)<8/3$.  In Section~\ref{sec12} and
Section~\ref{sec123}, respectively, we complete the proofs of the
$1,2$-Conjecture and the $1,2,3$-Conjecture for graphs $G$ such that
$\Mad(G)<8/3$.

\section{Reducible Configurations and $\Mad(G)<5/2$}\label{sec52}

In discharging arguments for sparse graphs, it is convenient to have concise
terminology for vertices satisfying degree constraints.

\begin{definition}\label{basic}
A vertex with degree $k$, at least $k$, or at most $k$ is a
\textit{$k$-vertex}, a \textit{$k^+$-vertex}, or a \textit{$k^-$-vertex},
respectively.  A \textit{$j$-neighbor} of $v$ is a $j$-vertex that is a
neighbor of $v$.

Write $N_G(v)$ for the neighborhood of $v$ in $G$ and $d_G(v)$ for its degree.
For $v\in V(G)$ and $U\esub N_G(v)$, let $[v,U]$ denote the set of edges
joining $v$ to $U$.

A weighting or total weighting $w$ \textit{satisfies} an edge $uv$ if
$\phi_w(u)\ne\phi_w(v)$, or equivalently if $\rw{u}{v}\ne\rw{v}{u}$, where we
define $\rw{x}{y}=\phi_w(x)-w(xy)$ when $x$ and $y$ are adjacent.

A {\em configuration} in a graph $G$ is a subgraph $C$ together with specified
degrees in $G$ for $V(C)$.  The {\em core} of the configuration is $E(C)$, and
the resulting {\em derived graph} is $G-E(C)$.
\end{definition}

Our simplest configurations consist of a vertex with specified degree and the
edges from it to certain neighbors of specified degrees.
We begin with a lemma that reduces the length of reducibility proofs:
$1$-neighbors are ``easier'' to handle than $2$-neighbors, so when we claim
that a configuration is reducible when a particular vertex has degree $1$ or
$2$, in the proof we may assume that it has degree $2$.

\begin{lem}\label{assume2}
If a vertex $z$ in a $2$- or $3$-reducible configuration $C$ has degree $1$ in
$C$ and is specified as a $2$-vertex of the full graph, then the configuration
$C'$ obtained from $C$ by instead specifying $z$ as a $1$-vertex (with 
neighbor $v$) is also reducible.
\end{lem}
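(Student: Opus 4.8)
The plan is to argue by contradiction using the minimality inherent in the definition of reducibility. Suppose $C'$ is not reducible; then there is a minimal $j$-bad graph $G'$ containing $C'$ as a configuration, so in particular $G'$ contains the vertex $z$ as a $1$-vertex with neighbor $v$, and $v$ has in $G'$ the degree specified for it in $C'$ (which equals the degree specified in $C$, since only $z$'s specification changed). The idea is to build from $G'$ a graph $G$ that contains $C$ as a configuration and is also $j$-bad, contradicting the reducibility of $C$.

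The construction: let $G = G' + zy$, where $y$ is a new pendant vertex attached to $z$. Then in $G$ the vertex $z$ has degree $2$, with neighbors $v$ (as before) and the new $1$-vertex $y$; all other specified degrees in $C$ are met because they were met in $G'$. So $G$ contains $C$ as a configuration. I would first check the trivial side conditions: if $j=3$ we must ensure $G$ has no isolated edge — but $zy$ is not isolated since $z$ also has the edge $zv$, and no other isolated edge is created. Next, since $C$ is reducible and $G$ contains $C$, the graph $G$ cannot be a minimal $j$-bad graph; I then want to convert this into the statement that $G$ is not $j$-bad at all, i.e.\ $G$ has a proper $j$-weighting. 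For this I would show that no proper subgraph of $G$ obtained by deleting the core $E(C)$ can salvage things — actually the cleanest route is: either $G$ has a proper $j$-weighting, and then I derive a proper $j$-weighting of $G'$ (contradiction), or $G$ is $j$-bad but not minimal, which by induction on the number of edges (the standard setup behind ``reducible'') again yields that some proper subgraph argument applies; in the write-up I will simply invoke that $C$ reducible means $C$ does not occur in a minimal $j$-bad graph, and since every $j$-bad graph contains a minimal one as a subgraph, it suffices to produce a proper $j$-weighting of $G$ directly from one of a smaller graph, which is exactly what reducibility of $C$ gives for $G$.

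The heart of the matter is the transfer of weightings between $G$ and $G'$, and this is where the degree-$1$-versus-degree-$2$ distinction is used. Given a proper $j$-weighting $w$ of $G$, I restrict it to $G'$ by forgetting the edge $zy$ (and, when $j=2$, the vertex weight $w(y)$). The only vertices whose seen-sum changes are $z$ and $y$: vertex $y$ disappears, and $z$ loses $w(zy)$ from $\phi(z)$. So in $G'$ we have $\phi_{w'}(z) = \phi_w(z) - w(zy)$, while $\phi_{w'}(u) = \phi_w(u)$ for every other vertex $u$, in particular for $v$. The only edge whose satisfaction status could be affected is $zv$. If it happens that $\phi_w(z) - w(zy) \ne \phi_w(v)$, we are done immediately. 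Otherwise $\phi_w(z) - w(zy) = \phi_w(v)$, and here I use that we have a free choice: reselect $w(zy)$ from $\{1,\dots,j\}$ (noting $j \ge 2$, so there are at least two choices) to change $\phi_w(z)$; since $\phi_w(z)$ took at least two distinct values as $w(zy)$ ranges over two values, and $z$'s only constraint in $G$ was the edge $zv$ (because $y$ is a leaf whose only incident edge $zy$ is automatically satisfiable by the choice of $w(y)$ when $j=2$, and for $j=3$ the edge $zy$ is never between two specified-constrained vertices in a way that blocks us — actually $zy$ must also be satisfied, so I must choose $w(zy)$ to satisfy both $zv$ in $G'$ and $zy$ in $G$), I pick $w(zy)$ avoiding the at most one bad value for edge $zv$; this leaves at least $j-1 \ge 1$ choices, and for $j=2$ I then also set $w(y)$ to satisfy $zy$, while for $j=3$, having at least two good values for $zv$, I can additionally avoid making $\phi_w(z) = \phi_w(y) = w(zy)$ — wait, $\phi_w(y) = w(zy)$ and $\phi_w(z) \ge w(zv)+w(zy) > w(zy)$, so $zy$ is automatically satisfied. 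Thus in all cases a valid $w(zy)$ (and $w(y)$ if $j=2$) exists, producing a proper $j$-weighting of $G'$.

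I expect the main obstacle to be bookkeeping the two cases $j=2$ and $j=3$ uniformly and making sure the leaf edge $zy$ is itself always satisfiable without interfering — this is precisely the subtlety the paragraph before the lemma warns about, that degree-$1$ vertices are not automatically harmless because a pendant edge's weight affects its neighbor. The resolution is the counting observation above: $z$ has at most one forbidden value for $w(zy)$ coming from the edge $zv$, and the pendant edge $zy$ is satisfied for free because $\phi_w(z)$ strictly exceeds $\phi_w(y)$ when $j=3$ (sum of at least two positive weights versus one) and can be arranged by the choice of $w(y)$ when $j=2$. So $w(zy)$ always has a legal value, and the reduction from $C$ to $C'$ goes through.
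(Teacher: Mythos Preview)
Your overall plan---enlarge $G'$ so that $z$ becomes a $2$-vertex, invoke reducibility of $C$ on the larger graph, then restrict back---is the paper's plan. But two steps break.

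\textbf{The repair of the edge $zv$ is miscomputed.} After restricting a proper $j$-weighting $w$ of $G$ to $G'$, you have $\phi_{w'}(z)=\phi_w(z)-w(zy)$. But $\phi_w(z)=w(zy)+w(zv)\ (+\,w(z)\text{ if }j=2)$, so $\phi_{w'}(z)=w(zv)\ (+\,w(z))$ is \emph{independent of} $w(zy)$; reselecting $w(zy)$ cannot fix $zv$ in $G'$. The correct repair is different in the two cases. For $j=2$, reselect the vertex weight $w(z)$: it has two values, only one of which can make $\phi_{w'}(z)=\phi_{w'}(v)$, and changing $w(z)$ affects no edge of $G'$ other than $zv$. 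For $j=3$, no repair is needed: since $G'$ has no isolated edge, $d_{G'}(v)\ge 2$, whence $\phi_{w'}(v)\ge w(zv)+1>w(zv)=\phi_{w'}(z)$, so $zv$ is satisfied automatically.

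\textbf{You never obtain a proper $j$-weighting of $G$ when $j=3$.} From the bare definition ``reducible $=$ absent from every minimal $j$-bad graph'' you can only conclude that $G$ is not a \emph{minimal} $j$-bad graph; it may still be $j$-bad, with $G'$ itself as a minimal $j$-bad subgraph---so no contradiction. What reducibility actually provides (and what every reducibility proof in the paper establishes) is the constructive statement: a proper $j$-weighting of the derived graph $G-E(C)$ extends to one of $G$. In your construction, $zv\in E(C)$, so in $G-E(C)$ the vertex $z$ is incident only to $zy$, and $y$ only to $zy$; thus $zy$ is an isolated edge in the derived graph. For $j=3$ an isolated edge has no proper $3$-weighting, so there is nothing to extend. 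This is exactly why the paper attaches a path of length two (new vertices $a,b$ with edges $ab,bz$) rather than a single pendant: the derived graph then gains a $P_3$ component, which does admit a proper $3$-weighting, and the extension goes through.
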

\begin{proof}
Let $H$ be a graph containing $C'$, and let $H'$ be the derived graph; $z$ is
isolated in $H'$.  Form $G$ by adding vertices $a$ and $b$ and edges $ab$ and
$bz$ to $H$.  Now $C$ arises in $G$, and its derived graph $G'$ arises from
$H'$ in the same way that $G$ arises from $H$.

If $H$ is a minimal $j$-bad graph, then $H'$ has a desired weighting.  Since
also the path $P_3$ has such a weighting, $G'$ has such a weighting.  Since $C$
is $j$-reducible, $G$ therefore also has such a weighting.  To
obtain the desired weighting of $H$, note that all edges are satisfied when $a$
and $b$ are deleted from the weighting of $G$, except possibly $zv$.

For $j=2$, the weight on $z$ is needed only to satisfy $zv$ in $H$ and can be
respecified so that $zv$ is satisfied.  For $j=3$, the edge $zv$ is satisfied
automatically since $d_H(v)>1$.
\end{proof} 

Reducibility proofs may use some types of inferences many times.  The next
lemma enables us to express statements concisely and reduce repetitive language.
It can be stated in more generality, but for clarity we list just typical
situations in which we will use it.

\begin{lem}\label{choose}
Let $w$ be a partial $j$-weighting of a graph $G$ ($w$ is not specified
everywhere).  In the situations below, the weights on a set $S$ can be chosen
to satisfy the edges in a set $F$ if the weights on all the edges (or
vertices) incident to $F$ and not in $S$ are already known:

1) The edges of $F$ have a common endpoint $v$, incident to all edges of $S$
(possibly also $v\in S$ when $j=2$), and $|F|\le(j-1)|S|$.

2) $F$ consists of two edges, with $S$ a single edge joining them and $j=3$.
\end{lem}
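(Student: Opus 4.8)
The plan is to handle the two cases separately, in each case using a simple counting or casework argument to show that the prescribed weights on $S$ can be chosen to avoid all the ``bad'' equalities $\phi_w(x)=\phi_w(y)$ for edges $xy\in F$. The key observation throughout is that changing a single weight $w(e)$ (edge or vertex) by $1$ changes $\phi_w$ by exactly $1$ at each endpoint of $e$ (or, for a vertex weight $w(v)$, only at $v$), and that all contributions to $\phi_w(x)$ and $\phi_w(y)$ coming from edges (or vertices) \emph{not} in $S$ are already determined by hypothesis. So for each edge $xy\in F$, the difference $\phi_w(x)-\phi_w(y)$ is an affine function of the yet-unchosen weights on $S$, and satisfying $xy$ means avoiding the single value of that function at which it vanishes.

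For part (1), every edge of $F$ is incident to $v$, so writing $F=\{vu_1,\ldots,vu_{|F|}\}$, the condition for $vu_i$ to be satisfied is $\phi_w(v)\ne\phi_w(u_i)$; since the edges $vu_i$ themselves are \emph{not} in $S$ (the edges of $S$ are incident to $v$ but an edge of $F$ incident to $v$ is incident to $F$, hence must lie outside $S$ unless — wait, here I should be careful), the value $\phi_w(u_i)$ is fully determined and $\phi_w(v)$ is what we control. Each weight in $S$ (there are $|S|$ of them, each an edge at $v$ or, when $j=2$, possibly the vertex $v$ itself) ranges over $\{1,\ldots,j\}$, so $\phi_w(v)$ can be made to take any value in an interval of $j-1$ consecutive integers per element of $S$ beyond its minimum, i.e.\ $\phi_w(v)$ achieves at least $(j-1)|S|+1$ distinct values. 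The $|F|\le(j-1)|S|$ forbidden values $\phi_w(u_1),\ldots,\phi_w(u_{|F|})$ therefore cannot exhaust these, so a valid choice of $\phi_w(v)$, hence of the weights on $S$, exists. For part (2), $F=\{xz,zy\}$ for some vertex $z$, $S=\{xz\}$ or $S=\{zy\}$ — actually $S$ is ``a single edge joining them'', so $S$ is an edge with one endpoint equal to an endpoint of $xz$ and the other equal to an endpoint of $zy$; more plausibly $F$ consists of two edges sharing a vertex $z$ and $S$ is a third edge at $z$ (the typical use): changing $w(S)$ over $\{1,2,3\}$ shifts $\phi_w(z)$ through three consecutive values, and we must simultaneously avoid $\phi_w(z)=\phi_w(x')$ and $\phi_w(z)=\phi_w(y')$ for the two far endpoints; two forbidden values cannot block all three choices, so a good weight on $S$ exists.

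The main obstacle — and the only real subtlety — is bookkeeping: making precise, in each listed situation, exactly which endpoints the edges of $F$ have, which weights lie in $S$ versus are ``already known,'' and confirming that the edges of $F$ are indeed never themselves members of $S$ (so that the $\phi_w$-values at the endpoints \emph{not} being adjusted are genuinely fixed). Once that is pinned down, both parts reduce to the pigeonhole statement ``an affine image of $\{1,\ldots,j\}^{|S|}$ under a sum map hits at least $(j-1)|S|+1$ integers, which exceeds $|F|$.'' I would state this pigeonhole fact once and invoke it for both cases rather than repeating the arithmetic. No step requires a genuinely hard idea; the lemma's purpose is purely to package a routine argument for repeated later use, so the write-up should stay short and emphasize the counting bound and the ``affine in the free weights'' principle.
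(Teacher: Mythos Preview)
Your treatment of part (1) is essentially the paper's argument: the sum of the $|S|$ free weights at $v$ ranges over $(j-1)|S|+1$ consecutive integers, each edge of $F$ forbids at most one value of that sum, and $|F|\le (j-1)|S|$ leaves a value available. That is correct.

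Your treatment of part (2), however, is based on a misreading of the configuration. You settle on ``$F$ consists of two edges sharing a vertex $z$ and $S$ is a third edge at $z$,'' but that situation is already a special case of part (1) with $|F|=2$, $|S|=1$, $j=3$, so there would be no point stating it separately. The intended geometry is that $S$ is the \emph{middle} edge of a three-edge path: if $S=\{bc\}$ then $F=\{ab,cd\}$, with the two edges of $F$ lying at \emph{different} endpoints of the edge in $S$ and not sharing a vertex with each other. Changing $w(bc)$ shifts $\phi_w(b)$ (hence the satisfaction of $ab$) and $\phi_w(c)$ (hence the satisfaction of $cd$) independently, each giving one forbidden value among the three choices for $w(bc)$.

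The feature you therefore miss entirely is the one the paper singles out explicitly: in part (2) the weights on the edges of $F$ themselves may still be unspecified, because $w(ab)$ contributes equally to $\phi_w(a)$ and $\phi_w(b)$ and so cancels in their difference (and likewise for $cd$). This is precisely what makes part (2) a separate statement rather than a corollary of part (1), and it is used repeatedly later (e.g.\ ``choose $w(vz)$ to satisfy $yz$ and $vz'$,'' where $yz$ and $vz'$ meet $vz$ at opposite endpoints). Your pigeonhole arithmetic is fine, but the geometry needs to be corrected and the cancellation point made explicit.
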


\begin{proof}
Let $k=|S|$.  Since weights are chosen from $\{1,\ldots,j\}$, the sum
of $k$ weights has $1+(j-1)k$ possible values.  Each edge in $F$ uses that sum
in determining whether the values of $\phi$ differ at its endpoints.  Each edge
in $F$ thus forbids at most one value of the sum in a proper $j$-weighting.
There are at least $k(j-1)$ possible augmentations above the least value of the
sum, so when $k(j-1)\ge|F|$ the labels can be chosen to satisfy all of $F$.

Note that in (2) the weights on $F$ may be unspecified; the weight on an edge
does not affect whether it is satisfied.  Similarly, if $F=\{uv\}$, and $S$ is
a single edge incident to $v$ or is $v$ itself, and the weights of all other
items incident to $uv$ are known, then the weight on $S$ can be chosen in 
$j-1$ ways to satisfy $F$.
\end{proof}

\begin{remark}\label{plan}
We use Lemma~\ref{choose} frequently in reducibility arguments, invoked without
mention in $2$-reducibility when we write ``choose $w(vz)$ to statisfy $vx$''
or ``choose $w(v)$ and $w(vz)$ to satisfy $vx$ and $vv'$''.  In
$3$-reducibility, a choice can satisfy more.  In $2$-reducibility we can choose
one weight to avoid one value, but in $3$-reducibility it can avoid two values.

Another method of satisfying an edge $uv$ is to create sufficient imbalance
between the contributions at $u$ and $v$ to guarantee that $\phi(u)\ne\phi(v)$
when the weighting is completed.  When we write ``Set $w(uv)=3$ to ensure
satisfying $vz$'', we mean that no way of choosing weights on the remaining
edges can produce $\phi(w)=\phi(v)$.  Saying that an edge is ``automatically
satisfied'' has a similar meaning.  For example, any edge joining a $1$-vertex
to a $3$-vertex is automatically satisfied for (total) $2$-weightings, while
putting weight $1$ at the $1$-vertex ensures satisfying the edge even when the 
neighbor has degree $2$.

The figures for configurations show the core in bold; the derived graph $G'$ is
obtained by deleting the core.  Also, with $w'$ assumed to be a proper
$j$-weighting of $G'$, the label on an edge $e$ is $w'(e)$, and the label in a
circle at a vertex $x$ with one neighbor $u$ is $\rp{x}{u}$.  To satisfy $xu$,
the sum of the contributions at $u$ other than $w'(xu)$ must differ from
$\rp{x}{u}$.

The figures do not show cases where some of the specified vertices may be equal.
For instances where such equalities do not affect the validity of the written
argument, we make no comment about possible changes in the illustration.
\end{remark}

The task of proving reducibility for a configuration $C$ is the task of
modifying or extending a proper $j$-weighting $w'$ of the derived graph $G'$
to obtain a $j$-weighting $w$ of $G$ such that the edges in or incident to the
core of $C$ become satisfied, while the other edges of $G'$ remain satisfied.
If we do not change the weights on edges of $G'$ incident to the core, then
we do not change the fact that all edges of $G'$ not incident to the core
are satisfied.

With these preparations, we begin the reducibility arguments.  The first lemma
eliminates many degenerate cases of later configurations in which specified
vertices may be identical.

\begin{lem}\label{triangle}
The following configurations are both $2$-reducible and $3$-reducible:

(1) A $3$-cycle through two $2$-vertices and one $4^-$-vertex.

(2) A $3$-cycle through one $2$-vertex $z$ and two vertices that each may be
a $3$-vertex, a $4$-vertex with a $1$-neighbor, or a $5$-vertex with a
$1$-neighbor.  In addition, one neighbor of $z$ is allowed to be a $4$-vertex
with a $2$-neighbor other than $z$ (and no $1$-neighbor).
\end{lem}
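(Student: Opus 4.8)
The plan is to handle both configurations, and both values of $j$, by a single scheme. Let $G$ be a minimal $j$-bad graph containing the configuration, let $C$ be the $3$-cycle (so the core is all three of its edges), and let $G'=G-E(C)$ be the derived graph. Write the cycle as $v_1v_2v_3$ in case~(1), with $v_1,v_2$ the two $2$-vertices, and as $zy_1y_2$ in case~(2), with $z$ the $2$-vertex. Deleting $E(C)$ isolates $v_1,v_2$ (resp.\ $z$) and lowers $d(v_3)$ (resp.\ $d(y_1)$ and $d(y_2)$) by $2$. I would first argue that $G'$ has no isolated edge, so that by minimality $G'$ has a proper $j$-weighting $w'$, and then extend $w'$ to $G$ by choosing only the three weights on $E(C)$, together with the weights on $v_1,v_2$ (resp.\ on $z$) when $j=2$. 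Since no weight outside the core changes, every edge of $G'$ not incident to $C$ stays satisfied, so it suffices to satisfy the edges in or incident to $C$.

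To dispose of isolated edges in $G'$: these matter only when $j=3$, and since the $2$-vertices of $C$ have both their edges in the core, an isolated edge of $G'$ must be incident to $v_3$ (resp.\ to a $y_i$ that is a $3$-vertex) whose surviving neighbor is a leaf of $G$. In that situation I would delete that leaf, and similarly on the other side, reaching either a subgraph with no isolated edge, to which the scheme applies after also choosing weights on the restored leaf edges (each automatically satisfied once its endpoint regains degree $3$, and harmless except that it supplies one more free weight at that $y_i$), or a component that is a $3$-cycle, a $3$-cycle with a pendant edge, or a $3$-cycle with two pendant edges; each such small graph has an explicit proper $j$-weighting for $j\in\{2,3\}$, so by minimality it does not occur. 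From here on $w'$ denotes a proper $j$-weighting of $G'$.

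For the extension I would rely on two observations in the spirit of Lemma~\ref{choose} and Remark~\ref{plan}. First, each of $v_1,v_2$ (resp.\ $z$) is a $2$-vertex, so its $\phi_w$-value is a sum of just two newly chosen weights; by keeping those small I can push it below $\phi_w$ at each of its neighbors, which, combined with a suitable choice of the remaining cycle weights, satisfies the two cycle edges there (for $z$ this handles $zy_1$ and $zy_2$; in case~(1) it handles $v_1v_3$ and $v_2v_3$, using that $v_3$ has degree $4$ and keeps its other two weights). Second, an edge from a vertex of degree at least $3$ to a $1$-neighbor is satisfied no matter what weight it carries, so whenever $y_i$ is a $4$- or $5$-vertex with a $1$-neighbor I may also re-choose the weight on that pendant edge, gaining an extra degree of freedom that moves $\phi_w(y_i)$ but disturbs nothing else. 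With these, case~(1) is a short count: choose the two edges at $v_3$ so their sum avoids the (at most two) values forbidden by $v_3$'s outside neighbors, then use $w(v_1v_2)$ and, if $j=2$, the weights on $v_1,v_2$ to separate $\phi_w(v_1),\phi_w(v_2),\phi_w(v_3)$, all guaranteed by the bound in Lemma~\ref{choose}(1).

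Case~(2) is where the real work lies, and I expect that bookkeeping to be the main obstacle. I would choose $w(y_1y_2)$ first (this weight never affects whether $y_1y_2$ is satisfied, while taking it large raises $\phi_w$ at a $3$-vertex end of the cycle), then the two edges at $z$; the outside edges of $y_i$ other than one to a $1$-neighbor --- at most one when $y_i$ is a $3$-vertex or a $4$-vertex with a $1$-neighbor, at most two when $y_i$ is a $5$-vertex with a $1$-neighbor or the exceptional $4$-vertex with a $2$-neighbor --- get satisfied using $y_i$'s pendant edge, when present, together with its cycle edges, while $\phi_w(y_1)\ne\phi_w(y_2)$, a condition unaffected by $w(y_1y_2)$, is arranged through the weights of $zy_1$ and $zy_2$. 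The hypothesis that at most one of $y_1,y_2$ is the exceptional $4$-vertex is exactly what rules out the one obstruction: two vertices each needing two non-pendant edges satisfied while both lacking the extra pendant freedom. Checking, over the (at most four) allowed types for each of $y_1$ and $y_2$, that the chosen weights can be ordered so that every affected edge becomes satisfied is the heart of the argument; the degenerate identifications of specified vertices are routine, and Lemma~\ref{assume2} lets me assume any $1$-neighbor is a $2$-neighbor wherever convenient.
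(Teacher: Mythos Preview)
Your overall strategy---delete the triangle, extend a proper $j$-weighting of the derived graph---matches the paper's, and for $j=3$ (and for case~(1) with either $j$) the counting you sketch can be pushed through. The genuine gap is case~(2) for $j=2$: you commit to re-choosing only the three cycle-edge weights together with $w(z)$, leaving $w(y_1),w(y_2)$ fixed at their $w'$-values, and that is not enough freedom. Take both $y_i$ to be $3$-vertices with outside neighbor $x_i$, and suppose $w'(y_1)=w'(y_2)=w'(y_1x_1)=w'(y_2x_2)=1$ while $\rho_{w'}(x_i,y_i)=4$ for each $i$. Then satisfying $y_ix_i$ forces $w(zy_i)+w(y_1y_2)\ne 3$, and satisfying $y_1y_2$ forces $w(zy_1)\ne w(zy_2)$; with all three weights in $\{1,2\}$ these constraints are jointly infeasible, and $w(z)$ plays no role in any of them. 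The same shortage bites when one $y_i$ is the exceptional $4$-vertex with a $2$-neighbor, where you have two outside edges at $y_i$ and no pendant edge to re-choose.

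The paper closes this with two moves you do not make. First, for $j=2$ it uncolors $y_1$ and $y_2$ and re-chooses their vertex weights; this is harmless since the only $G'$-edges at $y_i$ are precisely the outside edges being re-satisfied. Second, it enlarges the core beyond the triangle: whenever $d_G(y_i)\ge 4$, the edge from $y_i$ to its designated low-degree neighbor $u_i$ (the $1$-neighbor, or the $2$-neighbor in the exceptional case) is also deleted, making $w(y_iu_i)$ a free variable---in the exceptional case this weight is first chosen to satisfy the one surviving edge at $u_i$. With that extra freedom the paper sets $w(y_1y_2)=j$ (and $w(z)=1$ if $j=2$), which forces $\rho_w(z,y_i)\le 3<4\le\rho_w(y_i,z)$ and hence satisfies both edges $zy_i$ outright; the remaining constraints then decouple into independent choices at each $y_i$, with $y_1y_2$ handled last using the leftover flexibility on the non-exceptional side.
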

\begin{proof}
When $G$ is a $3$-cycle, the weights can be chosen to produce colors
$\{3,4,5\}$ at the vertices, for either value of $j$.  Suppose $G\ne C_3$.
In each case, we extend a proper $j$-weighting $w'$ of a subgraph $G'$ obtain
by deleting the edges of the cycle.  The cases appear in Figure~\ref{figtri}.

For (1), let $v$ be the $3^+$-vertex on the cycle, and let $z$ and $z'$ be
the $2$-vertices.  To extend $w'$ to $w$, first set $w(zz')=1$, and also set
$w(v)=2$ if $j=2$.  This ensures satisfying $vz$ and $vz'$.  If $j=2$, then fix
$w(z)=1$, choose $w(vz)$ and $w(vz')$ to satisfy $\Gamma_{G'}(v)$, and choose
$w(z')$ to satisfy $zz'$.  If $j=3$, then require $w(vz)\ne w(vz')$ with
$w(vz)\in\{1,2\}$ and $w(vz')\in\{2,3\}$ to satisfy $zz'$.  There are three
choices for $w(vz)+w(vz')$, so we can choose them also to satisfy
$\Gamma_{G'}(v)$, since $d_{G'}(v)\le2$.

For (2), let $v$ and $v'$ be the other vertices of the triangle.
If $d_G(v)\ge4$, then let $u$ be a vertex of smallest degree in $N(v)-\{z\}$;
similarly define $u'\in N(v')$.  Form $G'$ from $G$ by deleting
$\{vz,v'z,vv'\}$ and the edges $vu$ and $v'u'$ (if they exist).
Figure~\ref{figtri} shows one of the possibilities at each of $v$ and $v'$.

We first ensure that $vz$ and $v'z$ will be satisfied by setting $w(vv')=j$
(and $w(z)=1$ if $j=2$); this will yield $\rw zv\le 3<4\le \rw vz$, since
$d_G(v)\ge3$.

For $d_G(v)=3$, choose $w(vz)$ (and $w(v)$ if $j=2$) to satisfy the one edge in
$\Gamma_{G'}(v)$.  For $d_G(v)\in\{4,5\}$ and $d_G(u)=1$, choose $w(vz)$ and
$w(vu)$ (and $w(v)$ if $j=2$) to satisfy $\Gamma_{G'}(v)$.  These cases have
extra flexibility, so that if all contributions to $\phi_w(v')$ are already
known, then $vv'$ can also be satisfied.

For $d_G(v)=4$ and $d_G(u)=2$, choose $w(vu)$ (and $w(u)$ if $j=2$) to satisfy
$\Gamma_{G'}(u)$, and then choose $w(vz)$ (and $w(v)$ if $j=2$) to satisfy
$vu$ and $\Gamma_{G'}(v)$.  In this case we do not satisfy $vv'$ using edges at
$v$.  Instead, we satisfy $vv'$ using one of the earlier cases at $v'$ after
$\phi_w(v)$ is known; this case is only allowed to occur at one of $\{v,v'\}$.
\end{proof}

\begin{figure}[h]
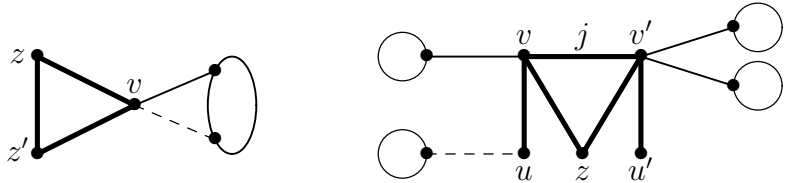

\gpic{
\expandafter\ifx\csname graph\endcsname\relax \csname newbox\endcsname\graph\fi
\expandafter\ifx\csname graphtemp\endcsname\relax \csname newdimen\endcsname\graphtemp\fi
\setbox\graph=\vtop{\vskip 0pt\hbox{%
    \special{pn 11}%
    \special{ar 1121 535 127 255 0 6.28319}%
    \graphtemp=.5ex\advance\graphtemp by 0.790in
    \rlap{\kern 0.102in\lower\graphtemp\hbox to 0pt{\hss $\bu$\hss}}%
    \graphtemp=.5ex\advance\graphtemp by 0.280in
    \rlap{\kern 0.102in\lower\graphtemp\hbox to 0pt{\hss $\bu$\hss}}%
    \graphtemp=.5ex\advance\graphtemp by 0.535in
    \rlap{\kern 0.611in\lower\graphtemp\hbox to 0pt{\hss $\bu$\hss}}%
    \graphtemp=.5ex\advance\graphtemp by 0.355in
    \rlap{\kern 1.031in\lower\graphtemp\hbox to 0pt{\hss $\bu$\hss}}%
    \graphtemp=.5ex\advance\graphtemp by 0.715in
    \rlap{\kern 1.031in\lower\graphtemp\hbox to 0pt{\hss $\bu$\hss}}%
    \special{pn 28}%
    \special{pa 611 535}%
    \special{pa 102 790}%
    \special{fp}%
    \special{pa 102 790}%
    \special{pa 102 280}%
    \special{fp}%
    \special{pa 102 280}%
    \special{pa 611 535}%
    \special{fp}%
    \special{pn 11}%
    \special{pa 611 535}%
    \special{pa 1031 355}%
    \special{fp}%
    \special{pn 8}%
    \special{pa 611 535}%
    \special{pa 1031 715}%
    \special{da 0.051}%
    \graphtemp=.5ex\advance\graphtemp by 0.790in
    \rlap{\kern 0.000in\lower\graphtemp\hbox to 0pt{\hss $z'$\hss}}%
    \graphtemp=.5ex\advance\graphtemp by 0.280in
    \rlap{\kern 0.000in\lower\graphtemp\hbox to 0pt{\hss $z$\hss}}%
    \graphtemp=.5ex\advance\graphtemp by 0.433in
    \rlap{\kern 0.611in\lower\graphtemp\hbox to 0pt{\hss $v$\hss}}%
    \graphtemp=.5ex\advance\graphtemp by 0.790in
    \rlap{\kern 2.140in\lower\graphtemp\hbox to 0pt{\hss $\bu$\hss}}%
    \graphtemp=.5ex\advance\graphtemp by 0.280in
    \rlap{\kern 2.140in\lower\graphtemp\hbox to 0pt{\hss $\bu$\hss}}%
    \graphtemp=.5ex\advance\graphtemp by 0.790in
    \rlap{\kern 2.650in\lower\graphtemp\hbox to 0pt{\hss $\bu$\hss}}%
    \graphtemp=.5ex\advance\graphtemp by 0.280in
    \rlap{\kern 2.650in\lower\graphtemp\hbox to 0pt{\hss $\bu$\hss}}%
    \graphtemp=.5ex\advance\graphtemp by 0.790in
    \rlap{\kern 2.955in\lower\graphtemp\hbox to 0pt{\hss $\bu$\hss}}%
    \graphtemp=.5ex\advance\graphtemp by 0.790in
    \rlap{\kern 3.261in\lower\graphtemp\hbox to 0pt{\hss $\bu$\hss}}%
    \graphtemp=.5ex\advance\graphtemp by 0.280in
    \rlap{\kern 3.261in\lower\graphtemp\hbox to 0pt{\hss $\bu$\hss}}%
    \graphtemp=.5ex\advance\graphtemp by 0.433in
    \rlap{\kern 3.745in\lower\graphtemp\hbox to 0pt{\hss $\bu$\hss}}%
    \graphtemp=.5ex\advance\graphtemp by 0.127in
    \rlap{\kern 3.745in\lower\graphtemp\hbox to 0pt{\hss $\bu$\hss}}%
    \special{pn 28}%
    \special{pa 3261 280}%
    \special{pa 2650 280}%
    \special{fp}%
    \special{pa 2650 280}%
    \special{pa 2955 790}%
    \special{fp}%
    \special{pa 2955 790}%
    \special{pa 3261 280}%
    \special{fp}%
    \special{pa 2650 790}%
    \special{pa 2650 280}%
    \special{fp}%
    \special{pa 3261 790}%
    \special{pa 3261 280}%
    \special{fp}%
    \special{pn 11}%
    \special{pa 3745 127}%
    \special{pa 3261 280}%
    \special{fp}%
    \special{pa 3261 280}%
    \special{pa 3745 433}%
    \special{fp}%
    \special{pa 2140 280}%
    \special{pa 2650 280}%
    \special{fp}%
    \special{pn 8}%
    \special{pa 2140 790}%
    \special{pa 2650 790}%
    \special{da 0.051}%
    \special{ar 2013 790 127 127 0 6.28319}%
    \special{ar 2013 280 127 127 0 6.28319}%
    \special{ar 3873 433 127 127 0 6.28319}%
    \special{ar 3873 127 127 127 0 6.28319}%
    \graphtemp=.5ex\advance\graphtemp by 0.892in
    \rlap{\kern 2.650in\lower\graphtemp\hbox to 0pt{\hss $u$\hss}}%
    \graphtemp=.5ex\advance\graphtemp by 0.892in
    \rlap{\kern 2.955in\lower\graphtemp\hbox to 0pt{\hss $z$\hss}}%
    \graphtemp=.5ex\advance\graphtemp by 0.892in
    \rlap{\kern 3.261in\lower\graphtemp\hbox to 0pt{\hss $u'$\hss}}%
    \graphtemp=.5ex\advance\graphtemp by 0.178in
    \rlap{\kern 2.650in\lower\graphtemp\hbox to 0pt{\hss $v$\hss}}%
    \graphtemp=.5ex\advance\graphtemp by 0.178in
    \rlap{\kern 2.955in\lower\graphtemp\hbox to 0pt{\hss $j$\hss}}%
    \graphtemp=.5ex\advance\graphtemp by 0.178in
    \rlap{\kern 3.261in\lower\graphtemp\hbox to 0pt{\hss $v'$\hss}}%
    \hbox{\vrule depth0.917in width0pt height 0pt}%
    \kern 4.000in
  }%
}%
}
\caption{Cases (1) and (2) for Lemma~\ref{triangle}\label{figtri}}
\end{figure}

\begin{lem}\label{3config}
The following configurations are $3$-reducible.

{\bf A}. A $2$-vertex or $3$-vertex having a $1$-neighbor.

{\bf B}. A $4^-$-vertex whose neighbors all have degree $2$.

{\bf C}. A $3$-vertex having two $2$-neighbors, one of which has a $2$-neighbor.

{\bf D}. A $4$-vertex having a $1$-neighbor and a $2^-$-neighbor.

{\bf E}. A $5^+$-vertex $v$ with 
$3p_1+2p_2\ge d_G(v)$, where $p_i$ is the number of $i$-neighbors of $v$.  
\end{lem}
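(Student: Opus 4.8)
The plan is to use the standard reducibility framework. I would let $G$ be an edge-minimal $3$-bad graph containing the configuration $C$ in question, set $G'=G-E(C)$, and fix a proper $3$-weighting $w'$ of $G'$ (peeling off the finitely many isolated-edge components of $G'$, each handled by the re-attachment device from the proof of Lemma~\ref{assume2}). In every case I would produce a proper $3$-weighting $w$ of $G$ by assigning weights only on the core edges, so that every edge of $G'$ not meeting the core automatically stays satisfied and only the core edges and the edges of $G'$ meeting the core need attention. By Lemma~\ref{assume2} I may take every $2^-$-vertex that is an endpoint of the core to be a $2$-vertex. Whenever two specified vertices coincide, either a triangle reducible by Lemma~\ref{triangle} appears or a copy of an earlier configuration (A)--(D) appears (which a minimal counterexample cannot contain), so I may assume all specified vertices are distinct; these cases are routine but must be enumerated.

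For (A), take core $\{vz\}$ with $z$ the $1$-neighbor: then $\phi_w(v)\ge w(vz)+(d_G(v)-1)>w(vz)=\phi_w(z)$, so $vz$ is automatically satisfied, and by Lemma~\ref{choose}(1) the weight $w(vz)$ can be chosen to satisfy the at most two other edges at $v$. For (B), take core $[v,N_G(v)]$ with neighbors $u_1,\ldots,u_d$ and outer neighbors $u_i'$; I would choose each $w(vu_i)\in\{2,3\}$ avoiding the one value leaving $u_iu_i'$ unsatisfied, which is always possible, and then for $d_G(v)\in\{3,4\}$ conclude $\phi_w(v)=\sum_k w(vu_k)\ge w(vu_i)+2(d_G(v)-1)>w(vu_i)+w'(u_iu_i')=\phi_w(u_i)$, so every $vu_i$ is satisfied (the cases $d_G(v)\le 2$ being immediate). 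For (C), with $u_1,u_2$ the $2$-neighbors, $x$ (degree $2$) the outer neighbor of $u_1$ and $y$ its further neighbor, $z$ the outer neighbor of $u_2$, and $u_3$ the third neighbor of $v$, take core $\{vu_1,vu_2,u_1x\}$ and choose in order $w(vu_1)$ (avoiding the $\le 2$ values leaving $u_1x$ or $vu_2$ unsatisfied), then $w(vu_2)$ (avoiding the $\le2$ values for $u_2z$ or $vu_3$), then $w(u_1x)$ (avoiding the $\le2$ values for $xy$ or $vu_1$); here $d_G(x)=2$ is precisely what bounds the last step by two forbidden values. For (D), with $z$ the $1$-neighbor, $u$ the $2$-neighbor and outer neighbor $u'$, and $a,b$ the remaining neighbors of $v$, take core $\{vz,vu\}$; then $vz$ is automatically satisfied as in (A), and I would pick $w(vu)\in S_u$, $w(vz)\in S_z$ with $S_u$ avoiding the value leaving $uu'$ unsatisfied and $S_z$ the value leaving $vu$ unsatisfied (so $|S_u|,|S_z|\ge2$); a short check on sums from $\{1,2,3\}$ shows one can also keep $w(vz)+w(vu)$ out of the $\le2$ values leaving $va$ or $vb$ unsatisfied.

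The substantive case is (E). Write $d=d_G(v)\ge5$; let $z_1,\ldots,z_{p_1}$ be the $1$-neighbors, $y_1,\ldots,y_{p_2}$ the $2$-neighbors with outer neighbors $y_i'$, and $x_1,\ldots,x_q$ the neighbors of degree $\ge3$, so $q=d-p_1-p_2$ and the hypothesis $3p_1+2p_2\ge d$ rearranges to $q\le 2p_1+p_2$. Take the core to be all edges from $v$ to its $1$- and $2$-neighbors, and write $\phi_w(v)=W+C$ where $W=\sum_i w(vz_i)+\sum_i w(vy_i)$ is the total core weight and $C=\sum_i w'(vx_i)\ge q$ is fixed. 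The key observation is that every edge at $v$ is satisfied no matter how the core weights are chosen: for a $1$-neighbor $z_i$ one has $\phi_w(v)\ge\phi_w(z_i)+(d-1)$; for a $2$-neighbor $y_i$ one has $\phi_w(v)-\phi_w(y_i)\ge(d-1)-w'(y_iy_i')\ge d-4\ge1$; and for $x_i$ the edge $vx_i$ merely asks $\phi_w(v)\ne\phi_{w'}(x_i)$, since $\phi_w(x_i)=\phi_{w'}(x_i)$ (no edge at $x_i$ lies in the core). Moreover the outer edge $y_iy_i'$ asks only $w(vy_i)\ne\phi_{w'}(y_i')-w'(y_iy_i')$, i.e.\ $w(vy_i)\in T_i$ with $|T_i|\ge2$, and the $1$-neighbors ask nothing. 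So it remains to choose $w(vz_i)\in\{1,2,3\}$ and $w(vy_i)\in T_i$ so that $W$ avoids the set $B=\{\phi_{w'}(x_i)-C:\ 1\le i\le q\}$, with $|B|\le q$. For this I would note that the set of attainable values of $W$ is formed by starting from the sumset of the $p_1$ free variables (an interval of $2p_1+1$ integers if $p_1\ge1$, or $\{0\}$ if $p_1=0$) and adjoining the $p_2$ two-element sets $T_i$ one at a time, each adjunction strictly increasing the cardinality because the new minimum is attained uniquely. Thus there are at least $2p_1+p_2+1$ attainable values of $W$ when $p_1\ge1$, which already exceeds $q+1>|B|$; and at least $p_2+1$ when $p_1=0$, in which case the hypothesis reads $2p_2\ge d=p_2+q$, so $q\le p_2$ and $p_2+1>q\ge|B|$. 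Either way some attainable $W$ avoids $B$; realizing it completes the proof.

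The hard part will be the bookkeeping of degeneracies rather than the core arguments. In particular, a triangle on $v$ through two of its small neighbors lies just outside the scope of Lemma~\ref{triangle} once $d_G(v)\ge5$, and I would handle it by imposing the extra constraint $w(vy_i)\ne w(vy_j)$ on the two relevant core edges (a small loss in the count of attainable $W$, still affordable given $q\le 2p_1+p_2$); coincidences among outer neighbors, and isolated edges in $G'$, must similarly be routed back to (A)--(D), Lemma~\ref{triangle}, or a finite explicit check. Apart from this, (E) is the only place where the counting is at all delicate, and there the inequality $3p_1+2p_2\ge d_G(v)$ is used exactly to make the number of colors attainable at $v$ outrun the number of high-degree neighbors whose colors must be avoided.
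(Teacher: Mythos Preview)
Your proposal is correct and follows essentially the same approach as the paper: the same cores, the same automatic-satisfaction observations for edges from $v$ to its $1$- and $2$-neighbors when $d_G(v)\ge5$, and the same sumset count $2p_1+p_2+1$ versus $q=d-p_1-p_2$ in Case~\textbf{E}, including the adjacent-$2$-neighbor degeneracy handled by the constraint $w(vy_i)\ne w(vy_j)$. The only cosmetic differences are that in~\textbf{B} you delete all of $\Gamma_G(v)$ whereas the paper nominally deletes two edges (but then re-chooses all weights at $v$ anyway), and your $d_G(v)=2$ subcase of~\textbf{B}, while easy, needs a line of argument rather than being ``immediate.''
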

\begin{proof}
Let $v$ be such a vertex in a graph $G$.  Let $U_i$ be the set of $i$-neighbors
of $v$.  Form the derived graph $G'$ as specified in Definition~\ref{basic}
(deleting the bold core), except that in addition any resulting isolated edges
are also deleted.  We show that a proper $3$-weighting $w'$ of $G'$ can
be used to obtain a proper $3$-weighting $w$ of $G$.

{\bf Case A:} {\it $d_G(v)\le3$ and $v$ has a $1$-neighbor $u$.}
As in Lemma~\ref{choose}, we can choose $w(uv)$ to satisfy the other edges at
$v$.  With $d_G(v)\ge2$, the edge $uv$ is automatically satisfied.

By Case {\bf A}, deleting the core in Cases {\bf B,C,D} leaves no isolated
edges.

\begin{figure}[h]
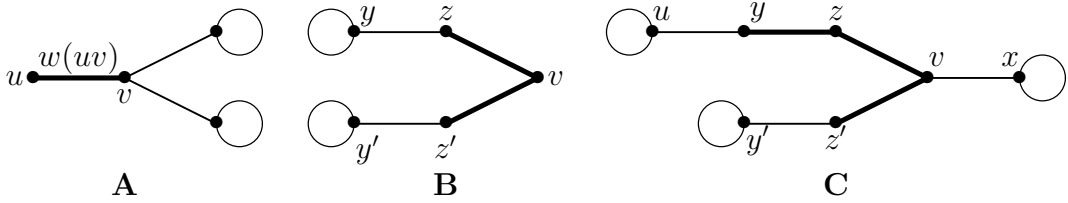

\gpic{
\expandafter\ifx\csname graph\endcsname\relax \csname newbox\endcsname\graph\fi
\expandafter\ifx\csname graphtemp\endcsname\relax \csname newdimen\endcsname\graphtemp\fi
\setbox\graph=\vtop{\vskip 0pt\hbox{%
    \graphtemp=.5ex\advance\graphtemp by 0.360in
    \rlap{\kern 0.096in\lower\graphtemp\hbox to 0pt{\hss $\bu$\hss}}%
    \graphtemp=.5ex\advance\graphtemp by 0.360in
    \rlap{\kern 0.576in\lower\graphtemp\hbox to 0pt{\hss $\bu$\hss}}%
    \graphtemp=.5ex\advance\graphtemp by 0.120in
    \rlap{\kern 1.057in\lower\graphtemp\hbox to 0pt{\hss $\bu$\hss}}%
    \graphtemp=.5ex\advance\graphtemp by 0.600in
    \rlap{\kern 1.057in\lower\graphtemp\hbox to 0pt{\hss $\bu$\hss}}%
    \special{pn 28}%
    \special{pa 96 360}%
    \special{pa 576 360}%
    \special{fp}%
    \special{pn 11}%
    \special{pa 1057 120}%
    \special{pa 576 360}%
    \special{fp}%
    \special{pa 576 360}%
    \special{pa 1057 600}%
    \special{fp}%
    \graphtemp=.5ex\advance\graphtemp by 0.360in
    \rlap{\kern 0.000in\lower\graphtemp\hbox to 0pt{\hss $u$\hss}}%
    \graphtemp=.5ex\advance\graphtemp by 0.456in
    \rlap{\kern 0.576in\lower\graphtemp\hbox to 0pt{\hss $v$\hss}}%
    \graphtemp=.5ex\advance\graphtemp by 0.264in
    \rlap{\kern 0.336in\lower\graphtemp\hbox to 0pt{\hss $w(uv)$\hss}}%
    \special{pn 8}%
    \special{ar 1177 120 120 120 0 6.28319}%
    \special{ar 1177 600 120 120 0 6.28319}%
    \graphtemp=.5ex\advance\graphtemp by 0.600in
    \rlap{\kern 1.777in\lower\graphtemp\hbox to 0pt{\hss $\bu$\hss}}%
    \graphtemp=.5ex\advance\graphtemp by 0.600in
    \rlap{\kern 2.258in\lower\graphtemp\hbox to 0pt{\hss $\bu$\hss}}%
    \graphtemp=.5ex\advance\graphtemp by 0.360in
    \rlap{\kern 2.738in\lower\graphtemp\hbox to 0pt{\hss $\bu$\hss}}%
    \graphtemp=.5ex\advance\graphtemp by 0.120in
    \rlap{\kern 2.258in\lower\graphtemp\hbox to 0pt{\hss $\bu$\hss}}%
    \graphtemp=.5ex\advance\graphtemp by 0.120in
    \rlap{\kern 1.777in\lower\graphtemp\hbox to 0pt{\hss $\bu$\hss}}%
    \special{ar 1657 600 120 120 0 6.28319}%
    \special{ar 1657 120 120 120 0 6.28319}%
    \special{pn 28}%
    \special{pa 2258 600}%
    \special{pa 2738 360}%
    \special{fp}%
    \special{pa 2738 360}%
    \special{pa 2258 120}%
    \special{fp}%
    \special{pn 11}%
    \special{pa 1777 600}%
    \special{pa 2258 600}%
    \special{fp}%
    \special{pa 2258 120}%
    \special{pa 1777 120}%
    \special{fp}%
    \graphtemp=.5ex\advance\graphtemp by 0.024in
    \rlap{\kern 1.849in\lower\graphtemp\hbox to 0pt{\hss $y$\hss}}%
    \graphtemp=.5ex\advance\graphtemp by 0.745in
    \rlap{\kern 1.849in\lower\graphtemp\hbox to 0pt{\hss $y'$\hss}}%
    \graphtemp=.5ex\advance\graphtemp by 0.024in
    \rlap{\kern 2.258in\lower\graphtemp\hbox to 0pt{\hss $z$\hss}}%
    \graphtemp=.5ex\advance\graphtemp by 0.360in
    \rlap{\kern 2.834in\lower\graphtemp\hbox to 0pt{\hss $v$\hss}}%
    \graphtemp=.5ex\advance\graphtemp by 0.745in
    \rlap{\kern 2.258in\lower\graphtemp\hbox to 0pt{\hss $z'$\hss}}%
    \graphtemp=.5ex\advance\graphtemp by 0.600in
    \rlap{\kern 3.819in\lower\graphtemp\hbox to 0pt{\hss $\bu$\hss}}%
    \graphtemp=.5ex\advance\graphtemp by 0.120in
    \rlap{\kern 3.819in\lower\graphtemp\hbox to 0pt{\hss $\bu$\hss}}%
    \graphtemp=.5ex\advance\graphtemp by 0.600in
    \rlap{\kern 4.299in\lower\graphtemp\hbox to 0pt{\hss $\bu$\hss}}%
    \graphtemp=.5ex\advance\graphtemp by 0.120in
    \rlap{\kern 4.299in\lower\graphtemp\hbox to 0pt{\hss $\bu$\hss}}%
    \graphtemp=.5ex\advance\graphtemp by 0.360in
    \rlap{\kern 4.779in\lower\graphtemp\hbox to 0pt{\hss $\bu$\hss}}%
    \graphtemp=.5ex\advance\graphtemp by 0.360in
    \rlap{\kern 5.260in\lower\graphtemp\hbox to 0pt{\hss $\bu$\hss}}%
    \graphtemp=.5ex\advance\graphtemp by 0.120in
    \rlap{\kern 3.338in\lower\graphtemp\hbox to 0pt{\hss $\bu$\hss}}%
    \special{pa 3338 120}%
    \special{pa 3819 120}%
    \special{fp}%
    \special{pa 3819 600}%
    \special{pa 4299 600}%
    \special{fp}%
    \special{pa 4779 360}%
    \special{pa 5260 360}%
    \special{fp}%
    \special{pn 28}%
    \special{pa 3819 120}%
    \special{pa 4299 120}%
    \special{fp}%
    \special{pa 4299 120}%
    \special{pa 4779 360}%
    \special{fp}%
    \special{pa 4779 360}%
    \special{pa 4299 600}%
    \special{fp}%
    \special{pn 8}%
    \special{ar 3699 600 120 120 0 6.28319}%
    \special{ar 5380 360 120 120 0 6.28319}%
    \special{ar 3218 120 120 120 0 6.28319}%
    \graphtemp=.5ex\advance\graphtemp by 0.697in
    \rlap{\kern 3.891in\lower\graphtemp\hbox to 0pt{\hss $y'$\hss}}%
    \graphtemp=.5ex\advance\graphtemp by 0.697in
    \rlap{\kern 4.299in\lower\graphtemp\hbox to 0pt{\hss $z'$\hss}}%
    \graphtemp=.5ex\advance\graphtemp by 0.024in
    \rlap{\kern 4.299in\lower\graphtemp\hbox to 0pt{\hss $z$\hss}}%
    \graphtemp=.5ex\advance\graphtemp by 0.264in
    \rlap{\kern 4.828in\lower\graphtemp\hbox to 0pt{\hss $v$\hss}}%
    \graphtemp=.5ex\advance\graphtemp by 0.024in
    \rlap{\kern 3.386in\lower\graphtemp\hbox to 0pt{\hss $u$\hss}}%
    \graphtemp=.5ex\advance\graphtemp by 0.264in
    \rlap{\kern 5.212in\lower\graphtemp\hbox to 0pt{\hss $x$\hss}}%
    \graphtemp=.5ex\advance\graphtemp by 0.000in
    \rlap{\kern 3.891in\lower\graphtemp\hbox to 0pt{\hss $y$\hss}}%
    \graphtemp=.5ex\advance\graphtemp by 0.937in
    \rlap{\kern 0.576in\lower\graphtemp\hbox to 0pt{\hss {\bf A}\hss}}%
    \graphtemp=.5ex\advance\graphtemp by 0.937in
    \rlap{\kern 2.258in\lower\graphtemp\hbox to 0pt{\hss {\bf B}\hss}}%
    \graphtemp=.5ex\advance\graphtemp by 0.937in
    \rlap{\kern 4.299in\lower\graphtemp\hbox to 0pt{\hss {\bf C}\hss}}%
    \hbox{\vrule depth0.937in width0pt height 0pt}%
    \kern 5.500in
  }%
}%
}
\vspace{-.5pc}
\caption{Cases {\bf A, B, C} for Lemma~\ref{3config}\label{3fig52}}

\end{figure}
\vspace{-.5pc}

{\bf Case B:} {\it $d_G(v)\le 4$ and $U_2=N_G(v)$}. 
Let $z$ and $z'$ be $2$-neighbors of $v$, with $N_G(z)=\{v,y'\}$ and
$N_G(z')=\{v,y\}$.  By Lemma~\ref{triangle}, $\{y,y'\}\cap\{z,z'\}=\nul$ (see
Figure~\ref{3fig52}B).  Let $G'=G-\{vz,vz'\}$.
If $d_G(v)=2$, then choose $w(vz)$ to satisfy $yz$ and $vz'$, and choose
$w(vz')$ to satisfy $y'z'$ and $vz$.  
If $d_G(v)\in\{3,4\}$, then for $z\in N_G(v)$ with $zy\in E(G')$, choose
$w(vz)\in\{2,3\}-\{\rp{y}{z}\}$ to satisfy $yz$.  Since $d(v)\ge3\ge w'(zy)$,
such choices on all of $\Gamma_G(v)$ also satisfy $zv$.

{\bf Case C:} {\it $d_G(v)=3$ and $U_2=\{z,z'\}$, with $z$ having a
$2$-neighbor $y$.}
By Lemma~\ref{triangle}, we may assume $y\ne z'$.  Let $G'=G-\{vz,vz',zy\}$,
leaving $vx,z'y',yu\in E(G')$ (see Figure~\ref{3fig52}C).  Choose $w(vz)$ to
satisfy $zy$ and $vz'$, then $w(vz')$ to satisfy $z'y'$ and $vx$, and finally
$w(zy)$ to satisfy $yu$ and $vz$.

{\bf Case D:} {\it $d_G(v)=4$ and $N_G(v)=\{u,z,x,x'\}$ with $d_G(u)=1$
and $d_G(z)\le2$.}  By Lemma~\ref{assume2}, we may assume $d_G(z)=2$.  Let
$G'=G-\{vu,vz\}$, leaving $zy\in E(G')$ (see Figure~\ref{3fig52DE}D, where $y$
may be in $N_G(v)$).  When choosing $w(vz)$ to satisfy $zy$ and choosing
$w(vu)$ to satisfy $vz$, each has at least two possible values.  Hence they can
be chosen with three possible values for $w(vz)+w(vu)$, yielding a choice that
also satisfies $vx$ and $vx'$.

\begin{figure}[h]
\gpic{
\expandafter\ifx\csname graph\endcsname\relax \csname newbox\endcsname\graph\fi
\expandafter\ifx\csname graphtemp\endcsname\relax \csname newdimen\endcsname\graphtemp\fi
\setbox\graph=\vtop{\vskip 0pt\hbox{%
    \graphtemp=.5ex\advance\graphtemp by 0.661in
    \rlap{\kern 0.264in\lower\graphtemp\hbox to 0pt{\hss $\bu$\hss}}%
    \graphtemp=.5ex\advance\graphtemp by 0.661in
    \rlap{\kern 0.793in\lower\graphtemp\hbox to 0pt{\hss $\bu$\hss}}%
    \graphtemp=.5ex\advance\graphtemp by 0.132in
    \rlap{\kern 0.793in\lower\graphtemp\hbox to 0pt{\hss $\bu$\hss}}%
    \graphtemp=.5ex\advance\graphtemp by 0.397in
    \rlap{\kern 1.322in\lower\graphtemp\hbox to 0pt{\hss $\bu$\hss}}%
    \graphtemp=.5ex\advance\graphtemp by 0.661in
    \rlap{\kern 1.851in\lower\graphtemp\hbox to 0pt{\hss $\bu$\hss}}%
    \graphtemp=.5ex\advance\graphtemp by 0.132in
    \rlap{\kern 1.851in\lower\graphtemp\hbox to 0pt{\hss $\bu$\hss}}%
    \special{pn 11}%
    \special{pa 1851 661}%
    \special{pa 1322 397}%
    \special{fp}%
    \special{pa 1322 397}%
    \special{pa 1851 132}%
    \special{fp}%
    \special{pa 264 661}%
    \special{pa 793 661}%
    \special{fp}%
    \special{pn 28}%
    \special{pa 793 661}%
    \special{pa 1322 397}%
    \special{fp}%
    \special{pa 1322 397}%
    \special{pa 793 132}%
    \special{fp}%
    \special{pn 8}%
    \special{ar 1983 661 132 132 0 6.28319}%
    \special{ar 1983 132 132 132 0 6.28319}%
    \special{ar 132 661 132 132 0 6.28319}%
    \graphtemp=.5ex\advance\graphtemp by 0.736in
    \rlap{\kern 0.339in\lower\graphtemp\hbox to 0pt{\hss $y$\hss}}%
    \graphtemp=.5ex\advance\graphtemp by 0.767in
    \rlap{\kern 0.793in\lower\graphtemp\hbox to 0pt{\hss $z$\hss}}%
    \graphtemp=.5ex\advance\graphtemp by 0.026in
    \rlap{\kern 0.793in\lower\graphtemp\hbox to 0pt{\hss $u$\hss}}%
    \graphtemp=.5ex\advance\graphtemp by 0.291in
    \rlap{\kern 1.375in\lower\graphtemp\hbox to 0pt{\hss $v$\hss}}%
    \graphtemp=.5ex\advance\graphtemp by 0.736in
    \rlap{\kern 1.776in\lower\graphtemp\hbox to 0pt{\hss $x$\hss}}%
    \graphtemp=.5ex\advance\graphtemp by 0.057in
    \rlap{\kern 1.776in\lower\graphtemp\hbox to 0pt{\hss $x'$\hss}}%
    \graphtemp=.5ex\advance\graphtemp by 1.084in
    \rlap{\kern 1.322in\lower\graphtemp\hbox to 0pt{\hss {\bf D}\hss}}%
    \graphtemp=.5ex\advance\graphtemp by 1.084in
    \rlap{\kern 3.172in\lower\graphtemp\hbox to 0pt{\hss {\bf E}\hss}}%
    \graphtemp=.5ex\advance\graphtemp by 0.397in
    \rlap{\kern 3.701in\lower\graphtemp\hbox to 0pt{\hss $\bu$\hss}}%
    \graphtemp=.5ex\advance\graphtemp by 0.397in
    \rlap{\kern 3.172in\lower\graphtemp\hbox to 0pt{\hss $\bu$\hss}}%
    \graphtemp=.5ex\advance\graphtemp by 0.397in
    \rlap{\kern 2.644in\lower\graphtemp\hbox to 0pt{\hss $\bu$\hss}}%
    \graphtemp=.5ex\advance\graphtemp by 0.925in
    \rlap{\kern 3.701in\lower\graphtemp\hbox to 0pt{\hss $\bu$\hss}}%
    \graphtemp=.5ex\advance\graphtemp by 0.397in
    \rlap{\kern 4.230in\lower\graphtemp\hbox to 0pt{\hss $\bu$\hss}}%
    \graphtemp=.5ex\advance\graphtemp by 0.291in
    \rlap{\kern 3.701in\lower\graphtemp\hbox to 0pt{\hss $v$\hss}}%
    \graphtemp=.5ex\advance\graphtemp by 0.291in
    \rlap{\kern 3.172in\lower\graphtemp\hbox to 0pt{\hss $z$\hss}}%
    \graphtemp=.5ex\advance\graphtemp by 0.291in
    \rlap{\kern 2.644in\lower\graphtemp\hbox to 0pt{\hss $y$\hss}}%
    \special{pn 11}%
    \special{ar 2644 397 106 330 0 6.28319}%
    \special{ar 3172 397 106 330 0 6.28319}%
    \special{ar 3701 925 330 106 0 6.28319}%
    \special{ar 4230 397 106 330 0 6.28319}%
    \graphtemp=.5ex\advance\graphtemp by 0.859in
    \rlap{\kern 3.172in\lower\graphtemp\hbox to 0pt{\hss $U_2$\hss}}%
    \graphtemp=.5ex\advance\graphtemp by 0.925in
    \rlap{\kern 4.137in\lower\graphtemp\hbox to 0pt{\hss $U_1$\hss}}%
    \graphtemp=.5ex\advance\graphtemp by 0.397in
    \rlap{\kern 4.600in\lower\graphtemp\hbox to 0pt{\hss $N_{G'}(v)$\hss}}%
    \graphtemp=.5ex\advance\graphtemp by 0.925in
    \rlap{\kern 3.807in\lower\graphtemp\hbox to 0pt{\hss $u$\hss}}%
    \graphtemp=.5ex\advance\graphtemp by 0.291in
    \rlap{\kern 4.230in\lower\graphtemp\hbox to 0pt{\hss $x$\hss}}%
    \special{pa 3172 66}%
    \special{pa 2644 66}%
    \special{fp}%
    \special{pa 3172 397}%
    \special{pa 2644 397}%
    \special{fp}%
    \special{pa 3172 727}%
    \special{pa 2644 727}%
    \special{fp}%
    \special{pn 28}%
    \special{pa 3701 397}%
    \special{pa 4032 925}%
    \special{fp}%
    \special{pa 3701 397}%
    \special{pa 3701 925}%
    \special{fp}%
    \special{pa 3701 397}%
    \special{pa 3371 925}%
    \special{fp}%
    \special{pa 3701 397}%
    \special{pa 3172 727}%
    \special{fp}%
    \special{pa 3701 397}%
    \special{pa 3172 66}%
    \special{fp}%
    \special{pa 3701 397}%
    \special{pa 3172 397}%
    \special{fp}%
    \special{pn 11}%
    \special{pa 3701 397}%
    \special{pa 4230 66}%
    \special{fp}%
    \special{pa 3701 397}%
    \special{pa 4230 397}%
    \special{fp}%
    \special{pa 3701 397}%
    \special{pa 4230 727}%
    \special{fp}%
    \hbox{\vrule depth1.084in width0pt height 0pt}%
    \kern 4.600in
  }%
}%
}
\vspace{-.5pc}
\caption{Cases {\bf D} and {\bf E} for Lemma~\ref{3config}\label{3fig52DE}}
\end{figure}
\vspace{-.5pc}


{\bf Case E:} {\it $d_G(v)\ge5$ and $3p_1+2p_2\ge d_G(v)$.}
For $z\in U_2$, let $y$ be the neighbor of $z$ in $G'$.  To satisfy $yz$ when
$y\notin U_2$ (see Figure~\ref{3fig52DE}E), we need $w(vz)\ne\rp yz$; there are
at least two such choices for $w(vz)$.  (If $y\in U_2$, then $yz$ is deleted in
$G'$; let $w(yz)=1$.  Now $w(vy)\ne w(vz)$ is needed to satisfy $yz$, leaving
three choices for $w(vy)+v(vz)$.)

Edges to $U_1$ are automatically satisfied, since $d_G(v)\ge2$.
For $z\in U_2$, the edge $zv$ will be satisfied, since $d_G(v)\ge5$ yields
$\rw vz\ge 4>3\ge w(zy)$.

It remains to satisfy $\Gamma_{G'}(v)$.  
Let $\sigma=\sum_{e\in E(G)-E(G')} w(e)$.  We
need $\sigma\ne\pwp(x)-\pwp(v)$ when $x\in N_{G'}(v)$, so $\sigma$ must avoid
$d_G(v)-p_1-p_2$ values.  It suffices to show that there are $1+2p_1+p_2$
choices for $\sigma$, since we are given $2p_1+p_2\ge d_G(v)-p_1-p_2$.

Weights on edges from $v$ to $U_1$ have three choices.  Those to $U_2$ have at
least two choices, except that two such edges incident to neighboring
$2$-vertices instead have three choices for the sum of their two weights.
Starting with the smallest choices, we can make $2p_1+p_2$ augmentations to the
sum, always using choices that satisfy the constraints discussed earlier.
Hence there are enough choices for $\sigma$ to satisfy the final constraints.
\end{proof}


We now present a discharging argument to obtain an unavoidable set of
configurations.

\begin{lem}\label{52struct}
If a graph $G$ without isolated edges has average degree less than $5/2$,
then $G$ contains one of the following configurations.

{\bf A}. A $2$-vertex or $3$-vertex having a $1$-neighbor.

{\bf B}. A $4^-$-vertex whose neighbors all have degree $2$.

{\bf C}. A $3$-vertex having two $2$-neighbors, one of which has a $2$-neighbor.

{\bf D}. A $4$-vertex having a $1$-neighbor and a $2^-$-neighbor.

{\bf E}. A $5^+$-vertex $v$ with 
$3p_1+p_2\ge 2d_G(v)-4$, where $p_i$ is the number of $i$-neighbors of $v$.  

\end{lem}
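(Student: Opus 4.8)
The plan is to use the Discharging Method. I assign to each vertex $v$ an initial charge equal to $d_G(v)$; the hypothesis $\Mad(G)<5/2$ (applied to $G$ itself, which has average degree less than $5/2$) gives total charge less than $\frac52 |V(G)|$. I then redistribute charge by a discharging rule and show that if none of configurations {\bf A}--{\bf E} occurs, every vertex ends with charge at least $5/2$, a contradiction. The natural rule here is: every $3^+$-vertex gives charge $\frac14$ to each of its $1$-neighbors and charge $\frac14$ to each of its $2$-neighbors. (This choice is forced by the arithmetic: a $2$-vertex has deficiency $\frac12$ and has two neighbors, while a $1$-vertex has deficiency $\frac32$ and only one neighbor; sending $\frac14$ across each incident edge repairs a $2$-vertex exactly, and a $1$-vertex will need more, which is where configuration {\bf A} and {\bf D} enter.)

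The main steps, in order. First, \emph{low-degree vertices}: a $1$-vertex $v$ has a neighbor $u$; since {\bf A} and {\bf D} are excluded, $u$ is not a $2$- or $3$-vertex and not a $4$-vertex with a $2^-$-neighbor — so $d_G(u)\ge4$, and in fact $u$ must send $v$ enough charge. Here I would \emph{refine} the rule: a $4$-vertex with a $1$-neighbor but no other $2^-$-neighbor (allowed, since {\bf D} forbids a $1$-neighbor together with a $2^-$-neighbor) can afford to give its $1$-neighbor $d_G(u)-\frac52 = \frac32$; more generally a high-degree vertex should give each $1$-neighbor $\frac32$ and each $2$-neighbor $\frac14$. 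With that rule a $1$-vertex ends at $1 + \frac32 = \frac52$ and a $2$-vertex ends at $2 + 2\cdot\frac14 = \frac52$, \emph{provided} both of a $2$-vertex's neighbors are $3^+$-vertices — which is exactly the content of excluding configuration {\bf B} (a $2$-vertex with a $2$-neighbor would lie in a {\bf B} or {\bf C} configuration; one checks {\bf B} and {\bf C} together kill the bad cases, using that a $3$-vertex with two $2$-neighbors one of which has a $2$-neighbor is forbidden). Second, \emph{$3$-vertices}: a $3$-vertex $v$ sends nothing (it only receives if it had a $1$-neighbor, but that is configuration {\bf A}), so it keeps charge $3 > \frac52$; I must check it is never asked to \emph{give} charge, i.e.\ a $3$-vertex has no $1$- or $2$-neighbor — precisely configurations {\bf A} and {\bf C}/{\bf B} being excluded. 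Third, \emph{$4$-vertices}: such a $v$ gives $\frac32$ to each $1$-neighbor and $\frac14$ to each $2$-neighbor; excluding {\bf D} means $v$ has at most one $2^-$-neighbor, so either it gives at most $\frac14$ total (ending $\ge\frac{15}{4}$) or it gives exactly $\frac32$ to one $1$-neighbor and nothing else (ending $\ge\frac52$). Fourth, \emph{$5^+$-vertices}: a $d$-vertex $v$ with $p_1$ neighbors of degree $1$ and $p_2$ of degree $2$ gives out $\frac32 p_1 + \frac14 p_2$, ending with $d - \frac32 p_1 - \frac14 p_2$; requiring this to be $\ge\frac52$ is exactly $6p_1 + p_2 \le 4d-10$, i.e.\ $\frac32 p_1$... here I would reconcile the coefficients with the stated bound $3p_1 + p_2 \ge 2d_G(v)-4$: configuration {\bf E} is designed so that its \emph{negation} gives $3p_1 + p_2 < 2d-4$, which after clearing denominators is what guarantees the final charge is at least $\frac52$ under the correctly-tuned rule (I would set the rule's transfer amounts so that the threshold works out to this inequality — likely $\frac32$ to $1$-neighbors and $\frac12$ to $2$-neighbors, giving $d - \frac32 p_1 - \frac12 p_2 \ge \frac52 \iff 3p_1 + p_2 \le 2d-5$, close enough that rounding and the strict inequality $\Mad<5/2$ close the gap).

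I expect the bookkeeping in the last two steps to be the main obstacle: pinning down exactly which transfer amounts ($\frac14$ versus $\frac12$ to $2$-neighbors, and how much to $1$-neighbors) make every case come out at $\ge\frac52$ \emph{simultaneously} across $4$-vertices and $5^+$-vertices while producing the specific inequality $3p_1 + p_2 \ge 2d_G(v)-4$ in configuration {\bf E}. This is a calibration exercise: I would first write down, for a $d$-vertex, the inequality ``(initial charge) $-$ (charge sent) $\ge \frac52$'' with unknown transfer rates $a$ (to $1$-neighbors) and $b$ (to $2$-neighbors), then choose $a,b$ so that (i) a $1$-vertex reaches $\frac52$ using only the charge from its unique $3^+$-neighbor — forcing $a \ge \frac32$ — (ii) a $2$-vertex with two $3^+$-neighbors reaches $\frac52$ — forcing $2b \ge \frac12$, i.e.\ $b\ge\frac14$ — and (iii) the $5^+$-vertex inequality matches {\bf E}. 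The remaining work is routine verification that configurations {\bf A}--{\bf D} remove exactly the problematic low-degree adjacencies (every $2$-vertex has both neighbors of degree $\ge3$; every $1$-vertex has its neighbor of degree $\ge4$ with no competing $2^-$-neighbor; $3$-vertices are never donors), which follows by inspecting the definitions, and that $3$-vertices and $4$-vertices clear $\frac52$, which is immediate arithmetic once the rule is fixed.
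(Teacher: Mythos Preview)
Your overall approach---discharging with initial charge $d_G(v)$ and target $\frac52$---is the paper's approach, and your eventual transfer rates ($\frac32$ to each $1$-neighbor, $\frac12$ to each $2$-neighbor) are exactly right for $4^+$-vertices. But several of your claims about what configurations {\bf A}--{\bf E} exclude are false, and this creates a genuine gap.

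You assert that, absent {\bf A}--{\bf D}, a $3$-vertex has no $2$-neighbor, a $2$-vertex has both neighbors of degree $\ge 3$, and a $4$-vertex has at most one $2^-$-neighbor. None of these hold. Configuration {\bf B} forbids only that \emph{all} neighbors have degree $2$; configuration {\bf C} forbids only a $3$-vertex with two $2$-neighbors \emph{one of which has a $2$-neighbor}; configuration {\bf D} forbids a $4$-vertex with a $1$-neighbor \emph{and} another $2^-$-neighbor. So a $3$-vertex may have one or two $2$-neighbors, a $2$-vertex may have one $2$-neighbor, and a $4$-vertex (with no $1$-neighbor) may have three $2$-neighbors. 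With your rule as stated, a $2$-vertex whose neighbors are both $3$-vertices receives nothing and ends at $2$; the proof collapses.

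The missing idea is a separate rule for $3$-vertices: each $3$-vertex with $2$-neighbors gives a \emph{total} of $\frac12$ to them, split equally ($\frac12$ if one, $\frac14$ each if two). A $3$-vertex then ends with exactly $\frac52$. A $2$-vertex $v$ has (by $\overline{\bf A}$, $\overline{\bf B}$) at least one $3^+$-neighbor $w$ and receives $\frac12$ or $\frac14$ from it; if only $\frac14$, then $w$ is a $3$-vertex with two $2$-neighbors, so by $\overline{\bf C}$ the $2$-neighbor $v$ has no $2$-neighbor, hence $v$'s other neighbor is a $3^+$-vertex contributing at least $\frac14$ more. This is precisely where configuration {\bf C} is used, and your sketch never engages it. For $4$-vertices, three $2$-neighbors cost $3\cdot\frac12=\frac32$, leaving exactly $\frac52$; one $1$-neighbor (and then, by $\overline{\bf D}$, no other $2^-$-neighbor) also costs $\frac32$. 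For $5^+$-vertices your computation $d-\frac32 p_1-\frac12 p_2\ge\frac52\iff 3p_1+p_2\le 2d-5$ is correct and matches $\overline{\bf E}$ exactly (integers), no ``rounding'' needed.
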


\begin{proof}
We prove that a graph $G$ containing none of {\bf A-E} has average degree at
least $5/2$.  Give every vertex $v$ in $G$ initial charge $d_G(v)$.  Move
charge via the following rules:

\medskip
{\narrower

\noindent
(1) Each $4^+$-vertex gives $\FR32$ to each $1$-neighbor and
$\FR12$ to each $2$-neighbor.

\smallskip

\noindent
(2) Each $3$-vertex with a $2$-neighbor gives total $\frac{1}{2}$ to
its $2$-neighbors, split equally if it has two $2$-neighbors.

}
\medskip

\noindent
Let $\mu(v)$ denote the resulting charge at $v$; it suffices to check that
$\mu(v)\ge\FR52$ for all $v$.  For ${\bf Z}\in\{{\bf A,B,C,D,E}\}$, let
$\bZ$ mean ``configuration {\bf Z} does not occur in $G$''.  

Case $d(v)=1$: By $\bA$, the neighbor of a $1$-vertex $v$ has degree at least
$4$, so $\mu(v)=\FR52$.

Case $d(v)=2$: By $\bA$ and $\bB$, $v$ has a $3^+$-neighbor and receives from
it $\FR14$ or $\FR12$.  If only $\FR14$, then $\bC$ implies that $v$ also
receives at least $\FR14$ from its other neighbor, so $\mu(v)\ge\FR52$.

Case $d(v)=3$: By $\bA$ and $\bB$, $v$ has no $1$-neighbor and at most two
$2$-neighbors.  Hence $v$ gives away $0$ or $\FR12$, and $\mu(v)\ge\FR52$.

Case $d(v)=4$: By $\bD$ and $\bB$, $v$ has at most one $1$-neighbor, has a
$2$-neighbor only if it has no $1$-neighbor, and has at most three
$2$-neighbors.  It loses at most $\FR32$, and $\mu(v)\ge\FR52$. 
 
Case $d(v)\ge5$: $v$ gives $\FR32$ to each $1$-neighbor and $\FR12$ to each
$2$-neighbor.  By $\bE$,
$\mu(v)=d_G(v)-\FR12(3p_1+p_2)\ge d_G(v)-\FR12(2d_G(v)-5)=\FR52$.
\end{proof}

\begin{thm}{\label{aved}}
If $G$ has no isolated edge, and $\Mad(G)<\frac{5}{2}$, then $G$ has a proper
$3$-weighting.
\end{thm}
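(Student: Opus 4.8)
The plan is a standard minimal-counterexample argument that simply glues together the reducibility results of Lemma~\ref{3config} and the structural result of Lemma~\ref{52struct}. Suppose the theorem fails, and among all counterexamples let $G$ be one with the fewest edges; so $G$ has no isolated edge, $\Mad(G)<\FR52$, and $G$ is $3$-bad. First I would note that $G$ is in fact a \emph{minimal} $3$-bad graph: any $3$-bad proper subgraph $H$ of $G$ has no isolated edge, satisfies $\Mad(H)\le\Mad(G)<\FR52$, and has $|E(H)|<|E(G)|$, so it would be a counterexample with fewer edges, contradicting the choice of $G$. Consequently, by the meaning of $3$-reducibility, $G$ contains none of the $3$-reducible configurations of Lemma~\ref{3config}.

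Next I would invoke Lemma~\ref{52struct}. Since $\Mad(G)<\FR52$ forces the average degree of $G$ itself to be less than $\FR52$, that lemma says $G$ contains one of the configurations {\bf A}--{\bf E} listed there. Configurations {\bf A}, {\bf B}, {\bf C}, {\bf D} are word-for-word the $3$-reducible configurations of Lemma~\ref{3config}, so their presence contradicts the previous paragraph at once. For configuration {\bf E} I would check that its structural hypothesis $3p_1+p_2\ge 2d_G(v)-4$ on a $5^+$-vertex $v$ implies the reducibility hypothesis $3p_1+2p_2\ge d_G(v)$ of Case~{\bf E} of Lemma~\ref{3config}: writing $s=p_1+p_2$, the bound gives $3s\ge 3p_1+p_2\ge 2d_G(v)-4\ge 6$, hence $s\ge 2$, and then $3p_1+2p_2=p_1+2s\ge\bigl(d_G(v)-2-\FR{s}{2}\bigr)+2s=d_G(v)-2+\FR{3s}{2}\ge d_G(v)+1$. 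Thus configuration {\bf E} of Lemma~\ref{52struct} is always an instance of configuration {\bf E} of Lemma~\ref{3config}, and in every case $G$ contains a $3$-reducible configuration — contradicting that $G$ is minimal $3$-bad. Hence no counterexample exists, which proves the theorem.

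The only delicate point — the "main obstacle", such as it is — is the interface between the two lemmas rather than any new idea. One must be sure that the derived graph produced inside each reducibility proof genuinely has no isolated edge (which is exactly why Lemma~\ref{3config} deletes stray isolated edges by hand and disposes of Case~{\bf A} before Cases~{\bf B}--{\bf D}, so that the latter cannot create any), and that the harmless mismatch between the two forms of configuration~{\bf E}, verified in the computation above, does not break the chain of implications. With those bookkeeping checks made, Theorem~\ref{aved} is immediate from Lemmas~\ref{52struct} and~\ref{3config} with essentially no further calculation.
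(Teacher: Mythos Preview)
Your proof is correct and follows essentially the same approach as the paper: take a minimal counterexample, note it avoids all configurations of Lemma~\ref{3config}, invoke Lemma~\ref{52struct}, and reconcile the two versions of~{\bf E}. Your verification for~{\bf E} is more elaborate than necessary (since $3p_1+2p_2\ge 3p_1+p_2\ge 2d_G(v)-4\ge d_G(v)$ directly when $d_G(v)\ge5$), but it is valid.
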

\begin{proof}
A minimal counterexample contains none of the configurations {\bf A-E} in
Lemma~\ref{3config}.  However, it has average degree less than $5/2$, and hence
it contains a configuration listed in Lemma~\ref{52struct}.  The lists are the
same except for {\bf E}.  Since a $5^+$-vertex $v$ satisfying $3p_1+p_2\ge
2d_G(v)-4$ also satisfies $3p_1+2p_2\ge d_G(v)$, every graph with $\Mad(G)<5/2$
contains a reducible configuration.
\end{proof}

For the $1,2$-Conjecture, we again begin with reducible configurations.
Isolated edges are now allowed, which eliminates some technicalities.
We will be able to use the unavoidable set obtained in Lemma~\ref{52struct},
but the list of $2$-reducible configurations is different.  The new technique
here is that in obtaining the proper total $2$-weighting of $G$ from such a
weighting of a subgraph $G'$, we may erase weights from some vertices and
recolor them. 

Configuration {\bf B} in the next lemma is more general than is needed for
the $1,2$-Conjecture when $\Mad(G)<5/2$, but we will need its full generality
in the proof for $\Mad(G)<8/3$.

\begin{lem}\label{2config}
A minimal $2$-bad graph contains none of the following configurations.

{\bf A}. A $3^-$-vertex having a $1$-neighbor.

{\bf B}. A $4^-$-vertex having two $2^-$-neighbors.

{\bf C}. A $5^+$-vertex $v$ whose number of $2^-$-neighbors is at least
$(d(v)-1)/2$. 
\end{lem}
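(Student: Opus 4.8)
The plan is the standard reducibility scheme. Let $G$ be a minimal $2$-bad graph containing one of the three configurations, with distinguished vertex $v$; let $C$ be the configuration, let $G'=G-E(C)$ with any resulting isolated edges and vertices also deleted, and let $w'$ be a proper total $2$-weighting of $G'$ (which exists since $G'$ is a proper subgraph of a minimal $2$-bad graph). In each case I will modify $w'$ into a proper total $2$-weighting $w$ of $G$, contradicting that $G$ is $2$-bad. I change only weights on items incident to the core, so edges of $G'$ not touching the core stay satisfied; the work is to choose the new weights on the core, and re-choose (using the erase-and-recolour freedom) the few vertices and edges incident to it, so that the edges in or incident to the core become satisfied. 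Three standing reductions are available. First, Lemma~\ref{assume2}, applied repeatedly, lets me assume every $2^-$-neighbour of $v$ named in the configuration has degree exactly $2$. Second, once part~{\bf A} is proved, I may further assume that the other neighbour $y$ of such a $2$-vertex $z$ has degree at least $2$, for otherwise $z$ is a $3^-$-vertex with a $1$-neighbour, an instance of~{\bf A}. Third, Lemma~\ref{triangle}(1) disposes of the case where a $4^-$-centre lies on a triangle through two of its $2$-neighbours. I also use Lemma~\ref{choose} and the observations of Remark~\ref{plan} (an edge from a $1$-vertex to a $2^+$-vertex is satisfied by weighting the $1$-vertex $1$; an edge from a $2$-vertex to a much larger-degree vertex is satisfied by imbalance) without further comment.

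For~{\bf A}, let $u$ be the $1$-neighbour of the $3^-$-vertex $v$. If $d_G(v)=1$ the component of $uv$ is $K_2$, but a minimal $2$-bad graph is connected (a disconnected one has a $2$-bad component) and is not $K_2$; so $d_G(v)\in\{2,3\}$. Delete $uv$; now $u$ is isolated in $G'$. Put $w(u)=1$, which satisfies $uv$ since $\phi_w(v)\ge w(uv)+2>\phi_w(u)$. Erase $w'(v)$ and, by Lemma~\ref{choose}(1) with $S=\{v,uv\}$, choose $w(v)$ and $w(uv)$ to satisfy the at most two other edges at $v$; these two items affect $\phi_w$ only at $v$ and at $u$ (whose only edge is $uv$), so nothing else is disturbed.

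For~{\bf C}, let $v$ be a $5^+$-vertex and $U$ its set of $2$-neighbours, with $t=|U|\ge(d_G(v)-1)/2$; delete $[v,U]$. The decisive fact is that every edge $vz$ with $z\in U$ is satisfied no matter what, since $\rho_w(z,v)\le 4$ while $\rho_w(v,z)=\phi_w(v)-w(vz)\ge d_G(v)\ge 5$. If $z\in U$ has its other neighbour $y_z\notin U$, a short check shows $w(vz)$ may be chosen freely in $\{1,2\}$ and then $w(z)$ chosen to satisfy $zy_z$; an adjacent pair $z,z'\in U$ gives an isolated edge $zz'$ in $G'$, for which $w(zz')=1$ leaves $w(vz),w(vz')$ free in $\{1,2\}$ (with $w(z),w(z')$ adjusting to satisfy $zz'$). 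Hence $w(v)$ and all the $w(vz)$ with $z\in U$ are free in $\{1,2\}$, so $\phi_w(v)=w(v)+\sum_{z\in U}w(vz)+\sigma_0$ (with $\sigma_0$ the fixed sum of the other edge-weights at $v$) attains all $t+2$ values in an interval. The remaining constraints are that $\phi_w(v)$ avoid the $d_G(v)-t$ values $\phi_{w'}(x)$ for $x\in N_{G'}(v)$, and $t+2\ge(d_G(v)-t)+1$ is exactly the hypothesis $2t\ge d_G(v)-1$.

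Configuration~{\bf B} is the bulk of the work, and I expect it to be the main obstacle. After the standing reductions I have a $4^-$-centre $v$ with two $2$-neighbours $z,z'$ whose other neighbours $y,y'$ lie outside $\{z,z'\}$ and have degree at least $2$, and $v$ is not on the triangle $vzz'$; delete $\{vz,vz'\}$ and split on $d_G(v)$. When $d_G(v)=4$ the argument is clean: weighting $v$ by $2$ forces $\rho_w(v,z),\rho_w(v,z')\ge 5>4\ge\rho_w(z,v),\rho_w(z',v)$, so both core edges are satisfied outright; then $w(vz)$ and $w(vz')$ are each free in $\{1,2\}$ with $w(z),w(z')$ chosen to satisfy $zy$ and $z'y'$, and $\phi_w(v)=2+w(vz)+w(vz')+(\text{fixed})$ still attains three values, enough to dodge the at most two constraints from the other edges at $v$. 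The difficulty is $d_G(v)\le 3$: the imbalance trick no longer by itself satisfies $vz$ and $vz'$, and the weights $w(vz),w(vz')$ must simultaneously satisfy the pendant-type edges $zy,z'y'$, satisfy $vz,vz'$ themselves, and steer $\phi_w(v)$ away from the remaining (at most one) neighbour-colour, so the freedoms interact. My plan there is a direct case analysis on whether the forbidden sum $\phi_{w'}(y)-w'(zy)$ lies in the attainable range $\{2,3,4\}$ (and likewise at $y'$), using the freedom to recolour $v,z,z'$: in the tight sub-cases one satisfies $vz$ by making $\phi_w(z)$ small (taking $w(z)=1$, $w(v)=2$) and then uses that a large $w'(zy)$ already constrains $\phi_{w'}(y)$ through the fact that $zy$ was satisfied in $G'$, recovering the needed slack. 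I expect this bookkeeping, rather than any single clever step, to be where the effort goes.
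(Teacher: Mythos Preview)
Your arguments for \textbf{A} and \textbf{C} are correct and essentially match the paper's (the paper restricts $U$ to exactly $\lceil(d(v)-1)/2\rceil$ vertices, but taking all $2^-$-neighbours as you do is harmless). Your argument for \textbf{B} when $d_G(v)=4$ is also correct and coincides with the paper's easy sub-case $a\ge 2$.

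The gap is in \textbf{B} with $d_G(v)\le 3$, and it is not just bookkeeping. Your proposed mechanism for the tight $d_G(v)=3$ sub-case --- that ``$zy$ was satisfied in $G'$'' constrains $\phi_{w'}(y)$ --- yields only $\phi_{w'}(y)\ne w'(z)+w'(zy)$, a single excluded value, which is not enough: in the worst case (namely $w'(vx)=1$, $w'(zy)=w'(z'y')=2$) you need $\phi_{w'}(y)\notin\{4,5\}$ to keep both choices of $w(vz)$ alive, and your information does not provide that. The paper's device here is different: when $w'(y)=1$ one \emph{swaps} the weights on $y$ and $yz$, which leaves all edges of $\Gamma_{G'}(y)$ other than $yz$ satisfied (since $\phi_{w'}(y)$ is unchanged) but reduces to the easier case $w'(zy)=1$; after the swap one may assume $w'(y)=2$, and then $d_G(y)\ge 2$ forces $\rho_{w'}(y,z)\ge 3$, which is exactly the bound needed. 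For $d_G(v)=2$ the situation is worse: there is no edge $vx$ to absorb slack, and a brute case split along your lines becomes circular. The paper instead uses a parity coupling --- setting $w(vz_i)=3-w(z_i)$ when $\rho_{w'}(y_i,z_i)$ is even and $w(vz_i)=w(z_i)$ when it is odd --- which satisfies $y_iz_i$ for \emph{every} choice of $w(z_i)$, leaving $w(z_1),w(z_2),w(v)$ genuinely free to handle $vz_1$ and $vz_2$. Both of these are short but genuine ideas, not routine casework; your plan as written does not supply them.
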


\begin{proof}
In each case, we obtain a proper total $2$-weighting of $G$ from such a
weighting $w'$ of the derived graph $G'$.  Let $v$ be the specified vertex.
Since isolated edges have proper total $2$-weightings,
we may assume that any $1$-vertex in $G$ has a $2^+$-neighbor.  In the
extension arguments, we use Lemma~\ref{choose} frequently to choose labels.

{\bf Case A:} {\it $d(v)\le 3$, and $v$ has a $1$-neighbor $u$.}
For $d(v)=3$, let $N_{G'}(v)=\{x,x'\}$ (see Figure~\ref{2fig52}A).  Uncolor
$v$, and then choose $w(v),w(uv)\in\{1,2\}$ to satisfy $vx$ and $vx'$.
Now choose $w(u)$ to satisfy $uv$.  When $d(v)=2$, we only need $w(v)+w(uv)$
to avoid one value.

\begin{figure}[h]
\gpic{
\expandafter\ifx\csname graph\endcsname\relax \csname newbox\endcsname\graph\fi
\expandafter\ifx\csname graphtemp\endcsname\relax \csname newdimen\endcsname\graphtemp\fi
\setbox\graph=\vtop{\vskip 0pt\hbox{%
    \graphtemp=.5ex\advance\graphtemp by 0.380in
    \rlap{\kern 0.101in\lower\graphtemp\hbox to 0pt{\hss $\bu$\hss}}%
    \graphtemp=.5ex\advance\graphtemp by 0.380in
    \rlap{\kern 0.608in\lower\graphtemp\hbox to 0pt{\hss $\bu$\hss}}%
    \graphtemp=.5ex\advance\graphtemp by 0.127in
    \rlap{\kern 1.114in\lower\graphtemp\hbox to 0pt{\hss $\bu$\hss}}%
    \graphtemp=.5ex\advance\graphtemp by 0.633in
    \rlap{\kern 1.114in\lower\graphtemp\hbox to 0pt{\hss $\bu$\hss}}%
    \special{pn 28}%
    \special{pa 101 380}%
    \special{pa 608 380}%
    \special{fp}%
    \special{pn 11}%
    \special{pa 1114 127}%
    \special{pa 608 380}%
    \special{fp}%
    \special{pa 608 380}%
    \special{pa 1114 633}%
    \special{fp}%
    \graphtemp=.5ex\advance\graphtemp by 0.380in
    \rlap{\kern 0.000in\lower\graphtemp\hbox to 0pt{\hss $u$\hss}}%
    \graphtemp=.5ex\advance\graphtemp by 0.481in
    \rlap{\kern 0.608in\lower\graphtemp\hbox to 0pt{\hss $v$\hss}}%
    \graphtemp=.5ex\advance\graphtemp by 0.278in
    \rlap{\kern 0.354in\lower\graphtemp\hbox to 0pt{\hss $~$\hss}}%
    \graphtemp=.5ex\advance\graphtemp by 0.055in
    \rlap{\kern 1.042in\lower\graphtemp\hbox to 0pt{\hss $x$\hss}}%
    \graphtemp=.5ex\advance\graphtemp by 0.705in
    \rlap{\kern 0.992in\lower\graphtemp\hbox to 0pt{\hss $x'$\hss}}%
    \special{pn 8}%
    \special{ar 1241 127 127 127 0 6.28319}%
    \special{ar 1241 633 127 127 0 6.28319}%
    \graphtemp=.5ex\advance\graphtemp by 0.380in
    \rlap{\kern 3.139in\lower\graphtemp\hbox to 0pt{\hss $\bu$\hss}}%
    \graphtemp=.5ex\advance\graphtemp by 0.380in
    \rlap{\kern 2.633in\lower\graphtemp\hbox to 0pt{\hss $\bu$\hss}}%
    \graphtemp=.5ex\advance\graphtemp by 0.380in
    \rlap{\kern 2.127in\lower\graphtemp\hbox to 0pt{\hss $\bu$\hss}}%
    \graphtemp=.5ex\advance\graphtemp by 0.380in
    \rlap{\kern 2.127in\lower\graphtemp\hbox to 0pt{\hss $\bu$\hss}}%
    \graphtemp=.5ex\advance\graphtemp by 0.380in
    \rlap{\kern 3.646in\lower\graphtemp\hbox to 0pt{\hss $\bu$\hss}}%
    \graphtemp=.5ex\advance\graphtemp by 0.278in
    \rlap{\kern 3.139in\lower\graphtemp\hbox to 0pt{\hss $v$\hss}}%
    \graphtemp=.5ex\advance\graphtemp by 0.278in
    \rlap{\kern 2.633in\lower\graphtemp\hbox to 0pt{\hss $z$\hss}}%
    \graphtemp=.5ex\advance\graphtemp by 0.278in
    \rlap{\kern 2.127in\lower\graphtemp\hbox to 0pt{\hss $y$\hss}}%
    \special{pn 11}%
    \special{ar 2127 380 101 316 0 6.28319}%
    \special{ar 2633 380 101 316 0 6.28319}%
    \special{ar 3646 380 101 316 0 6.28319}%
    \graphtemp=.5ex\advance\graphtemp by 0.823in
    \rlap{\kern 2.633in\lower\graphtemp\hbox to 0pt{\hss $U$\hss}}%
    \graphtemp=.5ex\advance\graphtemp by 0.380in
    \rlap{\kern 4.000in\lower\graphtemp\hbox to 0pt{\hss $X$\hss}}%
    \graphtemp=.5ex\advance\graphtemp by 0.278in
    \rlap{\kern 3.646in\lower\graphtemp\hbox to 0pt{\hss $x$\hss}}%
    \special{pa 2633 63}%
    \special{pa 2127 63}%
    \special{fp}%
    \special{pa 2633 380}%
    \special{pa 2127 380}%
    \special{fp}%
    \special{pa 2633 696}%
    \special{pa 2127 696}%
    \special{fp}%
    \special{pn 28}%
    \special{pa 3139 380}%
    \special{pa 2633 696}%
    \special{fp}%
    \special{pa 3139 380}%
    \special{pa 2633 63}%
    \special{fp}%
    \special{pa 3139 380}%
    \special{pa 2633 380}%
    \special{fp}%
    \special{pn 11}%
    \special{pa 3139 380}%
    \special{pa 3646 63}%
    \special{fp}%
    \special{pa 3139 380}%
    \special{pa 3646 380}%
    \special{fp}%
    \special{pa 3139 380}%
    \special{pa 3646 696}%
    \special{fp}%
    \hbox{\vrule depth0.823in width0pt height 0pt}%
    \kern 4.000in
  }%
}%
}
\caption{Cases {\bf A} and {\bf C} for Lemma~\ref{2config}\label{2fig52}}
\end{figure}

{\bf Case C:} {\it $d(v)\ge5$ and $v$ has at least $\FR{d(v)-1}2$
$2^-$-neighbors.}
Let $U$ be a set of $p$ such neighbors, where $p=\CL{\FR{d(v)-1}2}$, and let
$G'=G-[v,U]$ and $X=N_{G'}(v)$.  By Lemma~\ref{assume2}, for $z\in U$ we may
assume $d_G(z)=2$ and let $\{y\}=N_{G'}(z)$ (see Figure~\ref{2fig52}C).
Uncolor $z$.

By Lemma~\ref{choose}, we can choose the $p+1$ weights on
$\{v\}\cup\{vz\st z\in U\}$ to satisfy $[v,X]$ in $G$, since $p+1\ge d(v)-p$.
Now choose $w(z)$ for $z\in U$ to satisfy $zy$.  Finally, since $d(v)\ge5$, we
have $\rw{v}{z}\ge 5>4\ge w(z)+w(zy)$, so $zv$ is automatically satisfied.

\smallskip
{\bf Case B:} {\it $v$ has two $2^-$-neighbors $z$ and $z'$.}
By Lemma~\ref{assume2}, we may assume $d_G(z)=d_G(z')=2$.
By Lemma~\ref{triangle}, we may assume $zz'\notin E(G)$.
Let $G'=G-\{vz,vz'\}$.  Since the edges of $G'$ incident to the core
must be satisfied in the extension to $G$, uncolor $v$, $z$, and $z'$.

{\bf Subcase 1:} {\it $d_G(v)\in\{3,4\}$}.
Let $X\!=N_G(v)-\{z,z'\}$ (see Figure~\ref{2configB}).  Let $a=\SM xX w'(vx)$.
If $a\ge 2$, then setting $w(v)=2$ ensures satisfying $vz$ and $vz'$.  Using
$|X|\le 2$, choose $w(vz)$ and $w(vz')$ to satisfy $\Gamma_{G'}(v)$.
Now choose $w(z)$ and $w(z')$ to satisfy $yz$ and $y'z'$, respectively.

Hence we may assume $a=1$, which requires $d_G(v)=3$.  If $w'(yz)=1$,
then set $w(vz')=2$ to ensure satisfying $vz$.  Next choose $w(z')$ to satisfy
$z'y'$, and then choose $w(v)$ and $w(vz)$ to satisfy $vz'$ and $vx$.
Finally, choose $w(z)$ to satisfy $yz$.

By symmetry, we may now assume $a=1$ and $w'(yz)=w'(y'z')=2$, as in the
middle in Figure~\ref{2configB}.  If $w'(y)=1$, then we can exchange $w'(y)$
and $w'(yz)$ with no effect on the satisfaction of any edge in $\Gamma_{G'}(y)$
except $yz$, thereby reaching the case in the preceding paragraph.  Hence by
symmetry we may also assume $w'(y)=w'(y')=2$.

Let $b=\rp xv$.  If $b=4$, then set $w(zv)=w(v)=w(vz')=2$ to satisfy $vx$ and
ensure satisfying $vz$ and $vz'$; then choose $w(z)$ and $w(z')$ to satisfy
$zy$ and $z'y'$, respectively.  If $b\ne4$, then set $w(v)=2$ and
$w(z)=w(zv)=w(vz')=w(z')=1$.  By $\bA$, we have $d_G(y)\ge 2$ and hence
$\rp yz>2$, so $yz$ is satisfied (similarly for $y'z'$).  These values also
satisfy $\Gamma_G(v)$.

\begin{figure}[h]
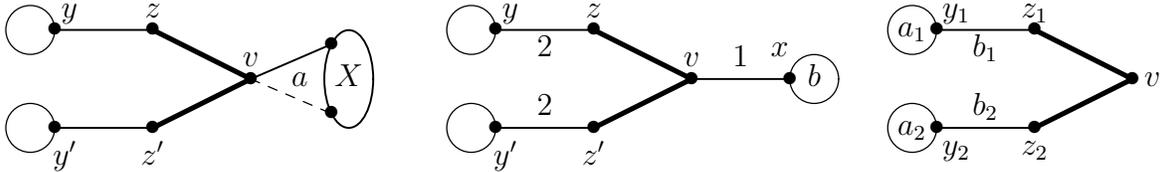

\gpic{
\expandafter\ifx\csname graph\endcsname\relax \csname newbox\endcsname\graph\fi
\expandafter\ifx\csname graphtemp\endcsname\relax \csname newdimen\endcsname\graphtemp\fi
\setbox\graph=\vtop{\vskip 0pt\hbox{%
    \graphtemp=.5ex\advance\graphtemp by 0.641in
    \rlap{\kern 0.256in\lower\graphtemp\hbox to 0pt{\hss $\bu$\hss}}%
    \graphtemp=.5ex\advance\graphtemp by 0.128in
    \rlap{\kern 0.256in\lower\graphtemp\hbox to 0pt{\hss $\bu$\hss}}%
    \graphtemp=.5ex\advance\graphtemp by 0.641in
    \rlap{\kern 0.769in\lower\graphtemp\hbox to 0pt{\hss $\bu$\hss}}%
    \graphtemp=.5ex\advance\graphtemp by 0.128in
    \rlap{\kern 0.769in\lower\graphtemp\hbox to 0pt{\hss $\bu$\hss}}%
    \graphtemp=.5ex\advance\graphtemp by 0.385in
    \rlap{\kern 1.282in\lower\graphtemp\hbox to 0pt{\hss $\bu$\hss}}%
    \special{pn 11}%
    \special{pa 256 641}%
    \special{pa 769 641}%
    \special{fp}%
    \special{pa 256 128}%
    \special{pa 769 128}%
    \special{fp}%
    \special{pn 28}%
    \special{pa 769 641}%
    \special{pa 1282 385}%
    \special{fp}%
    \special{pa 1282 385}%
    \special{pa 769 128}%
    \special{fp}%
    \special{pn 8}%
    \special{ar 128 128 128 128 0 6.28319}%
    \special{ar 128 641 128 128 0 6.28319}%
    \special{pn 11}%
    \special{ar 1795 385 128 256 0 6.28319}%
    \graphtemp=.5ex\advance\graphtemp by 0.566in
    \rlap{\kern 1.704in\lower\graphtemp\hbox to 0pt{\hss $\bu$\hss}}%
    \graphtemp=.5ex\advance\graphtemp by 0.203in
    \rlap{\kern 1.704in\lower\graphtemp\hbox to 0pt{\hss $\bu$\hss}}%
    \graphtemp=.5ex\advance\graphtemp by 0.795in
    \rlap{\kern 0.308in\lower\graphtemp\hbox to 0pt{\hss $y'$\hss}}%
    \graphtemp=.5ex\advance\graphtemp by 0.795in
    \rlap{\kern 0.769in\lower\graphtemp\hbox to 0pt{\hss $z'$\hss}}%
    \graphtemp=.5ex\advance\graphtemp by 0.385in
    \rlap{\kern 1.795in\lower\graphtemp\hbox to 0pt{\hss $X$\hss}}%
    \special{pa 1282 385}%
    \special{pa 1704 203}%
    \special{fp}%
    \special{pn 8}%
    \special{pa 1282 385}%
    \special{pa 1704 566}%
    \special{da 0.051}%
    \graphtemp=.5ex\advance\graphtemp by 0.026in
    \rlap{\kern 0.333in\lower\graphtemp\hbox to 0pt{\hss $y$\hss}}%
    \graphtemp=.5ex\advance\graphtemp by 0.026in
    \rlap{\kern 0.769in\lower\graphtemp\hbox to 0pt{\hss $z$\hss}}%
    \graphtemp=.5ex\advance\graphtemp by 0.282in
    \rlap{\kern 1.282in\lower\graphtemp\hbox to 0pt{\hss $v$\hss}}%
    \graphtemp=.5ex\advance\graphtemp by 0.385in
    \rlap{\kern 1.538in\lower\graphtemp\hbox to 0pt{\hss $a$\hss}}%
    \graphtemp=.5ex\advance\graphtemp by 0.282in
    \rlap{\kern 1.744in\lower\graphtemp\hbox to 0pt{\hss $~$\hss}}%
    \graphtemp=.5ex\advance\graphtemp by 0.641in
    \rlap{\kern 2.564in\lower\graphtemp\hbox to 0pt{\hss $\bu$\hss}}%
    \graphtemp=.5ex\advance\graphtemp by 0.128in
    \rlap{\kern 2.564in\lower\graphtemp\hbox to 0pt{\hss $\bu$\hss}}%
    \graphtemp=.5ex\advance\graphtemp by 0.641in
    \rlap{\kern 3.077in\lower\graphtemp\hbox to 0pt{\hss $\bu$\hss}}%
    \graphtemp=.5ex\advance\graphtemp by 0.128in
    \rlap{\kern 3.077in\lower\graphtemp\hbox to 0pt{\hss $\bu$\hss}}%
    \graphtemp=.5ex\advance\graphtemp by 0.385in
    \rlap{\kern 3.590in\lower\graphtemp\hbox to 0pt{\hss $\bu$\hss}}%
    \graphtemp=.5ex\advance\graphtemp by 0.385in
    \rlap{\kern 4.103in\lower\graphtemp\hbox to 0pt{\hss $\bu$\hss}}%
    \special{pn 11}%
    \special{pa 2564 641}%
    \special{pa 3077 641}%
    \special{fp}%
    \special{pa 2564 128}%
    \special{pa 3077 128}%
    \special{fp}%
    \special{pa 3590 385}%
    \special{pa 4103 385}%
    \special{fp}%
    \special{pn 28}%
    \special{pa 3077 641}%
    \special{pa 3590 385}%
    \special{fp}%
    \special{pa 3590 385}%
    \special{pa 3077 128}%
    \special{fp}%
    \special{pn 8}%
    \special{ar 2436 128 128 128 0 6.28319}%
    \special{ar 2436 641 128 128 0 6.28319}%
    \special{ar 4231 385 128 128 0 6.28319}%
    \graphtemp=.5ex\advance\graphtemp by 0.795in
    \rlap{\kern 2.615in\lower\graphtemp\hbox to 0pt{\hss $y'$\hss}}%
    \graphtemp=.5ex\advance\graphtemp by 0.795in
    \rlap{\kern 3.077in\lower\graphtemp\hbox to 0pt{\hss $z'$\hss}}%
    \graphtemp=.5ex\advance\graphtemp by 0.385in
    \rlap{\kern 4.231in\lower\graphtemp\hbox to 0pt{\hss $b$\hss}}%
    \graphtemp=.5ex\advance\graphtemp by 0.026in
    \rlap{\kern 2.641in\lower\graphtemp\hbox to 0pt{\hss $y$\hss}}%
    \graphtemp=.5ex\advance\graphtemp by 0.026in
    \rlap{\kern 3.077in\lower\graphtemp\hbox to 0pt{\hss $z$\hss}}%
    \graphtemp=.5ex\advance\graphtemp by 0.282in
    \rlap{\kern 3.590in\lower\graphtemp\hbox to 0pt{\hss $v$\hss}}%
    \graphtemp=.5ex\advance\graphtemp by 0.282in
    \rlap{\kern 3.846in\lower\graphtemp\hbox to 0pt{\hss $1$\hss}}%
    \graphtemp=.5ex\advance\graphtemp by 0.231in
    \rlap{\kern 4.051in\lower\graphtemp\hbox to 0pt{\hss $x$\hss}}%
    \graphtemp=.5ex\advance\graphtemp by 0.231in
    \rlap{\kern 2.821in\lower\graphtemp\hbox to 0pt{\hss $2$\hss}}%
    \graphtemp=.5ex\advance\graphtemp by 0.538in
    \rlap{\kern 2.821in\lower\graphtemp\hbox to 0pt{\hss $2$\hss}}%
    \graphtemp=.5ex\advance\graphtemp by 0.641in
    \rlap{\kern 4.872in\lower\graphtemp\hbox to 0pt{\hss $\bu$\hss}}%
    \graphtemp=.5ex\advance\graphtemp by 0.641in
    \rlap{\kern 5.385in\lower\graphtemp\hbox to 0pt{\hss $\bu$\hss}}%
    \graphtemp=.5ex\advance\graphtemp by 0.385in
    \rlap{\kern 5.897in\lower\graphtemp\hbox to 0pt{\hss $\bu$\hss}}%
    \graphtemp=.5ex\advance\graphtemp by 0.128in
    \rlap{\kern 5.385in\lower\graphtemp\hbox to 0pt{\hss $\bu$\hss}}%
    \graphtemp=.5ex\advance\graphtemp by 0.128in
    \rlap{\kern 4.872in\lower\graphtemp\hbox to 0pt{\hss $\bu$\hss}}%
    \special{ar 4744 641 128 128 0 6.28319}%
    \special{ar 4744 128 128 128 0 6.28319}%
    \special{pn 28}%
    \special{pa 5385 641}%
    \special{pa 5897 385}%
    \special{fp}%
    \special{pa 5897 385}%
    \special{pa 5385 128}%
    \special{fp}%
    \special{pn 11}%
    \special{pa 4872 641}%
    \special{pa 5385 641}%
    \special{fp}%
    \special{pa 5385 128}%
    \special{pa 4872 128}%
    \special{fp}%
    \graphtemp=.5ex\advance\graphtemp by 0.128in
    \rlap{\kern 4.744in\lower\graphtemp\hbox to 0pt{\hss $a_1$\hss}}%
    \graphtemp=.5ex\advance\graphtemp by 0.641in
    \rlap{\kern 4.744in\lower\graphtemp\hbox to 0pt{\hss $a_2$\hss}}%
    \graphtemp=.5ex\advance\graphtemp by 0.231in
    \rlap{\kern 5.128in\lower\graphtemp\hbox to 0pt{\hss $b_1$\hss}}%
    \graphtemp=.5ex\advance\graphtemp by 0.538in
    \rlap{\kern 5.128in\lower\graphtemp\hbox to 0pt{\hss $b_2$\hss}}%
    \graphtemp=.5ex\advance\graphtemp by 0.026in
    \rlap{\kern 4.974in\lower\graphtemp\hbox to 0pt{\hss $y_1$\hss}}%
    \graphtemp=.5ex\advance\graphtemp by 0.744in
    \rlap{\kern 4.974in\lower\graphtemp\hbox to 0pt{\hss $y_2$\hss}}%
    \graphtemp=.5ex\advance\graphtemp by 0.026in
    \rlap{\kern 5.385in\lower\graphtemp\hbox to 0pt{\hss $z_1$\hss}}%
    \graphtemp=.5ex\advance\graphtemp by 0.385in
    \rlap{\kern 6.000in\lower\graphtemp\hbox to 0pt{\hss $v$\hss}}%
    \graphtemp=.5ex\advance\graphtemp by 0.744in
    \rlap{\kern 5.385in\lower\graphtemp\hbox to 0pt{\hss $z_2$\hss}}%
    \hbox{\vrule depth0.795in width0pt height 0pt}%
    \kern 6.000in
  }%
}%
}
\caption{Case {\bf B} for Lemma~\ref{83red2}\label{2configB}}
\end{figure}

{\bf Subcase 2:} {\it $d_G(v)=2$.}
For this subcase, let $z_1=z$ and $z_2=z'$.  For $i\in\{1,2\}$, let
$\{y_i\}=N_{G'}(z_i)$, let $b_i=w'(y_iz_i)$, and let $a_i=\rp{y_i}{z_i}$, as on
the right in Figure~\ref{2config}.  To satisfy $y_iz_i$, fix $w(vz_i)=3-w(z_i)$
when $a_i$ is even and $w(vz_i)=w(z_i)$ when $a_i$ is odd.  We then must choose
$w(z_1)$ and $w(z_2)$ (and hence $w(vz_1)$ and $w(vz_2)$) to satisfy $vz_2$ and
$vz_1$.  We need $b_1+w(z_1)\ne w(v)+w(vz_2)$ and $b_2+w(z_2)\ne w(v)+w(vz_1)$.

When $a_1-a_2$ is even, set $w(v)=1$.  When $a_1$ and $a_2$ are both even,
using $w(vz_i)=3-w(z_i)$ converts the requirements to $b_1+w(z_1)\ne 4-w(z_2)$
and $b_2+w(z_2)\ne 4-w(z_1)$.  With two choices for both $w(z_1)$ and $w(z_2)$,
we can pick them so that $w(z_1)+w(z_2)\notin\{4-b_1,4-b_2\}$.  When $a_1$ and
$a_2$ are both odd, using $w(vz_i)=w(z_i)$ it suffices to choose
$w(z_1),w(z_2)\in\{1,2\}$ so that $w(z_1)-w(z_2)\notin\{1-b_1,b_2-1\}$.  Since
the difference can be any of the three values in $\{1,0,-1\}$, this also can be
done.

When $a_1$ and $a_2$ have opposite parity, we may assume that $a_1$ is even.
Now set $w(v)=3-b_1$ and $w(z_1)=w(z_2)=b_1$.  Using $w(vz_1)=3-w(z_1)$ and
$w(vz_2)=w(z_2)$, we have satisfied $vz_1$ because
$b_1+w(z_1)=2b_1\ne 3=w(v)+w(z_2)$, and we have satisfied $vz_2$ because
$b_2+w(z_2)=b_2+b_1\ne 6-2b_1=w(v)+3-w(z_1)$.
\end{proof}

\begin{thm}\label{52thm2}
If $\Mad(G)<5/2$, then $G$ has a proper total $2$-weighting.
\end{thm}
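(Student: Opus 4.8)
The plan is to mimic the proof of Theorem~\ref{aved}, now combining the $2$-reducibility established in Lemma~\ref{2config} with the same unavoidable set from Lemma~\ref{52struct}.  A minimal $2$-bad graph $G$ contains none of configurations {\bf A}, {\bf B}, {\bf C} of Lemma~\ref{2config}; moreover it has no isolated edge, since an isolated edge has a proper total $2$-weighting and so could be deleted from $G$, the derived weighting being extended by re-weighting that edge.  If $\Mad(G)<5/2$, then the average degree of $G$ is less than $5/2$, so by Lemma~\ref{52struct} the graph $G$ contains one of configurations {\bf A}--{\bf E} listed there.  It therefore suffices to show that each of those five configurations contains one of {\bf A}, {\bf B}, {\bf C} of Lemma~\ref{2config}; that contradiction proves the theorem.

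First I would dispose of the four easy containments.  Configuration {\bf A} of Lemma~\ref{52struct} is a $3^-$-vertex with a $1$-neighbor, which is configuration {\bf A} of Lemma~\ref{2config}.  For configuration {\bf B} of Lemma~\ref{52struct}, if the central $4^-$-vertex has degree at least $2$ it has two $2$-neighbors and hence is a $4^-$-vertex with two $2^-$-neighbors (configuration {\bf B}); if it has degree $1$, then its unique neighbor is a $2$-vertex having a $1$-neighbor (configuration {\bf A}).  Configuration {\bf C} of Lemma~\ref{52struct} gives a $3$-vertex with two $2$-neighbors, and configuration {\bf D} of Lemma~\ref{52struct} gives a $4$-vertex with two $2^-$-neighbors (its $1$-neighbor together with its other $2^-$-neighbor); each is configuration {\bf B} of Lemma~\ref{2config}.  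Lemma~\ref{triangle} and the drawing conventions should absorb the degenerate cases in which named vertices coincide.

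The only containment that needs an inequality is for configuration {\bf E} of Lemma~\ref{52struct}: there $v$ is a $5^+$-vertex with $3p_1+p_2\ge 2d_G(v)-4$, so its number $p_1+p_2$ of $2^-$-neighbors satisfies $3(p_1+p_2)\ge 3p_1+p_2\ge 2d_G(v)-4$, whence $p_1+p_2\ge\frac{2d_G(v)-4}{3}\ge\frac{d_G(v)-1}{2}$, the last step holding because $d_G(v)\ge5$; thus $v$ has at least $(d_G(v)-1)/2$ neighbors of degree at most $2$, which is configuration {\bf C} of Lemma~\ref{2config}.  I do not foresee a real obstacle here: like Theorem~\ref{aved}, this result is essentially a corollary of its two ingredient lemmas, and the only points needing any care are the isolated-edge remark, the coincidence of named vertices, and the degree-$1$ instance of configuration {\bf B} of Lemma~\ref{52struct} handled above.
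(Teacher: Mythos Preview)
Your proof is correct and follows the same approach as the paper: combine the $2$-reducibility of Lemma~\ref{2config} with the unavoidable set of Lemma~\ref{52struct}, checking that each of {\bf A}--{\bf E} contains one of the three $2$-reducible configurations. Your verification for case~{\bf E} is in fact cleaner than the paper's, which argues by contradiction (assuming $2p_1+2p_2\le d_G(v)-2$ and subtracting), whereas you simply observe $3(p_1+p_2)\ge 3p_1+p_2\ge 2d_G(v)-4$ and check $\frac{2d_G(v)-4}{3}\ge\frac{d_G(v)-1}{2}$ for $d_G(v)\ge5$.
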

\begin{proof}
By Lemma~\ref{2config}, a minimal counterexample contains no configuration
listed there.  Since it has average degree less than $5/2$, it contains a
configuration listed in Lemma~\ref{52struct}.  Configurations {\bf A--D} are
all $2$-reducible, by {\bf A} and {\bf B} of Lemma~\ref{2config}.  Hence to
show that every graph with $\Mad(G)<5/2$ contains a $2$-reducible
configuration, it
suffices to show that a $5^+$-vertex $v$ satisfying $3p_1+p_2\ge 2d_G(v)-4$
also satisfies $2p_1+2p_2\ge d_G(v)-1$.  If the desired inequality fails, then
subtracting $2p_1+2p_2\le d_G(v)-2$ from the given inequality yields
$p_1\ge d_G(v)-2$.  Since $d_G(v)-2\ge(d_G(v)-1)/2$, the desired inequality
follows.
\end{proof}

\section{Proper Total $2$-weighting when $\Mad(G)<8/3$}\label{sec12}

A graph formed by adding a pendant edge at each vertex of a $3$-regular graph
has average degree $\FR52$.  It has no configuration in Lemma~\ref{2config},
since each $4$-vertex has one $1$-neighbor and three $4$-neighbors.  Further
$2$-reducible configurations will require multiple ``almost-reducible''
vertices.  We introduce two types.

\begin{definition}\label{bvert}
A \textit{$\beta$-vertex} is a  $3$-vertex having exactly one $2$-neighbor
and no $1$-neighbor.
A $\beta'$-vertex is a $2k$-vertex, where $k\geq 2$, having exactly $k-1$
neighbors of degree $1$ and no $2$-neighbor.  For $\gamma\in\{\beta,\beta'\}$,
a \textit{$\gamma$-neighbor} of $v$ is a $\gamma$-vertex in $N(v)$.
\end{definition}

We will show in Lemma~\ref{83red2} that various configurations involving such
vertices are $2$-reducible.  Theorem~\ref{83disch2} shows that these plus
the configurations in Lemma~\ref{2config} form an unavoidable set when
$\Mad(G)<8/3$.  The argument would be shorter if adjacent $\beta'$-vertices of
degree $4$ formed a reducible configuration, but our usual method fails there.

\begin{example}\label{nonred}
Let $v$ and $v'$ be adjacent $\beta'$-vertices of degree $4$ in $G$, having
$1$-neighbors $u$ and $u'$, respectively.  As in Section~\ref{sec52}, the core
$F$ is $\{uv,vv',v'u'\}$, and $G'=G-F$.  A total $2$-weighting $w'$ of $G'$ may
assign labels as indicated in Figure~\ref{betafig}.  To extend $w'$, we need
$w(uv)+w(v)+w(vv')\in \{3,6\}$; hence these three weights must be equal.
Similarly, $w(u'v')+w(v')+w(vv')\in\{3,6\}$.  Since $w(vv')$ can take only one
value, we have forced $\phi_w(v)=\phi_w(v')$.  Hence no extension to a proper
total $2$-weighting is possible.

Another would-be-useful but non-reducible configuration consists of a
$\beta$-vertex $v$ whose $2$-neighbor $z$ has a $2$-neighbor $y$.
A total $2$-weighting $w'$ of $G'$ may assign labels as indicated in
Figure~\ref{betafig}.  The values of $\ppwp$ at the neighbors of $v$ other
than $z$ force $w(v)=w(vz)=1$.  Now satisfying $vz$ requires $w(z)=w(zy)$.
Similarly, satisfying $xy$ requires $w(y)=w(yz)$.  We conclude $w(z)=w(y)$,
but now $yz$ cannot be satisfied, since also $w(yx)=w(zv)$.
\qed
\end{example}

\vspace{-.5pc}
\begin{figure}[h]
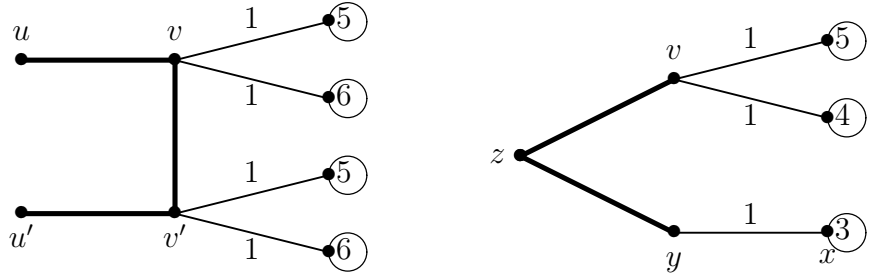

\gpic{
\expandafter\ifx\csname graph\endcsname\relax \csname newbox\endcsname\graph\fi
\expandafter\ifx\csname graphtemp\endcsname\relax \csname newdimen\endcsname\graphtemp\fi
\setbox\graph=\vtop{\vskip 0pt\hbox{%
    \graphtemp=.5ex\advance\graphtemp by 1.105in
    \rlap{\kern 0.080in\lower\graphtemp\hbox to 0pt{\hss $\bu$\hss}}%
    \graphtemp=.5ex\advance\graphtemp by 0.301in
    \rlap{\kern 0.080in\lower\graphtemp\hbox to 0pt{\hss $\bu$\hss}}%
    \graphtemp=.5ex\advance\graphtemp by 1.105in
    \rlap{\kern 0.884in\lower\graphtemp\hbox to 0pt{\hss $\bu$\hss}}%
    \graphtemp=.5ex\advance\graphtemp by 0.301in
    \rlap{\kern 0.884in\lower\graphtemp\hbox to 0pt{\hss $\bu$\hss}}%
    \graphtemp=.5ex\advance\graphtemp by 1.306in
    \rlap{\kern 1.688in\lower\graphtemp\hbox to 0pt{\hss $\bu$\hss}}%
    \graphtemp=.5ex\advance\graphtemp by 0.904in
    \rlap{\kern 1.688in\lower\graphtemp\hbox to 0pt{\hss $\bu$\hss}}%
    \graphtemp=.5ex\advance\graphtemp by 0.502in
    \rlap{\kern 1.688in\lower\graphtemp\hbox to 0pt{\hss $\bu$\hss}}%
    \graphtemp=.5ex\advance\graphtemp by 0.100in
    \rlap{\kern 1.688in\lower\graphtemp\hbox to 0pt{\hss $\bu$\hss}}%
    \special{pn 28}%
    \special{pa 80 1105}%
    \special{pa 884 1105}%
    \special{fp}%
    \special{pa 884 1105}%
    \special{pa 884 301}%
    \special{fp}%
    \special{pa 884 301}%
    \special{pa 80 301}%
    \special{fp}%
    \special{pn 11}%
    \special{pa 1688 1306}%
    \special{pa 884 1105}%
    \special{fp}%
    \special{pa 884 1105}%
    \special{pa 1688 904}%
    \special{fp}%
    \special{pa 1688 502}%
    \special{pa 884 301}%
    \special{fp}%
    \special{pa 884 301}%
    \special{pa 1688 100}%
    \special{fp}%
    \special{pn 8}%
    \special{ar 1788 1306 100 100 0 6.28319}%
    \special{ar 1788 904 100 100 0 6.28319}%
    \special{ar 1788 502 100 100 0 6.28319}%
    \special{ar 1788 100 100 100 0 6.28319}%
    \graphtemp=.5ex\advance\graphtemp by 1.246in
    \rlap{\kern 0.080in\lower\graphtemp\hbox to 0pt{\hss $u'$\hss}}%
    \graphtemp=.5ex\advance\graphtemp by 1.246in
    \rlap{\kern 0.884in\lower\graphtemp\hbox to 0pt{\hss $v'$\hss}}%
    \graphtemp=.5ex\advance\graphtemp by 0.161in
    \rlap{\kern 0.080in\lower\graphtemp\hbox to 0pt{\hss $u$\hss}}%
    \graphtemp=.5ex\advance\graphtemp by 0.161in
    \rlap{\kern 0.884in\lower\graphtemp\hbox to 0pt{\hss $v$\hss}}%
    \graphtemp=.5ex\advance\graphtemp by 1.306in
    \rlap{\kern 1.286in\lower\graphtemp\hbox to 0pt{\hss 1\hss}}%
    \graphtemp=.5ex\advance\graphtemp by 0.904in
    \rlap{\kern 1.286in\lower\graphtemp\hbox to 0pt{\hss 1\hss}}%
    \graphtemp=.5ex\advance\graphtemp by 0.502in
    \rlap{\kern 1.286in\lower\graphtemp\hbox to 0pt{\hss 1\hss}}%
    \graphtemp=.5ex\advance\graphtemp by 0.100in
    \rlap{\kern 1.286in\lower\graphtemp\hbox to 0pt{\hss 1\hss}}%
    \graphtemp=.5ex\advance\graphtemp by 1.306in
    \rlap{\kern 1.768in\lower\graphtemp\hbox to 0pt{\hss 6\hss}}%
    \graphtemp=.5ex\advance\graphtemp by 0.904in
    \rlap{\kern 1.768in\lower\graphtemp\hbox to 0pt{\hss 5\hss}}%
    \graphtemp=.5ex\advance\graphtemp by 0.502in
    \rlap{\kern 1.768in\lower\graphtemp\hbox to 0pt{\hss 6\hss}}%
    \graphtemp=.5ex\advance\graphtemp by 0.100in
    \rlap{\kern 1.768in\lower\graphtemp\hbox to 0pt{\hss 5\hss}}%
    \graphtemp=.5ex\advance\graphtemp by 0.804in
    \rlap{\kern 2.692in\lower\graphtemp\hbox to 0pt{\hss $\bu$\hss}}%
    \graphtemp=.5ex\advance\graphtemp by 0.804in
    \rlap{\kern 2.692in\lower\graphtemp\hbox to 0pt{\hss $\bu$\hss}}%
    \graphtemp=.5ex\advance\graphtemp by 1.205in
    \rlap{\kern 3.496in\lower\graphtemp\hbox to 0pt{\hss $\bu$\hss}}%
    \graphtemp=.5ex\advance\graphtemp by 0.402in
    \rlap{\kern 3.496in\lower\graphtemp\hbox to 0pt{\hss $\bu$\hss}}%
    \graphtemp=.5ex\advance\graphtemp by 1.205in
    \rlap{\kern 4.299in\lower\graphtemp\hbox to 0pt{\hss $\bu$\hss}}%
    \graphtemp=.5ex\advance\graphtemp by 0.603in
    \rlap{\kern 4.299in\lower\graphtemp\hbox to 0pt{\hss $\bu$\hss}}%
    \graphtemp=.5ex\advance\graphtemp by 0.201in
    \rlap{\kern 4.299in\lower\graphtemp\hbox to 0pt{\hss $\bu$\hss}}%
    \special{pn 28}%
    \special{pa 3496 1205}%
    \special{pa 2692 804}%
    \special{fp}%
    \special{pa 2692 804}%
    \special{pa 3496 402}%
    \special{fp}%
    \special{pn 11}%
    \special{pa 4299 603}%
    \special{pa 3496 402}%
    \special{fp}%
    \special{pa 3496 402}%
    \special{pa 4299 201}%
    \special{fp}%
    \special{pa 3496 1205}%
    \special{pa 4299 1205}%
    \special{fp}%
    \special{pn 8}%
    \special{ar 4400 1205 100 100 0 6.28319}%
    \special{ar 4400 603 100 100 0 6.28319}%
    \special{ar 4400 201 100 100 0 6.28319}%
    \graphtemp=.5ex\advance\graphtemp by 0.804in
    \rlap{\kern 2.571in\lower\graphtemp\hbox to 0pt{\hss $z$\hss}}%
    \graphtemp=.5ex\advance\graphtemp by 1.346in
    \rlap{\kern 3.496in\lower\graphtemp\hbox to 0pt{\hss $y$\hss}}%
    \graphtemp=.5ex\advance\graphtemp by 0.261in
    \rlap{\kern 3.496in\lower\graphtemp\hbox to 0pt{\hss $v$\hss}}%
    \graphtemp=.5ex\advance\graphtemp by 1.326in
    \rlap{\kern 4.299in\lower\graphtemp\hbox to 0pt{\hss $x$\hss}}%
    \graphtemp=.5ex\advance\graphtemp by 1.125in
    \rlap{\kern 3.897in\lower\graphtemp\hbox to 0pt{\hss 1\hss}}%
    \graphtemp=.5ex\advance\graphtemp by 0.603in
    \rlap{\kern 3.897in\lower\graphtemp\hbox to 0pt{\hss 1\hss}}%
    \graphtemp=.5ex\advance\graphtemp by 0.201in
    \rlap{\kern 3.897in\lower\graphtemp\hbox to 0pt{\hss 1\hss}}%
    \graphtemp=.5ex\advance\graphtemp by 1.205in
    \rlap{\kern 4.379in\lower\graphtemp\hbox to 0pt{\hss 3\hss}}%
    \graphtemp=.5ex\advance\graphtemp by 0.603in
    \rlap{\kern 4.379in\lower\graphtemp\hbox to 0pt{\hss 4\hss}}%
    \graphtemp=.5ex\advance\graphtemp by 0.201in
    \rlap{\kern 4.379in\lower\graphtemp\hbox to 0pt{\hss 5\hss}}%
    \hbox{\vrule depth1.406in width0pt height 0pt}%
    \kern 4.500in
  }%
}%
}
\caption{Non-reducible: adjacent $\beta'$-vertices, or $\beta$-vertex near
extra $2$-vertex\label{betafig}}
\end{figure}

\vspace{-.5pc}

Graphs formed by adding a pendant edge at each vertex of a $3$-regular graph
contain only configuration {\bf F} among those in Lemma~\ref{83red2}.  Although
its reducibility proof does not require the full flexibility of choosing
weights in it, Example~\ref{nonred} shows that the local argument cannot be
completed when a $\beta'$-vertex has only one $\beta'$-neighbor (of degree $4$).

Example~\ref{nonred} also shows that a $\beta$-vertex is not reducible,
even when its $2$-neighbor has another $2$-neighbor.  Nevertheless, when a
$\beta$-vertex appears in a minimal $2$-bad graph we can guarantee satisfying
all but one specified edge at that vertex.  This is useful when we can ensure
satisfying that edge, such as when its other endpoint has high degree.

\begin{lem}\label{beta2}
Let $v$ be a $\beta$-vertex with $2$-neighbor $z$ in a minimal $2$-bad graph
$G$.  Name vertices so that $N_G(v)=\{z,x,u\}$ and $N_G(z)=\{v,y\}$ (see
Figure~\ref{beta2fig}).  If $G-vz$ has a proper partial $2$-weighting $w'$ that
satisfies $\Gamma_{G}(x)$ and $\Gamma_{G}(y)$, then $G$ has a partial
$2$-weighting $w$ that satisfies the same edges other than $vu$, plus $vz$,
without changing any weights on $G-\{v,z\}$ except possibly on $yz$ and $y$.
\end{lem}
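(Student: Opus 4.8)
\noindent\emph{Proof plan.} The plan is to obtain $w$ from $w'$ by changing only $w(v)$, $w(z)$, $w(vz)$, $w(zy)$, and $w(y)$, keeping $w(vx)=w'(vx)$, $w(vu)=w'(vu)$, and every other weight unchanged; this already respects the restriction in the statement. Keeping $w(vx)$ gives $\phi_w(x)=\phi_{w'}(x)$, so every edge of $\Gamma_G(x)$ other than $vx$ survives and $vx$ reduces to the single requirement $\phi_w(v)\neq\phi_{w'}(x)$; keeping $w(vu)$ gives $\phi_w(u)=\phi_{w'}(u)$, so no edge at $u$ except $vu$ is disturbed, and $vu$ is the edge we are permitted to drop. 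If also $w(y)+w(zy)=w'(y)+w'(zy)$, then $\phi_w(y)=\phi_{w'}(y)$, so every edge of $\Gamma_G(y)$ other than $zy$ survives and $zy$ reduces to $\phi_w(z)\neq\phi_{w'}(y)$. All remaining edges keep their weights and their endpoints keep their $\phi$-values, so they stay satisfied. Thus everything reduces to choosing $p=w(v)$, $q=w(z)$, $r=w(vz)$, and $s=w(zy)$ in $\{1,2\}$, with $s$ constrained so that $w(y):=w'(y)+w'(zy)-s$ lies in $\{1,2\}$, so that $\phi_w(v)\neq\phi_w(z)$, $\phi_w(v)\neq\phi_{w'}(x)$, and $\phi_w(z)\neq\phi_{w'}(y)$, where $\phi_w(v)=p+r+w'(vx)+w'(vu)$ and $\phi_w(z)=q+r+s$.

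The key point is that $\phi_w(v)-\phi_w(z)=p+w'(vx)+w'(vu)-q-s$ does not involve $r$. So I would first fix $p$, $q$, and (when $w'(y)+w'(zy)=3$) the then-free value $s$ so as to secure $\phi_w(v)\neq\phi_w(z)$: this is automatic unless $w'(vx)+w'(vu)$ is small, and is otherwise arranged by a choice of $q$. With $p,q,s$ fixed, each of the other two inequalities forbids at most one value of $r$, namely $\phi_{w'}(x)-p-w'(vx)-w'(vu)$ and $\phi_{w'}(y)-q-s$, so a valid $r$ exists unless these two values are $1$ and $2$ in some order. A short case analysis on $w'(vx)+w'(vu)\in\{2,3,4\}$ and on the split of $w'(y)+w'(zy)$ shows that whenever this collision threatens, one of the two forbidden values can be pushed out of $\{1,2\}$ by re-choosing $p$, $q$, or $s$ (each such change moves one of the two values by $1$) --- with exactly one exception.

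That exception, which is the non-reducible configuration of Example~\ref{nonred} and the reason the lemma must permit $vu$ to fail and must allow $w(y)$ to change, is $w'(vx)=w'(vu)=1$, $w'(zy)=w'(y)=2$, $\phi_{w'}(x)=\phi_{w'}(y)=5$. Here $\phi_{w'}(y)=5$ forces $y$ to have exactly one edge besides $zy$, of weight $1$, so $d_G(y)=2$; let $y'$ be its other neighbor. Configuration {\bf B} of Lemma~\ref{2config} is absent, so $d_G(y')\ge3$ and hence $\phi_{w'}(y')\ge1+d_G(y')\ge4$; and $w'$ satisfies $\Gamma_G(y)\ni yy'$, so $\phi_{w'}(y')\neq\phi_{w'}(y)=5$. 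Now drop the requirement $\phi_w(y)=\phi_{w'}(y)$: keeping $w(vx)=w(vu)=1$ and the weight $1$ on $yy'$, set $w(v)=w(vz)=w(z)=2$ and $w(zy)=1$, so that $\phi_w(v)=6$ and $\phi_w(z)=5$, and pick $w(y)\in\{1,2\}$ with $\phi_w(y)=w(y)+2\neq\phi_{w'}(y')$. Then $vx$ and $vz$ are satisfied since $\phi_w(v)=6$ while $\phi_w(x)=\phi_w(z)=5$; $zy$ is satisfied since $\phi_w(z)=5>4\ge\phi_w(y)$; $yy'$ is satisfied by the choice of $w(y)$; and $\Gamma_G(x)$ is untouched because $\phi_w(x)=5$ is unchanged.

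I expect this last configuration to be the real obstacle: the uniform ``freeze $\phi_w$ outside $\{v,z\}$'' strategy has no slack there, and must be replaced by an argument using the structural fact --- forced here by the arithmetic of the case together with the absence of configuration {\bf B} --- that $y$ has degree $2$, so that $\phi_w(y)$ has enough room to dodge its single external neighbor while $v$, $z$, and $x$ are all handled. Degenerate coincidences or extra adjacencies among $x$, $u$, $y$ (for instance $x=y$, which would place $x$ on a triangle through the $2$-vertex $z$) are ruled out beforehand by Lemma~\ref{triangle} or absorbed by the same choices of weights.
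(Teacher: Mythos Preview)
Your argument is correct, but it is organized differently from the paper's. The paper splits immediately on $a=\rho_{w'}(y,z)$: for $a\ge4$, fixing $w(vz)=1$ makes $zy$ automatic and leaves $w(v),w(z)$ free to handle $vx$ and $vz$; for $a=3$ with $d_G(y)\ge3$ (which forces $w'(y)=1$), either $w'(yz)=2$ and swapping $w'(y)\leftrightarrow w'(yz)$ raises $a$ to $4$, or $w'(yz)=1$ and setting $w(z)=w(vz)=1$ works directly; only when $d_G(y)=2$ does the paper recolor $y$ and $yz$. You instead parametrize all four choices, observe that $r$ survives unless the two forbidden values are exactly $\{1,2\}$, and isolate the single parameter combination $(w'(vx)+w'(vu),\,w'(y)+w'(zy),\,\phi_{w'}(x),\,\phi_{w'}(y))=(2,4,5,5)$ where no re-choice of $p,q,s$ helps---then deduce $d_G(y)=2$ from that arithmetic and finish with an explicit recoloring. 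The paper's structural split is shorter; your route has the merit of making the uniqueness of the obstruction explicit and tying it directly to Example~\ref{nonred}.

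Your ``exactly one exception'' claim does check out: writing $A=\phi_{w'}(x)$, $B=\phi_{w'}(y)$, $C=w'(vx)+w'(vu)$, $T=w'(y)+w'(zy)$, the only residual cases (after pushing one forbidden value out of $\{1,2\}$ via $p$ or via $D=q+s$) are $A-C=3$ together with $(T,B)\in\{(2,4),(4,5)\}$. For $(T,B)=(2,4)$ the choice $D=p+1$ makes the two forbidden values coincide while $p+C\neq D$ holds since $C\ge2$; only $(T,B)=(4,5)$ with $C=2$ is genuinely blocked. One small wording issue: when you ``re-choose $p,q,s$'' you must simultaneously keep $p+C\neq q+s$, not fix $vz$ first and then adjust---your case analysis does this implicitly, but the prose suggests a two-stage process that would not quite be valid.
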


\begin{proof}
Let $G'=G-vz$.  By Lemma~\ref{2config}{\bf A}, $d(y)\geq 2$.  We want to choose
$w(v)$, $w(z)$, and $w(vz)$ to satisfy $\{xv,vz,zy\}$, leaving edges other than
$vu$ satisfied.

Let $a=\rp{y}{z}$.  If $a\ge4$, then setting $w(zv)=1$ ensures satisfying
$yz$, after which we choose $w(v)$ to satisfy $vx$ and $w(z)$ to satisfy $vz$.

If $a=3$ and $d_G(y)=3$, then $w'(y)=1$ ($y=u$ is allowed).  If $w'(yz)=2$,
then we can exchange the weights on $y$ and $yz$ and apply the previous case.
If $w'(yz)=1$, then setting $w(z)=w(zv)=1$ satisfies both $yz$ and $zv$, after
which we choose $w(v)$ to satisfy $vx$.

The remaining case is $d_G(y)=2$; let $N_G(y)=\{z,u'\}$ ($u'=u$ is allowed).
Uncolor $y$ and $yz$.  Setting $w(yz)=1$ and $w(v)=2$ ensures satisfying $zv$.
Now choose $w(vz)$ to satisfy $vx$, $w(y)$ to satisfy $yu'$, and $w(z)$ to
satisfy $yz$.
\end{proof}

\vspace{-1pc}
\begin{figure}[h]
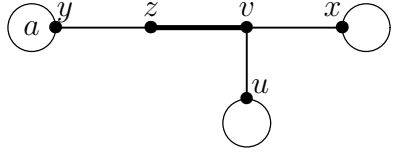

\gpic{
\expandafter\ifx\csname graph\endcsname\relax \csname newbox\endcsname\graph\fi
\expandafter\ifx\csname graphtemp\endcsname\relax \csname newdimen\endcsname\graphtemp\fi
\setbox\graph=\vtop{\vskip 0pt\hbox{%
    \graphtemp=.5ex\advance\graphtemp by 0.125in
    \rlap{\kern 0.250in\lower\graphtemp\hbox to 0pt{\hss $\bu$\hss}}%
    \graphtemp=.5ex\advance\graphtemp by 0.125in
    \rlap{\kern 0.250in\lower\graphtemp\hbox to 0pt{\hss $\bu$\hss}}%
    \graphtemp=.5ex\advance\graphtemp by 0.125in
    \rlap{\kern 0.750in\lower\graphtemp\hbox to 0pt{\hss $\bu$\hss}}%
    \graphtemp=.5ex\advance\graphtemp by 0.125in
    \rlap{\kern 0.750in\lower\graphtemp\hbox to 0pt{\hss $\bu$\hss}}%
    \graphtemp=.5ex\advance\graphtemp by 0.500in
    \rlap{\kern 1.250in\lower\graphtemp\hbox to 0pt{\hss $\bu$\hss}}%
    \graphtemp=.5ex\advance\graphtemp by 0.125in
    \rlap{\kern 1.250in\lower\graphtemp\hbox to 0pt{\hss $\bu$\hss}}%
    \graphtemp=.5ex\advance\graphtemp by 0.125in
    \rlap{\kern 1.750in\lower\graphtemp\hbox to 0pt{\hss $\bu$\hss}}%
    \graphtemp=.5ex\advance\graphtemp by 0.125in
    \rlap{\kern 1.750in\lower\graphtemp\hbox to 0pt{\hss $\bu$\hss}}%
    \special{pn 28}%
    \special{pa 1250 125}%
    \special{pa 750 125}%
    \special{fp}%
    \special{pn 11}%
    \special{pa 250 125}%
    \special{pa 750 125}%
    \special{fp}%
    \special{pa 1250 500}%
    \special{pa 1250 125}%
    \special{fp}%
    \special{pa 1250 125}%
    \special{pa 1750 125}%
    \special{fp}%
    \graphtemp=.5ex\advance\graphtemp by 0.025in
    \rlap{\kern 0.300in\lower\graphtemp\hbox to 0pt{\hss $y$\hss}}%
    \graphtemp=.5ex\advance\graphtemp by 0.025in
    \rlap{\kern 0.750in\lower\graphtemp\hbox to 0pt{\hss $z$\hss}}%
    \graphtemp=.5ex\advance\graphtemp by 0.025in
    \rlap{\kern 1.250in\lower\graphtemp\hbox to 0pt{\hss $v$\hss}}%
    \special{pn 8}%
    \special{ar 125 125 125 125 0 6.28319}%
    \special{ar 1250 625 125 125 0 6.28319}%
    \special{ar 1875 125 125 125 0 6.28319}%
    \graphtemp=.5ex\advance\graphtemp by 0.429in
    \rlap{\kern 1.321in\lower\graphtemp\hbox to 0pt{\hss $u$\hss}}%
    \graphtemp=.5ex\advance\graphtemp by 0.025in
    \rlap{\kern 1.700in\lower\graphtemp\hbox to 0pt{\hss $x$\hss}}%
    \graphtemp=.5ex\advance\graphtemp by 0.125in
    \rlap{\kern 0.125in\lower\graphtemp\hbox to 0pt{\hss $a$\hss}}%
    \hbox{\vrule depth0.750in width0pt height 0pt}%
    \kern 2.000in
  }%
}%
}
\caption{Configuration for Lemma~\ref{beta2}\label{beta2fig}}
\end{figure}

\vspace{-.5pc}

\begin{lem}\label{83red2}
The configurations below are $2$-reducible.

{\bf A}. A $3^-$-vertex having a $1$-neighbor.

{\bf B}. A $4^-$-vertex having two $2^-$-neighbors.

{\bf C}. A $5^+$-vertex $v$ whose number of $2^-$-neighbors is at least
$(d_G(v)-1)/2$.

{\bf D}. Two adjacent $\beta$-vertices.

{\bf E}. A $\beta$-vertex with a $\beta'$-neighbor.

{\bf F.} A $\beta'$-vertex of degree $4$ having two $\beta'$-neighbors of degree $4$.

{\bf G.} A $3$-vertex such that each neighbor is a $\beta$-vertex or is a
$\beta'$-vertex of degree $4$.

\end{lem}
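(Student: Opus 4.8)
For configurations \textbf{A}, \textbf{B}, and \textbf{C} there is nothing to do: these are exactly the configurations proved $2$-reducible in Lemma~\ref{2config}. So the real work is in \textbf{D}--\textbf{G}. In each of these I would suppose the configuration occurs in a minimal $2$-bad graph $G$, choose a suitable core $F$ of edges, pass to the derived graph $G'=G-F$, use minimality (isolated edges being harmless for $j=2$) to get a proper total $2$-weighting $w'$ of $G'$, and then rebuild $w'$ near $F$ into a proper total $2$-weighting of $G$, a contradiction. Throughout I would use Lemma~\ref{assume2} to reduce degree-$1$ cases to degree-$2$ cases, Lemma~\ref{triangle} to kill degenerate coincidences among the named vertices, Lemma~\ref{choose} for the arithmetic behind ``choose these weights to satisfy those edges,'' and, above all, Lemma~\ref{beta2}, which disposes of a $\beta$-vertex completely except for one prescribed incident edge.

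For \textbf{D}, let $v,v'$ be the adjacent $\beta$-vertices, with $2$-neighbors $z,z'$; write $N_G(z)=\{v,y\}$ and $N_G(z')=\{v',y'\}$, and let $x,x'$ be the third neighbors of $v,v'$. I would take $F=\{vz,vv',v'z'\}$, so that $v,v',z,z'$ all become degree-$1$ in $G'$, and obtain $w'$ by minimality. Then I would reconstruct by choosing the seven weights on the core and on the vertices $v,v',z,z'$ so as to satisfy the seven edges $vx,vz,zy$, $v'x',v'z',z'y'$, and $vv'$: on each side I run the case analysis of Lemma~\ref{beta2} (according to $\rho_{w'}(y,z)$ and $d_G(y)$, and likewise for $y'$), and I handle the shared edge $vv'$ with the residual freedom, exactly as in the $d_G(v)=2$ subcase of Lemma~\ref{2config}\textbf{B}: once one side's weights are fixed, the other $\beta$-vertex still has a free choice with which to force $\phi(v)\neq\phi(v')$.

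For \textbf{E}, \textbf{F}, and \textbf{G} there is no genuine $2$-vertex at a $\beta'$-neighbor, so the extra flexibility must come from that neighbor's pendant edges: a $\beta'$-vertex of degree $2k$ affords $k$ values for the sum of its pendant-edge weights, plus the recoloring of the pendants and of itself. For \textbf{E}, with $v$ the $\beta$-vertex and $v'$ its $\beta'$-neighbor, I would let $F$ consist of $vz$, $vv'$, and all pendant edges of $v'$; after obtaining $w'$ I would reconstruct around $v,z$ by the method of Lemma~\ref{beta2} while keeping $w(vv')$ free, and then choose $w(vv')$, the weights on $v'$'s pendant edges, and $w(v')$ (recoloring the pendants) to satisfy $vv'$ without disturbing the edges from $v'$ to its non-pendant neighbors. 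For \textbf{F}, with $v$ the degree-$4$ $\beta'$-vertex (pendant $u$), its degree-$4$ $\beta'$-neighbors $v_1,v_2$ (pendants $u_1,u_2$), and its third neighbor $v_3$, I would take $F=\{uv,vv_1,vv_2,v_1u_1,v_2u_2\}$, reducing $v$ to a near-pendant; after obtaining $w'$ the sum $w(v)+w(uv)+w(vv_1)+w(vv_2)$ ranges over five consecutive values, so it dodges the one value forbidden by $vv_3$, while the pendant slack at $v_1,v_2$ (and the recoloring of $u_1,u_2$) absorbs the changes in $\phi(v_1),\phi(v_2)$ and makes $vv_1,vv_2$ satisfiable. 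For \textbf{G}, with $v$ a $3$-vertex whose neighbors $v_1,v_2,v_3$ are each a $\beta$-vertex or a degree-$4$ $\beta'$-vertex, I would let $F$ be the distinguished edge at each $v_i$ (its edge to its $2$-neighbor if $v_i$ is a $\beta$-vertex, its pendant edge otherwise); after obtaining $w'$ I would process each $v_i$ (by Lemma~\ref{beta2} in the $\beta$ case, by pendant-plus-recoloring in the $\beta'$ case) so that $vv_i$ is the only edge left at risk at $v_i$, and finally choose $w(v)$ together with the three residual free choices at $v_1,v_2,v_3$ to satisfy $vv_1,vv_2,vv_3$ simultaneously. In \textbf{F} and \textbf{G}, Example~\ref{nonred} is exactly what shows that a single $\beta'$-neighbor (respectively, fewer bad neighbors of $v$) would not suffice.

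The main obstacle, present in \textbf{D}, \textbf{F}, \textbf{G} and in the last step of \textbf{E}, is precisely the phenomenon isolated in Example~\ref{nonred}: when two ``almost-reducible'' vertices are adjacent, a naive extension forces their $\phi$-values to coincide. Overcoming it requires a careful, case-heavy check (in the spirit of Lemma~\ref{2config}\textbf{B} and Lemma~\ref{beta2}) that the chosen core leaves at least one honest degree of freedom per bad neighbor once all the forced choices are made, and that these freedoms can be exercised in the right order --- $\phi(v)$ first, the shared edges last --- so that all boundary edges, the shared ones included, become simultaneously satisfiable. The accompanying bookkeeping (parities of the $\rho_{w'}$-values, extreme weightings forcing $\phi(v)$ to be large) is routine but lengthy.
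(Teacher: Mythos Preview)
Your reductions for \textbf{A}--\textbf{C} and your choice of core for \textbf{D} and \textbf{F} match the paper, and your use of Lemma~\ref{beta2} in \textbf{E} is essentially the paper's idea (though the order must be: fix $w(vv')$ and the weights at $v'$ first, \emph{then} apply Lemma~\ref{beta2} to $v$, since that lemma needs $w(vv')$ already chosen and leaves no slack there afterward).

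The real gap is \textbf{G}. Your core $F$ consists only of the three distinguished edges $v_iz_i$ (or $v_iu_i$), so the edges $vv_1,vv_2,vv_3$ stay in $G'$ with weights fixed by $w'$. You then claim ``three residual free choices at $v_1,v_2,v_3$,'' but there are none. At a $\beta$-vertex $v_i$, Lemma~\ref{beta2} determines $w(v_i)$ and $w(v_iz_i)$ from $\rho_{w'}(y_i,z_i)$ and $d_G(y_i)$, so $\phi_w(v_i)$ is fixed. At a degree-$4$ $\beta'$-vertex $v_i$, the sum $w(v_i)+w(v_iu_i)$ must avoid the two values coming from $v_ix_i$ and $v_ix_i'$, leaving exactly one value and again a fixed $\phi_w(v_i)$. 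So at the end you have only the two choices of $w(v)$ against the three constraints $\phi_w(v)\ne\phi_w(v_i)$, which can fail. The paper avoids this by deleting a much larger core, $\Gamma_G(v)\cup\{z_1y_1,z_2y_2,z_3y_3\}$: with the weights on $vv_1,vv_2,vv_3$ themselves free, it then splits into four subcases (by how many $v_i$ are $\beta'$-vertices) and in each makes an explicit global assignment, typically forcing all of $w(v),w(vz_1),w(vz_2),w(vz_3)$ equal to create enough imbalance.

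The same over-reliance on Lemma~\ref{beta2} weakens your sketch of \textbf{D}: after applying it to both $v$ and $v'$ there is in general no residual freedom with which to satisfy $vv'$. The paper does not invoke Lemma~\ref{beta2} in \textbf{D} at all; it fixes $w(vv')=2$ up front and branches on $d_G(y)$ and on $w'(zy),w'(vx)$, sometimes swapping $w'(y)\leftrightarrow w'(yz)$ to reach a tractable case.
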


\begin{proof}
Configurations {\bf A}--{\bf C} were shown to be $2$-reducible in
Lemma~\ref{2config}.  For {\bf D}--{\bf G}, as usual we consider a minimal
$2$-bad graph $G$ containing the specified configuration, and the derived graph
$G'$ is obtained by deleting the core, shown in bold in
Figures~\ref{fig2D}--\ref{fig2G}.  In each case we have a proper total
$2$-weighting $w'$ of $G'$ and produce a proper total $2$-weighting $w$ of $G$
by choosing weights on the deleted edges and on their endpoints, leaving all
other weights as in $w'$, with the possible exception of applying Lemma 3.3.
For each successive configuration, we know that the earlier configurations do
not occur in $G$.

\smallskip
\textbf{Case D:} {\it $v$ and $v'$ are adjacent $\beta$-vertices.}
As shown in Figure~\ref{fig2D}D, $v$ and $v'$ have degree $3$, with
$2$-neighbors $z$ and $z'$, respectively.  Let $N_G(v)=\{z,v',x\}$ and
$N_G(v')=\{z',v,x'\}$, also $N_G(z)=\{v,y\}$ and $N_G(z')=\{v,y'\}$ ($y=y'$
and/or $x=x'$ are allowed in the argument).  By Lemma~\ref{triangle}, we may
assume $z\ne z'$, $y\ne x$, and $y'\ne x'$.  Let $G'=G-\{zv,vv',v'z'\}$.  

Consider first the degenerate case $zz'\in E(G)$, so $y=z'$ and $y'=z$.
This also handles the case $y=x'$ or $y'=x$ under appropriate relabeling.
Set $w(zz')=1$ and $w(vv')=2$ to ensure satisfying $zv$ and $z'v'$.
Set $w(z)=w(zv)=2$.  Now choose $w(v)$ to satisfy $zv$, choose $w(v')$ and
$w(v'z')$ to satisfy $vv'$ and $v'x'$, and choose $w(z')$ to satisfy $zz'$.

Hence we may assume that the vertices are distinct as on the left in
Figure~\ref{fig2D}D.  Let $a=w'(zy)$, $b=\rp{y}{z}$, $c=w'(vx)$, and
$d=\rp{x}{v}$.  Define $a',b',c',d'$ analogously using $y',z',v',x'$
In all subcases, set $w(vv')=2$.

{\bf Subcase 1:} {\it $d_G(y)=2$ (or $d_G(y')=2$).}  Let $N_G(y)=\{z,u\}$.
Uncolor $u$ and $yu$.  Treat $y=z'$ (which implies $u=v'$) as a special case.
When $y=z'$, set $w(zz')=w(z')=1$; in general, set $w(z)=w(z')=1$.  In both
cases, this ensures satisfying $vz$ and $v'z'$ (since $w(vv')=2$).  Now set
$w(v'z')=1$ when $y=z'$; otherwise choose $w(v'z')$ to satisfy $y'z'$.  In both
cases, next choose $w(v')$ to satisfy $v'x'$, then $w(v)$ and $w(vz)$ to
satisfy $vv'$ and $vx$.  Finally, choose $w(z)$ to satisfy $zz'$ when $y=z'$;
otherwise, choose $w(u)$ to satisfy $yu$ and then $w(yu)$ to satisfy the other
edge at $u$.

{\bf Subcase 2:} {\it $d_G(y)\ne2$}.  By $\overline{A}$, we may assume
$d(y)\ge3$.
If $c=2$ or $a=1$, then $w(vv')=2$ ensures satisfying $zv$.  Set $w(v')=2$
to guarantee satisfying $z'v'$.  Now choose $w(z'v')$ to satisfy $v'x'$, and
choose $w(z')$ to satisfy $z'y'$.  Next choose $w(zv)$ and $w(v)$ to satisfy
$vx$ and $vv'$.  Finally, choose $w(z)$ to satisfy $zy$.

We may therefore assume $c=1$ and $a=2$, and by symmetry $c'=1$ and $a'=2$.
We may also assume $w'(y)=w'(y')=2$, since otherwise we can switch weights on
$y$ and $yz$ (or on $y'$ and $y'z'$), which leaves the other edges at $y$ or
$y'$ satisfied and yields the subcase above.

With $d_G(y)\ge3$, we have $b\ge4$ (since $w'(y)=2$).  By symmetry,
$d_G(y')\ge3$ and $b'\geq 4$.  Now setting $w(z)=w(z')=1$ ensures satisfying
$\Gamma_G(z)$ and $\Gamma_G(z')$ (since $w(vv')=2$).  Finally, choose
$w(zv)+w(v)$ to avoid $d-2$ and and $w(z'v')+w(v')$ to avoid $d'-2$ (allowing
two choices for each sum) so that the sums are different.  This satisfies $vx$,
$v'x'$, and $vv'$.

\begin{figure}[h]
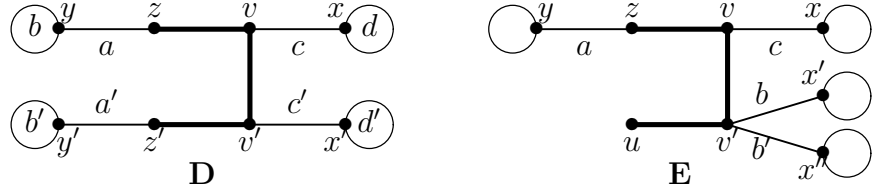

\gpic{
\expandafter\ifx\csname graph\endcsname\relax \csname newbox\endcsname\graph\fi
\expandafter\ifx\csname graphtemp\endcsname\relax \csname newdimen\endcsname\graphtemp\fi
\setbox\graph=\vtop{\vskip 0pt\hbox{%
    \graphtemp=.5ex\advance\graphtemp by 0.625in
    \rlap{\kern 0.250in\lower\graphtemp\hbox to 0pt{\hss $\bu$\hss}}%
    \graphtemp=.5ex\advance\graphtemp by 0.125in
    \rlap{\kern 0.250in\lower\graphtemp\hbox to 0pt{\hss $\bu$\hss}}%
    \graphtemp=.5ex\advance\graphtemp by 0.625in
    \rlap{\kern 0.750in\lower\graphtemp\hbox to 0pt{\hss $\bu$\hss}}%
    \graphtemp=.5ex\advance\graphtemp by 0.125in
    \rlap{\kern 0.750in\lower\graphtemp\hbox to 0pt{\hss $\bu$\hss}}%
    \graphtemp=.5ex\advance\graphtemp by 0.625in
    \rlap{\kern 1.250in\lower\graphtemp\hbox to 0pt{\hss $\bu$\hss}}%
    \graphtemp=.5ex\advance\graphtemp by 0.125in
    \rlap{\kern 1.250in\lower\graphtemp\hbox to 0pt{\hss $\bu$\hss}}%
    \graphtemp=.5ex\advance\graphtemp by 0.625in
    \rlap{\kern 1.750in\lower\graphtemp\hbox to 0pt{\hss $\bu$\hss}}%
    \graphtemp=.5ex\advance\graphtemp by 0.125in
    \rlap{\kern 1.750in\lower\graphtemp\hbox to 0pt{\hss $\bu$\hss}}%
    \special{pn 28}%
    \special{pa 750 625}%
    \special{pa 1250 625}%
    \special{fp}%
    \special{pa 1250 625}%
    \special{pa 1250 125}%
    \special{fp}%
    \special{pa 1250 125}%
    \special{pa 750 125}%
    \special{fp}%
    \special{pn 11}%
    \special{pa 250 625}%
    \special{pa 750 625}%
    \special{fp}%
    \special{pa 250 125}%
    \special{pa 750 125}%
    \special{fp}%
    \special{pa 1250 625}%
    \special{pa 1750 625}%
    \special{fp}%
    \special{pa 1250 125}%
    \special{pa 1750 125}%
    \special{fp}%
    \graphtemp=.5ex\advance\graphtemp by 0.725in
    \rlap{\kern 0.300in\lower\graphtemp\hbox to 0pt{\hss $y'$\hss}}%
    \graphtemp=.5ex\advance\graphtemp by 0.025in
    \rlap{\kern 0.300in\lower\graphtemp\hbox to 0pt{\hss $y$\hss}}%
    \graphtemp=.5ex\advance\graphtemp by 0.725in
    \rlap{\kern 0.750in\lower\graphtemp\hbox to 0pt{\hss $z'$\hss}}%
    \graphtemp=.5ex\advance\graphtemp by 0.025in
    \rlap{\kern 0.750in\lower\graphtemp\hbox to 0pt{\hss $z$\hss}}%
    \graphtemp=.5ex\advance\graphtemp by 0.725in
    \rlap{\kern 1.250in\lower\graphtemp\hbox to 0pt{\hss $v'$\hss}}%
    \graphtemp=.5ex\advance\graphtemp by 0.025in
    \rlap{\kern 1.250in\lower\graphtemp\hbox to 0pt{\hss $v$\hss}}%
    \graphtemp=.5ex\advance\graphtemp by 0.725in
    \rlap{\kern 1.700in\lower\graphtemp\hbox to 0pt{\hss $x'$\hss}}%
    \graphtemp=.5ex\advance\graphtemp by 0.025in
    \rlap{\kern 1.700in\lower\graphtemp\hbox to 0pt{\hss $x$\hss}}%
    \special{pn 8}%
    \special{ar 125 625 125 125 0 6.28319}%
    \special{ar 125 125 125 125 0 6.28319}%
    \special{ar 1875 625 125 125 0 6.28319}%
    \special{ar 1875 125 125 125 0 6.28319}%
    \graphtemp=.5ex\advance\graphtemp by 0.625in
    \rlap{\kern 0.125in\lower\graphtemp\hbox to 0pt{\hss $b'$\hss}}%
    \graphtemp=.5ex\advance\graphtemp by 0.125in
    \rlap{\kern 0.125in\lower\graphtemp\hbox to 0pt{\hss $b$\hss}}%
    \graphtemp=.5ex\advance\graphtemp by 0.225in
    \rlap{\kern 0.500in\lower\graphtemp\hbox to 0pt{\hss $a$\hss}}%
    \graphtemp=.5ex\advance\graphtemp by 0.525in
    \rlap{\kern 0.500in\lower\graphtemp\hbox to 0pt{\hss $a'$\hss}}%
    \graphtemp=.5ex\advance\graphtemp by 0.625in
    \rlap{\kern 1.875in\lower\graphtemp\hbox to 0pt{\hss $d'$\hss}}%
    \graphtemp=.5ex\advance\graphtemp by 0.125in
    \rlap{\kern 1.875in\lower\graphtemp\hbox to 0pt{\hss $d$\hss}}%
    \graphtemp=.5ex\advance\graphtemp by 0.225in
    \rlap{\kern 1.500in\lower\graphtemp\hbox to 0pt{\hss $c$\hss}}%
    \graphtemp=.5ex\advance\graphtemp by 0.525in
    \rlap{\kern 1.500in\lower\graphtemp\hbox to 0pt{\hss $c'$\hss}}%
    \graphtemp=.5ex\advance\graphtemp by 0.775in
    \rlap{\kern 4.250in\lower\graphtemp\hbox to 0pt{\hss $\bu$\hss}}%
    \graphtemp=.5ex\advance\graphtemp by 0.125in
    \rlap{\kern 2.750in\lower\graphtemp\hbox to 0pt{\hss $\bu$\hss}}%
    \graphtemp=.5ex\advance\graphtemp by 0.625in
    \rlap{\kern 3.250in\lower\graphtemp\hbox to 0pt{\hss $\bu$\hss}}%
    \graphtemp=.5ex\advance\graphtemp by 0.125in
    \rlap{\kern 3.250in\lower\graphtemp\hbox to 0pt{\hss $\bu$\hss}}%
    \graphtemp=.5ex\advance\graphtemp by 0.625in
    \rlap{\kern 3.750in\lower\graphtemp\hbox to 0pt{\hss $\bu$\hss}}%
    \graphtemp=.5ex\advance\graphtemp by 0.125in
    \rlap{\kern 3.750in\lower\graphtemp\hbox to 0pt{\hss $\bu$\hss}}%
    \graphtemp=.5ex\advance\graphtemp by 0.475in
    \rlap{\kern 4.250in\lower\graphtemp\hbox to 0pt{\hss $\bu$\hss}}%
    \graphtemp=.5ex\advance\graphtemp by 0.125in
    \rlap{\kern 4.250in\lower\graphtemp\hbox to 0pt{\hss $\bu$\hss}}%
    \special{pn 28}%
    \special{pa 3250 625}%
    \special{pa 3750 625}%
    \special{fp}%
    \special{pa 3750 625}%
    \special{pa 3750 125}%
    \special{fp}%
    \special{pa 3750 125}%
    \special{pa 3250 125}%
    \special{fp}%
    \special{pn 11}%
    \special{pa 4250 775}%
    \special{pa 3750 625}%
    \special{fp}%
    \special{pa 2750 125}%
    \special{pa 3250 125}%
    \special{fp}%
    \special{pa 3750 625}%
    \special{pa 4250 475}%
    \special{fp}%
    \special{pa 3750 125}%
    \special{pa 4250 125}%
    \special{fp}%
    \graphtemp=.5ex\advance\graphtemp by 0.875in
    \rlap{\kern 4.200in\lower\graphtemp\hbox to 0pt{\hss $x''$\hss}}%
    \graphtemp=.5ex\advance\graphtemp by 0.025in
    \rlap{\kern 2.800in\lower\graphtemp\hbox to 0pt{\hss $y$\hss}}%
    \graphtemp=.5ex\advance\graphtemp by 0.725in
    \rlap{\kern 3.250in\lower\graphtemp\hbox to 0pt{\hss $u$\hss}}%
    \graphtemp=.5ex\advance\graphtemp by 0.025in
    \rlap{\kern 3.250in\lower\graphtemp\hbox to 0pt{\hss $z$\hss}}%
    \graphtemp=.5ex\advance\graphtemp by 0.725in
    \rlap{\kern 3.750in\lower\graphtemp\hbox to 0pt{\hss $v'$\hss}}%
    \graphtemp=.5ex\advance\graphtemp by 0.025in
    \rlap{\kern 3.750in\lower\graphtemp\hbox to 0pt{\hss $v$\hss}}%
    \graphtemp=.5ex\advance\graphtemp by 0.375in
    \rlap{\kern 4.200in\lower\graphtemp\hbox to 0pt{\hss $x'$\hss}}%
    \graphtemp=.5ex\advance\graphtemp by 0.025in
    \rlap{\kern 4.200in\lower\graphtemp\hbox to 0pt{\hss $x$\hss}}%
    \special{pn 8}%
    \special{ar 2625 125 125 125 0 6.28319}%
    \special{ar 4375 775 125 125 0 6.28319}%
    \special{ar 4375 475 125 125 0 6.28319}%
    \special{ar 4375 125 125 125 0 6.28319}%
    \graphtemp=.5ex\advance\graphtemp by 0.225in
    \rlap{\kern 3.000in\lower\graphtemp\hbox to 0pt{\hss $a$\hss}}%
    \graphtemp=.5ex\advance\graphtemp by 0.225in
    \rlap{\kern 4.000in\lower\graphtemp\hbox to 0pt{\hss $c$\hss}}%
    \graphtemp=.5ex\advance\graphtemp by 0.479in
    \rlap{\kern 3.929in\lower\graphtemp\hbox to 0pt{\hss $b$\hss}}%
    \graphtemp=.5ex\advance\graphtemp by 0.771in
    \rlap{\kern 3.929in\lower\graphtemp\hbox to 0pt{\hss $b'$\hss}}%
    \graphtemp=.5ex\advance\graphtemp by 0.900in
    \rlap{\kern 1.000in\lower\graphtemp\hbox to 0pt{\hss {\bf D}\hss}}%
    \graphtemp=.5ex\advance\graphtemp by 0.900in
    \rlap{\kern 3.500in\lower\graphtemp\hbox to 0pt{\hss {\bf E}\hss}}%
    \hbox{\vrule depth0.900in width0pt height 0pt}%
    \kern 4.500in
  }%
}%
}

\vspace{-1pc}
\caption{Cases {\bf D} and {\bf E} for Lemma~\ref{83red2}\label{fig2D}}
\end{figure}

\smallskip
\textbf{Case E:} {\it $v$ is a $\beta$-vertex with a $\beta'$-neighbor $v'$.}
Let $z$ be the $2$-neighbor of $v$, with $N_G(v)=\{x,z,v'\}$ and
$N_G(z)=\{y,v\}$.

If $d_G(v')\ge6$, then let $U$ be the set of $1$-neighbors of $v'$.  Set
$w(v')=2$ to guarantee satisfying $vv'$ (since now
$\rw{v'}{v}\ge 7>6\ge\rw{v}{v'}$).  Including $vv'$, the number
of remaining edges incident to $v'$ with unchosen weights equals the number of
edges from $v'$ to $N_G(v')-U$; by Lemma~\ref{choose}, we can choose these
weights to satisfy these edges.  The edges of $[v,U]$ are automatically
satisfied, regardless of the weights on $U$.  Finally, now that $w(vv')$ is
chosen, Lemma~\ref{beta2} allows us to complete $w$ to a proper
$2$-weighting of $G$.

We may therefore assume $d_G(v')=4$, as in Figure~\ref{fig2D}E.  Let  $u$ be
the $1$-neighbor of $v'$.  Let $G'=G-\{zv,vv',v'u\}$, leaving $v'x',v'x''\in
E(G')$.  Let $a=w'(zy)$, $b=w'(v'x')$, $b'=w'(v'x'')$, and $c=w'(vx)$.
The argument allows $y\in\{x',x''\}$.  Fix $w(u)=1$.

If $b+b'-c\geq 2$, then requiring $w(v)+w(zv)=3$ guarantees satisfying $v'v$.
Next choose $w(v'v)$ to satisfy $vx$, and choose $w(v')$ and $w(v'u)$ to satisfy
$v'x'$ and $v'x''$.  With $w(v)=3-w(zv)$, there are three choices for
$w(zv)+w(z)$, so we can choose $w(zv)$ and $w(z)$ with
$w(z)+w(zv)\ne \rp yz$ to satisfy $yz$ and $w(z)+a\ne 3-w(zv)+c+w(vv')$ to
satisfy $zv$.  We may therefore assume $b+b'-c\leq 1$.

If $c=2$ or $a=1$, then requiring $w(v)+w(vv')=3$ guarantees satisfying $zv$.
Now choose $w(zv)$ to satisfy $vx$ and then $w(z)$ to satisfy $zy$.
Finally, tentatively set $w(vv')=2$ and $w(v)=1$, and then choose $w(v')$
and $w(v'u)$ to satisfy $v'x'$ and $v'x''$.  If $vv'$ is not now satisfied,
then $w(v')+w(v'u)<4$.  Now exchange weights on $vv'$ and $v$ while increasing
$w(v')$ or $w(v'u)$ to satisfy $vv'$ and preserve the satisfaction of
$v'x'$ and $v'x''$.

Hence we may assume $c=1$ and $a=2$.  Since also $b+b'-c\leq 1$, we have
$b=b'=1$.  Tentatively set $w(v'u)=2$, and choose $w(v')$ and $w(vv')$ to
satisfy $v'x'$ and $v'x''$, with $w(v')\ge w(vv')$.  If $w(v')=2$, then 
$vv'$ is automatically satisfied (since $c=1$), and Lemma~\ref{beta2}
completes the extension to $w$.  If $w(v')=1$ and the application of
Lemma~\ref{beta2} produces $w(zv)=w(v)=2$, then $vv'$ is not satisfied.
In this case, $\rp{x'}{v'},\rp{x''}{v'}=\{6,7\}$, and changing $w(v'u)$ to
$1$ satisfies $vv'$ while still satisfying $v'x'$ and $v'x''$.

%
%

\smallskip
\textbf{Case F:} {\it $v$ is a $\beta'$-vertex of degree $4$ with
$\beta'$-neighbors $z$ and $z'$ of degree $4$.} Let $N_G(v)=\{x,u,z,z'\}$.  Let
$u,y,y'$ be the $1$-neighbors of $v,z,z'$, respectively.  (see
Figure~\ref{fig2F}).

{\bf Subcase 1:} {$zz'\in E(G)$}.  In this case, we have a triangle of
$\beta'$-vertices having degree $4$.  Let $G'=G-\{vz,vz',zz',vu,zy,z'y'\}$.
Let $t$ and $t'$ be the remaining neighbors of $z$ and $z'$, respectively.
By symmetry, we need only consider two cases: $w'(zt)\ne w'(z't')$, and the
case $w'(zt)=w'(z't')=w'(vx)=c$.

If $w'(zt)\ne w'(z't')$, then by symmetry we may assume $w'(zt)=1$ and
$w'(z't')=2$.  Set $w(zz')=w(z')=w(z'v)=2$ and $w(zv)=w(v)=1$ to ensure
satisfying $zz'$ and $z'v$.  Now choose $w(vu)$ to satisfy $\Gamma_{G'}(v)$ and
choose $w(z'y')$ to satisfy $\Gamma_{G'}(z')$.  Finally, choose $w(z)$ and
$w(zy)$ to satisfy $zv$ and $\Gamma_{G'}(z)$.

In the other case, let $w(vu)=w(v)=w(vz)=w(vz')=a$.  Choose $a$ to satisfy
$vx$.  Let $w(zz')=3-a$; this ensures satisfying $vz$ and $vz'$.  With
$w(z')$ arbitrary, choose $w(z'y')$ to satisfy $z't'$, and finally choose
$w(z)$ and $w(zy)$ to satisfy $zz'$ and $zt$.

\begin{figure}[h]
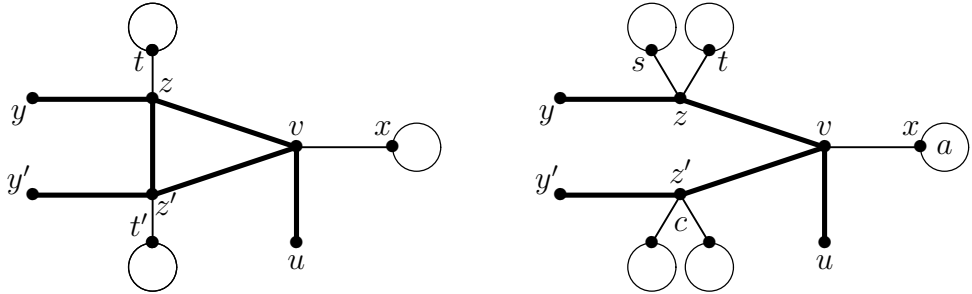

\gpic{
\expandafter\ifx\csname graph\endcsname\relax \csname newbox\endcsname\graph\fi
\expandafter\ifx\csname graphtemp\endcsname\relax \csname newdimen\endcsname\graphtemp\fi
\setbox\graph=\vtop{\vskip 0pt\hbox{%
    \graphtemp=.5ex\advance\graphtemp by 1.005in
    \rlap{\kern 0.101in\lower\graphtemp\hbox to 0pt{\hss $\bu$\hss}}%
    \graphtemp=.5ex\advance\graphtemp by 0.503in
    \rlap{\kern 0.101in\lower\graphtemp\hbox to 0pt{\hss $\bu$\hss}}%
    \graphtemp=.5ex\advance\graphtemp by 1.256in
    \rlap{\kern 0.729in\lower\graphtemp\hbox to 0pt{\hss $\bu$\hss}}%
    \graphtemp=.5ex\advance\graphtemp by 1.256in
    \rlap{\kern 0.729in\lower\graphtemp\hbox to 0pt{\hss $\bu$\hss}}%
    \graphtemp=.5ex\advance\graphtemp by 0.251in
    \rlap{\kern 0.729in\lower\graphtemp\hbox to 0pt{\hss $\bu$\hss}}%
    \graphtemp=.5ex\advance\graphtemp by 0.251in
    \rlap{\kern 0.729in\lower\graphtemp\hbox to 0pt{\hss $\bu$\hss}}%
    \graphtemp=.5ex\advance\graphtemp by 1.005in
    \rlap{\kern 0.729in\lower\graphtemp\hbox to 0pt{\hss $\bu$\hss}}%
    \graphtemp=.5ex\advance\graphtemp by 0.503in
    \rlap{\kern 0.729in\lower\graphtemp\hbox to 0pt{\hss $\bu$\hss}}%
    \graphtemp=.5ex\advance\graphtemp by 0.754in
    \rlap{\kern 1.482in\lower\graphtemp\hbox to 0pt{\hss $\bu$\hss}}%
    \special{pn 28}%
    \special{pa 101 1005}%
    \special{pa 729 1005}%
    \special{fp}%
    \special{pa 729 1005}%
    \special{pa 1482 754}%
    \special{fp}%
    \special{pa 1482 754}%
    \special{pa 729 503}%
    \special{fp}%
    \special{pa 729 503}%
    \special{pa 101 503}%
    \special{fp}%
    \special{pn 11}%
    \special{pa 729 1256}%
    \special{pa 729 1005}%
    \special{fp}%
    \special{pa 729 251}%
    \special{pa 729 503}%
    \special{fp}%
    \special{pn 28}%
    \special{pa 729 1005}%
    \special{pa 729 503}%
    \special{fp}%
    \graphtemp=.5ex\advance\graphtemp by 0.934in
    \rlap{\kern 0.029in\lower\graphtemp\hbox to 0pt{\hss $y'$\hss}}%
    \graphtemp=.5ex\advance\graphtemp by 0.574in
    \rlap{\kern 0.029in\lower\graphtemp\hbox to 0pt{\hss $y$\hss}}%
    \graphtemp=.5ex\advance\graphtemp by 1.076in
    \rlap{\kern 0.800in\lower\graphtemp\hbox to 0pt{\hss $z'$\hss}}%
    \graphtemp=.5ex\advance\graphtemp by 0.431in
    \rlap{\kern 0.800in\lower\graphtemp\hbox to 0pt{\hss $z$\hss}}%
    \graphtemp=.5ex\advance\graphtemp by 0.653in
    \rlap{\kern 1.482in\lower\graphtemp\hbox to 0pt{\hss $v$\hss}}%
    \special{pn 8}%
    \special{ar 729 1382 126 126 0 6.28319}%
    \special{ar 729 1382 126 126 0 6.28319}%
    \special{ar 729 126 126 126 0 6.28319}%
    \special{ar 729 126 126 126 0 6.28319}%
    \graphtemp=.5ex\advance\graphtemp by 1.256in
    \rlap{\kern 1.482in\lower\graphtemp\hbox to 0pt{\hss $\bu$\hss}}%
    \graphtemp=.5ex\advance\graphtemp by 0.754in
    \rlap{\kern 1.985in\lower\graphtemp\hbox to 0pt{\hss $\bu$\hss}}%
    \special{pn 28}%
    \special{pa 1482 1256}%
    \special{pa 1482 754}%
    \special{fp}%
    \special{pn 11}%
    \special{pa 1985 754}%
    \special{pa 1482 754}%
    \special{fp}%
    \special{pn 8}%
    \special{ar 2111 754 126 126 0 6.28319}%
    \graphtemp=.5ex\advance\graphtemp by 1.357in
    \rlap{\kern 1.482in\lower\graphtemp\hbox to 0pt{\hss $u$\hss}}%
    \graphtemp=.5ex\advance\graphtemp by 0.653in
    \rlap{\kern 1.935in\lower\graphtemp\hbox to 0pt{\hss $x$\hss}}%
    \graphtemp=.5ex\advance\graphtemp by 0.754in
    \rlap{\kern 2.111in\lower\graphtemp\hbox to 0pt{\hss $~$\hss}}%
    \graphtemp=.5ex\advance\graphtemp by 0.322in
    \rlap{\kern 0.658in\lower\graphtemp\hbox to 0pt{\hss $t$\hss}}%
    \graphtemp=.5ex\advance\graphtemp by 0.322in
    \rlap{\kern 0.800in\lower\graphtemp\hbox to 0pt{\hss $~$\hss}}%
    \graphtemp=.5ex\advance\graphtemp by 1.185in
    \rlap{\kern 0.658in\lower\graphtemp\hbox to 0pt{\hss $t'$\hss}}%
    \graphtemp=.5ex\advance\graphtemp by 1.005in
    \rlap{\kern 2.864in\lower\graphtemp\hbox to 0pt{\hss $\bu$\hss}}%
    \graphtemp=.5ex\advance\graphtemp by 0.503in
    \rlap{\kern 2.864in\lower\graphtemp\hbox to 0pt{\hss $\bu$\hss}}%
    \graphtemp=.5ex\advance\graphtemp by 1.256in
    \rlap{\kern 3.342in\lower\graphtemp\hbox to 0pt{\hss $\bu$\hss}}%
    \graphtemp=.5ex\advance\graphtemp by 1.256in
    \rlap{\kern 3.643in\lower\graphtemp\hbox to 0pt{\hss $\bu$\hss}}%
    \graphtemp=.5ex\advance\graphtemp by 0.251in
    \rlap{\kern 3.342in\lower\graphtemp\hbox to 0pt{\hss $\bu$\hss}}%
    \graphtemp=.5ex\advance\graphtemp by 0.251in
    \rlap{\kern 3.643in\lower\graphtemp\hbox to 0pt{\hss $\bu$\hss}}%
    \graphtemp=.5ex\advance\graphtemp by 1.005in
    \rlap{\kern 3.492in\lower\graphtemp\hbox to 0pt{\hss $\bu$\hss}}%
    \graphtemp=.5ex\advance\graphtemp by 0.503in
    \rlap{\kern 3.492in\lower\graphtemp\hbox to 0pt{\hss $\bu$\hss}}%
    \graphtemp=.5ex\advance\graphtemp by 0.754in
    \rlap{\kern 4.246in\lower\graphtemp\hbox to 0pt{\hss $\bu$\hss}}%
    \special{pn 28}%
    \special{pa 2864 1005}%
    \special{pa 3492 1005}%
    \special{fp}%
    \special{pa 3492 1005}%
    \special{pa 4246 754}%
    \special{fp}%
    \special{pa 4246 754}%
    \special{pa 3492 503}%
    \special{fp}%
    \special{pa 3492 503}%
    \special{pa 2864 503}%
    \special{fp}%
    \special{pn 11}%
    \special{pa 3342 1256}%
    \special{pa 3492 1005}%
    \special{fp}%
    \special{pa 3492 1005}%
    \special{pa 3643 1256}%
    \special{fp}%
    \special{pa 3342 251}%
    \special{pa 3492 503}%
    \special{fp}%
    \special{pa 3492 503}%
    \special{pa 3643 251}%
    \special{fp}%
    \graphtemp=.5ex\advance\graphtemp by 0.934in
    \rlap{\kern 2.793in\lower\graphtemp\hbox to 0pt{\hss $y'$\hss}}%
    \graphtemp=.5ex\advance\graphtemp by 0.574in
    \rlap{\kern 2.793in\lower\graphtemp\hbox to 0pt{\hss $y$\hss}}%
    \graphtemp=.5ex\advance\graphtemp by 0.905in
    \rlap{\kern 3.492in\lower\graphtemp\hbox to 0pt{\hss $z'$\hss}}%
    \graphtemp=.5ex\advance\graphtemp by 0.603in
    \rlap{\kern 3.492in\lower\graphtemp\hbox to 0pt{\hss $z$\hss}}%
    \graphtemp=.5ex\advance\graphtemp by 0.653in
    \rlap{\kern 4.246in\lower\graphtemp\hbox to 0pt{\hss $v$\hss}}%
    \special{pn 8}%
    \special{ar 3342 1382 126 126 0 6.28319}%
    \special{ar 3643 1382 126 126 0 6.28319}%
    \special{ar 3342 126 126 126 0 6.28319}%
    \special{ar 3643 126 126 126 0 6.28319}%
    \graphtemp=.5ex\advance\graphtemp by 1.256in
    \rlap{\kern 4.246in\lower\graphtemp\hbox to 0pt{\hss $\bu$\hss}}%
    \graphtemp=.5ex\advance\graphtemp by 0.754in
    \rlap{\kern 4.749in\lower\graphtemp\hbox to 0pt{\hss $\bu$\hss}}%
    \special{pn 28}%
    \special{pa 4246 1256}%
    \special{pa 4246 754}%
    \special{fp}%
    \special{pn 11}%
    \special{pa 4749 754}%
    \special{pa 4246 754}%
    \special{fp}%
    \special{pn 8}%
    \special{ar 4874 754 126 126 0 6.28319}%
    \graphtemp=.5ex\advance\graphtemp by 1.357in
    \rlap{\kern 4.246in\lower\graphtemp\hbox to 0pt{\hss $u$\hss}}%
    \graphtemp=.5ex\advance\graphtemp by 0.653in
    \rlap{\kern 4.698in\lower\graphtemp\hbox to 0pt{\hss $x$\hss}}%
    \graphtemp=.5ex\advance\graphtemp by 0.754in
    \rlap{\kern 4.874in\lower\graphtemp\hbox to 0pt{\hss $a$\hss}}%
    \graphtemp=.5ex\advance\graphtemp by 0.322in
    \rlap{\kern 3.271in\lower\graphtemp\hbox to 0pt{\hss $s$\hss}}%
    \graphtemp=.5ex\advance\graphtemp by 0.322in
    \rlap{\kern 3.714in\lower\graphtemp\hbox to 0pt{\hss $t$\hss}}%
    \graphtemp=.5ex\advance\graphtemp by 1.156in
    \rlap{\kern 3.492in\lower\graphtemp\hbox to 0pt{\hss $c$\hss}}%
    \hbox{\vrule depth1.508in width0pt height 0pt}%
    \kern 5.000in
  }%
}%
}
\caption{Cases {\bf F1} and {\bf F2} for Lemma~\ref{83red2}\label{fig2F}}
\end{figure}

{\bf Subcase 2:} {\it $zz'\notin E(G)$}.  Let $G'=G-\{yz,zv,vu,vz',z'y'\}$.
Let $a=\rp{x}{v}$.  If $a\ne 6$, then requiring $w(v)+w(vz')=3$ and
$w(vz)+w(vu)=3$ satisfies $xv$, with $w(vz')$ and $w(vz)$ still choosable
freely.  Choose $w(zy)$, $w(z)$, and $w(vz)$ so that their sum avoids
$\{\rp{s}{z},\rp{t}{z}\}$, where $\{s,t\}=N_{G'}(z)$, and so that
$w(zy)+w(z)+w'(sz)+w'(tz)\ne 3-w(vz)+w(v)+w(vz')+w'(vx)$ (to satisfy $vz$).
Since $w(v)+w(vz')=3$, there are three constants for $w(zy)+w(z)+w(vz)$ to
avoid, so such a choice exists.  Finally, choose $w(vz')$, $w(z')$, and
$w(z'y')$ to satisfy $vz$ and $\Gamma_{G'}(z)$, again making their sum avoid
three known values.

Hence we may assume $a=6$.  Now choosing $w(v)=w(vu)=w(vz)$ guarantees
satisfying $vx$; let $b$ denote the value to be chosen for them.
Let $c$ be the total weight assigned by $w'$ to $\Gamma_{G'}(z')$.
If $c=2$, or if $c=3$ and $w(vx)=2$, then let $b=2$.  Otherwise, let $b=1$.
In either case, $vz'$ is guaranteed to be satisfied.  Finally, choose $w(z)$
and $w(zy)$ to satisfy $zs$ and $zt$, choose $w(vz')$ to satisfy $vz$, and 
choose $w(z')$ and $w(z'y')$ to satisfy $\Gamma_{G'}(z')$.

\smallskip
\textbf{Case G:} {\it $v$ is a $3$-vertex with neighbors $z_1,z_2,z_3$ (where
$d_G(z_1)\ge d_G(z_2)\ge d_G(z_3)$) such that each is a $\beta$-vertex or is a
$\beta'$-vertex of degree $4$.}
For $i\in\{1,2,3\}$, let $y_i$ be the neighbor of $z_i$ with degree
$5-d_G(z_i)$.  When $d_G(z_i)=3$, let $x_i$ be the other neighbor of $z_i$ and
let $y'_i$ be the other neighbor of $y_i$.  When $d_G(z_i)=4$, let
$\{x_i,x'_i\}=N_G(z_i)-\{v,y_i\}$.

We first reduce to the case where $N_G(v)$ is independent.  Adjacent
$\beta$-vertices are forbidden by $\bD$.  Adjacent $\beta$- and
$\beta'$-vertices are forbidden by $\bE$.

The third possibility is that $z$ and $z'$ are adjacent $\beta'$-vertices
having a common $3$-neighbor $v$.  The situation is illustrated by deleting $u$
from the left graph in Figure~\ref{fig2F}.  Label the vertices as described
there, with $G'=G-\{vz,vz',zz',zy,z'y'\}$.  If $w'(zt)=2$ or $w'(vx)=1$, then
set $w(vz)=w(vz')=1$ and $w(z')=w(zz')=2$ to ensure satisfying $vz$ and $vz'$.
Choose $w(v)$ to satisfy $vx$, choose $w(z'y')$ to satisfy $z't'$, and
choose $w(z)$ and $w(zy)$ to satisfy $zt$ and $zz'$.

By symmetry, we may now assume $w'(zt)=w'(z't')=1$ and $w'(vx)=2$.  Also,
$w'(x)=2$, or we can switch the weights on $x$ and $vx$ to reach the case just
discussed.  Now, using $d(x)\ge3$ (since $x$ is a $\beta$- or $\beta'$-neighbor
of $v$), we have $\rp xv\ge4$.  Now set $w(zv)=w(v)=w(vz')=1$ and $w(zz')=2$
to satisfy $vx$ and ensure satisfying $vz$ and $vz'$.  Finally, set
$w(z')=1$, choose $w(z'y')$ to satisfy $z't'$, and choose $w(z)$ and $w(zy)$ to
satisfy $zt$ and $zz'$.

Hence we may assume that $N_G(v)$ is independent.  If two $\beta$-vertices in
$N_G(v)$ have a common $2$-neighbor, say $z_1$ and $z_2$ with common
$2$-neighbor $y$, then let $G'=G-\{vz_1,vz_2,z_1y,z_2y\}$.  Set
$w(vz_1)=w(vz_2)=2$ and $w(y)=1$ to ensure satisfying $z_1y$ and $z_2y$.
Now choose $w(v)$ to satisfy $vz_3$, choose $w(z_1)$ and $w(z_1y)$ to satisfy
$vz_1$ and $\Gamma_{G'}(z_1)$, and choose $w(z_2)$ and $w(z_2y)$ to satisfy
$vz_2$ and $\Gamma_{G'}(z_2)$.

Now $N_G(v)$ independent and the $2$-neighbors of $\beta$-neighbors of $v$ are
distinct.  The remaining cases are shown in Figure~\ref{fig2G}.  The argument
does not require the vertices on circles to be distinct.  Let Subcase $j$ be
the situation where $j$ neighbors of $v$ are $\beta'$-vertices.  In each
subcase, the deleted core consists of $\Gamma_G(v)$ and
$\{z_1y_1,z_2y_2,z_3y_3\}$.  To obtain $w$ from $w'$, we must satisfy these six
edges and six additional edges incident to them.  We have the freedom to choose
weights on the six deleted edges and their seven incident vertices.

We define operation $S_i$ to satisfy the edges in the $i$th ``branch'' when
$w(vz_i)$ has been specified.  If $d_G(z_i)=3$, then $S_i$ uses
Lemma~\ref{beta2} to choose $w(z_i)$, $w(z_iy_i)$, and $w(y_i)$ (plus possible
changes to weights on $y_iy'_i$ and $y'_i$ but not on $z_ix_i$ or $z_iv$) so
that $z_ix_i$, $z_iy_i$, and $y_iy'_i$ become satisfied.  If $d_G(z_i)=4$, then
$S_i$ chooses $w(z_i)$ and $w(z_iy_i)$ to satisfy $z_ix_i$ and $z_ix_i'$.
(When $d_G(z_i)=4$, automatically $z_iy_i$ is satisfied, and $w(y_i)$ is
irrelevant.)

\begin{figure}[h]
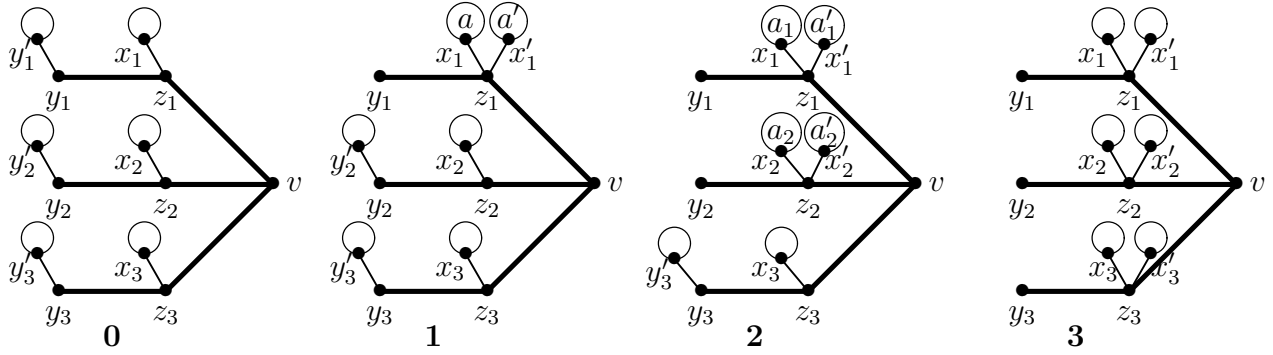

\gpic{
\expandafter\ifx\csname graph\endcsname\relax \csname newbox\endcsname\graph\fi
\expandafter\ifx\csname graphtemp\endcsname\relax \csname newdimen\endcsname\graphtemp\fi
\setbox\graph=\vtop{\vskip 0pt\hbox{%
    \graphtemp=.5ex\advance\graphtemp by 1.527in
    \rlap{\kern 0.224in\lower\graphtemp\hbox to 0pt{\hss $\bu$\hss}}%
    \graphtemp=.5ex\advance\graphtemp by 0.967in
    \rlap{\kern 0.224in\lower\graphtemp\hbox to 0pt{\hss $\bu$\hss}}%
    \graphtemp=.5ex\advance\graphtemp by 0.406in
    \rlap{\kern 0.224in\lower\graphtemp\hbox to 0pt{\hss $\bu$\hss}}%
    \graphtemp=.5ex\advance\graphtemp by 1.527in
    \rlap{\kern 0.784in\lower\graphtemp\hbox to 0pt{\hss $\bu$\hss}}%
    \graphtemp=.5ex\advance\graphtemp by 0.967in
    \rlap{\kern 0.784in\lower\graphtemp\hbox to 0pt{\hss $\bu$\hss}}%
    \graphtemp=.5ex\advance\graphtemp by 0.406in
    \rlap{\kern 0.784in\lower\graphtemp\hbox to 0pt{\hss $\bu$\hss}}%
    \graphtemp=.5ex\advance\graphtemp by 0.967in
    \rlap{\kern 1.345in\lower\graphtemp\hbox to 0pt{\hss $\bu$\hss}}%
    \special{pn 28}%
    \special{pa 224 1527}%
    \special{pa 784 1527}%
    \special{fp}%
    \special{pa 224 967}%
    \special{pa 784 967}%
    \special{fp}%
    \special{pa 224 406}%
    \special{pa 784 406}%
    \special{fp}%
    \special{pa 1345 967}%
    \special{pa 784 406}%
    \special{fp}%
    \special{pa 1345 967}%
    \special{pa 784 1527}%
    \special{fp}%
    \special{pa 1345 967}%
    \special{pa 784 967}%
    \special{fp}%
    \graphtemp=.5ex\advance\graphtemp by 1.331in
    \rlap{\kern 0.112in\lower\graphtemp\hbox to 0pt{\hss $\bu$\hss}}%
    \graphtemp=.5ex\advance\graphtemp by 1.331in
    \rlap{\kern 0.672in\lower\graphtemp\hbox to 0pt{\hss $\bu$\hss}}%
    \graphtemp=.5ex\advance\graphtemp by 0.770in
    \rlap{\kern 0.112in\lower\graphtemp\hbox to 0pt{\hss $\bu$\hss}}%
    \graphtemp=.5ex\advance\graphtemp by 0.770in
    \rlap{\kern 0.672in\lower\graphtemp\hbox to 0pt{\hss $\bu$\hss}}%
    \graphtemp=.5ex\advance\graphtemp by 0.210in
    \rlap{\kern 0.112in\lower\graphtemp\hbox to 0pt{\hss $\bu$\hss}}%
    \graphtemp=.5ex\advance\graphtemp by 0.210in
    \rlap{\kern 0.672in\lower\graphtemp\hbox to 0pt{\hss $\bu$\hss}}%
    \special{pn 11}%
    \special{pa 112 1331}%
    \special{pa 224 1527}%
    \special{fp}%
    \special{pa 784 1527}%
    \special{pa 672 1331}%
    \special{fp}%
    \special{pa 112 770}%
    \special{pa 224 967}%
    \special{fp}%
    \special{pa 784 967}%
    \special{pa 672 770}%
    \special{fp}%
    \special{pa 112 210}%
    \special{pa 224 406}%
    \special{fp}%
    \special{pa 784 406}%
    \special{pa 672 210}%
    \special{fp}%
    \special{pn 8}%
    \special{ar 112 1247 84 84 0 6.28319}%
    \special{ar 672 1247 84 84 0 6.28319}%
    \special{ar 112 686 84 84 0 6.28319}%
    \special{ar 672 686 84 84 0 6.28319}%
    \special{ar 112 126 84 84 0 6.28319}%
    \special{ar 672 126 84 84 0 6.28319}%
    \graphtemp=.5ex\advance\graphtemp by 1.639in
    \rlap{\kern 0.224in\lower\graphtemp\hbox to 0pt{\hss $y_3$\hss}}%
    \graphtemp=.5ex\advance\graphtemp by 1.079in
    \rlap{\kern 0.224in\lower\graphtemp\hbox to 0pt{\hss $y_2$\hss}}%
    \graphtemp=.5ex\advance\graphtemp by 0.518in
    \rlap{\kern 0.224in\lower\graphtemp\hbox to 0pt{\hss $y_1$\hss}}%
    \graphtemp=.5ex\advance\graphtemp by 0.967in
    \rlap{\kern 1.457in\lower\graphtemp\hbox to 0pt{\hss $v$\hss}}%
    \graphtemp=.5ex\advance\graphtemp by 1.639in
    \rlap{\kern 0.784in\lower\graphtemp\hbox to 0pt{\hss $z_3$\hss}}%
    \graphtemp=.5ex\advance\graphtemp by 1.079in
    \rlap{\kern 0.784in\lower\graphtemp\hbox to 0pt{\hss $z_2$\hss}}%
    \graphtemp=.5ex\advance\graphtemp by 0.518in
    \rlap{\kern 0.784in\lower\graphtemp\hbox to 0pt{\hss $z_1$\hss}}%
    \graphtemp=.5ex\advance\graphtemp by 1.779in
    \rlap{\kern 0.504in\lower\graphtemp\hbox to 0pt{\hss {\bf 0}\hss}}%
    \graphtemp=.5ex\advance\graphtemp by 1.410in
    \rlap{\kern 0.033in\lower\graphtemp\hbox to 0pt{\hss $y'_3$\hss}}%
    \graphtemp=.5ex\advance\graphtemp by 0.850in
    \rlap{\kern 0.033in\lower\graphtemp\hbox to 0pt{\hss $y'_2$\hss}}%
    \graphtemp=.5ex\advance\graphtemp by 0.289in
    \rlap{\kern 0.033in\lower\graphtemp\hbox to 0pt{\hss $y'_1$\hss}}%
    \graphtemp=.5ex\advance\graphtemp by 1.410in
    \rlap{\kern 0.593in\lower\graphtemp\hbox to 0pt{\hss $x_3$\hss}}%
    \graphtemp=.5ex\advance\graphtemp by 0.850in
    \rlap{\kern 0.593in\lower\graphtemp\hbox to 0pt{\hss $x_2$\hss}}%
    \graphtemp=.5ex\advance\graphtemp by 0.289in
    \rlap{\kern 0.593in\lower\graphtemp\hbox to 0pt{\hss $x_1$\hss}}%
    \graphtemp=.5ex\advance\graphtemp by 1.527in
    \rlap{\kern 1.905in\lower\graphtemp\hbox to 0pt{\hss $\bu$\hss}}%
    \graphtemp=.5ex\advance\graphtemp by 0.967in
    \rlap{\kern 1.905in\lower\graphtemp\hbox to 0pt{\hss $\bu$\hss}}%
    \graphtemp=.5ex\advance\graphtemp by 0.406in
    \rlap{\kern 1.905in\lower\graphtemp\hbox to 0pt{\hss $\bu$\hss}}%
    \graphtemp=.5ex\advance\graphtemp by 1.527in
    \rlap{\kern 2.466in\lower\graphtemp\hbox to 0pt{\hss $\bu$\hss}}%
    \graphtemp=.5ex\advance\graphtemp by 0.967in
    \rlap{\kern 2.466in\lower\graphtemp\hbox to 0pt{\hss $\bu$\hss}}%
    \graphtemp=.5ex\advance\graphtemp by 0.406in
    \rlap{\kern 2.466in\lower\graphtemp\hbox to 0pt{\hss $\bu$\hss}}%
    \graphtemp=.5ex\advance\graphtemp by 0.967in
    \rlap{\kern 3.026in\lower\graphtemp\hbox to 0pt{\hss $\bu$\hss}}%
    \special{pn 28}%
    \special{pa 1905 1527}%
    \special{pa 2466 1527}%
    \special{fp}%
    \special{pa 1905 967}%
    \special{pa 2466 967}%
    \special{fp}%
    \special{pa 1905 406}%
    \special{pa 2466 406}%
    \special{fp}%
    \special{pa 3026 967}%
    \special{pa 2466 406}%
    \special{fp}%
    \special{pa 3026 967}%
    \special{pa 2466 1527}%
    \special{fp}%
    \special{pa 3026 967}%
    \special{pa 2466 967}%
    \special{fp}%
    \graphtemp=.5ex\advance\graphtemp by 1.331in
    \rlap{\kern 1.793in\lower\graphtemp\hbox to 0pt{\hss $\bu$\hss}}%
    \graphtemp=.5ex\advance\graphtemp by 1.331in
    \rlap{\kern 2.353in\lower\graphtemp\hbox to 0pt{\hss $\bu$\hss}}%
    \graphtemp=.5ex\advance\graphtemp by 0.770in
    \rlap{\kern 1.793in\lower\graphtemp\hbox to 0pt{\hss $\bu$\hss}}%
    \graphtemp=.5ex\advance\graphtemp by 0.770in
    \rlap{\kern 2.353in\lower\graphtemp\hbox to 0pt{\hss $\bu$\hss}}%
    \graphtemp=.5ex\advance\graphtemp by 0.210in
    \rlap{\kern 2.578in\lower\graphtemp\hbox to 0pt{\hss $\bu$\hss}}%
    \graphtemp=.5ex\advance\graphtemp by 0.210in
    \rlap{\kern 2.353in\lower\graphtemp\hbox to 0pt{\hss $\bu$\hss}}%
    \special{pn 11}%
    \special{pa 1793 1331}%
    \special{pa 1905 1527}%
    \special{fp}%
    \special{pa 2466 1527}%
    \special{pa 2353 1331}%
    \special{fp}%
    \special{pa 1793 770}%
    \special{pa 1905 967}%
    \special{fp}%
    \special{pa 2466 967}%
    \special{pa 2353 770}%
    \special{fp}%
    \special{pa 2578 210}%
    \special{pa 2466 406}%
    \special{fp}%
    \special{pa 2466 406}%
    \special{pa 2353 210}%
    \special{fp}%
    \special{pn 8}%
    \special{ar 1793 1247 84 84 0 6.28319}%
    \special{ar 2353 1247 84 84 0 6.28319}%
    \special{ar 1793 686 84 84 0 6.28319}%
    \special{ar 2353 686 84 84 0 6.28319}%
    \special{ar 2578 112 98 98 0 6.28319}%
    \special{ar 2353 112 98 98 0 6.28319}%
    \graphtemp=.5ex\advance\graphtemp by 0.112in
    \rlap{\kern 2.353in\lower\graphtemp\hbox to 0pt{\hss $a$\hss}}%
    \graphtemp=.5ex\advance\graphtemp by 0.112in
    \rlap{\kern 2.578in\lower\graphtemp\hbox to 0pt{\hss $a'$\hss}}%
    \graphtemp=.5ex\advance\graphtemp by 1.639in
    \rlap{\kern 1.905in\lower\graphtemp\hbox to 0pt{\hss $y_3$\hss}}%
    \graphtemp=.5ex\advance\graphtemp by 1.079in
    \rlap{\kern 1.905in\lower\graphtemp\hbox to 0pt{\hss $y_2$\hss}}%
    \graphtemp=.5ex\advance\graphtemp by 0.518in
    \rlap{\kern 1.905in\lower\graphtemp\hbox to 0pt{\hss $y_1$\hss}}%
    \graphtemp=.5ex\advance\graphtemp by 0.967in
    \rlap{\kern 3.138in\lower\graphtemp\hbox to 0pt{\hss $v$\hss}}%
    \graphtemp=.5ex\advance\graphtemp by 1.639in
    \rlap{\kern 2.466in\lower\graphtemp\hbox to 0pt{\hss $z_3$\hss}}%
    \graphtemp=.5ex\advance\graphtemp by 1.079in
    \rlap{\kern 2.466in\lower\graphtemp\hbox to 0pt{\hss $z_2$\hss}}%
    \graphtemp=.5ex\advance\graphtemp by 0.518in
    \rlap{\kern 2.466in\lower\graphtemp\hbox to 0pt{\hss $z_1$\hss}}%
    \graphtemp=.5ex\advance\graphtemp by 1.779in
    \rlap{\kern 2.185in\lower\graphtemp\hbox to 0pt{\hss {\bf 1}\hss}}%
    \graphtemp=.5ex\advance\graphtemp by 1.410in
    \rlap{\kern 1.714in\lower\graphtemp\hbox to 0pt{\hss $y'_3$\hss}}%
    \graphtemp=.5ex\advance\graphtemp by 0.850in
    \rlap{\kern 1.714in\lower\graphtemp\hbox to 0pt{\hss $y'_2$\hss}}%
    \graphtemp=.5ex\advance\graphtemp by 0.289in
    \rlap{\kern 2.657in\lower\graphtemp\hbox to 0pt{\hss $x'_1$\hss}}%
    \graphtemp=.5ex\advance\graphtemp by 1.410in
    \rlap{\kern 2.274in\lower\graphtemp\hbox to 0pt{\hss $x_3$\hss}}%
    \graphtemp=.5ex\advance\graphtemp by 0.850in
    \rlap{\kern 2.274in\lower\graphtemp\hbox to 0pt{\hss $x_2$\hss}}%
    \graphtemp=.5ex\advance\graphtemp by 0.289in
    \rlap{\kern 2.274in\lower\graphtemp\hbox to 0pt{\hss $x_1$\hss}}%
    \graphtemp=.5ex\advance\graphtemp by 1.527in
    \rlap{\kern 3.586in\lower\graphtemp\hbox to 0pt{\hss $\bu$\hss}}%
    \graphtemp=.5ex\advance\graphtemp by 0.967in
    \rlap{\kern 3.586in\lower\graphtemp\hbox to 0pt{\hss $\bu$\hss}}%
    \graphtemp=.5ex\advance\graphtemp by 0.406in
    \rlap{\kern 3.586in\lower\graphtemp\hbox to 0pt{\hss $\bu$\hss}}%
    \graphtemp=.5ex\advance\graphtemp by 1.527in
    \rlap{\kern 4.147in\lower\graphtemp\hbox to 0pt{\hss $\bu$\hss}}%
    \graphtemp=.5ex\advance\graphtemp by 0.967in
    \rlap{\kern 4.147in\lower\graphtemp\hbox to 0pt{\hss $\bu$\hss}}%
    \graphtemp=.5ex\advance\graphtemp by 0.406in
    \rlap{\kern 4.147in\lower\graphtemp\hbox to 0pt{\hss $\bu$\hss}}%
    \graphtemp=.5ex\advance\graphtemp by 0.967in
    \rlap{\kern 4.707in\lower\graphtemp\hbox to 0pt{\hss $\bu$\hss}}%
    \special{pn 28}%
    \special{pa 3586 1527}%
    \special{pa 4147 1527}%
    \special{fp}%
    \special{pa 3586 967}%
    \special{pa 4147 967}%
    \special{fp}%
    \special{pa 3586 406}%
    \special{pa 4147 406}%
    \special{fp}%
    \special{pa 4707 967}%
    \special{pa 4147 406}%
    \special{fp}%
    \special{pa 4707 967}%
    \special{pa 4147 1527}%
    \special{fp}%
    \special{pa 4707 967}%
    \special{pa 4147 967}%
    \special{fp}%
    \graphtemp=.5ex\advance\graphtemp by 1.359in
    \rlap{\kern 3.446in\lower\graphtemp\hbox to 0pt{\hss $\bu$\hss}}%
    \graphtemp=.5ex\advance\graphtemp by 1.359in
    \rlap{\kern 4.006in\lower\graphtemp\hbox to 0pt{\hss $\bu$\hss}}%
    \graphtemp=.5ex\advance\graphtemp by 0.798in
    \rlap{\kern 4.231in\lower\graphtemp\hbox to 0pt{\hss $\bu$\hss}}%
    \graphtemp=.5ex\advance\graphtemp by 0.798in
    \rlap{\kern 4.006in\lower\graphtemp\hbox to 0pt{\hss $\bu$\hss}}%
    \graphtemp=.5ex\advance\graphtemp by 0.238in
    \rlap{\kern 4.231in\lower\graphtemp\hbox to 0pt{\hss $\bu$\hss}}%
    \graphtemp=.5ex\advance\graphtemp by 0.238in
    \rlap{\kern 4.006in\lower\graphtemp\hbox to 0pt{\hss $\bu$\hss}}%
    \special{pn 11}%
    \special{pa 3446 1359}%
    \special{pa 3586 1527}%
    \special{fp}%
    \special{pa 4147 1527}%
    \special{pa 4006 1359}%
    \special{fp}%
    \special{pa 4231 798}%
    \special{pa 4147 967}%
    \special{fp}%
    \special{pa 4147 967}%
    \special{pa 4006 798}%
    \special{fp}%
    \special{pa 4231 238}%
    \special{pa 4147 406}%
    \special{fp}%
    \special{pa 4147 406}%
    \special{pa 4006 238}%
    \special{fp}%
    \special{pn 8}%
    \special{ar 3446 1275 84 84 0 6.28319}%
    \special{ar 4006 1275 84 84 0 6.28319}%
    \special{ar 4231 695 104 104 0 6.28319}%
    \special{ar 4006 695 104 104 0 6.28319}%
    \special{ar 4231 134 104 104 0 6.28319}%
    \special{ar 4006 134 104 104 0 6.28319}%
    \graphtemp=.5ex\advance\graphtemp by 0.134in
    \rlap{\kern 4.006in\lower\graphtemp\hbox to 0pt{\hss $a_1$\hss}}%
    \graphtemp=.5ex\advance\graphtemp by 0.134in
    \rlap{\kern 4.231in\lower\graphtemp\hbox to 0pt{\hss $a_1'$\hss}}%
    \graphtemp=.5ex\advance\graphtemp by 0.695in
    \rlap{\kern 4.006in\lower\graphtemp\hbox to 0pt{\hss $a_2$\hss}}%
    \graphtemp=.5ex\advance\graphtemp by 0.695in
    \rlap{\kern 4.231in\lower\graphtemp\hbox to 0pt{\hss $a_2'$\hss}}%
    \graphtemp=.5ex\advance\graphtemp by 1.639in
    \rlap{\kern 3.586in\lower\graphtemp\hbox to 0pt{\hss $y_3$\hss}}%
    \graphtemp=.5ex\advance\graphtemp by 1.079in
    \rlap{\kern 3.586in\lower\graphtemp\hbox to 0pt{\hss $y_2$\hss}}%
    \graphtemp=.5ex\advance\graphtemp by 0.518in
    \rlap{\kern 3.586in\lower\graphtemp\hbox to 0pt{\hss $y_1$\hss}}%
    \graphtemp=.5ex\advance\graphtemp by 0.967in
    \rlap{\kern 4.819in\lower\graphtemp\hbox to 0pt{\hss $v$\hss}}%
    \graphtemp=.5ex\advance\graphtemp by 1.639in
    \rlap{\kern 4.147in\lower\graphtemp\hbox to 0pt{\hss $z_3$\hss}}%
    \graphtemp=.5ex\advance\graphtemp by 1.079in
    \rlap{\kern 4.147in\lower\graphtemp\hbox to 0pt{\hss $z_2$\hss}}%
    \graphtemp=.5ex\advance\graphtemp by 0.518in
    \rlap{\kern 4.147in\lower\graphtemp\hbox to 0pt{\hss $z_1$\hss}}%
    \graphtemp=.5ex\advance\graphtemp by 1.779in
    \rlap{\kern 3.866in\lower\graphtemp\hbox to 0pt{\hss {\bf 2}\hss}}%
    \graphtemp=.5ex\advance\graphtemp by 1.438in
    \rlap{\kern 3.367in\lower\graphtemp\hbox to 0pt{\hss $y'_3$\hss}}%
    \graphtemp=.5ex\advance\graphtemp by 0.850in
    \rlap{\kern 4.310in\lower\graphtemp\hbox to 0pt{\hss $x'_2$\hss}}%
    \graphtemp=.5ex\advance\graphtemp by 0.317in
    \rlap{\kern 4.310in\lower\graphtemp\hbox to 0pt{\hss $x'_1$\hss}}%
    \graphtemp=.5ex\advance\graphtemp by 1.410in
    \rlap{\kern 3.927in\lower\graphtemp\hbox to 0pt{\hss $x_3$\hss}}%
    \graphtemp=.5ex\advance\graphtemp by 0.850in
    \rlap{\kern 3.927in\lower\graphtemp\hbox to 0pt{\hss $x_2$\hss}}%
    \graphtemp=.5ex\advance\graphtemp by 0.289in
    \rlap{\kern 3.927in\lower\graphtemp\hbox to 0pt{\hss $x_1$\hss}}%
    \graphtemp=.5ex\advance\graphtemp by 1.527in
    \rlap{\kern 5.267in\lower\graphtemp\hbox to 0pt{\hss $\bu$\hss}}%
    \graphtemp=.5ex\advance\graphtemp by 0.967in
    \rlap{\kern 5.267in\lower\graphtemp\hbox to 0pt{\hss $\bu$\hss}}%
    \graphtemp=.5ex\advance\graphtemp by 0.406in
    \rlap{\kern 5.267in\lower\graphtemp\hbox to 0pt{\hss $\bu$\hss}}%
    \graphtemp=.5ex\advance\graphtemp by 1.527in
    \rlap{\kern 5.828in\lower\graphtemp\hbox to 0pt{\hss $\bu$\hss}}%
    \graphtemp=.5ex\advance\graphtemp by 0.967in
    \rlap{\kern 5.828in\lower\graphtemp\hbox to 0pt{\hss $\bu$\hss}}%
    \graphtemp=.5ex\advance\graphtemp by 0.406in
    \rlap{\kern 5.828in\lower\graphtemp\hbox to 0pt{\hss $\bu$\hss}}%
    \graphtemp=.5ex\advance\graphtemp by 0.967in
    \rlap{\kern 6.388in\lower\graphtemp\hbox to 0pt{\hss $\bu$\hss}}%
    \special{pn 28}%
    \special{pa 5267 1527}%
    \special{pa 5828 1527}%
    \special{fp}%
    \special{pa 5267 967}%
    \special{pa 5828 967}%
    \special{fp}%
    \special{pa 5267 406}%
    \special{pa 5828 406}%
    \special{fp}%
    \special{pa 6388 967}%
    \special{pa 5828 406}%
    \special{fp}%
    \special{pa 6388 967}%
    \special{pa 5828 1527}%
    \special{fp}%
    \special{pa 6388 967}%
    \special{pa 5828 967}%
    \special{fp}%
    \graphtemp=.5ex\advance\graphtemp by 1.331in
    \rlap{\kern 5.940in\lower\graphtemp\hbox to 0pt{\hss $\bu$\hss}}%
    \graphtemp=.5ex\advance\graphtemp by 1.331in
    \rlap{\kern 5.716in\lower\graphtemp\hbox to 0pt{\hss $\bu$\hss}}%
    \graphtemp=.5ex\advance\graphtemp by 0.770in
    \rlap{\kern 5.940in\lower\graphtemp\hbox to 0pt{\hss $\bu$\hss}}%
    \graphtemp=.5ex\advance\graphtemp by 0.770in
    \rlap{\kern 5.716in\lower\graphtemp\hbox to 0pt{\hss $\bu$\hss}}%
    \graphtemp=.5ex\advance\graphtemp by 0.210in
    \rlap{\kern 5.940in\lower\graphtemp\hbox to 0pt{\hss $\bu$\hss}}%
    \graphtemp=.5ex\advance\graphtemp by 0.210in
    \rlap{\kern 5.716in\lower\graphtemp\hbox to 0pt{\hss $\bu$\hss}}%
    \special{pn 11}%
    \special{pa 5940 1331}%
    \special{pa 5828 1527}%
    \special{fp}%
    \special{pa 5828 1527}%
    \special{pa 5716 1331}%
    \special{fp}%
    \special{pa 5940 770}%
    \special{pa 5828 967}%
    \special{fp}%
    \special{pa 5828 967}%
    \special{pa 5716 770}%
    \special{fp}%
    \special{pa 5940 210}%
    \special{pa 5828 406}%
    \special{fp}%
    \special{pa 5828 406}%
    \special{pa 5716 210}%
    \special{fp}%
    \special{pn 8}%
    \special{ar 5940 1247 84 84 0 6.28319}%
    \special{ar 5716 1247 84 84 0 6.28319}%
    \special{ar 5940 686 84 84 0 6.28319}%
    \special{ar 5716 686 84 84 0 6.28319}%
    \special{ar 5940 126 84 84 0 6.28319}%
    \special{ar 5716 126 84 84 0 6.28319}%
    \graphtemp=.5ex\advance\graphtemp by 1.639in
    \rlap{\kern 5.267in\lower\graphtemp\hbox to 0pt{\hss $y_3$\hss}}%
    \graphtemp=.5ex\advance\graphtemp by 1.079in
    \rlap{\kern 5.267in\lower\graphtemp\hbox to 0pt{\hss $y_2$\hss}}%
    \graphtemp=.5ex\advance\graphtemp by 0.518in
    \rlap{\kern 5.267in\lower\graphtemp\hbox to 0pt{\hss $y_1$\hss}}%
    \graphtemp=.5ex\advance\graphtemp by 0.967in
    \rlap{\kern 6.500in\lower\graphtemp\hbox to 0pt{\hss $v$\hss}}%
    \graphtemp=.5ex\advance\graphtemp by 1.639in
    \rlap{\kern 5.828in\lower\graphtemp\hbox to 0pt{\hss $z_3$\hss}}%
    \graphtemp=.5ex\advance\graphtemp by 1.079in
    \rlap{\kern 5.828in\lower\graphtemp\hbox to 0pt{\hss $z_2$\hss}}%
    \graphtemp=.5ex\advance\graphtemp by 0.518in
    \rlap{\kern 5.828in\lower\graphtemp\hbox to 0pt{\hss $z_1$\hss}}%
    \graphtemp=.5ex\advance\graphtemp by 1.779in
    \rlap{\kern 5.547in\lower\graphtemp\hbox to 0pt{\hss {\bf 3}\hss}}%
    \graphtemp=.5ex\advance\graphtemp by 1.410in
    \rlap{\kern 6.019in\lower\graphtemp\hbox to 0pt{\hss $x'_3$\hss}}%
    \graphtemp=.5ex\advance\graphtemp by 0.850in
    \rlap{\kern 6.019in\lower\graphtemp\hbox to 0pt{\hss $x'_2$\hss}}%
    \graphtemp=.5ex\advance\graphtemp by 0.289in
    \rlap{\kern 6.019in\lower\graphtemp\hbox to 0pt{\hss $x'_1$\hss}}%
    \graphtemp=.5ex\advance\graphtemp by 1.410in
    \rlap{\kern 5.692in\lower\graphtemp\hbox to 0pt{\hss $x_3$\hss}}%
    \graphtemp=.5ex\advance\graphtemp by 0.850in
    \rlap{\kern 5.636in\lower\graphtemp\hbox to 0pt{\hss $x_2$\hss}}%
    \graphtemp=.5ex\advance\graphtemp by 0.289in
    \rlap{\kern 5.636in\lower\graphtemp\hbox to 0pt{\hss $x_1$\hss}}%
    \hbox{\vrule depth1.779in width0pt height 0pt}%
    \kern 6.500in
  }%
}%
}
\vspace{-1pc}

\caption{Case {\bf G} for Lemma~\ref{83red2}\label{fig2G}}
\end{figure}

\textbf{Subcase 0:}
Set $w(v)=w(vz_1)=w(vz_2)=w(vz_3)=2$, and consider $i\in\{1,2,3\}$.
If $w'(x_iz_i)=1$, then $z_iv$ is automatically satisfied; apply $S_i$.
If $w'(x_iz_i)=2$, then $z_iy_i$ is automatically satisfied.  Set $w(z_i)=1$
to satisfy $z_iv$.  Choose $w(z_iy_i)$ to satisfy $z_ix_i$, and choose $w(y_i)$
to satisfy $y_iy'_i$.

\textbf{Subcase 1:} 
Let $a=\rp{x_1}{z_1}$ and $a'=\rp{x_1'}{z_1}$.  When $w'(z_1x_1)=w'(z_1x'_1)=1$
and $\{a,a'\}=\{6,7\}$, set $w(z_3v)=w(z_2v)=2$.  Otherwise, set
$w(z_3v)=w(z_2v)=1$.  With $w(z_3v)$ and $w(z_2v)$ fixed, apply $S_2$ and
$S_3$.  Now choose $w(v)$ and $w(vz_1)$ to satisfy $vz_2$ and $vz_3$, with
$w(v)\le w(vz_1)$.

If we have set $w(z_3v)=w(z_2v)=2$, then $w'(z_1x_1)=w'(z_1x_1')=1$;
satisfy $vz_1$ by setting $w(z_1)=w(z_1y_1)=1$.  Since $\{a,a'\}=\{6,7\}$,
this also satisfies $\Gamma_{G'}(z_1)$.

If we have set $w(z_3v)=w(z_2v)=1$, then $w'(z_1x_1)+w'(z_1x_1')\ge3$ or
$\{a,a'\}\ne\{6,7\}$.  In the first case, $vz_1$ is automatically satisfied;
apply $S_1$.  In the second, $w(z_3v)=w(z_2v)=w'(z_1x_1)=w'(z_1x'_1)=1$ and
$\{a,a'\}\ne\{6,7\}$; choose $b\in\{6,7\}-\{a,a'\}$.  If $w(v)=1$, then $vz_1$
is automatically satisfied; apply $S_1$.  Otherwise, $w(v)=w(vz_1)=2$, since we
chose $w(v)\le w(vz_1)$.  Now choose $w(z_1)$ and $w(z_1y_1)$ with sum $b-3$.
This satisfies $vz_1$ and $\Gamma_{G'}(z_1)$.

\textbf{Subcase 2:}
Set $w(z_3v)=1$.  With $w(z_3v)$ fixed, apply $S_3$.
If $w'(z_1x_1)=2$, then setting $w(z_1v)=w(v)=1$ ensures satisfying $z_1v$
and $z_2v$; choose $w(z_2v)$ to satisfy $vz_3$ and apply $S_1$ and $S_2$.
By symmetry, we may thus assume $w'(z_ix_i)=w'(z_ix'_i)=1$ for $i\in\{1,2\}$.

If $w(z_3)+w'(z_3x_3)+w(z_3y_3)>3$, then setting $w(v)=w(vz_2)=w(vz_1)=1$
satisfies $vz_3$ and ensures satisfying $vz_2$ and $vz_1$; apply $S_2$
and $S_1$.  Hence we may also assume $w(z_3)+w'(z_3x_3)+w(z_3y_3)=3$.
Let $a_i=\rp{x_i}{z_i}$ and $a'_i=\rp{x_i'}{z_i}$, for $i\in\{1,2\}$.

If  $\{a_1,a'_1\}\neq \{5,6\}$, then set $w(vz_1)=w(v)=1$ and $w(vz_2)=2$.
Now $vz_3$ is satisfied and $vz_2$ is automatically satisfied; apply $S_2$.
Choose $b\in\{5,6\}-\{a_1,a'_1\}$.  Choose $w(z_1)$ and $w(z_1y_1)$ summing to
$b-2$.  This satisfies $vz_1$ and $\Gamma_{G'}(z_1)$.

By symmetry, we may thus assume $\{a_1,a'_1\}=\{a_2,a'_2\}=\{5,6\}$.  Let
$w(v)=2$ and $w(vz_i)=w(z_i)=w(z_iy_i)=2$ for $i\in\{1,2\}$ to satisfy all
remaining edges.

\textbf{Subcase 3:}
Set $w(v)=w(vz_1)=w(vz_2)=w(vz_3)=1$ to guarantee satisfying each $vz_i$.  Now
for each $i$ choose $w(y_iz_i)$ and $w(z_i)$ to satisfy $z_ix_i$ and $z_ix'_i$.
\end{proof}

Case {\bf F} and Case {\bf G} in Lemma~\ref{83red2} can be generalized.
If $v$ in the former case or $z_i$ in the latter is a $\beta'$-vertex of
any even degree, then the configuration remains $2$-reducible.  We omit this
since it is not needed to prove the $1,2$-Conjecture for $\Mad(G)<8/3$; the
more restrictive configurations in the lemma complete an unavoidable set.

\begin{lem}\label{83disch2}
If $G$ has average degree less than $8/3$, then $G$ contains one of the
following:

{\bf A}. A $3^-$-vertex having a $1$-neighbor.

{\bf B}. A $4^-$-vertex having two $2^-$-neighbors.

{\bf C}. A $5^+$-vertex $v$ whose number of $2^-$-neighbors is at least
$(d_G(v)-1)/2$.

{\bf D}. Two adjacent $\beta$-vertices.

{\bf E}. A $\beta$-vertex with a $\beta'$-neighbor.

{\bf F.} A $\beta'$-vertex of degree 4 having two $\beta'$-neighbors of degree
$4$.

{\bf G.} A $3$-vertex such that each neighbor is a $\beta$-vertex or is a
$\beta'$-vertex of degree $4$.
\end{lem}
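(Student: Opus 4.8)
The plan is to argue by contradiction via discharging, in exact parallel with the proof of Lemma~\ref{52struct}. Suppose $G$ contains none of the configurations {\bf A}--{\bf G}; as usual let $\bZ$ abbreviate ``configuration {\bf Z} does not occur in $G$''. Give each vertex $v$ initial charge $d_G(v)$, and aim to redistribute charge so that the final charge $\mu(v)$ is at least $\FR83$ for every $v$; since the total charge is unchanged this forces the average degree of $G$ to be at least $\FR83$, a contradiction. First I would record the structural consequences of $\bA$--$\bC$ already exploited in Lemma~\ref{52struct}: by $\bA$ every $1$-vertex has a neighbor of degree at least $4$; by $\bB$ every $2$-vertex has at least one neighbor of degree at least $3$, and no path of three consecutive $2$-vertices exists (the middle one would be a $4^-$-vertex with two $2^-$-neighbors); and, crucially, a $3$-vertex adjacent to a $2$-vertex must be a $\beta$-vertex, while a $4$-vertex adjacent to a $1$-vertex must be a $\beta'$-vertex of degree $4$. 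Thus the $1$-vertices (deficit $\FR53$) and $2$-vertices (deficit $\FR23$) are the charge sinks, the vertices of degree at least $4$ and the $3$-vertices with no $2^-$-neighbor are the sources, and the only sources with dangerously small surplus are the $\beta$-vertices and the $\beta'$-vertices of degree $4$; call these \emph{tight}.

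The discharging rules would be, roughly: (1) every vertex of degree at least $4$ sends $\FR53$ to each $1$-neighbor; (2) every vertex of degree at least $3$ sends $\FR13$ to each $2$-neighbor, plus an additional $\FR13$ to a $2$-neighbor whose other neighbor also has degree $2$ (so that the two $2$-vertices on a maximal $2$-path each collect $\FR23$); (3) every non-tight vertex sends a suitably small fixed amount to each tight neighbor. Then one checks $\mu(v)\ge\FR83$ by cases on $d_G(v)$, just as in Lemma~\ref{52struct}. Vertices of degree at least $5$ are handled by $\bC$ exactly as in the case $d(v)\ge5$ there. A non-tight vertex of degree $3$ or $4$ keeps enough because $\bG$ forces it to have a neighbor that is not tight, so it feeds at most two tight neighbors. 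A $\beta$-vertex recovers the $\FR13$ it is short by pulling from its two non-$2$-neighbors, which by $\bD$ and $\bE$ are non-tight; a $\beta'$-vertex of degree $4$ recovers its $\FR13$ from its three non-$1$-neighbors, at least two of which are non-tight by $\bE$ and $\bF$. Example~\ref{nonred} is what tells us this local recovery genuinely cannot be turned into a reducible configuration, so the discharging (not an extra reducibility step) is forced to do the work.

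The main obstacle is the calibration of rule (3) together with the bookkeeping at vertices that simultaneously carry many sinks and many tight neighbors: a $4$-vertex whose $2$-neighbor lies on a pair of $2$-vertices and which also feeds tight neighbors, and a high-degree vertex carrying both $1$-neighbors and tight neighbors, are the tight spots, and making the counts come out to exactly $\FR83$ in these spots is precisely what pins down the thresholds in $\bC$ and the exact statements of $\bD$--$\bG$; getting the constant in rule~(3) (and possibly making it depend on which tight neighbor or which source is involved) right is the delicate point. As in Sections~\ref{sec52}--\ref{sec12}, one must also dispose of the degenerate coincidences among named vertices, but these (handled in spirit by Lemma~\ref{triangle} and, for $\beta$-vertices, by Lemma~\ref{beta2}) do not affect the discharging count. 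Once every case yields $\mu(v)\ge\FR83$, the resulting lower bound on the average degree contradicts the hypothesis, completing the proof.
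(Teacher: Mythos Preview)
Your overall strategy matches the paper's: assign each vertex charge $d_G(v)$, push charge so that $1$-vertices and $2$-vertices reach $\FR83$, and have the ``tight'' vertices (your term for $\beta$-vertices and $\beta'$-vertices of degree~$4$) recoup their deficit of $\FR13$ from non-tight neighbors via $\bD$, $\bE$, $\bF$, $\bG$. The paper's rules are: $1$-vertices take $\FR53$; each $2$-vertex takes $\FR23$ from \emph{one} $3^+$-neighbor; each $\beta$-vertex takes $\FR16$ from its two other neighbors; each $\beta'$-vertex of degree~$4$ takes $\FR16$ from each other neighbor \emph{that is not a $\beta'$-vertex}. Your rule~(2) spreads the $2$-vertex's $\FR23$ differently, but that variant also works.

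The genuine gap is in your rule~(3) and your assertion that ``vertices of degree at least~$5$ are handled by $\bC$ exactly as'' before. Consider a $\beta'$-vertex $v$ of degree $2k\ge 6$: it has $k-1$ $1$-neighbors and $k+1$ other neighbors, none of which is a $\beta$-vertex by $\bE$, but all of which may be $\beta'$-vertices of degree~$4$ (nothing forbids this). In your scheme $v$ is non-tight, so it would give $\FR53(k-1)+\FR16(k+1)$, leaving $\mu(v)=2k-\FR53(k-1)-\FR16(k+1)=\FR{k+9}6$, which is below $\FR83$ already for $k=3$. Configuration $\bC$ alone cannot save this: $v$ has exactly $\FR{d(v)-2}2$ $2^-$-neighbors, which is one short of the $\bC$ threshold. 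The paper's fix is precisely the clause you left vague: the $\FR16$ contribution to a degree-$4$ $\beta'$-vertex comes only from neighbors that are themselves \emph{not} $\beta'$-vertices (of any degree). With that exemption, a $\beta'$-vertex of degree $\ge 6$ gives nothing beyond its $1$-neighbors (using $\bE$ to rule out $\beta$-neighbors), and $\mu(v)=2k-\FR53(k-1)=\FR{k}3+\FR53\ge\FR83$. So the dependency you anticipated in rule~(3) is on the \emph{source}, and the right refinement is ``$\beta'$-vertices of degree~$4$ take $\FR16$ from each non-$\beta'$ neighbor''; once you insert that, the case check goes through. (A minor aside: Lemmas~\ref{triangle} and~\ref{beta2} are for the reducibility arguments, not for this structural lemma; no coincidence issues arise in the discharging itself.)
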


\begin{proof}
We prove that a graph $G$ containing none of {\bf A-G} has average degree at
least $8/3$.  Give every vertex $v$ in $G$ initial charge $d_G(v)$.  Move
charge via the following rules:

\medskip
{\narrower

\noindent
(1) Each $1$-vertex takes $\FR53$ from its neighbor.
\smallskip

\noindent
(2) Each $2$-vertex takes $\FR23$ from one $3^+$-neighbor.
\smallskip

\noindent
(3) Each $3$-vertex with a $2$-neighbor takes $\FR16$ from each other neighbor.
\smallskip

\noindent
(4) Each $4$-vertex with a $1$-neighbor takes $\FR16$ from each other neighbor
not a $\beta'$-vertex.
\smallskip

}

\noindent
Let $\mu(v)$ denote the resulting charge at $v$; it suffices to check that
$\mu(v)\ge\FR83$ for all $v$.  For ${\bf Z}\in\{{\bf A},\ldots,{\bf G}\}$, let
$\bZ$ mean ``configuration {\bf Z} does not occur in $G$''.  Note that by
$\bB$, the vertex taking charge in Rule 3 or Rule 4 is a $\beta$-vertex or a
$\beta'$-vertex, respectively.

Case $d(v)=1$: By Rule 1, $v$ receives $\FR53$ and has final charge $\FR83$.

Case $d(v)=2$: By $\bA$ and $\bB$, $v$ gives no charge and receives $\FR23$
from a $3^+$-neighbor.

Case $d(v)=3$:
By $\bA$, $v$ has no $1$-neighbor.  If $v$ also has no $2$-neighbor, then by
$\bG$ at most two neighbors take charge $\FR16$ from it, so $\mu(v)\ge\FR83$.
If $v$ has a $2$-neighbor, then by $\bB$ it is a $\beta$-vertex, has only
one $2$-neighbor, and may give $\FR23$ to that $2$-neighbor.  By $\bD$ and
$\bE$, $v$ loses no other charge.  If $v$ does loses $\FR23$, then $v$ needs to
regain $\FR13$ and does so via Rule 3.

Case $d(v)=4$:
By $\bB$, $v$ has at most one $2^-$-neighbor.  If $v$ has no $1$-neighbor, then
$v$ loses at most $\FR23+\FR36$.  Hence in this case $\mu(v)>\FR83$.  If $v$
has a $1$-neighbor, then $v$ loses $\FR53$ to it and is a $\beta'$-vertex.
By $\bE$ and $\bF$, $v$ has no $\beta$-neighbor and at most one
$\beta'$-neighbor.  Hence it gives away no other charge and receives at least
$\FR26$ to reach $\mu(v)\ge \FR83$.

Case $d(v)\ge5$:
By $\bC$, $v$ has at most $\FR{d(v)-2}2$ $2^-$-neighbors.  If the inequality
is strict, then $v$ gives at most $\FR53\FR{d(v)-3}2$ to those vertices and
at most $\FR16$ to each other neighbor, yielding
$$
\mu(v)\ge d(v)-\FR53\FR{d(v)-3}2-\FR16\FR{d(v)+3}2
=\FR{d(v)}{12}+\FR94\ge \FR{32}{12}=\FR83.
$$
If $v$ has exactly $\FR{d(v)-2}2$ $2^-$-neighbors and at least one of them
is a $2$-vertex, then $d(v)\ge6$ and
$$
\mu(v)\ge d(v)-\FR53\FR{d(v)-4}2-\FR23-\FR16\FR{d(v)+2}2
=\FR{d(v)}{12}+\FR83-\FR16> \FR83.
$$
In the remaining case, $v$ is a $\beta'$-vertex with degree at least $6$.
By definition, $v$ has $\FR{d(v)-2}2$ $1$-neighbors and no $2$-neighbor.
By $\bE$, $v$ has no $\beta$-neighbor.  By Rules 3 and 4, $v$ gives charge
only to its $1$-neighbors.  Hence
$$
\mu(v)\ge d(v)-\FR53\FR{d(v)-2}2 =\FR{d(v)}{6}+\FR53\ge \FR83.
$$
\vspace{-3pc}

\end{proof}
\medskip

Since every configuration in Lemma~\ref{83disch2} 
is listed in Lemma~\ref{83red2}, the following is proved.

\begin{thm}
The $1,2$-Conjecture holds for each graph $G$ such that $\Mad(G)<8/3$.
\end{thm}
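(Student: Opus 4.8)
The plan is to argue by minimal counterexample, exactly as in the proofs of Theorems~\ref{aved} and \ref{52thm2}, using Lemma~\ref{83red2} (reducibility) together with Lemma~\ref{83disch2} (unavoidability). First I would suppose the statement fails and let $G$ be a counterexample with $\Mad(G)<8/3$ minimizing $|V(G)|+|E(G)|$. Then $G$ is $2$-bad, and by the choice of $G$ every proper subgraph of $G$ has a proper total $2$-weighting; thus $G$ is a \emph{minimal} $2$-bad graph, which is precisely the hypothesis under which Lemma~\ref{83red2} applies. Note also that, since $\Mad$ is monotone under subgraphs by definition, $\Mad(H)<8/3$ for every subgraph $H$ of $G$; in particular the derived graphs used in the reducibility arguments inherit this bound (though for the final deduction we only need it for $G$ itself).

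Next I would invoke the structure theorem: the average degree of $G$ is at most $\Mad(G)<8/3$, so by Lemma~\ref{83disch2} the graph $G$ contains one of the configurations {\bf A}--{\bf G} listed there. But Lemma~\ref{83red2} asserts that each of configurations {\bf A}--{\bf G} is $2$-reducible, hence none of them can appear in a minimal $2$-bad graph. This contradiction completes the proof, so no counterexample exists and the $1,2$-Conjecture holds whenever $\Mad(G)<8/3$.

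The substantive content has already been carried in the two lemmas, so the only thing to check at this stage is the bookkeeping point that the seven configurations whose unavoidability is proved in Lemma~\ref{83disch2} are \emph{verbatim} the seven configurations proved $2$-reducible in Lemma~\ref{83red2} — there is no gap between the two lists and hence no residual case. This matching is exactly why the discharging rules in Lemma~\ref{83disch2} were designed the way they are (for instance the exclusion of $\beta'$-vertices from Rule~4): the simpler configuration consisting of two adjacent degree-$4$ $\beta'$-vertices would make the discharging easier but, as Example~\ref{nonred} shows, is \emph{not} reducible by the method, so the unavoidable set had to be built from the genuinely reducible configurations {\bf D}--{\bf G}. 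If one wanted, the mild obstacle here is simply confirming that every vertex type flagged in the discharging cases ($d(v)=1,2,3,4,\ge 5$) lands in one of {\bf A}--{\bf G}, which the proof of Lemma~\ref{83disch2} already does.
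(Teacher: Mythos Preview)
Your proposal is correct and takes exactly the paper's approach: the paper's own proof is a single sentence noting that every configuration in Lemma~\ref{83disch2} already appears in Lemma~\ref{83red2}, so the theorem follows immediately. Your minimal-counterexample framing and the added commentary about the matching of the two lists (and the role of Example~\ref{nonred}) are accurate elaborations of that one-line deduction.
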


\section{Proper $3$-weighting when $\Mad(G)<8/3$}\label{sec123}

For the discussion of proper $3$-weightings, again it will be helpful to have
notation for special types of vertices.  The definition of $\beta$-vertex is
the same as before, but instead of $\beta'$-vertices we introduce
$\alpha$-vertices and $\gamma$-vertices.

\begin{definition}
An \textit{$\alpha$-vertex} is a $2$ vertex with a $2$-neighbor.
A \textit{$\beta$-vertex} is a $3$-vertex with a $2$-neighbor.
A \textit{$\gamma$-vertex} is a $4$-vertex with a $1$-neighbor
or is a $3$-vertex with an $\alpha$-neighbor or two $2$-neighbors.
\end{definition}

\begin{lem}\label{83disch3}
If $G$ has average degree less than $8/3$, then $G$ contains one of the
following:

{\bf A}. A $2$-vertex or $3$-vertex having a $1$-neighbor.

{\bf B}. A $4^-$-vertex whose neighbors all have degree $2$.

{\bf C}. A $3$-vertex having an $\alpha$-neighbor and another $2$-neighbor.

{\bf D}. A $4$-vertex having a $1$-neighbor and a $2^-$-neighbor.

{\bf E}. A $5^+$-vertex $v$ with
$3p_1+2p_2\ge d(v)$, where $p_i$ is the number of $i$-neighbors of $v$.

{\bf F}. Two adjacent $\gamma$-vertices.

{\bf G}. A $3$-vertex with two $\gamma$-neighbors.

{\bf H}. A $6$-vertex or $7$-vertex having a $1$-neighbor and four
$\gamma$-neighbors.

{\bf I}. A $5$-vertex having a $1$-neighbor and three $\gamma$-neighbors.

{\bf J}. A $4$-vertex with $p+q+r\ge5$, where $p,q,r$ are its numbers of
$2$-neighbors, $\gamma$-neighbors, and $\alpha$-neighbors, respectively.

{\bf K}. A $\gamma$-vertex whose $3^+$-neighbors are all $\beta$-vertices.
\end{lem}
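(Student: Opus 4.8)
I would prove the contrapositive by discharging: assuming $G$ contains none of the configurations \textbf{A}--\textbf{K}, I show $G$ has average degree at least $8/3$. Assign to each vertex $v$ the initial charge $d_G(v)$; since $\sum_v d_G(v)=2|E(G)|$, it suffices to redistribute charge, conserving the total, so that the final charge $\mu(v)$ satisfies $\mu(v)\ge 8/3$ for every $v$. Throughout, for a configuration \textbf{Z} let $\bZ$ abbreviate ``\textbf{Z} does not occur in $G$''.

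The discharging rules I would use: (1) each $1$-vertex pulls $\frac53$ from its unique neighbor, which by $\bA$ has degree at least $4$; (2) each $\alpha$-vertex pulls $\frac23$ from its unique $3^+$-neighbor (by $\bA$ and $\bB$ an $\alpha$-vertex has exactly one $3^+$-neighbor), while every other $2$-vertex pulls $\frac13$ from each of its two $3^+$-neighbors; (3) each $\gamma$-vertex pulls $\frac13$ from one fixed $3^+$-neighbor that is neither a $\beta$-vertex nor a $\gamma$-vertex. Such a donor exists: $\bK$ forbids a $\gamma$-vertex whose $3^+$-neighbors are all $\beta$-vertices (this also rules out a $\gamma$-vertex with no $3^+$-neighbor, vacuously), so some $3^+$-neighbor is not a $\beta$-vertex, and by $\bF$ that neighbor is not a $\gamma$-vertex. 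Note that, by the definition of $\gamma$-vertex together with $\bD$, $\bC$, $\bB$, each $\gamma$-vertex indeed has a $3^+$-neighbor in any case.

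Next I would verify $\mu(v)\ge\frac83$ by cases on $d_G(v)$. For $d_G(v)\le 2$ the vertex gives nothing away (using $\bA,\bB$) and receives exactly what it needs by Rule 1 or 2. For $d_G(v)=3$: by $\bA$ there is no $1$-neighbor; if $v$ is a $\beta$-vertex it loses at most $\frac23$ to its $2$-neighbor(s), using $\bC$ to exclude having both an $\alpha$-neighbor and a second $2$-neighbor, is never chosen as a donor, and recovers $\frac13$ exactly when it is itself a $\gamma$-vertex; if $v$ is not a $\beta$-vertex it donates $\frac13$ to at most one $\gamma$-neighbor by $\bG$. For $d_G(v)=4$: if $v$ has a $1$-neighbor, then by $\bD$ it has no other $2^-$-neighbor and by $\bF$ no $\gamma$-neighbor, so it loses exactly $\frac53$ and recovers $\frac13$ as a $\gamma$-vertex; if $v$ has no $1$-neighbor, then its total outflow is at most $\frac13(p+q+r)$ — where $p,q,r$ count its $2$-neighbors, $\gamma$-neighbors, and $\alpha$-neighbors, each $2$-neighbor costing $\frac13$, each $\alpha$-neighbor an extra $\frac13$, each donated $\gamma$-neighbor $\frac13$ — which is at most $\frac43$ by $\bJ$. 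For $d_G(v)\ge 5$: $\bE$ bounds the numbers $p_1,p_2$ of $1$- and $2$-neighbors, and for $d_G(v)\in\{5,6,7\}$ the additional configurations $\bI$ and $\bH$ cap the number of $\gamma$-neighbors; substituting these into the outflow bound $\frac53 p_1+\frac23 p_2+\frac13(\#\text{donated }\gamma\text{-neighbors})$ gives $\mu(v)\ge\frac83$ in every case (several of them tight).

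The main obstacle is the accounting around $\gamma$-vertices. A $\gamma$-vertex is deficient by exactly $\frac13$, and its eligible donors have essentially no slack: a non-$\beta$ $3$-vertex has surplus exactly $\frac13$, so it can support at most one $\gamma$-neighbor — which is precisely why $\bG$ is needed — while $\bF$ and $\bK$ are what make the donor assignment well defined and keep it from conflicting with a donor's own obligations; configurations $\bH$, $\bI$, $\bJ$ are what prevent a $4$-vertex or a high-degree vertex carrying a $1$-neighbor from being drained by too many neighbors each demanding $\frac13$. The substantive part of the write-up is simply checking that every combination of neighbor types left open by $\bA$--$\bK$ leaves enough charge; this is routine but long, and care is needed with the floors in the inequalities derived from $\bE$.
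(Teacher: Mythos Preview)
Your approach is correct and essentially the same as the paper's: the same discharging framework with identical rules for $1$-vertices and $2$-vertices, and the case analysis for each degree class lines up with the paper's verification. The only difference is cosmetic---the paper has each $\gamma$-vertex take $\tfrac13$ from \emph{every} non-$\beta$ $3^+$-neighbor rather than from a single chosen donor, but since the worst case for any potential donor is the same under either rule (and a $\gamma$-vertex needs only $\tfrac13$), the verification is unchanged.
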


\begin{proof}
We prove that a graph $G$ containing none of {\bf A-K} has average degree at
least $8/3$.  Give every vertex $v$ in $G$ initial charge $d(v)$.
Move charge via the following rules:

\medskip
{\narrower

\noindent
(1) Each $1$-vertex takes $\FR53$ from its neighbor.
\smallskip

\noindent
(2) Each $\alpha$-vertex takes $\FR23$ from its $3^+$-neighbor.
\smallskip

\noindent
(3) Each $2$-vertex that is not an $\alpha$-vertex takes $\FR13$ from each
neighbor.
\smallskip

\noindent
(4) Each $\gamma$-vertex takes $\FR13$ from each $3^+$-neighbor that is not
a $\beta$-vertex.

}
\medskip

\noindent
Let $\mu(v)$ denote the resulting charge at $v$; it suffices to check that
$\mu(v)\ge\frac{8}{3}$ for all $v$.  For
${\bf Z}\in \{{\bf A},\ldots,{\bf K}\}$, let $\bZ$ mean ``configuration
${\bf Z}$ does not occur in $G$''.  Fix a vertex $v$ and let $p_i$, $q$, and
$r$ count its $i$-neighbors, $\gamma$-neighbors, and $\alpha$-neighbors,
respectively.

Case $d(v)=1$: By Rule 1, $v$ receives $\FR53$ and has final charge $\FR83$.

Case $d(v)=2$: By $\bA$, $v$ gives no charge; via Rule 2 or Rule 3, $v$
receives $\FR23$.

Case $d(v)=3$: By $\bA$ and $\bB$, $v$ has no $1$-neighbor and at most two
$2$-neighbors.

If $v$ has no $2$-neighbor, then $v$ is not a $\gamma$-vertex or a
$\beta$-vertex.  It loses nothing via Rule 2 or 3, and via Rule 4 it gives
$\FR13$ to each $\gamma$-neighbor.  By $\bG$, $v$ loses at most $\FR13$, and
$\mu(v)\ge\FR83$.

If $v$ has a $2$-neighbor, then $v$ is a $\beta$-vertex and loses nothing via
Rule 4.  By $\bB$ and $\bC$, $v$ gives at most $\frac{2}{3}$ to its
$2$-neighbors, with equality only if it is a $\gamma$-vertex.  In this case, by
$\bA$ and $\bB$, $v$ has a $3^+$-neighbor $u$.  By $\bF$, $u$ is not a
$\gamma$-vertex, and by $\bK$ not all choices of $u$ are $\beta$-vertices.
Hence from some $3^+$-neighbor $v$ receives $\FR13$, and $\mu(v)\ge\FR83$.

Case $d(v)=4$:
If $v$ has a $1$-neighbor, then $v$ is a $\gamma$-vertex.  By $\bD$ it has no
other $2^-$-neighbor and loses at most $\FR53$.  Its other neighbors are
$3^+$-vertices.  By $\bF$, none is a $\gamma$-vertex, and by $\bK$ they are not
all $\beta$-vertices.  Hence $v$ receives at least $\FR13$ via Rule 4, and
$\mu(v)\ge\FR83$.

If $v$ has no $1$-neighbor, then $v$ is not a $\gamma$-vertex.  By $\bJ$,
$p+q+r\le 4$.  An $\alpha$-neighbor contributes to both $p$ and $r$.  Hence $v$
loses exactly $(p+q+r)/3$, yielding $\mu(v)\ge\FR83$.

Case $d(v)\ge5$:
The charge lost by $v$ is $\FR13(5p_1+p_2+q+r)$.  Hence $v$ is happy if
$5p_1+p_2+q+r\le 3d(v)-8$.  Also, configurations with $3p_1+2p_2\ge d(v)$
are forbidden, by $\bE$.  Hence if $v$ is not happy and is not in a
configuration forbidden by $\bE$, then
\begin{equation}\label{ineq}
5p_1+p_2+q+r\ge 3d(v)-7 \quad\textrm{and}\quad 3p_1+2p_2\le d(v)-1.
\end{equation}
Since $r\le p_2$ and $q\le d(v)-p_1$, the first inequality above yields
$4p_1+2p_2\ge 2d(v)-7$.  Eliminating $2p_2$ from the two inequalities then
yields $2d(v)-7-4p_1\le d(v)-1-3p_1$, which simplifies to $d(v)-6\le p_1$.
Since also $p_1\le \FL{(d(v)-1)/3}$, we obtain $d(v)\le8$.

If $p_1=0$, then substituting $q+r\le d(v)$ in (1) yields
$(d(v)-1)/2\ge 2d(v)-7$, which simplifies to $d(v)\le13/3$.  Hence we may
assume $p_1\ge1$.  We consider below the remaining unexcluded possibilities for
$(p_1,p_2,r,q)$.  In each case these are the choices allowed by (\ref{ineq}).

For $d(v)=5$, the remaining case is $(1,0,0,q)$ with $q\in\{3,4\}$, forbidden
by $\bI$.

For $d(v)=6$, the remaining case is $(1,1,1,4)$, forbidden by $\bH$.

For $d(v)=7$, the case $(1,p_2,r,q)$ requires $p_2\le1$, which yields
$5p_1+2p_2+q\le 12 <14$, so $v$ remains happy.  Hence the remaining case is
$(2,0,0,q)$ with $q\in\{4,5\}$, forbidden by $\bH$.

For $d(v)=8$, with $p_1\le 2$ and $3p_1+2p_2\le 7$, we have
$5p_1+2p_2\le 10$.  At most $16/3$ is lost, and hence $\mu(v)\ge\FR83$.
\end{proof}

The next lemma explains the role of $\gamma$-vertices.

\begin{lem}\label{gamma}
Let $v$ be a $\gamma$-vertex having a $3^+$-neighbor $x$.
Define $F\esub E(G)$ as follows: 

$F=\{vu\}$ if $v$ is a $4$-vertex with $1$-neighbor $u$,

$F=\{vz,vz'\}$ if $v$ is a $3$-vertex with $2$-neighbors $z$ and $z'$,

$F=\Gamma_G(z)$ if $v$ is a $3$-vertex with $\alpha$-neighbor $z$.

\noindent
Given any weighting of $G-F$, weights in $\{1,2,3\}$ can be chosen on $F$ to
satisfy all edges in $F$ or incident to edges of $F$ except $vx$, without
changing the weights on edges not in $F$.
\end{lem}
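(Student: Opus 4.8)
The plan is to treat the three forms of a $\gamma$-vertex separately. In each case I would write down the (one or two) edges of $F$ whose weights are free, list the edges that must be satisfied — namely the edges of $F$ together with the edges sharing a vertex with an edge of $F$, with $vx$ deleted — and then apply Lemma~\ref{choose}; the omission of $vx$ is exactly what keeps the number of forbidden values within what that lemma can absorb. The one observation that makes every case close is a cancellation: if $z$ is a $2$-vertex and $vz\in F$, then $w(vz)$ occurs in both $\phi_w(v)$ and $\phi_w(z)$, so it drops out of $\phi_w(v)-\phi_w(z)$; hence whether $vz$ is satisfied is controlled by the weights of the \emph{other} edges at $v$ and of the other edge at $z$, not by $w(vz)$ itself. (In general $w(e)$ never affects whether $e$ is satisfied, as noted in Lemma~\ref{choose}.) This is what lets the two free weights be chosen one after another without their constraints interfering, so that each faces at most $j-1=2$ forbidden values.

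First I would dispose of the case where $v$ is a $4$-vertex with $1$-neighbor $u$, so $F=\{vu\}$ and $N_G(v)=\{u,x,a,b\}$. Here $vu$ is automatically satisfied, since $d_G(v)\ge 2$ forces $\phi_w(v)>w(vu)=\phi_w(u)$; the only remaining edges incident to $F$ other than $vx$ are $va$ and $vb$, and since they share the endpoint $v$ with $vu$ and $|\{va,vb\}|=2=(j-1)|\{vu\}|$, Lemma~\ref{choose}(1) with $S=\{vu\}$ chooses $w(vu)$ to satisfy both (the weights $w(vx)$ and those on edges at $a$ and $b$ being already fixed).

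Next come the two-edge cores. If $v$ is a $3$-vertex with $2$-neighbors $z,z'$, then $F=\{vz,vz'\}$; writing $N_G(v)=\{z,z',x\}$, $N_G(z)=\{v,y\}$, $N_G(z')=\{v,y'\}$, the edges to satisfy are $vz,vz',zy,z'y'$. By the cancellation, satisfaction of $vz$ depends only on $w(vz')$ and fixed weights, of $vz'$ only on $w(vz)$ and fixed weights, of $zy$ only on $w(vz)$, and of $z'y'$ only on $w(vz')$. So I would first apply Lemma~\ref{choose}(2) with $S=\{vz\}$ (which joins $zy$ and $vz'$) to fix $w(vz)$, and then Lemma~\ref{choose}(2) with $S=\{vz'\}$ (which joins $vz$ and $z'y'$) to fix $w(vz')$; in each application the required ``incident, not in $S$'' weights — $w(vx)$, $w(zy)$, $w(z'y')$, and the weights on edges at $y$ and $y'$ — are already known. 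The last case, $v$ a $3$-vertex with $\alpha$-neighbor $z$ (so $F=\Gamma_G(z)=\{vz,zy\}$, with $N_G(z)=\{v,y\}$, $N_G(y)=\{z,u'\}$, $d_G(y)=2$, and $N_G(v)=\{z,x,t\}$), is structurally identical: the edges to satisfy are $vz,zy,vt,yu'$; satisfaction of $vz$ depends only on $w(zy)$ and fixed weights, of $zy$ and $vt$ only on $w(vz)$, and of $yu'$ only on $w(zy)$, so Lemma~\ref{choose}(2) with $S=\{vz\}$ joining $\{zy,vt\}$ fixes $w(vz)$ and then Lemma~\ref{choose}(2) with $S=\{zy\}$ joining $\{vz,yu'\}$ fixes $w(zy)$.

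The main thing to get right is the bookkeeping: identifying exactly which edges are incident to the core in each configuration, and checking the hypothesis of Lemma~\ref{choose} that the ``incident, not in $S$'' weights are all fixed (which they are, since nothing outside $F$ is rewritten) — routine once the cancellation pins down the correct free variable for each edge. The only situation not covered by this generic argument is a coincidence of specified vertices that puts the core on a triangle: $z\sim z'$ in the two-$2$-neighbor case, or $z\sim y$ in the $\alpha$-neighbor case. Either way this triangle is a $3$-cycle through a $4^-$-vertex and two $2$-vertices, hence $3$-reducible by Lemma~\ref{triangle}(1); alternatively, one checks by hand that the now-coupled constraints on the two free weights of $F$ are still jointly satisfiable within $\{1,2,3\}$. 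All other identifications of specified vertices leave the argument above intact.
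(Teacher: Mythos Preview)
Your proof is correct and follows essentially the same route as the paper's: in each of the three $\gamma$-vertex types you choose the free weights sequentially, using Lemma~\ref{choose} (case~(1) for the $\gamma_4$-vertex, case~(2) twice for the other two) in exactly the pairings the paper uses. Your ``cancellation'' remark simply makes explicit why Lemma~\ref{choose}(2) applies in the two-edge cores, and your discussion of the triangle degeneracies is a small addition the paper omits (it tacitly assumes the vertices are distinct, since in every later application those coincidences are already excluded); the by-hand check you allude to does go through.
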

\begin{proof}
Figure~\ref{figgamma} shows $F$ in bold; the weight on $vx$ is fixed.
When $v$ is a $4$-vertex, choose $w(vu)$ to satisfy the two edges from $v$
to $N_G(v)-\{x,u\}$.  When $v$ is a $3$-vertex with $2$-neighbors $z$ and 
$z'$ having neighbors $y$ and $y'$ other than $v$, choose $w(vz)$ to 
satisfy $vz'$ and $zy$, and choose $w(vz')$ to satisfy $vz$ and $z'y'$.
When $v$ is a $3$-vertex with $\alpha$-neighbor $z$ having neighbor $y$ other
than $v$, let $x'$ and $y'$ be the remaining neighbors of $v$ and $y$.
Choose $w(vz)$ to satisfy $vx'$ and $zy$, and choose $w(zy)$ to satisfy
$vz$ and $yy'$.
\end{proof}

\begin{figure}[h]
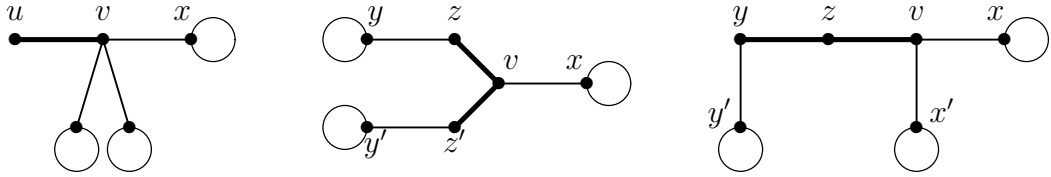

\gpic{
\expandafter\ifx\csname graph\endcsname\relax \csname newbox\endcsname\graph\fi
\expandafter\ifx\csname graphtemp\endcsname\relax \csname newdimen\endcsname\graphtemp\fi
\setbox\graph=\vtop{\vskip 0pt\hbox{%
    \graphtemp=.5ex\advance\graphtemp by 0.138in
    \rlap{\kern 0.092in\lower\graphtemp\hbox to 0pt{\hss $\bu$\hss}}%
    \graphtemp=.5ex\advance\graphtemp by 0.138in
    \rlap{\kern 0.552in\lower\graphtemp\hbox to 0pt{\hss $\bu$\hss}}%
    \graphtemp=.5ex\advance\graphtemp by 0.138in
    \rlap{\kern 1.013in\lower\graphtemp\hbox to 0pt{\hss $\bu$\hss}}%
    \graphtemp=.5ex\advance\graphtemp by 0.598in
    \rlap{\kern 0.414in\lower\graphtemp\hbox to 0pt{\hss $\bu$\hss}}%
    \graphtemp=.5ex\advance\graphtemp by 0.598in
    \rlap{\kern 0.690in\lower\graphtemp\hbox to 0pt{\hss $\bu$\hss}}%
    \special{pn 28}%
    \special{pa 92 138}%
    \special{pa 552 138}%
    \special{fp}%
    \special{pn 11}%
    \special{pa 552 138}%
    \special{pa 1013 138}%
    \special{fp}%
    \special{pa 552 138}%
    \special{pa 414 598}%
    \special{fp}%
    \special{pa 552 138}%
    \special{pa 690 598}%
    \special{fp}%
    \special{pn 8}%
    \special{ar 414 713 115 115 0 6.28319}%
    \special{ar 690 713 115 115 0 6.28319}%
    \special{ar 1128 138 115 115 0 6.28319}%
    \graphtemp=.5ex\advance\graphtemp by 0.000in
    \rlap{\kern 0.092in\lower\graphtemp\hbox to 0pt{\hss $u$\hss}}%
    \graphtemp=.5ex\advance\graphtemp by 0.000in
    \rlap{\kern 0.552in\lower\graphtemp\hbox to 0pt{\hss $v$\hss}}%
    \graphtemp=.5ex\advance\graphtemp by 0.000in
    \rlap{\kern 0.967in\lower\graphtemp\hbox to 0pt{\hss $x$\hss}}%
    \graphtemp=.5ex\advance\graphtemp by 0.598in
    \rlap{\kern 1.933in\lower\graphtemp\hbox to 0pt{\hss $\bu$\hss}}%
    \graphtemp=.5ex\advance\graphtemp by 0.138in
    \rlap{\kern 1.933in\lower\graphtemp\hbox to 0pt{\hss $\bu$\hss}}%
    \graphtemp=.5ex\advance\graphtemp by 0.598in
    \rlap{\kern 2.393in\lower\graphtemp\hbox to 0pt{\hss $\bu$\hss}}%
    \graphtemp=.5ex\advance\graphtemp by 0.138in
    \rlap{\kern 2.393in\lower\graphtemp\hbox to 0pt{\hss $\bu$\hss}}%
    \graphtemp=.5ex\advance\graphtemp by 0.368in
    \rlap{\kern 2.623in\lower\graphtemp\hbox to 0pt{\hss $\bu$\hss}}%
    \graphtemp=.5ex\advance\graphtemp by 0.368in
    \rlap{\kern 3.084in\lower\graphtemp\hbox to 0pt{\hss $\bu$\hss}}%
    \special{pn 28}%
    \special{pa 2393 598}%
    \special{pa 2623 368}%
    \special{fp}%
    \special{pa 2623 368}%
    \special{pa 2393 138}%
    \special{fp}%
    \special{pn 11}%
    \special{pa 1933 598}%
    \special{pa 2393 598}%
    \special{fp}%
    \special{pa 1933 138}%
    \special{pa 2393 138}%
    \special{fp}%
    \special{pa 2623 368}%
    \special{pa 3084 368}%
    \special{fp}%
    \special{pn 8}%
    \special{ar 1818 598 115 115 0 6.28319}%
    \special{ar 1818 138 115 115 0 6.28319}%
    \special{ar 3199 368 115 115 0 6.28319}%
    \graphtemp=.5ex\advance\graphtemp by 0.690in
    \rlap{\kern 1.979in\lower\graphtemp\hbox to 0pt{\hss $y'$\hss}}%
    \graphtemp=.5ex\advance\graphtemp by 0.000in
    \rlap{\kern 1.979in\lower\graphtemp\hbox to 0pt{\hss $y$\hss}}%
    \graphtemp=.5ex\advance\graphtemp by 0.690in
    \rlap{\kern 2.393in\lower\graphtemp\hbox to 0pt{\hss $z'$\hss}}%
    \graphtemp=.5ex\advance\graphtemp by 0.000in
    \rlap{\kern 2.393in\lower\graphtemp\hbox to 0pt{\hss $z$\hss}}%
    \graphtemp=.5ex\advance\graphtemp by 0.257in
    \rlap{\kern 2.689in\lower\graphtemp\hbox to 0pt{\hss $v$\hss}}%
    \graphtemp=.5ex\advance\graphtemp by 0.257in
    \rlap{\kern 3.019in\lower\graphtemp\hbox to 0pt{\hss $x$\hss}}%
    \graphtemp=.5ex\advance\graphtemp by 0.598in
    \rlap{\kern 3.889in\lower\graphtemp\hbox to 0pt{\hss $\bu$\hss}}%
    \graphtemp=.5ex\advance\graphtemp by 0.138in
    \rlap{\kern 3.889in\lower\graphtemp\hbox to 0pt{\hss $\bu$\hss}}%
    \graphtemp=.5ex\advance\graphtemp by 0.138in
    \rlap{\kern 4.349in\lower\graphtemp\hbox to 0pt{\hss $\bu$\hss}}%
    \graphtemp=.5ex\advance\graphtemp by 0.138in
    \rlap{\kern 4.810in\lower\graphtemp\hbox to 0pt{\hss $\bu$\hss}}%
    \graphtemp=.5ex\advance\graphtemp by 0.598in
    \rlap{\kern 4.810in\lower\graphtemp\hbox to 0pt{\hss $\bu$\hss}}%
    \graphtemp=.5ex\advance\graphtemp by 0.138in
    \rlap{\kern 5.270in\lower\graphtemp\hbox to 0pt{\hss $\bu$\hss}}%
    \special{pn 28}%
    \special{pa 3889 138}%
    \special{pa 4349 138}%
    \special{fp}%
    \special{pa 4349 138}%
    \special{pa 4810 138}%
    \special{fp}%
    \special{pn 11}%
    \special{pa 3889 598}%
    \special{pa 3889 138}%
    \special{fp}%
    \special{pa 4810 138}%
    \special{pa 4810 598}%
    \special{fp}%
    \special{pa 4810 138}%
    \special{pa 5270 138}%
    \special{fp}%
    \special{pn 8}%
    \special{ar 3889 713 115 115 0 6.28319}%
    \special{ar 4810 713 115 115 0 6.28319}%
    \special{ar 5385 138 115 115 0 6.28319}%
    \graphtemp=.5ex\advance\graphtemp by 0.533in
    \rlap{\kern 3.778in\lower\graphtemp\hbox to 0pt{\hss $y'$\hss}}%
    \graphtemp=.5ex\advance\graphtemp by 0.000in
    \rlap{\kern 3.889in\lower\graphtemp\hbox to 0pt{\hss $y$\hss}}%
    \graphtemp=.5ex\advance\graphtemp by 0.000in
    \rlap{\kern 4.349in\lower\graphtemp\hbox to 0pt{\hss $z$\hss}}%
    \graphtemp=.5ex\advance\graphtemp by 0.000in
    \rlap{\kern 4.810in\lower\graphtemp\hbox to 0pt{\hss $v$\hss}}%
    \graphtemp=.5ex\advance\graphtemp by 0.533in
    \rlap{\kern 4.944in\lower\graphtemp\hbox to 0pt{\hss $x'$\hss}}%
    \graphtemp=.5ex\advance\graphtemp by 0.000in
    \rlap{\kern 5.224in\lower\graphtemp\hbox to 0pt{\hss $x$\hss}}%
    \hbox{\vrule depth0.828in width0pt height 0pt}%
    \kern 5.500in
  }%
}%
}
\caption{Three cases for Lemma~\ref{gamma}, with $F$ in bold\label{figgamma}}
\end{figure}

In employing Lemma~\ref{gamma}, the difficulty is ensuring that the edge $vx$
will be satisfied.  Generally, we will need to ensure that some edge is
satisfied regardless of the choice of weight on some incident edge.  When
$v$ is a $\gamma$-vertex, let $F_v$ denote the set of one or two edges
designated as $F$ in Lemma~\ref{gamma} (bold in Figure~\ref{figgamma}).

Like Lemma~\ref{triangle}, the next lemma excludes degenerate cases for the
reducibility proofs.

\begin{lem}\label{4cycle}
Let $z$ and $z'$ be $\beta$-vertices having respective $2$-neighbors $y$ and
$y'$ that are equal or adjacent.  The following cases lead to
$3$-reducible configurations:

(1) $zz'\in E(G)$.

(2) $z$ and $z'$ have a common neighbor $v$ with a $1$-neighbor $u$, such that
$d_G(v)\in\{4,5\}$.
 
\end{lem}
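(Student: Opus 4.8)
The plan is to show that a minimal $3$-bad graph $G$ cannot contain either configuration. As usual, we delete a set of edges (the \emph{core}), pass to the derived graph $G'$ obtained by also discarding any resulting isolated edge, apply minimality to get a proper $3$-weighting $w'$ of $G'$, and extend $w'$ to a proper $3$-weighting of $G$ by choosing weights in $\{1,2,3\}$ on the deleted edges, changing nothing else. We use throughout that $G$ omits the $3$-reducible configurations of Lemmas~\ref{triangle} and~\ref{3config}: by Lemma~\ref{3config}{\bf A} no $2$- or $3$-vertex of $G$ has a $1$-neighbor, so every neighbor of $z$ or $z'$ other than $y,y'$ has degree at least $2$; and by Lemma~\ref{triangle} no triangle of $G$ passes through a $2$-vertex together with two vertices each of which is a $3$-vertex or a $4$- or $5$-vertex with a $1$-neighbor.

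\textbf{Case (1).} If $y=y'$, then $\{z,z',y\}$ is a triangle through the $2$-vertex $y$ and the $3$-vertices $z,z'$, hence $3$-reducible by Lemma~\ref{triangle}(2). So $y\ne y'$ and $yy'\in E(G)$; then $z,z',y,y'$ are distinct and $zyy'z'$ is a $4$-cycle. Let $x$ and $x'$ be the remaining neighbors of $z$ and $z'$ (possibly $x=x'$; since $d_G(y)=d_G(y')=2$, none of $x=y'$, $x'=y$, $x=z'$, $x'=z$ occurs). Put $G'=G-\{zy,z'y',yy'\}$, so $y,y'$ become isolated vertices and $z,z'$ become $2$-vertices on a path through $x,x'$; the only edges whose satisfaction can change are $zy,z'y',yy',zx,z'x',zz'$. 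Let $w'$ be a proper $3$-weighting of $G'$. Setting $w(yy')=1$ gives $\phi_w(z)-\phi_w(y)=w'(zx)+w'(zz')-1\ge1$, so $zy$ is satisfied, and likewise $z'y'$. It then suffices to choose $a:=w(zy)$ and $b:=w(z'y')$ in $\{1,2,3\}$, each avoiding one value (to satisfy $zx$ and $z'x'$), with $a\ne b$ (to satisfy $yy'$) and $a-b\ne\delta$ (to satisfy $zz'$), where $\delta\ne0$ because $zz'$ is satisfied by $w'$. Each of $a,b$ has at least two admissible values; if for both admissible values of $a$ the admissible set for $b$ were contained in $\{a,a-\delta\}$, it would equal $\{a,a-\delta\}$ for two distinct $a$, forcing $\delta=0$. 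So a valid pair exists, contradicting that $G$ is $3$-bad.

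\textbf{Case (2).} If $zz'\in E(G)$, then the configuration is an instance of Case~(1); so assume $zz'\notin E(G)$. Write $N_G(z)=\{v,y,x\}$ and $N_G(z')=\{v,y',x'\}$ with $d_G(x),d_G(x')\ge2$. Since $v$ has a $1$-neighbor $u$ and $d_G(v)\in\{4,5\}$, Lemma~\ref{3config}{\bf D} (if $d_G(v)=4$) or Lemma~\ref{3config}{\bf E} (if $d_G(v)=5$) shows that $u$ is the unique $2^-$-neighbor of $v$, so the other neighbors $w_1$ (and $w_2$, when $d_G(v)=5$) of $v$ have degree at least $3$. Every remaining coincidence among $v,z,z',y,y',x,x'$ is either harmless ($x=x'$, or $x$ or $x'$ equal to some $w_i$) or produces a triangle through a $2$-vertex, excluded by Lemma~\ref{triangle}; hence either $y=y'$, giving a $4$-cycle $vzyz'$, or $yy'\in E(G)$, giving a $5$-cycle $vzyy'z'$, with the named vertices otherwise distinct. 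Take the core to be $\{vz,vz',zy,z'y'\}$, augmented by $yy'$ when $yy'\in E(G)$, let $G'$ be the resulting derived graph, and let $w'$ be a proper $3$-weighting of $G'$. We extend $w'$ by choosing weights on $vz,vz',zy,z'y'$ (and $w(yy')=1$ when $yy'\in E(G)$), and also re-choosing $w(vu)$, which is unconstrained because $vu$ is automatically satisfied ($d_G(v)\ge4>1=d_G(u)$). When $d_G(v)=5$, taking $w(vu)=3$ makes $\rho_w(v,z)\ge6\ge\rho_w(z,v)$, so $vz$ and $vz'$ are satisfied apart from a narrow case imposing one or two further inequalities on the other weights. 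In every case the edges that must be (re)satisfied are $vz$, $vz'$, the edges $vw_i$, the edges $zx,z'x'$, and $zy,z'y'$ (plus $yy'$ when present), and each resulting constraint restricts one of the free weights $w(vz),w(vz'),w(zy),w(z'y'),w(vu)$ or a short sum or difference of them; Lemma~\ref{choose} --- with $w(vu)$ reserved to satisfy two of $v$'s edges in the manner of Lemma~\ref{gamma} when $d_G(v)=4$ --- together with a brief separate check for $y=y'$ and for $yy'\in E(G)$, yields an admissible assignment, contradicting that $G$ is $3$-bad.

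\textbf{The main obstacle} is Case~(2) with $d_G(v)=5$: here $v$ is not a $\gamma$-vertex, so Lemma~\ref{gamma} does not apply directly, and the edges $vz,vz'$ carry no automatic imbalance, so one must exploit the three choices for $w(vu)$ (whose edge is already satisfied) together with the choices on $vz,vz',zy,z'y'$. The remainder is the enumeration of degenerate configurations, nearly all of which reduce to Lemma~\ref{triangle} or to Case~(1).
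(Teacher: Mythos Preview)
Your Case~(1) is correct and in fact slightly cleaner than the paper's. Both arguments set $w(yy')=1$ to force $zy$ and $z'y'$ satisfied, but the paper also puts $zz'$ in the core and then recovers $zx,z'x'$ via $w(zz')$; you instead leave $zz'$ in $G'$, which buys you the information $\delta\ne0$ and lets you finish by a short pigeonhole on the pair $(a,b)$. That trade is legitimate and the counting argument you give is valid.

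Your Case~(2), however, is not a proof but a plan. You correctly rule out the degeneracies, set up the core, and list the edges to be satisfied and the free weights available, but the sentence ``Lemma~\ref{choose} \ldots\ together with a brief separate check \ldots\ yields an admissible assignment'' is exactly where the content should be. Neither Lemma~\ref{choose} nor Lemma~\ref{gamma} applies off the shelf here: in the subcase $yy'\in E(G)$ you have six edges to satisfy ($vz,vz',vw_i,zx,z'x',yy'$, after $zy,z'y'$ are forced) and only five free weights, with the constraints not all incident to a common vertex, so a naive count via Lemma~\ref{choose} does not close. The paper's argument works because of specific asymmetric choices (for $yy'\in E(G)$: set $w(zy)=1$ and $w(vz')=3$, which together with $w(yy')=1$ forces $\rw zv<\rw vz$ automatically, freeing up $w(vz)$ and $w(vu)$ to handle $vz'$ and $\Gamma_{G'}(v)$; for $y=y'$: set $w(zy)=w(z'y)=1$ and restrict $w(vz),w(vz')\in\{2,3\}$). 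Your sketch does not supply any such mechanism, and your remark that setting $w(vu)=3$ gives $\rho_w(v,z)\ge6\ge\rho_w(z,v)$ explicitly leaves the equality case unresolved while simultaneously committing $w(vu)$, which you also need for $\Gamma_{G'}(v)$ when $d_G(v)=5$. You have identified the right skeleton, but the actual weight-selection argument---which is the substance of this lemma---is missing.
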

\begin{proof}
See Figure~\ref{fig4cyc} for these cases.

(1) Suppose $zz'\in E(G)$.  If $y=y'$, then Lemma~\ref{triangle} applies.
If $yy'\in E(G)$, then let $G'=G-\{zz',zy,yy',z'y'\}$.  Set $w(yy')=1$ to
ensure satisfying $zy$ and $z'y'$.  Set $w(zy)=1$.  Now choose $w(z'y')$ to
satisfy $yy'$ and $zz'$, and choose $w(zz')$ to satisfy $zx$ and $z'x'$, where
$N_G(z)=\{z',y,x\}$ and $N_G(z')=\{z,y',x'\}$ ($x=x'$ is allowed).

(2) By Case (1), we may assume $zz'\notin E(G)$, so $x\ne z'$ and $x'\ne z$.

(2a) If $y=y'$, then let $G'=G-\{vz,vz',zy,z'y',vu\}$.  Set $w(zy)=w(z'y')=1$
to ensure satisfying $zy$ and $z'y'$.  Next choose $w(zv)\in\{2,3\}$ to satisfy
$zx$ and choose $w(z'v)\in\{2,3\}$ to satisfy $z'x'$.  To ensure satisfying
$vz$ and $vz'$, we now choose $w(vu)$ to satisfy $\Gamma_{G'}(v)$ (if
$d_G(v)=5$) or $w(vu)\in\{2,3\}$ to satisfy $\Gamma_{G'}(v)$ (if $d_G(v)=4$).

(2b) If $yy'\in E(G)$, then let $G'=G-\{vz,vz',zy,z'y',yy',vu\}$.  Set
$w(yy')=1$ to ensure satisfying $zy$ and $z'y'$.
Set $w(zy)=1$ and $w(vz')=3$ to ensure satisfying $zv$.
Next choose $w(z'y')$ to satisfy $yy'$ and $z'x'$.
Two choices of $w(vz)$ will satisfy $zx$.  Along with the three choices
available for $w(vu)$
these choices can be made to satisfy $vz'$ and $\Gamma_{G'}(v)$.
\end{proof}

\begin{figure}[h]
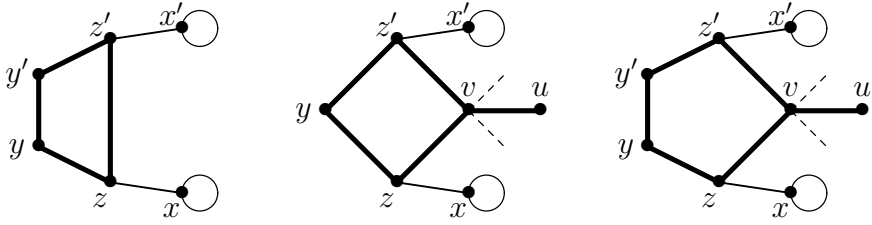

\gpic{
\expandafter\ifx\csname graph\endcsname\relax \csname newbox\endcsname\graph\fi
\expandafter\ifx\csname graphtemp\endcsname\relax \csname newdimen\endcsname\graphtemp\fi
\setbox\graph=\vtop{\vskip 0pt\hbox{%
    \graphtemp=.5ex\advance\graphtemp by 0.713in
    \rlap{\kern 0.113in\lower\graphtemp\hbox to 0pt{\hss $\bu$\hss}}%
    \graphtemp=.5ex\advance\graphtemp by 0.900in
    \rlap{\kern 0.488in\lower\graphtemp\hbox to 0pt{\hss $\bu$\hss}}%
    \graphtemp=.5ex\advance\graphtemp by 0.956in
    \rlap{\kern 0.863in\lower\graphtemp\hbox to 0pt{\hss $\bu$\hss}}%
    \graphtemp=.5ex\advance\graphtemp by 0.150in
    \rlap{\kern 0.488in\lower\graphtemp\hbox to 0pt{\hss $\bu$\hss}}%
    \graphtemp=.5ex\advance\graphtemp by 0.094in
    \rlap{\kern 0.863in\lower\graphtemp\hbox to 0pt{\hss $\bu$\hss}}%
    \graphtemp=.5ex\advance\graphtemp by 0.337in
    \rlap{\kern 0.113in\lower\graphtemp\hbox to 0pt{\hss $\bu$\hss}}%
    \special{pn 28}%
    \special{pa 113 337}%
    \special{pa 113 713}%
    \special{fp}%
    \special{pa 113 713}%
    \special{pa 488 900}%
    \special{fp}%
    \special{pa 488 900}%
    \special{pa 488 150}%
    \special{fp}%
    \special{pa 488 150}%
    \special{pa 113 337}%
    \special{fp}%
    \special{pn 11}%
    \special{pa 488 900}%
    \special{pa 863 956}%
    \special{fp}%
    \special{pa 488 150}%
    \special{pa 863 94}%
    \special{fp}%
    \special{pn 8}%
    \special{ar 956 956 94 94 0 6.28319}%
    \special{ar 956 94 94 94 0 6.28319}%
    \graphtemp=.5ex\advance\graphtemp by 0.713in
    \rlap{\kern 0.000in\lower\graphtemp\hbox to 0pt{\hss $y$\hss}}%
    \graphtemp=.5ex\advance\graphtemp by 0.991in
    \rlap{\kern 0.434in\lower\graphtemp\hbox to 0pt{\hss $z$\hss}}%
    \graphtemp=.5ex\advance\graphtemp by 1.047in
    \rlap{\kern 0.809in\lower\graphtemp\hbox to 0pt{\hss $x$\hss}}%
    \graphtemp=.5ex\advance\graphtemp by 0.097in
    \rlap{\kern 0.434in\lower\graphtemp\hbox to 0pt{\hss $z'$\hss}}%
    \graphtemp=.5ex\advance\graphtemp by 0.041in
    \rlap{\kern 0.809in\lower\graphtemp\hbox to 0pt{\hss $x'$\hss}}%
    \graphtemp=.5ex\advance\graphtemp by 0.337in
    \rlap{\kern 0.000in\lower\graphtemp\hbox to 0pt{\hss $y'$\hss}}%
    \graphtemp=.5ex\advance\graphtemp by 0.525in
    \rlap{\kern 1.613in\lower\graphtemp\hbox to 0pt{\hss $\bu$\hss}}%
    \graphtemp=.5ex\advance\graphtemp by 0.900in
    \rlap{\kern 1.988in\lower\graphtemp\hbox to 0pt{\hss $\bu$\hss}}%
    \graphtemp=.5ex\advance\graphtemp by 0.956in
    \rlap{\kern 2.363in\lower\graphtemp\hbox to 0pt{\hss $\bu$\hss}}%
    \graphtemp=.5ex\advance\graphtemp by 0.150in
    \rlap{\kern 1.988in\lower\graphtemp\hbox to 0pt{\hss $\bu$\hss}}%
    \graphtemp=.5ex\advance\graphtemp by 0.094in
    \rlap{\kern 2.363in\lower\graphtemp\hbox to 0pt{\hss $\bu$\hss}}%
    \graphtemp=.5ex\advance\graphtemp by 0.525in
    \rlap{\kern 2.363in\lower\graphtemp\hbox to 0pt{\hss $\bu$\hss}}%
    \graphtemp=.5ex\advance\graphtemp by 0.525in
    \rlap{\kern 2.738in\lower\graphtemp\hbox to 0pt{\hss $\bu$\hss}}%
    \special{pn 28}%
    \special{pa 2738 525}%
    \special{pa 2363 525}%
    \special{fp}%
    \special{pa 2363 525}%
    \special{pa 1988 150}%
    \special{fp}%
    \special{pa 1988 150}%
    \special{pa 1613 525}%
    \special{fp}%
    \special{pa 1613 525}%
    \special{pa 1988 900}%
    \special{fp}%
    \special{pa 1988 900}%
    \special{pa 2363 525}%
    \special{fp}%
    \special{pn 11}%
    \special{pa 1988 900}%
    \special{pa 2363 956}%
    \special{fp}%
    \special{pa 1988 150}%
    \special{pa 2363 94}%
    \special{fp}%
    \special{pa 2738 525}%
    \special{pa 2550 525}%
    \special{fp}%
    \special{pn 8}%
    \special{pa 2363 525}%
    \special{pa 2550 337}%
    \special{da 0.038}%
    \special{pa 2363 525}%
    \special{pa 2550 713}%
    \special{da 0.038}%
    \special{ar 2456 956 94 94 0 6.28319}%
    \special{ar 2456 94 94 94 0 6.28319}%
    \graphtemp=.5ex\advance\graphtemp by 0.525in
    \rlap{\kern 1.500in\lower\graphtemp\hbox to 0pt{\hss $y$\hss}}%
    \graphtemp=.5ex\advance\graphtemp by 0.991in
    \rlap{\kern 1.934in\lower\graphtemp\hbox to 0pt{\hss $z$\hss}}%
    \graphtemp=.5ex\advance\graphtemp by 1.047in
    \rlap{\kern 2.309in\lower\graphtemp\hbox to 0pt{\hss $x$\hss}}%
    \graphtemp=.5ex\advance\graphtemp by 0.097in
    \rlap{\kern 1.934in\lower\graphtemp\hbox to 0pt{\hss $z'$\hss}}%
    \graphtemp=.5ex\advance\graphtemp by 0.041in
    \rlap{\kern 2.309in\lower\graphtemp\hbox to 0pt{\hss $x'$\hss}}%
    \graphtemp=.5ex\advance\graphtemp by 0.412in
    \rlap{\kern 2.363in\lower\graphtemp\hbox to 0pt{\hss $v$\hss}}%
    \graphtemp=.5ex\advance\graphtemp by 0.412in
    \rlap{\kern 2.738in\lower\graphtemp\hbox to 0pt{\hss $u$\hss}}%
    \graphtemp=.5ex\advance\graphtemp by 0.713in
    \rlap{\kern 3.300in\lower\graphtemp\hbox to 0pt{\hss $\bu$\hss}}%
    \graphtemp=.5ex\advance\graphtemp by 0.900in
    \rlap{\kern 3.675in\lower\graphtemp\hbox to 0pt{\hss $\bu$\hss}}%
    \graphtemp=.5ex\advance\graphtemp by 0.956in
    \rlap{\kern 4.050in\lower\graphtemp\hbox to 0pt{\hss $\bu$\hss}}%
    \graphtemp=.5ex\advance\graphtemp by 0.150in
    \rlap{\kern 3.675in\lower\graphtemp\hbox to 0pt{\hss $\bu$\hss}}%
    \graphtemp=.5ex\advance\graphtemp by 0.094in
    \rlap{\kern 4.050in\lower\graphtemp\hbox to 0pt{\hss $\bu$\hss}}%
    \graphtemp=.5ex\advance\graphtemp by 0.525in
    \rlap{\kern 4.050in\lower\graphtemp\hbox to 0pt{\hss $\bu$\hss}}%
    \graphtemp=.5ex\advance\graphtemp by 0.525in
    \rlap{\kern 4.425in\lower\graphtemp\hbox to 0pt{\hss $\bu$\hss}}%
    \graphtemp=.5ex\advance\graphtemp by 0.337in
    \rlap{\kern 3.300in\lower\graphtemp\hbox to 0pt{\hss $\bu$\hss}}%
    \special{pn 28}%
    \special{pa 4425 525}%
    \special{pa 4050 525}%
    \special{fp}%
    \special{pa 4050 525}%
    \special{pa 3675 150}%
    \special{fp}%
    \special{pa 3675 150}%
    \special{pa 3300 337}%
    \special{fp}%
    \special{pa 3300 337}%
    \special{pa 3300 713}%
    \special{fp}%
    \special{pa 3300 713}%
    \special{pa 3675 900}%
    \special{fp}%
    \special{pa 3675 900}%
    \special{pa 4050 525}%
    \special{fp}%
    \special{pn 11}%
    \special{pa 3675 900}%
    \special{pa 4050 956}%
    \special{fp}%
    \special{pa 3675 150}%
    \special{pa 4050 94}%
    \special{fp}%
    \special{pn 8}%
    \special{pa 4050 525}%
    \special{pa 4237 337}%
    \special{da 0.038}%
    \special{pa 4050 525}%
    \special{pa 4237 713}%
    \special{da 0.038}%
    \special{ar 4144 956 94 94 0 6.28319}%
    \special{ar 4144 94 94 94 0 6.28319}%
    \graphtemp=.5ex\advance\graphtemp by 0.713in
    \rlap{\kern 3.188in\lower\graphtemp\hbox to 0pt{\hss $y$\hss}}%
    \graphtemp=.5ex\advance\graphtemp by 0.991in
    \rlap{\kern 3.622in\lower\graphtemp\hbox to 0pt{\hss $z$\hss}}%
    \graphtemp=.5ex\advance\graphtemp by 1.047in
    \rlap{\kern 3.997in\lower\graphtemp\hbox to 0pt{\hss $x$\hss}}%
    \graphtemp=.5ex\advance\graphtemp by 0.097in
    \rlap{\kern 3.622in\lower\graphtemp\hbox to 0pt{\hss $z'$\hss}}%
    \graphtemp=.5ex\advance\graphtemp by 0.041in
    \rlap{\kern 3.997in\lower\graphtemp\hbox to 0pt{\hss $x'$\hss}}%
    \graphtemp=.5ex\advance\graphtemp by 0.412in
    \rlap{\kern 4.050in\lower\graphtemp\hbox to 0pt{\hss $v$\hss}}%
    \graphtemp=.5ex\advance\graphtemp by 0.412in
    \rlap{\kern 4.425in\lower\graphtemp\hbox to 0pt{\hss $u$\hss}}%
    \graphtemp=.5ex\advance\graphtemp by 0.337in
    \rlap{\kern 3.188in\lower\graphtemp\hbox to 0pt{\hss $y'$\hss}}%
    \hbox{\vrule depth1.069in width0pt height 0pt}%
    \kern 4.500in
  }%
}%
}
\caption{Three cases for Lemma~\ref{4cycle}\label{fig4cyc}}
\end{figure}

\begin{lem}\label{83red3}
The following configurations are $3$-reducible.

{\bf A}. A $2$-vertex or $3$-vertex having a $1$-neighbor.

{\bf B}. A $4^-$-vertex whose neighbors all have degree $2$.

{\bf C}. A $3$-vertex having an $\alpha$-neighbor and another $2$-neighbor.

{\bf D}. A $4$-vertex having a $1$-neighbor and a $2^-$-neighbor.

{\bf E}. A $5^+$-vertex $v$ with
$3p_1+2p_2\ge d(v)$, where $p_i$ is the number of $i$-neighbors of $v$.

{\bf F}. Two adjacent $\gamma$-vertices.

{\bf G}. A $3$-vertex with two $\gamma$-neighbors.

{\bf H}. A vertex $v$ such that $p_1+2q\ge d(v)$ and $p_1+q> 4$, where $p_1$
is the number of $1$-neighbors and $q$ is the number of $\gamma$-neighbors of
$v$.

{\bf I}. A $5$-vertex having a $1$-neighbor and three $\gamma$-neighbors.

{\bf J}. A $4$-vertex with (1) two $\alpha$-neighbors, (2) an $\alpha$-neighbor,
another $2$-neighbor, and a $\gamma$-neighbor, or (3) a $2$-neighbor and three
$\gamma$-neighbors.

{\bf K}. A $\gamma$-vertex whose $3^+$-neighbors are all $\beta$-vertices.
\end{lem}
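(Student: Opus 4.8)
Configurations {\bf A}--{\bf E} are $3$-reducible by Lemma~\ref{3config}: {\bf A}, {\bf B}, {\bf D}, {\bf E} are literally the configurations proved reducible there, and {\bf C} here --- a $3$-vertex with an $\alpha$-neighbor and another $2$-neighbor --- is exactly a $3$-vertex with two $2$-neighbors one of which has a $2$-neighbor. So it remains to treat {\bf F}--{\bf K}. Throughout, $G$ is a minimal $3$-bad graph containing the configuration at hand; processing {\bf F}--{\bf K} in order, $G$ contains none of {\bf A}--{\bf E} nor any earlier configuration among {\bf F}--{\bf K}. I write $w'$ for a proper $3$-weighting of the derived graph $G'=G-F$, where $F$ is the core I choose (together with any isolated edges thereby created, which one checks do not occur once {\bf A}--{\bf D} are excluded), and I extend $w'$ to a proper $3$-weighting $w$ of $G$ by re-choosing weights only on $F$. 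Coincidences among the named vertices, and the short cycles they might form, are peeled off at the start using Lemma~\ref{triangle} and Lemma~\ref{4cycle}.

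The engine is Lemma~\ref{gamma}: for a $\gamma$-vertex $z$ with a designated $3^+$-neighbor $x$, deleting $F_z$ and re-choosing its weights satisfies every edge incident to $F_z$ except $zx$, touching nothing else. Hence, for a ``central'' vertex $v$ with $1$-neighbor set $N_1$ and $\gamma$-neighbor set $N_\gamma$, I would let the core be $[v,N_1\cup N_\gamma]\cup\bigcup_{z\in N_\gamma}F_z$. After re-choosing the weights on $[v,N_1\cup N_\gamma]$ and applying Lemma~\ref{gamma} at each $z\in N_\gamma$ (with $x=v$), the edges still needing attention are: the $|N_1|$ edges $vu$ (automatically satisfied since $d(v)\ge2$); the $|N_\gamma|$ edges $vz$; and the $m:=d(v)-|N_1|-|N_\gamma|$ edges from $v$ to its remaining neighbors. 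The point is that, for each $z\in N_\gamma$, the quantity $\rho_w(z,v)$ depends only on weights inside $z$'s branch --- on which Lemma~\ref{gamma} retains real slack --- so $vz$ can be satisfied ``locally'', while the $m$ remaining edges demand only that $\phi_w(v)$ avoid $m$ values, and the $|N_1|+|N_\gamma|$ re-chosen weights at $v$ furnish enough values of $\phi_w(v)$ precisely when the governing inequality holds. The bookkeeping here (the order in which the local choices and the weights $w(vz)$ are fixed, and how much slack survives for each $\gamma$-type --- $4$-vertex with a $1$-neighbor, $3$-vertex with an $\alpha$-neighbor, $3$-vertex with two $2$-neighbors) is the delicate part, the last $\gamma$-type being stingiest.

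This is the whole story for {\bf H} and {\bf I}: the hypothesis $p_1+2q\ge d(v)$ is exactly what makes the count of available values of $\phi_w(v)$ beat the number $m$ of constraints on it, and $p_1+q>4$ excludes the few small cases --- {\bf I}, plus the $d(v)\in\{6,7\}$ instances inside {\bf H} --- where the margin vanishes and which I would finish by exhibiting an explicit weighting. For {\bf J} (a $4$-vertex) the same scheme runs, with each $\alpha$-neighbor $z$ handled by putting $\Gamma_G(z)$ into the core (the $\alpha$-analogue of the third case of Lemma~\ref{gamma}) and each of the three listed degree patterns checked in turn. For {\bf G}, the center is the $3$-vertex $v$ shared by $\gamma$-neighbors $z_1,z_2$: put $F_{z_1}\cup F_{z_2}\cup\{vz_1,vz_2\}$ in the core, choose $w(vz_1),w(vz_2)$ so the third edge $vx$ remains satisfied, and satisfy $vz_1,vz_2$ via the local slack at $z_1,z_2$. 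For {\bf K}, $v$ is a $\gamma$-vertex all of whose $3^+$-neighbors are $\beta$-vertices (there is one, as {\bf A}--{\bf D} are excluded): apply Lemma~\ref{gamma} at $v$ with $x$ a $\beta$-neighbor, and then push the core into $x$'s branch $x,z_x,y_x$ ($z_x$ the $2$-neighbor of $x$, $y_x$ its other neighbor), so that the two weights $w(xz_x),w(z_xy_x)$ can be chosen to move $\phi_w(x)$ off $\phi_w(v)$ while keeping $xz_x$, $z_xy_x$, and the remaining edge at $y_x$ satisfied --- a three-weight version of the move in Lemma~\ref{beta2}.

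The step I expect to be the main obstacle is {\bf F}, two adjacent $\gamma$-vertices $v$ and $v'$. The natural core is $F_v\cup F_{v'}\cup\{vv'\}$; after choosing $w(vv')$ and applying Lemma~\ref{gamma} at $v$ with $x=v'$ and at $v'$ with $x=v$, the unique edge not automatically satisfied is $vv'$ --- and here the engine does not apply, because the weight on $vv'$ is irrelevant to whether $vv'$ is satisfied, and the slack Lemma~\ref{gamma} leaves at $v$ and at $v'$ can, for the worst $\gamma$-types, shrink to a single choice (obstructions of the flavor of Example~\ref{nonred} hover nearby). So $vv'$ has to be forced by a case analysis on the $\gamma$-types of $v$ and $v'$, extracting $\phi_w(v)\ne\phi_w(v')$ from the combined freedom of $w(vv')$ and the local choices at $v$ and at $v'$, once the degenerate sub-configurations (a common neighbor, a triangle, or a $4$-cycle on $z,z',y,y'$) have been removed via Lemmas~\ref{triangle} and \ref{4cycle}. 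The secondary difficulty, already flagged, is keeping the freedom-count in {\bf H} and {\bf J} honest when several $\gamma$-neighbors are $3$-vertices with two $2$-neighbors, since for that type the weight $w(vz)$ and the weights on $F_z$ interact and $w(vz)$ may end up pinned.
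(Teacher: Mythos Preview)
Your outline tracks the paper's architecture closely --- Lemma~\ref{3config} for {\bf A}--{\bf E}, Lemma~\ref{gamma} as the engine for the rest, {\bf F} as the case requiring the heaviest type-by-type analysis. But the mechanism you lean on in {\bf G}, {\bf H}, {\bf I}, {\bf J} has a genuine gap.

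You assert that Lemma~\ref{gamma} ``retains real slack'' at each $\gamma$-neighbor $z$, so that after fixing the weights on $[v,N_1\cup N_\gamma]$ and applying the lemma, the edge $vz$ can still be satisfied by adjusting $\rho_w(z,v)$ locally. It need not: for a $\gamma_4$-vertex $z$ the single edge in $F_z$ must dodge two forbidden values and may be pinned to one choice, and the other $\gamma$-types can be just as rigid. Once $w(vz)$ is fixed and Lemma~\ref{gamma} applied, $\rho_w(z,v)$ can be completely determined, so the requirement $\rho_w(v,z)\ne\rho_w(z,v)$ becomes a constraint on the \emph{other} weights at $v$ --- and these constraints for the various $z$'s are circularly entangled with the very weights you are choosing. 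Your freedom-count for {\bf H} therefore does not close. The paper's argument for {\bf H} is different in kind: it sets weight $3$ on nearly all of $[v,N_1\cup N_\gamma]$ so that $\rho_w(v,z)\ge 3(p_1+q-1)$, and the hypothesis $p_1+q>4$ is exactly what makes this exceed $9\ge\rho_w(z,v)$, satisfying every $vz$ \emph{by magnitude} before Lemma~\ref{gamma} is ever invoked. Cases {\bf G}, {\bf I}, {\bf J} are handled not by a uniform count but by explicit case analysis on the $\gamma$-types involved, typically fixing some $w(vz_i)$ to a specific value (often $3$) to force automatic satisfaction of one edge by a similar magnitude argument, then finishing with Lemma~\ref{choose}.

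Your plan for {\bf K} also omits a constraint: once you re-choose $w(xz_x)$ you change $\phi_w(x)$, and the edge $xx'$ from $x$ to its remaining $3^+$-neighbor (which every $\beta$-vertex has) must be re-satisfied too. That puts three constraints (from $vx$, $xx'$, and $z_xy_x$) on the single weight $w(xz_x)$, which can fail. The paper does not route {\bf K} through Lemma~\ref{gamma} at $v$; it splits on the $\gamma$-type of $v$, deletes all of $\Gamma_G(v)$ together with the $zy$-edge at each $\beta$-neighbor, and works the extension out directly in each of the three subcases.
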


\begin{proof}
Lemma~\ref{3config} shows that {\bf A-E} are $3$-reducible.  Let $G$ be a
minimal $3$-bad graph containing one of {\bf F-K}.  When $z$ is a
$\gamma$-vertex with a $3^+$-neighbor $v$ and $w(zv)$ has been chosen, the
phrase ``apply Lemma~\ref{gamma} to $z$'' means ``apply Lemma~\ref{gamma} to
choose weights on $F_z$ to satisfy $F_z$ and the edges incident to them other
than $zv$''.  In each case, we extend a proper $3$-weighting $w'$ of a proper
subgraph $G'$ of $G$ to a proper $3$-weighting $w$ of $G$.

The phrase ``Figure $n$ is accurate'' means that the vertices in the
illustration are known to be distinct, except possibly for
non-$\gamma$-vertices on circles, to which no edges of the core are adjacent.
There are three types of $\gamma$-vertices.  Let those of degree $4$ be
{\it $\gamma_4$-vertices}, those of degree $3$ with an $\alpha$-neighbor be
{\it $\gamma_{3a}$-vertices}, and those of degree $3$ with two $2$-neighbors be
{\it $\gamma_{3b}$-vertices}.  

\smallskip
{\bf Case F:} {\it $v$ and $v'$ are adjacent $\gamma$-vertices.}
Let $G'=G-vv'-F_v-F_{v'}$.  By symmetry, the first subcase covers when $v$ or
$v'$ is a $\gamma_{3b}$-vertex.  By Lemma~\ref{triangle}, $v$ and $v'$ do
not have a common $2$-neighbor.

{\bf Subcase 1:} {\it $v$ is a $\gamma_{3b}$-vertex.}
Let $N_G(v)=\{v',z,z'\}$.  By Lemma~\ref{triangle}, we may assume
$zz',zv',z'v'\notin E(G)$, so Figure~\ref{figF12} is accurate.  Set $w(vv')=3$
to ensure satisfying  $vz$ and $vz'$.  Apply Lemma~\ref{gamma} to $v'$.
Choose $w(vz)$ to satisfy $\Gamma_{G'}(z)$.  Choose $w(vz')$ to satisfy $vv'$
and $\Gamma_{G'}(z')$.  

\begin{figure}[h]
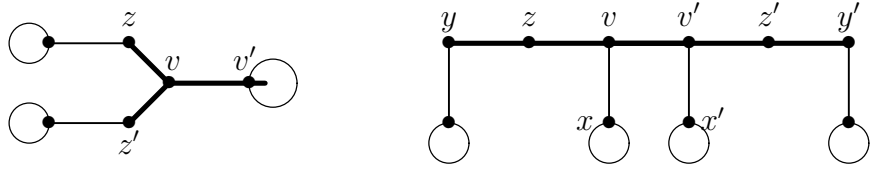

\gpic{
\expandafter\ifx\csname graph\endcsname\relax \csname newbox\endcsname\graph\fi
\expandafter\ifx\csname graphtemp\endcsname\relax \csname newdimen\endcsname\graphtemp\fi
\setbox\graph=\vtop{\vskip 0pt\hbox{%
    \graphtemp=.5ex\advance\graphtemp by 0.544in
    \rlap{\kern 0.209in\lower\graphtemp\hbox to 0pt{\hss $\bu$\hss}}%
    \graphtemp=.5ex\advance\graphtemp by 0.126in
    \rlap{\kern 0.209in\lower\graphtemp\hbox to 0pt{\hss $\bu$\hss}}%
    \graphtemp=.5ex\advance\graphtemp by 0.544in
    \rlap{\kern 0.628in\lower\graphtemp\hbox to 0pt{\hss $\bu$\hss}}%
    \graphtemp=.5ex\advance\graphtemp by 0.126in
    \rlap{\kern 0.628in\lower\graphtemp\hbox to 0pt{\hss $\bu$\hss}}%
    \graphtemp=.5ex\advance\graphtemp by 0.335in
    \rlap{\kern 0.837in\lower\graphtemp\hbox to 0pt{\hss $\bu$\hss}}%
    \graphtemp=.5ex\advance\graphtemp by 0.335in
    \rlap{\kern 1.256in\lower\graphtemp\hbox to 0pt{\hss $\bu$\hss}}%
    \special{pn 28}%
    \special{pa 837 335}%
    \special{pa 1340 335}%
    \special{fp}%
    \special{pa 837 335}%
    \special{pa 628 544}%
    \special{fp}%
    \special{pa 837 335}%
    \special{pa 628 126}%
    \special{fp}%
    \special{pn 11}%
    \special{pa 209 544}%
    \special{pa 628 544}%
    \special{fp}%
    \special{pa 209 126}%
    \special{pa 628 126}%
    \special{fp}%
    \special{pn 8}%
    \special{ar 105 544 105 105 0 6.28319}%
    \special{ar 105 126 105 105 0 6.28319}%
    \special{ar 1381 335 126 126 0 6.28319}%
    \graphtemp=.5ex\advance\graphtemp by 0.670in
    \rlap{\kern 0.628in\lower\graphtemp\hbox to 0pt{\hss $z'$\hss}}%
    \graphtemp=.5ex\advance\graphtemp by 0.000in
    \rlap{\kern 0.628in\lower\graphtemp\hbox to 0pt{\hss $z$\hss}}%
    \graphtemp=.5ex\advance\graphtemp by 0.230in
    \rlap{\kern 0.858in\lower\graphtemp\hbox to 0pt{\hss $v$\hss}}%
    \graphtemp=.5ex\advance\graphtemp by 0.230in
    \rlap{\kern 1.235in\lower\graphtemp\hbox to 0pt{\hss $v'$\hss}}%
    \graphtemp=.5ex\advance\graphtemp by 0.544in
    \rlap{\kern 2.302in\lower\graphtemp\hbox to 0pt{\hss $\bu$\hss}}%
    \graphtemp=.5ex\advance\graphtemp by 0.126in
    \rlap{\kern 2.302in\lower\graphtemp\hbox to 0pt{\hss $\bu$\hss}}%
    \graphtemp=.5ex\advance\graphtemp by 0.126in
    \rlap{\kern 2.721in\lower\graphtemp\hbox to 0pt{\hss $\bu$\hss}}%
    \graphtemp=.5ex\advance\graphtemp by 0.126in
    \rlap{\kern 3.140in\lower\graphtemp\hbox to 0pt{\hss $\bu$\hss}}%
    \graphtemp=.5ex\advance\graphtemp by 0.544in
    \rlap{\kern 3.140in\lower\graphtemp\hbox to 0pt{\hss $\bu$\hss}}%
    \graphtemp=.5ex\advance\graphtemp by 0.000in
    \rlap{\kern 2.302in\lower\graphtemp\hbox to 0pt{\hss $y$\hss}}%
    \graphtemp=.5ex\advance\graphtemp by 0.000in
    \rlap{\kern 2.721in\lower\graphtemp\hbox to 0pt{\hss $z$\hss}}%
    \graphtemp=.5ex\advance\graphtemp by 0.000in
    \rlap{\kern 3.140in\lower\graphtemp\hbox to 0pt{\hss $v$\hss}}%
    \graphtemp=.5ex\advance\graphtemp by 0.544in
    \rlap{\kern 3.014in\lower\graphtemp\hbox to 0pt{\hss $x$\hss}}%
    \special{ar 2302 649 105 105 0 6.28319}%
    \special{ar 3140 649 105 105 0 6.28319}%
    \special{pn 11}%
    \special{pa 2302 544}%
    \special{pa 2302 126}%
    \special{fp}%
    \special{pa 3140 126}%
    \special{pa 3140 544}%
    \special{fp}%
    \special{pn 28}%
    \special{pa 2302 126}%
    \special{pa 3558 126}%
    \special{fp}%
    \graphtemp=.5ex\advance\graphtemp by 0.544in
    \rlap{\kern 4.395in\lower\graphtemp\hbox to 0pt{\hss $\bu$\hss}}%
    \graphtemp=.5ex\advance\graphtemp by 0.126in
    \rlap{\kern 4.395in\lower\graphtemp\hbox to 0pt{\hss $\bu$\hss}}%
    \graphtemp=.5ex\advance\graphtemp by 0.126in
    \rlap{\kern 3.977in\lower\graphtemp\hbox to 0pt{\hss $\bu$\hss}}%
    \graphtemp=.5ex\advance\graphtemp by 0.126in
    \rlap{\kern 3.558in\lower\graphtemp\hbox to 0pt{\hss $\bu$\hss}}%
    \graphtemp=.5ex\advance\graphtemp by 0.544in
    \rlap{\kern 3.558in\lower\graphtemp\hbox to 0pt{\hss $\bu$\hss}}%
    \graphtemp=.5ex\advance\graphtemp by 0.000in
    \rlap{\kern 4.395in\lower\graphtemp\hbox to 0pt{\hss $y'$\hss}}%
    \graphtemp=.5ex\advance\graphtemp by 0.000in
    \rlap{\kern 3.977in\lower\graphtemp\hbox to 0pt{\hss $z'$\hss}}%
    \graphtemp=.5ex\advance\graphtemp by 0.000in
    \rlap{\kern 3.558in\lower\graphtemp\hbox to 0pt{\hss $v'$\hss}}%
    \graphtemp=.5ex\advance\graphtemp by 0.544in
    \rlap{\kern 3.684in\lower\graphtemp\hbox to 0pt{\hss $x'$\hss}}%
    \special{pn 8}%
    \special{ar 4395 649 105 105 0 6.28319}%
    \special{ar 3558 649 105 105 0 6.28319}%
    \special{pn 11}%
    \special{pa 4395 544}%
    \special{pa 4395 126}%
    \special{fp}%
    \special{pa 3558 126}%
    \special{pa 3558 544}%
    \special{fp}%
    \special{pn 28}%
    \special{pa 4395 126}%
    \special{pa 3140 126}%
    \special{fp}%
    \hbox{\vrule depth0.753in width0pt height 0pt}%
    \kern 4.500in
  }%
}%
}
\caption{Cases {\bf F1} and {\bf F2} for Lemma~\ref{83red3}\label{figF12}}
\end{figure}

{\bf Subcase 2:} {\it $v$ and $v'$ are both $\gamma_{3a}$-vertices.}
Let $z$ and $z'$ be the $\alpha$-neighbors of $v$ and $v'$, and let
$y$ and $y'$ be the $2$-neighbors of $z$ and $z'$.  By Lemma~\ref{4cycle},
Lemma~\ref{triangle}, and {\bf B}, Figure~\ref{figF12} is accurate.  Let $x$
and $x'$ be the remaining neighbors of $v$ and $v'$, respectively.

Choose $w(vz)$ to satisfy $zy$.  Now choose $w(v'z')$ to satisfy $vv'$ and
$z'y'$.  Next choose $w(vv')$ to satisfy $vx$ and $v'x'$.  Finally, choose
$w(zy)$ to satisfy $vz$ and $\Gamma_{G'}(y)$, and choose $w(z'y')$ to satisfy
$v'z'$ and $\Gamma_{G'}(y')$.

{\bf Subcase 3:} {\it $v$ and $v'$ are both $\gamma_{4}$-vertices.}
Let $N(v)=\{v',z_1,z_2,u\}$ and $N(v')=\{v,z_1',z_2',u'\}$, with
$d_G(u)=d_G(u')=1$.  For $i\in\{1,2\}$, let $a_i=w'(vz_i)$ and $b_i=\rp{z_i}v$;
similarly define $a_i'$ and $b_i'$ using $\{v',z_1',z_2'\}$.  Since
$d_G(u),d_G(u')=1$, Figure~\ref{figF34} is accurate.

If $a_1+a_2>a_1'+a_2'$, then set $w(uv)=3$ to ensure satisfying $vv'$.  Next
choose $w(vv')$ to satisfy $vz_1$ and $vz_2$, and choose $w(v'u')$ to satisfy
$v'z_1'$ and $v'z_2'$.

By symmetry, we may thus assume $a_1+a_2=a_1'+a_2'$.  If $b_1\ne a_2+4$, then
choose $w(vv')\in\{1,3\}$ to ensure satisfying $vz_1$.  Now choose
$w(v'u')$ to satisfy $v'z_1'$ and $v'z_2'$, and then choose $w(vu)$ to satisfy
$vz_2$ and $vv'$.  Hence by symmetry we may assume that each entry of
$(b_1,b_2,b_1',b_2')$ exceeds the corresponding entry of $(a_2,a_1,a_2',a_1')$
by exactly $4$.  Now setting $w(uv)=w(vv')=3$ and $w(v'u')=2$ completes the
extension.

\begin{figure}[h]
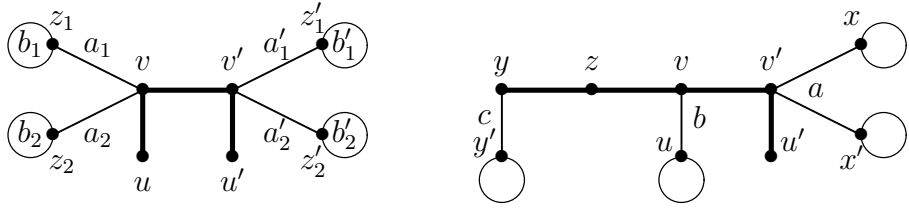

\gpic{
\expandafter\ifx\csname graph\endcsname\relax \csname newbox\endcsname\graph\fi
\expandafter\ifx\csname graphtemp\endcsname\relax \csname newdimen\endcsname\graphtemp\fi
\setbox\graph=\vtop{\vskip 0pt\hbox{%
    \graphtemp=.5ex\advance\graphtemp by 0.611in
    \rlap{\kern 0.235in\lower\graphtemp\hbox to 0pt{\hss $\bu$\hss}}%
    \graphtemp=.5ex\advance\graphtemp by 0.141in
    \rlap{\kern 0.235in\lower\graphtemp\hbox to 0pt{\hss $\bu$\hss}}%
    \graphtemp=.5ex\advance\graphtemp by 0.376in
    \rlap{\kern 0.705in\lower\graphtemp\hbox to 0pt{\hss $\bu$\hss}}%
    \graphtemp=.5ex\advance\graphtemp by 0.729in
    \rlap{\kern 0.705in\lower\graphtemp\hbox to 0pt{\hss $\bu$\hss}}%
    \graphtemp=.5ex\advance\graphtemp by 0.729in
    \rlap{\kern 1.175in\lower\graphtemp\hbox to 0pt{\hss $\bu$\hss}}%
    \graphtemp=.5ex\advance\graphtemp by 0.376in
    \rlap{\kern 1.175in\lower\graphtemp\hbox to 0pt{\hss $\bu$\hss}}%
    \graphtemp=.5ex\advance\graphtemp by 0.611in
    \rlap{\kern 1.645in\lower\graphtemp\hbox to 0pt{\hss $\bu$\hss}}%
    \graphtemp=.5ex\advance\graphtemp by 0.141in
    \rlap{\kern 1.645in\lower\graphtemp\hbox to 0pt{\hss $\bu$\hss}}%
    \special{pn 28}%
    \special{pa 705 729}%
    \special{pa 705 376}%
    \special{fp}%
    \special{pa 705 376}%
    \special{pa 1175 376}%
    \special{fp}%
    \special{pa 1175 376}%
    \special{pa 1175 729}%
    \special{fp}%
    \special{pn 11}%
    \special{pa 235 611}%
    \special{pa 705 376}%
    \special{fp}%
    \special{pa 705 376}%
    \special{pa 235 141}%
    \special{fp}%
    \special{pa 1645 141}%
    \special{pa 1175 376}%
    \special{fp}%
    \special{pa 1175 376}%
    \special{pa 1645 611}%
    \special{fp}%
    \special{pn 8}%
    \special{ar 118 611 118 118 0 6.28319}%
    \special{ar 118 141 118 118 0 6.28319}%
    \special{ar 1762 611 118 118 0 6.28319}%
    \special{ar 1762 141 118 118 0 6.28319}%
    \graphtemp=.5ex\advance\graphtemp by 0.611in
    \rlap{\kern 0.118in\lower\graphtemp\hbox to 0pt{\hss $b_2$\hss}}%
    \graphtemp=.5ex\advance\graphtemp by 0.141in
    \rlap{\kern 0.118in\lower\graphtemp\hbox to 0pt{\hss $b_1$\hss}}%
    \graphtemp=.5ex\advance\graphtemp by 0.752in
    \rlap{\kern 0.282in\lower\graphtemp\hbox to 0pt{\hss $z_2$\hss}}%
    \graphtemp=.5ex\advance\graphtemp by 0.000in
    \rlap{\kern 0.282in\lower\graphtemp\hbox to 0pt{\hss $z_1$\hss}}%
    \graphtemp=.5ex\advance\graphtemp by 0.611in
    \rlap{\kern 0.470in\lower\graphtemp\hbox to 0pt{\hss $a_2$\hss}}%
    \graphtemp=.5ex\advance\graphtemp by 0.141in
    \rlap{\kern 0.470in\lower\graphtemp\hbox to 0pt{\hss $a_1$\hss}}%
    \graphtemp=.5ex\advance\graphtemp by 0.235in
    \rlap{\kern 0.705in\lower\graphtemp\hbox to 0pt{\hss $v$\hss}}%
    \graphtemp=.5ex\advance\graphtemp by 0.870in
    \rlap{\kern 0.705in\lower\graphtemp\hbox to 0pt{\hss $u$\hss}}%
    \graphtemp=.5ex\advance\graphtemp by 0.235in
    \rlap{\kern 1.175in\lower\graphtemp\hbox to 0pt{\hss $v'$\hss}}%
    \graphtemp=.5ex\advance\graphtemp by 0.870in
    \rlap{\kern 1.175in\lower\graphtemp\hbox to 0pt{\hss $u'$\hss}}%
    \graphtemp=.5ex\advance\graphtemp by 0.611in
    \rlap{\kern 1.410in\lower\graphtemp\hbox to 0pt{\hss $a_2'$\hss}}%
    \graphtemp=.5ex\advance\graphtemp by 0.141in
    \rlap{\kern 1.410in\lower\graphtemp\hbox to 0pt{\hss $a_1'$\hss}}%
    \graphtemp=.5ex\advance\graphtemp by 0.752in
    \rlap{\kern 1.598in\lower\graphtemp\hbox to 0pt{\hss $z_2'$\hss}}%
    \graphtemp=.5ex\advance\graphtemp by 0.000in
    \rlap{\kern 1.598in\lower\graphtemp\hbox to 0pt{\hss $z_1'$\hss}}%
    \graphtemp=.5ex\advance\graphtemp by 0.611in
    \rlap{\kern 1.762in\lower\graphtemp\hbox to 0pt{\hss $b_2'$\hss}}%
    \graphtemp=.5ex\advance\graphtemp by 0.141in
    \rlap{\kern 1.762in\lower\graphtemp\hbox to 0pt{\hss $b_1'$\hss}}%
    \graphtemp=.5ex\advance\graphtemp by 0.729in
    \rlap{\kern 2.585in\lower\graphtemp\hbox to 0pt{\hss $\bu$\hss}}%
    \graphtemp=.5ex\advance\graphtemp by 0.376in
    \rlap{\kern 2.585in\lower\graphtemp\hbox to 0pt{\hss $\bu$\hss}}%
    \graphtemp=.5ex\advance\graphtemp by 0.376in
    \rlap{\kern 3.055in\lower\graphtemp\hbox to 0pt{\hss $\bu$\hss}}%
    \graphtemp=.5ex\advance\graphtemp by 0.376in
    \rlap{\kern 3.525in\lower\graphtemp\hbox to 0pt{\hss $\bu$\hss}}%
    \graphtemp=.5ex\advance\graphtemp by 0.729in
    \rlap{\kern 3.525in\lower\graphtemp\hbox to 0pt{\hss $\bu$\hss}}%
    \graphtemp=.5ex\advance\graphtemp by 0.729in
    \rlap{\kern 3.995in\lower\graphtemp\hbox to 0pt{\hss $\bu$\hss}}%
    \graphtemp=.5ex\advance\graphtemp by 0.376in
    \rlap{\kern 3.995in\lower\graphtemp\hbox to 0pt{\hss $\bu$\hss}}%
    \graphtemp=.5ex\advance\graphtemp by 0.611in
    \rlap{\kern 4.465in\lower\graphtemp\hbox to 0pt{\hss $\bu$\hss}}%
    \graphtemp=.5ex\advance\graphtemp by 0.141in
    \rlap{\kern 4.465in\lower\graphtemp\hbox to 0pt{\hss $\bu$\hss}}%
    \special{pn 28}%
    \special{pa 2585 376}%
    \special{pa 3995 376}%
    \special{fp}%
    \special{pa 3995 376}%
    \special{pa 3995 729}%
    \special{fp}%
    \special{pn 11}%
    \special{pa 2585 729}%
    \special{pa 2585 376}%
    \special{fp}%
    \special{pa 3525 376}%
    \special{pa 3525 729}%
    \special{fp}%
    \special{pa 3995 376}%
    \special{pa 4465 611}%
    \special{fp}%
    \special{pa 3995 376}%
    \special{pa 4465 141}%
    \special{fp}%
    \special{pn 8}%
    \special{ar 2585 846 118 118 0 6.28319}%
    \special{ar 3525 846 118 118 0 6.28319}%
    \special{ar 4583 611 118 118 0 6.28319}%
    \special{ar 4583 141 118 118 0 6.28319}%
    \graphtemp=.5ex\advance\graphtemp by 0.662in
    \rlap{\kern 2.495in\lower\graphtemp\hbox to 0pt{\hss $y'$\hss}}%
    \graphtemp=.5ex\advance\graphtemp by 0.235in
    \rlap{\kern 2.585in\lower\graphtemp\hbox to 0pt{\hss $y$\hss}}%
    \graphtemp=.5ex\advance\graphtemp by 0.235in
    \rlap{\kern 3.055in\lower\graphtemp\hbox to 0pt{\hss $z$\hss}}%
    \graphtemp=.5ex\advance\graphtemp by 0.235in
    \rlap{\kern 3.525in\lower\graphtemp\hbox to 0pt{\hss $v$\hss}}%
    \graphtemp=.5ex\advance\graphtemp by 0.662in
    \rlap{\kern 3.435in\lower\graphtemp\hbox to 0pt{\hss $u$\hss}}%
    \graphtemp=.5ex\advance\graphtemp by 0.235in
    \rlap{\kern 3.995in\lower\graphtemp\hbox to 0pt{\hss $v'$\hss}}%
    \graphtemp=.5ex\advance\graphtemp by 0.662in
    \rlap{\kern 4.108in\lower\graphtemp\hbox to 0pt{\hss $u'$\hss}}%
    \graphtemp=.5ex\advance\graphtemp by 0.000in
    \rlap{\kern 4.418in\lower\graphtemp\hbox to 0pt{\hss $x$\hss}}%
    \graphtemp=.5ex\advance\graphtemp by 0.752in
    \rlap{\kern 4.418in\lower\graphtemp\hbox to 0pt{\hss $x'$\hss}}%
    \graphtemp=.5ex\advance\graphtemp by 0.517in
    \rlap{\kern 2.491in\lower\graphtemp\hbox to 0pt{\hss $c$\hss}}%
    \graphtemp=.5ex\advance\graphtemp by 0.540in
    \rlap{\kern 3.619in\lower\graphtemp\hbox to 0pt{\hss $b$\hss}}%
    \graphtemp=.5ex\advance\graphtemp by 0.376in
    \rlap{\kern 4.230in\lower\graphtemp\hbox to 0pt{\hss $a$\hss}}%
    \hbox{\vrule depth0.964in width0pt height 0pt}%
    \kern 4.700in
  }%
}%
}
\caption{Cases {\bf F3} and {\bf F4} for Lemma~\ref{83red3}\label{figF34}}
\end{figure}

{\bf Subcase 4:} {\it $v$ is a $\gamma_{3a}$-vertex and $v'$ is a
$\gamma_{4}$-vertex.} 
Let $N_G(v)=\{v',z,u\}$, with $z$ being the $\alpha$-vertex.  Let $y$ be the 
$2$-neighbor of $z$, with $N_G(y)=\{z,y'\}$.  Let $N_G(v')=\{v,x,x',u'\}$, with
$d_G(u')=1$.  By Lemma~\ref{triangle} and {\bf D}, Figure~\ref{figF34} is
accurate.  Let $a=w'(v'x)+w'(v'x')$, $b=w'(vu)$, and $c=w'(yy')$.

If $a\ne b$, then choose $w(u'v')\in\{1,3\}$ to ensure satisfying $vv'$.
Now choose $w(vv')$ to satisfy $v'x$ and $v'x'$, choose $w(vz)$ to satisfy
$vu$ and $yz$, and choose $w(zy)$ to satisfy $vz$ and the other edge at $y$.

If $a=b$, then set $w(u'v')=c$.  Now choose $w(vv')$ to satisfy $v'x$ and
$v'x'$.  Next choose $w(vz)$ to satisfy $vv'$, $yz$, and $uv$, which succeeds
because $vv'$ and $yz$ both are satisfied if and only if $w(vz)\ne c$.
Finally choose $w(zy)$ to satisfy $vz$ and the other edge at $y$.

{\bf Case G:} {\it $v$ is a $3$-vertex having two $\gamma$-neighbors $z$ and
$z'$.} 
Let $N_G(v)=\{z,z',x\}$.  In each case, we will
let $G'=G-\{vz,vz'\}-F_z-F_{z'}$ (and $w'$ is a proper $3$-weighting of $G'$).
By {\bf F}, $zz'\notin E(G)$.  By Lemma~\ref{triangle}, neither $z$
nor $z'$ shares a $2$-neighbor with $v$ or has a $2$-neighbor adjacent to a
$2$-neighbor of $v$.  Hence Figures~\ref{figG12} and~\ref{figG3} are accurate.

{\bf Subcase 1:} {\it $z$ is a $\gamma_{3a}$-vertex.}
Let $N_G(z)=\{v,y,u\}$, with $y$ the $\alpha$-neighbor of $z$.  Let $y'$ be the
$2$-neighbor of $y$, with $N_G(y')=\{y,y''\}$.  Let $a=w'(vx)$, $b=w'(zu)$, and
$c=w'(y'y'')$, as shown on the left in Figure~\ref{figG12}.

If $b\ne a$, then choose $w(vz')\in\{1,3\}$ to ensure satisfying $vz$.
With $w(vz')$ known, apply Lemma~\ref{gamma} to $z'$.  Next choose $w(vz)$ to
satisfy $vx$ and $vz'$.  With $vz$ automatically satisfied and $w(vz)$ chosen,
it now suffices to apply Lemma~\ref{gamma} to $z$.

Hence we may assume $b=a$.  In this case, set $w(vz')=c$ and apply
Lemma~\ref{gamma} to $z'$.  Choose $w(vz)$ to satisfy $vx$ and $vz'$.
Now choose $w(zy)$ to satisfy $vz$, $zu$, and $yy'$; this succeeds because
$yy'$ and $vz$ both forbid $w(zy)=c$, so at most two choices for $w(zy)$ are
forbidden.  Finally, choose $w(yy')$ to satisfy $zy$ and $y'y''$.

\begin{figure}[h]
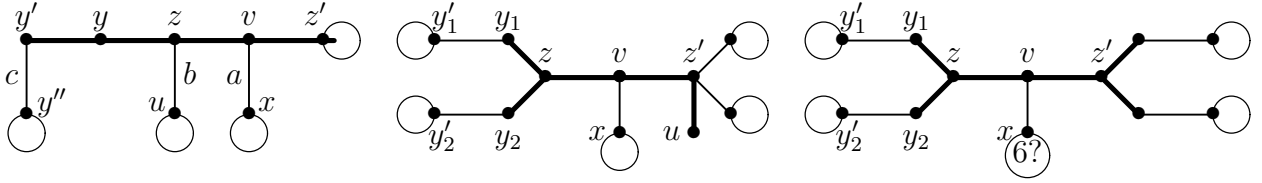

\gpic{
\expandafter\ifx\csname graph\endcsname\relax \csname newbox\endcsname\graph\fi
\expandafter\ifx\csname graphtemp\endcsname\relax \csname newdimen\endcsname\graphtemp\fi
\setbox\graph=\vtop{\vskip 0pt\hbox{%
    \graphtemp=.5ex\advance\graphtemp by 0.504in
    \rlap{\kern 0.097in\lower\graphtemp\hbox to 0pt{\hss $\bu$\hss}}%
    \graphtemp=.5ex\advance\graphtemp by 0.116in
    \rlap{\kern 0.097in\lower\graphtemp\hbox to 0pt{\hss $\bu$\hss}}%
    \graphtemp=.5ex\advance\graphtemp by 0.116in
    \rlap{\kern 0.485in\lower\graphtemp\hbox to 0pt{\hss $\bu$\hss}}%
    \graphtemp=.5ex\advance\graphtemp by 0.504in
    \rlap{\kern 0.873in\lower\graphtemp\hbox to 0pt{\hss $\bu$\hss}}%
    \graphtemp=.5ex\advance\graphtemp by 0.116in
    \rlap{\kern 0.873in\lower\graphtemp\hbox to 0pt{\hss $\bu$\hss}}%
    \graphtemp=.5ex\advance\graphtemp by 0.504in
    \rlap{\kern 1.261in\lower\graphtemp\hbox to 0pt{\hss $\bu$\hss}}%
    \graphtemp=.5ex\advance\graphtemp by 0.116in
    \rlap{\kern 1.261in\lower\graphtemp\hbox to 0pt{\hss $\bu$\hss}}%
    \graphtemp=.5ex\advance\graphtemp by 0.116in
    \rlap{\kern 1.649in\lower\graphtemp\hbox to 0pt{\hss $\bu$\hss}}%
    \special{pn 28}%
    \special{pa 97 116}%
    \special{pa 485 116}%
    \special{fp}%
    \special{pa 485 116}%
    \special{pa 873 116}%
    \special{fp}%
    \special{pa 873 116}%
    \special{pa 1261 116}%
    \special{fp}%
    \special{pa 1261 116}%
    \special{pa 1707 116}%
    \special{fp}%
    \special{pn 11}%
    \special{pa 97 504}%
    \special{pa 97 116}%
    \special{fp}%
    \special{pa 873 504}%
    \special{pa 873 116}%
    \special{fp}%
    \special{pa 1261 116}%
    \special{pa 1261 504}%
    \special{fp}%
    \graphtemp=.5ex\advance\graphtemp by 0.000in
    \rlap{\kern 1.261in\lower\graphtemp\hbox to 0pt{\hss $v$\hss}}%
    \graphtemp=.5ex\advance\graphtemp by 0.450in
    \rlap{\kern 1.355in\lower\graphtemp\hbox to 0pt{\hss $x$\hss}}%
    \special{pn 8}%
    \special{ar 97 601 97 97 0 6.28319}%
    \special{ar 873 601 97 97 0 6.28319}%
    \special{ar 1261 601 97 97 0 6.28319}%
    \special{ar 1746 116 97 97 0 6.28319}%
    \graphtemp=.5ex\advance\graphtemp by 0.000in
    \rlap{\kern 0.097in\lower\graphtemp\hbox to 0pt{\hss $y'$\hss}}%
    \graphtemp=.5ex\advance\graphtemp by 0.000in
    \rlap{\kern 0.485in\lower\graphtemp\hbox to 0pt{\hss $y$\hss}}%
    \graphtemp=.5ex\advance\graphtemp by 0.000in
    \rlap{\kern 1.610in\lower\graphtemp\hbox to 0pt{\hss $z'$\hss}}%
    \graphtemp=.5ex\advance\graphtemp by 0.000in
    \rlap{\kern 0.873in\lower\graphtemp\hbox to 0pt{\hss $z$\hss}}%
    \graphtemp=.5ex\advance\graphtemp by 0.450in
    \rlap{\kern 0.230in\lower\graphtemp\hbox to 0pt{\hss $y''$\hss}}%
    \graphtemp=.5ex\advance\graphtemp by 0.450in
    \rlap{\kern 0.779in\lower\graphtemp\hbox to 0pt{\hss $u$\hss}}%
    \graphtemp=.5ex\advance\graphtemp by 0.310in
    \rlap{\kern 0.951in\lower\graphtemp\hbox to 0pt{\hss $b$\hss}}%
    \graphtemp=.5ex\advance\graphtemp by 0.310in
    \rlap{\kern 1.184in\lower\graphtemp\hbox to 0pt{\hss $a$\hss}}%
    \graphtemp=.5ex\advance\graphtemp by 0.310in
    \rlap{\kern 0.019in\lower\graphtemp\hbox to 0pt{\hss $c$\hss}}%
    \graphtemp=.5ex\advance\graphtemp by 0.504in
    \rlap{\kern 2.231in\lower\graphtemp\hbox to 0pt{\hss $\bu$\hss}}%
    \graphtemp=.5ex\advance\graphtemp by 0.116in
    \rlap{\kern 2.231in\lower\graphtemp\hbox to 0pt{\hss $\bu$\hss}}%
    \graphtemp=.5ex\advance\graphtemp by 0.504in
    \rlap{\kern 2.619in\lower\graphtemp\hbox to 0pt{\hss $\bu$\hss}}%
    \graphtemp=.5ex\advance\graphtemp by 0.116in
    \rlap{\kern 2.619in\lower\graphtemp\hbox to 0pt{\hss $\bu$\hss}}%
    \graphtemp=.5ex\advance\graphtemp by 0.310in
    \rlap{\kern 2.813in\lower\graphtemp\hbox to 0pt{\hss $\bu$\hss}}%
    \graphtemp=.5ex\advance\graphtemp by 0.310in
    \rlap{\kern 3.201in\lower\graphtemp\hbox to 0pt{\hss $\bu$\hss}}%
    \graphtemp=.5ex\advance\graphtemp by 0.601in
    \rlap{\kern 3.201in\lower\graphtemp\hbox to 0pt{\hss $\bu$\hss}}%
    \special{pn 28}%
    \special{pa 2813 310}%
    \special{pa 3201 310}%
    \special{fp}%
    \special{pa 2813 310}%
    \special{pa 2619 504}%
    \special{fp}%
    \special{pa 2813 310}%
    \special{pa 2619 116}%
    \special{fp}%
    \special{pn 11}%
    \special{pa 2231 504}%
    \special{pa 2619 504}%
    \special{fp}%
    \special{pa 2231 116}%
    \special{pa 2619 116}%
    \special{fp}%
    \special{pa 3201 310}%
    \special{pa 3201 601}%
    \special{fp}%
    \graphtemp=.5ex\advance\graphtemp by 0.194in
    \rlap{\kern 2.813in\lower\graphtemp\hbox to 0pt{\hss $z$\hss}}%
    \graphtemp=.5ex\advance\graphtemp by 0.194in
    \rlap{\kern 3.201in\lower\graphtemp\hbox to 0pt{\hss $v$\hss}}%
    \graphtemp=.5ex\advance\graphtemp by 0.601in
    \rlap{\kern 3.085in\lower\graphtemp\hbox to 0pt{\hss $x$\hss}}%
    \graphtemp=.5ex\advance\graphtemp by 0.000in
    \rlap{\kern 2.619in\lower\graphtemp\hbox to 0pt{\hss $y_1$\hss}}%
    \graphtemp=.5ex\advance\graphtemp by 0.621in
    \rlap{\kern 2.619in\lower\graphtemp\hbox to 0pt{\hss $y_2$\hss}}%
    \graphtemp=.5ex\advance\graphtemp by 0.000in
    \rlap{\kern 2.290in\lower\graphtemp\hbox to 0pt{\hss $y_1'$\hss}}%
    \graphtemp=.5ex\advance\graphtemp by 0.621in
    \rlap{\kern 2.270in\lower\graphtemp\hbox to 0pt{\hss $y_2'$\hss}}%
    \special{pn 8}%
    \special{ar 2134 504 97 97 0 6.28319}%
    \special{ar 2134 116 97 97 0 6.28319}%
    \special{ar 3201 699 97 97 0 6.28319}%
    \graphtemp=.5ex\advance\graphtemp by 0.310in
    \rlap{\kern 3.201in\lower\graphtemp\hbox to 0pt{\hss $\bu$\hss}}%
    \graphtemp=.5ex\advance\graphtemp by 0.310in
    \rlap{\kern 3.590in\lower\graphtemp\hbox to 0pt{\hss $\bu$\hss}}%
    \graphtemp=.5ex\advance\graphtemp by 0.601in
    \rlap{\kern 3.590in\lower\graphtemp\hbox to 0pt{\hss $\bu$\hss}}%
    \graphtemp=.5ex\advance\graphtemp by 0.504in
    \rlap{\kern 3.784in\lower\graphtemp\hbox to 0pt{\hss $\bu$\hss}}%
    \graphtemp=.5ex\advance\graphtemp by 0.116in
    \rlap{\kern 3.784in\lower\graphtemp\hbox to 0pt{\hss $\bu$\hss}}%
    \special{pn 28}%
    \special{pa 3201 310}%
    \special{pa 3590 310}%
    \special{fp}%
    \special{pa 3590 310}%
    \special{pa 3590 601}%
    \special{fp}%
    \special{pn 11}%
    \special{pa 3784 504}%
    \special{pa 3590 310}%
    \special{fp}%
    \special{pa 3590 310}%
    \special{pa 3784 116}%
    \special{fp}%
    \special{pn 8}%
    \special{ar 3881 504 97 97 0 6.28319}%
    \special{ar 3881 116 97 97 0 6.28319}%
    \graphtemp=.5ex\advance\graphtemp by 0.194in
    \rlap{\kern 3.590in\lower\graphtemp\hbox to 0pt{\hss $z'$\hss}}%
    \graphtemp=.5ex\advance\graphtemp by 0.601in
    \rlap{\kern 3.473in\lower\graphtemp\hbox to 0pt{\hss $u$\hss}}%
    \graphtemp=.5ex\advance\graphtemp by 0.504in
    \rlap{\kern 4.366in\lower\graphtemp\hbox to 0pt{\hss $\bu$\hss}}%
    \graphtemp=.5ex\advance\graphtemp by 0.116in
    \rlap{\kern 4.366in\lower\graphtemp\hbox to 0pt{\hss $\bu$\hss}}%
    \graphtemp=.5ex\advance\graphtemp by 0.504in
    \rlap{\kern 4.754in\lower\graphtemp\hbox to 0pt{\hss $\bu$\hss}}%
    \graphtemp=.5ex\advance\graphtemp by 0.116in
    \rlap{\kern 4.754in\lower\graphtemp\hbox to 0pt{\hss $\bu$\hss}}%
    \graphtemp=.5ex\advance\graphtemp by 0.310in
    \rlap{\kern 4.948in\lower\graphtemp\hbox to 0pt{\hss $\bu$\hss}}%
    \graphtemp=.5ex\advance\graphtemp by 0.310in
    \rlap{\kern 5.336in\lower\graphtemp\hbox to 0pt{\hss $\bu$\hss}}%
    \graphtemp=.5ex\advance\graphtemp by 0.601in
    \rlap{\kern 5.336in\lower\graphtemp\hbox to 0pt{\hss $\bu$\hss}}%
    \special{pn 28}%
    \special{pa 4948 310}%
    \special{pa 5336 310}%
    \special{fp}%
    \special{pa 4948 310}%
    \special{pa 4754 504}%
    \special{fp}%
    \special{pa 4948 310}%
    \special{pa 4754 116}%
    \special{fp}%
    \special{pn 11}%
    \special{pa 4366 504}%
    \special{pa 4754 504}%
    \special{fp}%
    \special{pa 4366 116}%
    \special{pa 4754 116}%
    \special{fp}%
    \special{pa 5336 310}%
    \special{pa 5336 601}%
    \special{fp}%
    \graphtemp=.5ex\advance\graphtemp by 0.194in
    \rlap{\kern 4.948in\lower\graphtemp\hbox to 0pt{\hss $z$\hss}}%
    \graphtemp=.5ex\advance\graphtemp by 0.194in
    \rlap{\kern 5.336in\lower\graphtemp\hbox to 0pt{\hss $v$\hss}}%
    \graphtemp=.5ex\advance\graphtemp by 0.601in
    \rlap{\kern 5.219in\lower\graphtemp\hbox to 0pt{\hss $x$\hss}}%
    \special{pn 8}%
    \special{ar 4269 504 97 97 0 6.28319}%
    \special{ar 4269 116 97 97 0 6.28319}%
    \special{ar 5336 718 116 116 0 6.28319}%
    \graphtemp=.5ex\advance\graphtemp by 0.000in
    \rlap{\kern 4.754in\lower\graphtemp\hbox to 0pt{\hss $y_1$\hss}}%
    \graphtemp=.5ex\advance\graphtemp by 0.621in
    \rlap{\kern 4.754in\lower\graphtemp\hbox to 0pt{\hss $y_2$\hss}}%
    \graphtemp=.5ex\advance\graphtemp by 0.718in
    \rlap{\kern 5.336in\lower\graphtemp\hbox to 0pt{\hss 6?\hss}}%
    \graphtemp=.5ex\advance\graphtemp by 0.000in
    \rlap{\kern 4.424in\lower\graphtemp\hbox to 0pt{\hss $y_1'$\hss}}%
    \graphtemp=.5ex\advance\graphtemp by 0.621in
    \rlap{\kern 4.404in\lower\graphtemp\hbox to 0pt{\hss $y_2'$\hss}}%
    \graphtemp=.5ex\advance\graphtemp by 0.310in
    \rlap{\kern 5.336in\lower\graphtemp\hbox to 0pt{\hss $\bu$\hss}}%
    \graphtemp=.5ex\advance\graphtemp by 0.310in
    \rlap{\kern 5.724in\lower\graphtemp\hbox to 0pt{\hss $\bu$\hss}}%
    \graphtemp=.5ex\advance\graphtemp by 0.504in
    \rlap{\kern 5.918in\lower\graphtemp\hbox to 0pt{\hss $\bu$\hss}}%
    \graphtemp=.5ex\advance\graphtemp by 0.116in
    \rlap{\kern 5.918in\lower\graphtemp\hbox to 0pt{\hss $\bu$\hss}}%
    \graphtemp=.5ex\advance\graphtemp by 0.504in
    \rlap{\kern 6.306in\lower\graphtemp\hbox to 0pt{\hss $\bu$\hss}}%
    \graphtemp=.5ex\advance\graphtemp by 0.116in
    \rlap{\kern 6.306in\lower\graphtemp\hbox to 0pt{\hss $\bu$\hss}}%
    \special{pn 28}%
    \special{pa 5724 310}%
    \special{pa 5918 116}%
    \special{fp}%
    \special{pa 5724 310}%
    \special{pa 5336 310}%
    \special{fp}%
    \special{pa 5724 310}%
    \special{pa 5918 504}%
    \special{fp}%
    \special{pn 11}%
    \special{pa 5918 504}%
    \special{pa 6306 504}%
    \special{fp}%
    \special{pa 5918 116}%
    \special{pa 6306 116}%
    \special{fp}%
    \special{pn 8}%
    \special{ar 6403 504 97 97 0 6.28319}%
    \special{ar 6403 116 97 97 0 6.28319}%
    \graphtemp=.5ex\advance\graphtemp by 0.194in
    \rlap{\kern 5.724in\lower\graphtemp\hbox to 0pt{\hss $z'$\hss}}%
    \graphtemp=.5ex\advance\graphtemp by 0.504in
    \rlap{\kern 6.034in\lower\graphtemp\hbox to 0pt{\hss $~$\hss}}%
    \hbox{\vrule depth0.834in width0pt height 0pt}%
    \kern 6.500in
  }%
}%
}
\vspace{-.5pc}

\caption{Cases {\bf G1} and {\bf G2}  for Lemma~\ref{83red3}\label{figG12}}
\end{figure}

{\bf Subcase 2:} {\it $z$ is a $\gamma_{3b}$-vertex.} 
Let $N_G(z)=\{v,y_1,y_2\}$, with $N_G(y_1)=\{z,y_1'\}$ and
$N_G(y_2)=\{z,y_2'\}$.  By Subcase 1, we may assume that $z'$ is not a
$\gamma_{3a}$-vertex.

Suppose that $z'$ is a $\gamma_4$-vertex, with $1$-neighbor $u$, as in
Figure~\ref{figG12} in the middle.  Set $w(vz)=3$ to ensure satisfying
$zy_1$ and $zy_2$.  Now $w(uz')$ has two choices that satisfy $vz'$, and
$w(vz')$ has two choices that satisfy $vx$.  With at least three choices for
the sum $w(uz')+w(vz')$, the two edges in $\Gamma_{G'}(z')$ can also be
satisfied.  Now choose $w(zy_1)$ to satisfy $\Gamma_{G'}(y_1)$ and
$w(zy_2)$ to satisfy $zv$ and $\Gamma_{G'}(y_2)$.

Hence we may assume that $z'$ is also a $\gamma_{3b}$-vertex, as on the right
in Figure~\ref{figG12}.  If $\rp xv\ne6$, then set $w(vz)=w(vz')=3$.  This
satisfies $vx$ and also ensures satisfying $F_z$ and $F_{z'}$.  Choose
$w(zy_1)$ to satisfy $y_1y_1'$, and choose $w(zy_2)$ to satisfy $y_2y_2'$ and
$zv$.  Choose weights on $F_{z'}$ by the same method. 

If $\rp xv=6$ and $w'(y_1y_1')\ne 3$, then set $w(vz)=2$ and $w(vz')=3$ to
satisfy $vx$ and ensure satisfying $zy_1$.  Choose $w(zy_1)\in\{2,3\}$ to
satisfy $y_1y_1'$ and ensure satisfying $zy_2$.  Now choose $w(zy_2)$ to
satisfy $y_2y_2'$ and $vz$.  Since setting $w(vz')=3$ ensures satisfying
$F_{z'}$, we can choose weights on $F_{z'}$ to finish as in the preceding
paragraph.

Hence we may assume that $\rp xv=6$ and (by symmetry) that all the edges of
$G'$ incident to the $2$-neighbors of $z$ and $z'$ have weight $3$ under $w'$.
Since we may assume by Subcase 1 that neither $z$ nor $z'$ is a
$\gamma_{3a}$-vertex, we can complete the extension by giving all the missing
edges weight $1$, unless $w'(vx)=1$.  In that case, just change $w(vz)$ to $3$
and choose $w(zy_i)\in\{2,3\}$ to satisfy $y_iy_i'$, for $i\in\{1,2\}$.

\begin{figure}[h]
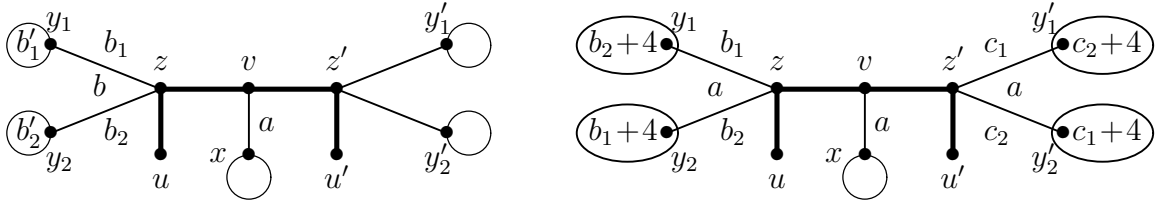

\gpic{
\expandafter\ifx\csname graph\endcsname\relax \csname newbox\endcsname\graph\fi
\expandafter\ifx\csname graphtemp\endcsname\relax \csname newdimen\endcsname\graphtemp\fi
\setbox\graph=\vtop{\vskip 0pt\hbox{%
    \graphtemp=.5ex\advance\graphtemp by 0.610in
    \rlap{\kern 0.230in\lower\graphtemp\hbox to 0pt{\hss $\bu$\hss}}%
    \graphtemp=.5ex\advance\graphtemp by 0.150in
    \rlap{\kern 0.230in\lower\graphtemp\hbox to 0pt{\hss $\bu$\hss}}%
    \graphtemp=.5ex\advance\graphtemp by 0.380in
    \rlap{\kern 0.806in\lower\graphtemp\hbox to 0pt{\hss $\bu$\hss}}%
    \graphtemp=.5ex\advance\graphtemp by 0.726in
    \rlap{\kern 0.806in\lower\graphtemp\hbox to 0pt{\hss $\bu$\hss}}%
    \graphtemp=.5ex\advance\graphtemp by 0.380in
    \rlap{\kern 1.267in\lower\graphtemp\hbox to 0pt{\hss $\bu$\hss}}%
    \graphtemp=.5ex\advance\graphtemp by 0.380in
    \rlap{\kern 1.727in\lower\graphtemp\hbox to 0pt{\hss $\bu$\hss}}%
    \graphtemp=.5ex\advance\graphtemp by 0.726in
    \rlap{\kern 1.727in\lower\graphtemp\hbox to 0pt{\hss $\bu$\hss}}%
    \graphtemp=.5ex\advance\graphtemp by 0.610in
    \rlap{\kern 2.303in\lower\graphtemp\hbox to 0pt{\hss $\bu$\hss}}%
    \graphtemp=.5ex\advance\graphtemp by 0.150in
    \rlap{\kern 2.303in\lower\graphtemp\hbox to 0pt{\hss $\bu$\hss}}%
    \graphtemp=.5ex\advance\graphtemp by 0.726in
    \rlap{\kern 1.267in\lower\graphtemp\hbox to 0pt{\hss $\bu$\hss}}%
    \special{pn 28}%
    \special{pa 806 726}%
    \special{pa 806 380}%
    \special{fp}%
    \special{pa 806 380}%
    \special{pa 1727 380}%
    \special{fp}%
    \special{pa 1727 380}%
    \special{pa 1727 726}%
    \special{fp}%
    \special{pn 11}%
    \special{pa 230 610}%
    \special{pa 806 380}%
    \special{fp}%
    \special{pa 806 380}%
    \special{pa 230 150}%
    \special{fp}%
    \special{pa 2303 150}%
    \special{pa 1727 380}%
    \special{fp}%
    \special{pa 1727 380}%
    \special{pa 2303 610}%
    \special{fp}%
    \special{pa 1267 380}%
    \special{pa 1267 726}%
    \special{fp}%
    \special{pn 8}%
    \special{ar 115 610 115 115 0 6.28319}%
    \special{ar 115 150 115 115 0 6.28319}%
    \special{ar 2418 610 115 115 0 6.28319}%
    \special{ar 2418 150 115 115 0 6.28319}%
    \special{ar 1267 841 115 115 0 6.28319}%
    \graphtemp=.5ex\advance\graphtemp by 0.610in
    \rlap{\kern 0.115in\lower\graphtemp\hbox to 0pt{\hss $b_2'$\hss}}%
    \graphtemp=.5ex\advance\graphtemp by 0.150in
    \rlap{\kern 0.115in\lower\graphtemp\hbox to 0pt{\hss $b_1'$\hss}}%
    \graphtemp=.5ex\advance\graphtemp by 0.749in
    \rlap{\kern 0.276in\lower\graphtemp\hbox to 0pt{\hss $y_2$\hss}}%
    \graphtemp=.5ex\advance\graphtemp by 0.012in
    \rlap{\kern 0.276in\lower\graphtemp\hbox to 0pt{\hss $y_1$\hss}}%
    \graphtemp=.5ex\advance\graphtemp by 0.610in
    \rlap{\kern 0.576in\lower\graphtemp\hbox to 0pt{\hss $b_2$\hss}}%
    \graphtemp=.5ex\advance\graphtemp by 0.150in
    \rlap{\kern 0.576in\lower\graphtemp\hbox to 0pt{\hss $b_1$\hss}}%
    \graphtemp=.5ex\advance\graphtemp by 0.242in
    \rlap{\kern 0.806in\lower\graphtemp\hbox to 0pt{\hss $z$\hss}}%
    \graphtemp=.5ex\advance\graphtemp by 0.864in
    \rlap{\kern 0.806in\lower\graphtemp\hbox to 0pt{\hss $u$\hss}}%
    \graphtemp=.5ex\advance\graphtemp by 0.242in
    \rlap{\kern 1.267in\lower\graphtemp\hbox to 0pt{\hss $v$\hss}}%
    \graphtemp=.5ex\advance\graphtemp by 0.726in
    \rlap{\kern 1.106in\lower\graphtemp\hbox to 0pt{\hss $x$\hss}}%
    \graphtemp=.5ex\advance\graphtemp by 0.242in
    \rlap{\kern 1.727in\lower\graphtemp\hbox to 0pt{\hss $z'$\hss}}%
    \graphtemp=.5ex\advance\graphtemp by 0.864in
    \rlap{\kern 1.727in\lower\graphtemp\hbox to 0pt{\hss $u'$\hss}}%
    \graphtemp=.5ex\advance\graphtemp by 0.749in
    \rlap{\kern 2.257in\lower\graphtemp\hbox to 0pt{\hss $y_2'$\hss}}%
    \graphtemp=.5ex\advance\graphtemp by 0.012in
    \rlap{\kern 2.257in\lower\graphtemp\hbox to 0pt{\hss $y_1'$\hss}}%
    \graphtemp=.5ex\advance\graphtemp by 0.380in
    \rlap{\kern 0.484in\lower\graphtemp\hbox to 0pt{\hss $b$\hss}}%
    \graphtemp=.5ex\advance\graphtemp by 0.564in
    \rlap{\kern 1.359in\lower\graphtemp\hbox to 0pt{\hss $a$\hss}}%
    \graphtemp=.5ex\advance\graphtemp by 0.610in
    \rlap{\kern 3.455in\lower\graphtemp\hbox to 0pt{\hss $\bu$\hss}}%
    \graphtemp=.5ex\advance\graphtemp by 0.150in
    \rlap{\kern 3.455in\lower\graphtemp\hbox to 0pt{\hss $\bu$\hss}}%
    \graphtemp=.5ex\advance\graphtemp by 0.380in
    \rlap{\kern 4.031in\lower\graphtemp\hbox to 0pt{\hss $\bu$\hss}}%
    \graphtemp=.5ex\advance\graphtemp by 0.726in
    \rlap{\kern 4.031in\lower\graphtemp\hbox to 0pt{\hss $\bu$\hss}}%
    \graphtemp=.5ex\advance\graphtemp by 0.380in
    \rlap{\kern 4.491in\lower\graphtemp\hbox to 0pt{\hss $\bu$\hss}}%
    \graphtemp=.5ex\advance\graphtemp by 0.380in
    \rlap{\kern 4.952in\lower\graphtemp\hbox to 0pt{\hss $\bu$\hss}}%
    \graphtemp=.5ex\advance\graphtemp by 0.726in
    \rlap{\kern 4.952in\lower\graphtemp\hbox to 0pt{\hss $\bu$\hss}}%
    \graphtemp=.5ex\advance\graphtemp by 0.610in
    \rlap{\kern 5.528in\lower\graphtemp\hbox to 0pt{\hss $\bu$\hss}}%
    \graphtemp=.5ex\advance\graphtemp by 0.150in
    \rlap{\kern 5.528in\lower\graphtemp\hbox to 0pt{\hss $\bu$\hss}}%
    \graphtemp=.5ex\advance\graphtemp by 0.726in
    \rlap{\kern 4.491in\lower\graphtemp\hbox to 0pt{\hss $\bu$\hss}}%
    \special{pn 28}%
    \special{pa 4031 726}%
    \special{pa 4031 380}%
    \special{fp}%
    \special{pa 4031 380}%
    \special{pa 4952 380}%
    \special{fp}%
    \special{pa 4952 380}%
    \special{pa 4952 726}%
    \special{fp}%
    \special{pn 11}%
    \special{pa 3455 610}%
    \special{pa 4031 380}%
    \special{fp}%
    \special{pa 4031 380}%
    \special{pa 3455 150}%
    \special{fp}%
    \special{pa 5528 150}%
    \special{pa 4952 380}%
    \special{fp}%
    \special{pa 4952 380}%
    \special{pa 5528 610}%
    \special{fp}%
    \special{pa 4491 380}%
    \special{pa 4491 726}%
    \special{fp}%
    \special{ar 3248 610 265 150 0 6.28319}%
    \special{ar 3248 150 265 150 0 6.28319}%
    \special{ar 5735 610 265 150 0 6.28319}%
    \special{ar 5735 150 265 150 0 6.28319}%
    \special{pn 8}%
    \special{ar 4491 841 115 115 0 6.28319}%
    \graphtemp=.5ex\advance\graphtemp by 0.610in
    \rlap{\kern 3.225in\lower\graphtemp\hbox to 0pt{\hss $b_1\!+\!4$\hss}}%
    \graphtemp=.5ex\advance\graphtemp by 0.150in
    \rlap{\kern 3.225in\lower\graphtemp\hbox to 0pt{\hss $b_2\!+\!4$\hss}}%
    \graphtemp=.5ex\advance\graphtemp by 0.749in
    \rlap{\kern 3.547in\lower\graphtemp\hbox to 0pt{\hss $y_2$\hss}}%
    \graphtemp=.5ex\advance\graphtemp by 0.012in
    \rlap{\kern 3.547in\lower\graphtemp\hbox to 0pt{\hss $y_1$\hss}}%
    \graphtemp=.5ex\advance\graphtemp by 0.610in
    \rlap{\kern 3.800in\lower\graphtemp\hbox to 0pt{\hss $b_2$\hss}}%
    \graphtemp=.5ex\advance\graphtemp by 0.150in
    \rlap{\kern 3.800in\lower\graphtemp\hbox to 0pt{\hss $b_1$\hss}}%
    \graphtemp=.5ex\advance\graphtemp by 0.242in
    \rlap{\kern 4.031in\lower\graphtemp\hbox to 0pt{\hss $z$\hss}}%
    \graphtemp=.5ex\advance\graphtemp by 0.864in
    \rlap{\kern 4.031in\lower\graphtemp\hbox to 0pt{\hss $u$\hss}}%
    \graphtemp=.5ex\advance\graphtemp by 0.242in
    \rlap{\kern 4.491in\lower\graphtemp\hbox to 0pt{\hss $v$\hss}}%
    \graphtemp=.5ex\advance\graphtemp by 0.726in
    \rlap{\kern 4.330in\lower\graphtemp\hbox to 0pt{\hss $x$\hss}}%
    \graphtemp=.5ex\advance\graphtemp by 0.242in
    \rlap{\kern 4.952in\lower\graphtemp\hbox to 0pt{\hss $z'$\hss}}%
    \graphtemp=.5ex\advance\graphtemp by 0.864in
    \rlap{\kern 4.952in\lower\graphtemp\hbox to 0pt{\hss $u'$\hss}}%
    \graphtemp=.5ex\advance\graphtemp by 0.749in
    \rlap{\kern 5.436in\lower\graphtemp\hbox to 0pt{\hss $y_2'$\hss}}%
    \graphtemp=.5ex\advance\graphtemp by 0.012in
    \rlap{\kern 5.436in\lower\graphtemp\hbox to 0pt{\hss $y_1'$\hss}}%
    \graphtemp=.5ex\advance\graphtemp by 0.380in
    \rlap{\kern 3.708in\lower\graphtemp\hbox to 0pt{\hss $a$\hss}}%
    \graphtemp=.5ex\advance\graphtemp by 0.564in
    \rlap{\kern 4.583in\lower\graphtemp\hbox to 0pt{\hss $a$\hss}}%
    \graphtemp=.5ex\advance\graphtemp by 0.610in
    \rlap{\kern 5.758in\lower\graphtemp\hbox to 0pt{\hss $c_1\!+\!4$\hss}}%
    \graphtemp=.5ex\advance\graphtemp by 0.150in
    \rlap{\kern 5.758in\lower\graphtemp\hbox to 0pt{\hss $c_2\!+\!4$\hss}}%
    \graphtemp=.5ex\advance\graphtemp by 0.610in
    \rlap{\kern 5.182in\lower\graphtemp\hbox to 0pt{\hss $c_2$\hss}}%
    \graphtemp=.5ex\advance\graphtemp by 0.150in
    \rlap{\kern 5.182in\lower\graphtemp\hbox to 0pt{\hss $c_1$\hss}}%
    \graphtemp=.5ex\advance\graphtemp by 0.380in
    \rlap{\kern 5.274in\lower\graphtemp\hbox to 0pt{\hss $a$\hss}}%
    \hbox{\vrule depth0.956in width0pt height 0pt}%
    \kern 6.000in
  }%
}%
}
\caption{Case {\bf G3}  for Lemma~\ref{83red3}\label{figG3}}
\end{figure}

{\bf Subcase 3:} {\it $z$ and $z'$ are both $\gamma_4$-vertices.} 
Let $N_G(z)=\{v,y_1,y_2,u\}$ and $N_G(z')=\{v,y_1',y_2',u'\}$, with
$d_G(u)=d_G(u')=1$.  Let $a=w'(vx)$.  For $i\in\{1,2\}$, let $b_i=w'(zy_i)$ and
$b_i'=\rp{y_i}z$, as on the left in Figure~\ref{figG3}.

Let $b=b_1+b_2$.  If $b\ne a$, then choose $w(vz')\in\{1,3\}$ to ensure
satisfying  $vz$.  With $w(vz')$ fixed, choose $w(z'u')$ to satisfy
$\Gamma_{G'}(z')$.  Now choose $w(vz)$ to satisfy $vx$ and $vz'$, and then
choose $w(zu)$ to satisfy $\Gamma_{G'}(z)$ and complete the extension.

If $b_1'\ne b_2+4$, then choose $w(vz)\in\{1,3\}$ to ensure satisfying 
$zy_1$.  Now restrict $w(u'z')$ to two choices that satisfy $vz'$, and restrict
$w(vz')$ to two choices that satisfy $vx$.  Having at least three choices
for the sum $w(u'z')+w(vz')$ allows also satisfying the edges of
$\Gamma_{G'}(z')$.  Finally, choose $w(uz)$ to satisfy $zy_2$ and $zv$ and
complete the extension.

By symmetry, the only remaining case is $b=a$, $b_1'=b_2+4$, and $b_2'=b_1+4$,
and similarly for $\Gamma_{G'}(z')$, as on the right in Figure~\ref{figG3}.
Now $w(vz)+w(zu)<4$ satisfies $zy_1$ and $zy_2$, and $w(uz)\ne w(vz')$
satisfies $vz$.  Similarly, $w(vz')+w(z'u')>4$
satisfies $z'y'_1$ and $z'y'_2$, and $w(u'z')\ne w(vz)$ satisfies $vz'$.  Set
$w(vz)=w(zu)=1$ and $w(z'u')=3$, and choose $w(vz')\in\{2,3\}$ to satisfy $vx$.

{\bf Case H:} {\it A vertex $v$ such that $p_1+2q\ge d(v)$ and $p_1+q>4$.}
Let $Z$ be the set of $\gamma$-neighbors of $v$.  Let $R$ be the set of edges
from $v$ to $1$-neighbors and to $Z$, shown bold in Figure~\ref{figHI3}.  By
{\bf F}, the set $Z$ is independent.  Form $G'$ from $G$ by deleting $R$ and
$F_z$ for each $z\in Z$.  Here $v$ and a $\gamma$-neighbor of $v$ play the
roles of $x$ and $v$ in Figure~\ref{figgamma}, respectively.
 
Let $R'$ be a set of $d(v)-p_1-q$ edges from $v$ to $Z$.  Assign weight $3$ to
all of $R-R'$.  Choose weights on $R'$ from $\{2,3\}$ to satisfy the 
$d(v)-p_1-q$ edges in $\Gamma_{G'}(v)$.

Consider $vz$ with $z\in Z$.  Including the weights on $\Gamma_{G'}(v)$, the
sum of the weights on $\Gamma_G(v)-\{vz\}$ is now at least $3(q-1)+3p_1$, which
by hypothesis exceeds $9$.  At most three edges are incident to $vz$ at $z$, so
$vz$ is automatically satisfied, as are the edges from $v$ to $1$-neighbors.
Now the weights on $R$ are fixed; apply Lemma~\ref{gamma} to the vertices of
$Z$.

\begin{figure}[h]
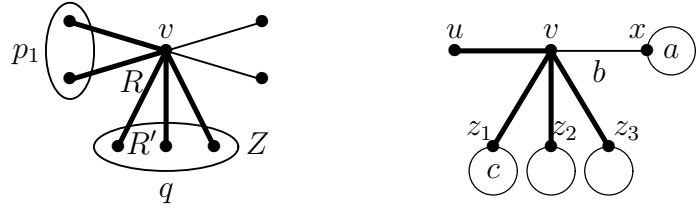

\gpic{
\expandafter\ifx\csname graph\endcsname\relax \csname newbox\endcsname\graph\fi
\expandafter\ifx\csname graphtemp\endcsname\relax \csname newdimen\endcsname\graphtemp\fi
\setbox\graph=\vtop{\vskip 0pt\hbox{%
    \graphtemp=.5ex\advance\graphtemp by 0.252in
    \rlap{\kern 0.730in\lower\graphtemp\hbox to 0pt{\hss $\bu$\hss}}%
    \graphtemp=.5ex\advance\graphtemp by 0.101in
    \rlap{\kern 0.227in\lower\graphtemp\hbox to 0pt{\hss $\bu$\hss}}%
    \graphtemp=.5ex\advance\graphtemp by 0.403in
    \rlap{\kern 0.227in\lower\graphtemp\hbox to 0pt{\hss $\bu$\hss}}%
    \graphtemp=.5ex\advance\graphtemp by 0.101in
    \rlap{\kern 1.234in\lower\graphtemp\hbox to 0pt{\hss $\bu$\hss}}%
    \graphtemp=.5ex\advance\graphtemp by 0.403in
    \rlap{\kern 1.234in\lower\graphtemp\hbox to 0pt{\hss $\bu$\hss}}%
    \graphtemp=.5ex\advance\graphtemp by 0.755in
    \rlap{\kern 0.478in\lower\graphtemp\hbox to 0pt{\hss $\bu$\hss}}%
    \graphtemp=.5ex\advance\graphtemp by 0.755in
    \rlap{\kern 0.982in\lower\graphtemp\hbox to 0pt{\hss $\bu$\hss}}%
    \graphtemp=.5ex\advance\graphtemp by 0.755in
    \rlap{\kern 0.730in\lower\graphtemp\hbox to 0pt{\hss $\bu$\hss}}%
    \special{pn 28}%
    \special{pa 730 252}%
    \special{pa 730 755}%
    \special{fp}%
    \special{pa 730 252}%
    \special{pa 982 755}%
    \special{fp}%
    \special{pa 730 252}%
    \special{pa 478 755}%
    \special{fp}%
    \special{pa 730 252}%
    \special{pa 227 101}%
    \special{fp}%
    \special{pa 730 252}%
    \special{pa 227 403}%
    \special{fp}%
    \special{pn 11}%
    \special{pa 1234 101}%
    \special{pa 730 252}%
    \special{fp}%
    \special{pa 730 252}%
    \special{pa 1234 403}%
    \special{fp}%
    \special{ar 227 252 126 252 0 6.28319}%
    \special{ar 730 755 378 126 0 6.28319}%
    \graphtemp=.5ex\advance\graphtemp by 0.151in
    \rlap{\kern 0.730in\lower\graphtemp\hbox to 0pt{\hss $v$\hss}}%
    \graphtemp=.5ex\advance\graphtemp by 0.252in
    \rlap{\kern 0.000in\lower\graphtemp\hbox to 0pt{\hss $p_1$\hss}}%
    \graphtemp=.5ex\advance\graphtemp by 0.982in
    \rlap{\kern 0.730in\lower\graphtemp\hbox to 0pt{\hss $q$\hss}}%
    \graphtemp=.5ex\advance\graphtemp by 0.755in
    \rlap{\kern 0.604in\lower\graphtemp\hbox to 0pt{\hss $R'$\hss}}%
    \graphtemp=.5ex\advance\graphtemp by 0.432in
    \rlap{\kern 0.550in\lower\graphtemp\hbox to 0pt{\hss $R$\hss}}%
    \graphtemp=.5ex\advance\graphtemp by 0.755in
    \rlap{\kern 1.209in\lower\graphtemp\hbox to 0pt{\hss $Z$\hss}}%
    \graphtemp=.5ex\advance\graphtemp by 0.252in
    \rlap{\kern 2.241in\lower\graphtemp\hbox to 0pt{\hss $\bu$\hss}}%
    \graphtemp=.5ex\advance\graphtemp by 0.252in
    \rlap{\kern 2.745in\lower\graphtemp\hbox to 0pt{\hss $\bu$\hss}}%
    \graphtemp=.5ex\advance\graphtemp by 0.252in
    \rlap{\kern 3.248in\lower\graphtemp\hbox to 0pt{\hss $\bu$\hss}}%
    \graphtemp=.5ex\advance\graphtemp by 0.755in
    \rlap{\kern 2.442in\lower\graphtemp\hbox to 0pt{\hss $\bu$\hss}}%
    \graphtemp=.5ex\advance\graphtemp by 0.755in
    \rlap{\kern 2.745in\lower\graphtemp\hbox to 0pt{\hss $\bu$\hss}}%
    \graphtemp=.5ex\advance\graphtemp by 0.755in
    \rlap{\kern 3.047in\lower\graphtemp\hbox to 0pt{\hss $\bu$\hss}}%
    \special{pn 28}%
    \special{pa 2745 252}%
    \special{pa 3047 755}%
    \special{fp}%
    \special{pa 2745 252}%
    \special{pa 2745 755}%
    \special{fp}%
    \special{pa 2745 252}%
    \special{pa 2241 252}%
    \special{fp}%
    \special{pa 2745 252}%
    \special{pa 2442 755}%
    \special{fp}%
    \special{pn 11}%
    \special{pa 2745 252}%
    \special{pa 3248 252}%
    \special{fp}%
    \special{pn 8}%
    \special{ar 2442 881 126 126 0 6.28319}%
    \special{ar 2745 881 126 126 0 6.28319}%
    \special{ar 3047 881 126 126 0 6.28319}%
    \special{ar 3374 252 126 126 0 6.28319}%
    \graphtemp=.5ex\advance\graphtemp by 0.151in
    \rlap{\kern 2.241in\lower\graphtemp\hbox to 0pt{\hss $u$\hss}}%
    \graphtemp=.5ex\advance\graphtemp by 0.151in
    \rlap{\kern 2.745in\lower\graphtemp\hbox to 0pt{\hss $v$\hss}}%
    \graphtemp=.5ex\advance\graphtemp by 0.151in
    \rlap{\kern 3.198in\lower\graphtemp\hbox to 0pt{\hss $x$\hss}}%
    \graphtemp=.5ex\advance\graphtemp by 0.252in
    \rlap{\kern 3.374in\lower\graphtemp\hbox to 0pt{\hss $a$\hss}}%
    \graphtemp=.5ex\advance\graphtemp by 0.378in
    \rlap{\kern 2.996in\lower\graphtemp\hbox to 0pt{\hss $b$\hss}}%
    \graphtemp=.5ex\advance\graphtemp by 0.659in
    \rlap{\kern 2.371in\lower\graphtemp\hbox to 0pt{\hss $z_1$\hss}}%
    \graphtemp=.5ex\advance\graphtemp by 0.659in
    \rlap{\kern 2.816in\lower\graphtemp\hbox to 0pt{\hss $z_2$\hss}}%
    \graphtemp=.5ex\advance\graphtemp by 0.659in
    \rlap{\kern 3.143in\lower\graphtemp\hbox to 0pt{\hss $z_3$\hss}}%
    \graphtemp=.5ex\advance\graphtemp by 0.881in
    \rlap{\kern 2.442in\lower\graphtemp\hbox to 0pt{\hss $c$\hss}}%
    \hbox{\vrule depth1.007in width0pt height 0pt}%
    \kern 3.500in
  }%
}%
}
\caption{Cases {\bf H} and {\bf I} for Lemma~\ref{83red3}\label{figHI3}}
\end{figure}

{\bf Case I:} {\it A $5$-vertex $v$ having a $1$-neighbor and three
$\gamma$-neighbors.}
The argument of Case {\bf H} does not suffice here, since $p_1+q=4$.  Let $u$
be the $1$-neighbor of $v$, and let $z_1,z_2,z_3$ be its $\gamma$-neighbors.
As in {\bf H}, let $R=\{vu,vz_1,vz_2,vz_3\}$ (bold on the right in
Figure~\ref{figHI3}), and let $G'=G-R-\bigcup_i F_{z_i}$.  Let $a=\rp{x}{v}$,
and let $b=w'(vx)$.  By Lemma~\ref{4cycle}, we may assume that no two of the
$\gamma$-neighbors are $3$-vertices with a common $2$-neighbor, and by {\bf F}
they form an independent set.  Hence Figure~\ref{figHI3} is accurate.

If $a\ne12$, then put weight $3$ on all edges of $R$ to satisfy $vx$ and 
ensure satisfying $\{vz_1,vz_2,vz_3\}$.  Finally, apply
Lemma~\ref{gamma} to each $z_i$.

If $a=12$, then set $w(vz_1)=2$ so that $vx$ is automatically satisfied.
Having specified $w(vz_1)$, apply Lemma~\ref{gamma} to $z_1$.  Now let
$c=\rho_w(z_1,v)$.  If $c\le 7$, or if $c=8$ and $b\ge2$, then set
$w(vz_2)=w(vz_3)=3$ to ensure satisfying $vz_1$.  If $c=9$, or if $c=8$ and
$b=1$, then set $w(vz_2)=w(vz_3)=1$ to ensure satisfying $vz_1$.  Next
apply Lemma~\ref{gamma} to $z_2$ and $z_3$.  Finally, choose $w(vu)$ to
satisfy $vz_2$ and $vz_3$.

\smallskip
{\bf Case J:} {\it A $4$-vertex with specified neighbors.}
As usual, by the prior lemmas and reducible configurations, Figure~\ref{figJ3}
is accurate.

{\bf Subcase 1:} {\it A $4$-vertex $v$ with $\alpha$-neighbors $z$ and $z'$.}
Let $N_G(z)=\{v,y\}$ and $N_G(z')=\{v,y'\}$, as in Figure~\ref{figJ3}.
By Lemma~\ref{triangle}, we may assume $\{y,y'\}\cap\{z,z'\}=\nul$.  By {\bf B},
we have $y\ne y'$ and $yy'\notin E(G)$.  Let $G'=G-\{vz,vz',zy,z'y'\}$.  At
least two choices for $w(vz)$ satisfy $zy$, and similarly two choices for
$w(vz')$ satisfy $z'y'$.  This yields at least three choices for
$w(vz)+w(vz')$, which is enough to satisfy $\Gamma_{G'}(v)$.  Finally, choose
$w(zy)$ to satisfy its two incident edges and $w(z'y')$ to satisfy its two
incident edges.

\begin{figure}[h]
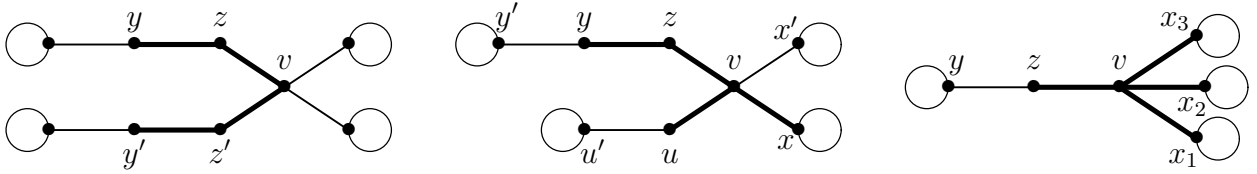

\gpic{
\expandafter\ifx\csname graph\endcsname\relax \csname newbox\endcsname\graph\fi
\expandafter\ifx\csname graphtemp\endcsname\relax \csname newdimen\endcsname\graphtemp\fi
\setbox\graph=\vtop{\vskip 0pt\hbox{%
    \graphtemp=.5ex\advance\graphtemp by 0.605in
    \rlap{\kern 0.224in\lower\graphtemp\hbox to 0pt{\hss $\bu$\hss}}%
    \graphtemp=.5ex\advance\graphtemp by 0.157in
    \rlap{\kern 0.224in\lower\graphtemp\hbox to 0pt{\hss $\bu$\hss}}%
    \graphtemp=.5ex\advance\graphtemp by 0.605in
    \rlap{\kern 0.672in\lower\graphtemp\hbox to 0pt{\hss $\bu$\hss}}%
    \graphtemp=.5ex\advance\graphtemp by 0.157in
    \rlap{\kern 0.672in\lower\graphtemp\hbox to 0pt{\hss $\bu$\hss}}%
    \graphtemp=.5ex\advance\graphtemp by 0.605in
    \rlap{\kern 1.121in\lower\graphtemp\hbox to 0pt{\hss $\bu$\hss}}%
    \graphtemp=.5ex\advance\graphtemp by 0.157in
    \rlap{\kern 1.121in\lower\graphtemp\hbox to 0pt{\hss $\bu$\hss}}%
    \graphtemp=.5ex\advance\graphtemp by 0.381in
    \rlap{\kern 1.457in\lower\graphtemp\hbox to 0pt{\hss $\bu$\hss}}%
    \graphtemp=.5ex\advance\graphtemp by 0.605in
    \rlap{\kern 1.793in\lower\graphtemp\hbox to 0pt{\hss $\bu$\hss}}%
    \graphtemp=.5ex\advance\graphtemp by 0.157in
    \rlap{\kern 1.793in\lower\graphtemp\hbox to 0pt{\hss $\bu$\hss}}%
    \special{pn 28}%
    \special{pa 672 605}%
    \special{pa 1121 605}%
    \special{fp}%
    \special{pa 1121 605}%
    \special{pa 1457 381}%
    \special{fp}%
    \special{pa 1457 381}%
    \special{pa 1121 157}%
    \special{fp}%
    \special{pa 1121 157}%
    \special{pa 672 157}%
    \special{fp}%
    \special{pn 11}%
    \special{pa 1793 605}%
    \special{pa 1457 381}%
    \special{fp}%
    \special{pa 1457 381}%
    \special{pa 1793 157}%
    \special{fp}%
    \special{pa 224 605}%
    \special{pa 672 605}%
    \special{fp}%
    \special{pa 224 157}%
    \special{pa 672 157}%
    \special{fp}%
    \graphtemp=.5ex\advance\graphtemp by 0.247in
    \rlap{\kern 1.457in\lower\graphtemp\hbox to 0pt{\hss $v$\hss}}%
    \special{pn 8}%
    \special{ar 112 605 112 112 0 6.28319}%
    \special{ar 112 157 112 112 0 6.28319}%
    \special{ar 1905 605 112 112 0 6.28319}%
    \special{ar 1905 157 112 112 0 6.28319}%
    \graphtemp=.5ex\advance\graphtemp by 0.740in
    \rlap{\kern 0.672in\lower\graphtemp\hbox to 0pt{\hss $y'$\hss}}%
    \graphtemp=.5ex\advance\graphtemp by 0.022in
    \rlap{\kern 0.672in\lower\graphtemp\hbox to 0pt{\hss $y$\hss}}%
    \graphtemp=.5ex\advance\graphtemp by 0.740in
    \rlap{\kern 1.121in\lower\graphtemp\hbox to 0pt{\hss $z'$\hss}}%
    \graphtemp=.5ex\advance\graphtemp by 0.022in
    \rlap{\kern 1.121in\lower\graphtemp\hbox to 0pt{\hss $z$\hss}}%
    \graphtemp=.5ex\advance\graphtemp by 0.157in
    \rlap{\kern 2.578in\lower\graphtemp\hbox to 0pt{\hss $\bu$\hss}}%
    \graphtemp=.5ex\advance\graphtemp by 0.605in
    \rlap{\kern 3.026in\lower\graphtemp\hbox to 0pt{\hss $\bu$\hss}}%
    \graphtemp=.5ex\advance\graphtemp by 0.157in
    \rlap{\kern 3.026in\lower\graphtemp\hbox to 0pt{\hss $\bu$\hss}}%
    \graphtemp=.5ex\advance\graphtemp by 0.605in
    \rlap{\kern 3.474in\lower\graphtemp\hbox to 0pt{\hss $\bu$\hss}}%
    \graphtemp=.5ex\advance\graphtemp by 0.157in
    \rlap{\kern 3.474in\lower\graphtemp\hbox to 0pt{\hss $\bu$\hss}}%
    \graphtemp=.5ex\advance\graphtemp by 0.381in
    \rlap{\kern 3.810in\lower\graphtemp\hbox to 0pt{\hss $\bu$\hss}}%
    \graphtemp=.5ex\advance\graphtemp by 0.605in
    \rlap{\kern 4.147in\lower\graphtemp\hbox to 0pt{\hss $\bu$\hss}}%
    \graphtemp=.5ex\advance\graphtemp by 0.157in
    \rlap{\kern 4.147in\lower\graphtemp\hbox to 0pt{\hss $\bu$\hss}}%
    \special{pn 28}%
    \special{pa 3474 605}%
    \special{pa 3810 381}%
    \special{fp}%
    \special{pa 3810 381}%
    \special{pa 3474 157}%
    \special{fp}%
    \special{pa 3474 157}%
    \special{pa 3026 157}%
    \special{fp}%
    \special{pa 3810 381}%
    \special{pa 4147 605}%
    \special{fp}%
    \special{pn 11}%
    \special{pa 2578 157}%
    \special{pa 3026 157}%
    \special{fp}%
    \special{pa 3026 605}%
    \special{pa 3474 605}%
    \special{fp}%
    \special{pa 3810 381}%
    \special{pa 4147 157}%
    \special{fp}%
    \special{pn 8}%
    \special{ar 2466 157 112 112 0 6.28319}%
    \special{ar 2914 605 112 112 0 6.28319}%
    \special{ar 4259 605 112 112 0 6.28319}%
    \special{ar 4259 157 112 112 0 6.28319}%
    \graphtemp=.5ex\advance\graphtemp by 0.022in
    \rlap{\kern 2.622in\lower\graphtemp\hbox to 0pt{\hss $y'$\hss}}%
    \graphtemp=.5ex\advance\graphtemp by 0.022in
    \rlap{\kern 3.026in\lower\graphtemp\hbox to 0pt{\hss $y$\hss}}%
    \graphtemp=.5ex\advance\graphtemp by 0.740in
    \rlap{\kern 3.071in\lower\graphtemp\hbox to 0pt{\hss $u'$\hss}}%
    \graphtemp=.5ex\advance\graphtemp by 0.740in
    \rlap{\kern 3.474in\lower\graphtemp\hbox to 0pt{\hss $u$\hss}}%
    \graphtemp=.5ex\advance\graphtemp by 0.022in
    \rlap{\kern 3.474in\lower\graphtemp\hbox to 0pt{\hss $z$\hss}}%
    \graphtemp=.5ex\advance\graphtemp by 0.247in
    \rlap{\kern 3.810in\lower\graphtemp\hbox to 0pt{\hss $v$\hss}}%
    \graphtemp=.5ex\advance\graphtemp by 0.669in
    \rlap{\kern 4.083in\lower\graphtemp\hbox to 0pt{\hss $x$\hss}}%
    \graphtemp=.5ex\advance\graphtemp by 0.094in
    \rlap{\kern 4.083in\lower\graphtemp\hbox to 0pt{\hss $x'$\hss}}%
    \graphtemp=.5ex\advance\graphtemp by 0.381in
    \rlap{\kern 4.931in\lower\graphtemp\hbox to 0pt{\hss $\bu$\hss}}%
    \graphtemp=.5ex\advance\graphtemp by 0.381in
    \rlap{\kern 5.379in\lower\graphtemp\hbox to 0pt{\hss $\bu$\hss}}%
    \graphtemp=.5ex\advance\graphtemp by 0.381in
    \rlap{\kern 5.828in\lower\graphtemp\hbox to 0pt{\hss $\bu$\hss}}%
    \graphtemp=.5ex\advance\graphtemp by 0.650in
    \rlap{\kern 6.231in\lower\graphtemp\hbox to 0pt{\hss $\bu$\hss}}%
    \graphtemp=.5ex\advance\graphtemp by 0.381in
    \rlap{\kern 6.276in\lower\graphtemp\hbox to 0pt{\hss $\bu$\hss}}%
    \graphtemp=.5ex\advance\graphtemp by 0.112in
    \rlap{\kern 6.231in\lower\graphtemp\hbox to 0pt{\hss $\bu$\hss}}%
    \special{pn 28}%
    \special{pa 5828 381}%
    \special{pa 6231 112}%
    \special{fp}%
    \special{pa 5828 381}%
    \special{pa 6276 381}%
    \special{fp}%
    \special{pa 5828 381}%
    \special{pa 5379 381}%
    \special{fp}%
    \special{pa 5828 381}%
    \special{pa 6231 650}%
    \special{fp}%
    \special{pn 11}%
    \special{pa 4931 381}%
    \special{pa 5379 381}%
    \special{fp}%
    \graphtemp=.5ex\advance\graphtemp by 0.247in
    \rlap{\kern 4.976in\lower\graphtemp\hbox to 0pt{\hss $y$\hss}}%
    \graphtemp=.5ex\advance\graphtemp by 0.247in
    \rlap{\kern 5.379in\lower\graphtemp\hbox to 0pt{\hss $z$\hss}}%
    \graphtemp=.5ex\advance\graphtemp by 0.247in
    \rlap{\kern 5.828in\lower\graphtemp\hbox to 0pt{\hss $v$\hss}}%
    \special{pn 8}%
    \special{ar 4819 381 112 112 0 6.28319}%
    \special{ar 6343 650 112 112 0 6.28319}%
    \special{ar 6388 381 112 112 0 6.28319}%
    \special{ar 6343 112 112 112 0 6.28319}%
    \graphtemp=.5ex\advance\graphtemp by 0.740in
    \rlap{\kern 6.164in\lower\graphtemp\hbox to 0pt{\hss $x_1$\hss}}%
    \graphtemp=.5ex\advance\graphtemp by 0.471in
    \rlap{\kern 6.209in\lower\graphtemp\hbox to 0pt{\hss $x_2$\hss}}%
    \graphtemp=.5ex\advance\graphtemp by 0.022in
    \rlap{\kern 6.119in\lower\graphtemp\hbox to 0pt{\hss $x_3$\hss}}%
    \hbox{\vrule depth0.762in width0pt height 0pt}%
    \kern 6.500in
  }%
}%
}
\vspace{-.5pc}

\caption{Cases {\bf J1,J2,J3}  for Lemma~\ref{83red3}\label{figJ3}}
\end{figure}

{\bf Subcase 2:} {\it A $4$-vertex $v$ with an $\alpha$-neighbor $z$, another
$2$-neighbor $u$ (that is not an $\alpha$-vertex), and a $\gamma$-neighbor
$x$.}  Name the vertices (uniquely) so that $y',y,z,v,u,u'$ form a path in
order, and let $x'$ be the remaining neighbor of $v$, as in Figure~\ref{figJ3}.
Let $G'=G-\{yz,zv,vu,vx\}-F_x$.
Set $w(vx)=2$ to ensure satisfying $vu$.  Apply Lemma~\ref{gamma} to $x$.
At least two choices for $w(vu)$ satisfy $uu'$, and at least two choices for
$w(vz)$ satisfy $yz$.  With at least three choices for $w(vu)+w(vz)$, at least
one satisfies $vx$ and $vx'$.  Finally, choose $w(yz)$ to satisfy $vz$ and
$yy'$.

{\bf Subcase 3:} {\it A $4$-vertex $v$ with a $2$-neighbor $z$ and three
$\gamma$-neighbors $x_1,x_2,x_3$.}
Let $N(z)=\{v,y\}$, as in Figure~\ref{figJ3}.  Let
$G'=G-\Gamma_G(v)-\bigcup_iF_{x_i}$.

If $d(x_1)=3$, or if $d(x_1)=4$ and $\phi_{w'}(x_1)\le 4$, then the value
of $\rho_w(x_1,v)$ will be at most $7$, since when $d(x_1)=4$ there is only one
edge in $F_{x_1}$ (the edge incident to the $1$-neighbor of $x_1$).  Hence
setting $w(vx_2)=w(vx_3)=3$ and restricting $w(zv)$ to $\{2,3\}$ ensures
satisfying $vx_1$ and $vz$.  Apply Lemma~\ref{gamma} to $x_2$ and $x_3$.  Now
choose $w(zv)$ (in $\{2,3\}$) to satisfy $yz$, and then choose $w(vx_1)$ to
satisfy $vx_2$ and $vx_3$.  Finally, apply Lemma~\ref{gamma} to $x_1$.

By symmetry, we may now assume $d(x_i)=4$ and $\phi_{w'}(x_i)\ge5$ for all $i$.
Choose $w(vz)\in\{1,2\}$ to satisfy $zy$.  Set $w(vx_1)=2$ and
$w(vx_2)=w(vx_3)=1$ to ensure satisfying all edges incident to $v$.
Finally, apply Lemma~\ref{gamma} to each $x_i$.

\smallskip
{\bf Case K:} {\it $v$ is a $\gamma$-vertex whose $3^+$-neighbors are all
$\beta$-vertices.}  See Figure~\ref{figK3}.  Let $S$ be the set of neighbors of
$v$ whose degrees are not specified by the definition of $v$ being a
$\gamma$-vertex.  By $\{${\bf B,C,D}$\}$, all vertices of $S$ are
$3^+$-vertices.  By {\bf F}, they cannot be $\gamma$-vertices, so by the
hypothesis of this case, each vertex of $S$ has degree $3$, with one
$2$-neighbor and one $3^+$-neighbor other than $v$ (by {\bf A}).
By Lemmas~\ref{triangle} and~\ref{4cycle} and Cases {\bf B} and {\bf F},
Figure~\ref{figK3} is accurate in each subcase.

{\bf Subcase 1:} {\it $v$ is a $\gamma_{3b}$-vertex.}
Let $N_G(v)=\{z,z',u\}$, where $d_G(u)=3$.  Let $y$ be the $2$-neighbor of $u$.
Let $G'=G-\Gamma_G(v)-uy$.  Set $w(vu)=3$ to ensure satisfying all of
$\{vz,vz',uy\}$.  Choose $w(uy)$ to satisfy its two incident edges other than
$vu$.  Choose $w(vz)$ to satisfy the other edge at $z$, and choose $w(vz')$ to
satisfy $vu$ and the other edge at $z'$.

{\bf Subcase 2:} {\it $v$ is a $\gamma_4$-vertex.}
Let $N_G(v)=\{z_1,z_2,z_3,u\}$, with $d_G(u)=1$.  Let $y_i$ be the $2$-neighbor
of $z_i$.  Let $G'=G-\Gamma_G(v)-\{z_1y_1,z_2y_2,z_3y_3\}$.  For each $i$, set
$w(vz_i)=3$ to ensure satisfying $z_iy_i$, and choose $w(z_iy_i)$ to
satisfy its two incident edges other than $z_iy_i$.  Also $vz_1,vz_2,vz_3$
are satisfied.

\begin{figure}[h]
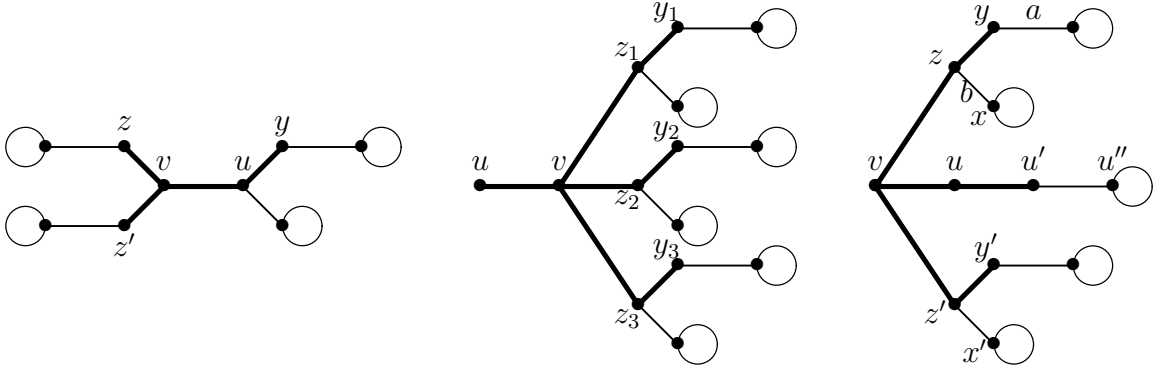

\gpic{
\expandafter\ifx\csname graph\endcsname\relax \csname newbox\endcsname\graph\fi
\expandafter\ifx\csname graphtemp\endcsname\relax \csname newdimen\endcsname\graphtemp\fi
\setbox\graph=\vtop{\vskip 0pt\hbox{%
    \graphtemp=.5ex\advance\graphtemp by 1.159in
    \rlap{\kern 0.207in\lower\graphtemp\hbox to 0pt{\hss $\bu$\hss}}%
    \graphtemp=.5ex\advance\graphtemp by 0.745in
    \rlap{\kern 0.207in\lower\graphtemp\hbox to 0pt{\hss $\bu$\hss}}%
    \graphtemp=.5ex\advance\graphtemp by 1.159in
    \rlap{\kern 0.621in\lower\graphtemp\hbox to 0pt{\hss $\bu$\hss}}%
    \graphtemp=.5ex\advance\graphtemp by 0.745in
    \rlap{\kern 0.621in\lower\graphtemp\hbox to 0pt{\hss $\bu$\hss}}%
    \graphtemp=.5ex\advance\graphtemp by 0.952in
    \rlap{\kern 0.828in\lower\graphtemp\hbox to 0pt{\hss $\bu$\hss}}%
    \graphtemp=.5ex\advance\graphtemp by 0.952in
    \rlap{\kern 1.241in\lower\graphtemp\hbox to 0pt{\hss $\bu$\hss}}%
    \graphtemp=.5ex\advance\graphtemp by 1.159in
    \rlap{\kern 1.448in\lower\graphtemp\hbox to 0pt{\hss $\bu$\hss}}%
    \graphtemp=.5ex\advance\graphtemp by 0.745in
    \rlap{\kern 1.448in\lower\graphtemp\hbox to 0pt{\hss $\bu$\hss}}%
    \graphtemp=.5ex\advance\graphtemp by 0.745in
    \rlap{\kern 1.862in\lower\graphtemp\hbox to 0pt{\hss $\bu$\hss}}%
    \special{pn 28}%
    \special{pa 621 745}%
    \special{pa 828 952}%
    \special{fp}%
    \special{pa 828 952}%
    \special{pa 1241 952}%
    \special{fp}%
    \special{pa 1241 952}%
    \special{pa 1448 745}%
    \special{fp}%
    \special{pa 621 1159}%
    \special{pa 828 952}%
    \special{fp}%
    \special{pn 11}%
    \special{pa 207 1159}%
    \special{pa 621 1159}%
    \special{fp}%
    \special{pa 207 745}%
    \special{pa 621 745}%
    \special{fp}%
    \special{pa 1241 952}%
    \special{pa 1448 1159}%
    \special{fp}%
    \special{pa 1448 745}%
    \special{pa 1862 745}%
    \special{fp}%
    \special{pn 8}%
    \special{ar 103 1159 103 103 0 6.28319}%
    \special{ar 103 745 103 103 0 6.28319}%
    \special{ar 1552 1159 103 103 0 6.28319}%
    \special{ar 1966 745 103 103 0 6.28319}%
    \graphtemp=.5ex\advance\graphtemp by 1.283in
    \rlap{\kern 0.621in\lower\graphtemp\hbox to 0pt{\hss $z'$\hss}}%
    \graphtemp=.5ex\advance\graphtemp by 0.621in
    \rlap{\kern 0.621in\lower\graphtemp\hbox to 0pt{\hss $z$\hss}}%
    \graphtemp=.5ex\advance\graphtemp by 0.828in
    \rlap{\kern 0.828in\lower\graphtemp\hbox to 0pt{\hss $v$\hss}}%
    \graphtemp=.5ex\advance\graphtemp by 0.828in
    \rlap{\kern 1.241in\lower\graphtemp\hbox to 0pt{\hss $u$\hss}}%
    \graphtemp=.5ex\advance\graphtemp by 0.621in
    \rlap{\kern 1.448in\lower\graphtemp\hbox to 0pt{\hss $y$\hss}}%
    \graphtemp=.5ex\advance\graphtemp by 0.952in
    \rlap{\kern 2.483in\lower\graphtemp\hbox to 0pt{\hss $\bu$\hss}}%
    \graphtemp=.5ex\advance\graphtemp by 0.952in
    \rlap{\kern 2.897in\lower\graphtemp\hbox to 0pt{\hss $\bu$\hss}}%
    \graphtemp=.5ex\advance\graphtemp by 1.572in
    \rlap{\kern 3.310in\lower\graphtemp\hbox to 0pt{\hss $\bu$\hss}}%
    \graphtemp=.5ex\advance\graphtemp by 0.952in
    \rlap{\kern 3.310in\lower\graphtemp\hbox to 0pt{\hss $\bu$\hss}}%
    \graphtemp=.5ex\advance\graphtemp by 0.331in
    \rlap{\kern 3.310in\lower\graphtemp\hbox to 0pt{\hss $\bu$\hss}}%
    \graphtemp=.5ex\advance\graphtemp by 1.779in
    \rlap{\kern 3.517in\lower\graphtemp\hbox to 0pt{\hss $\bu$\hss}}%
    \graphtemp=.5ex\advance\graphtemp by 1.366in
    \rlap{\kern 3.517in\lower\graphtemp\hbox to 0pt{\hss $\bu$\hss}}%
    \graphtemp=.5ex\advance\graphtemp by 1.159in
    \rlap{\kern 3.517in\lower\graphtemp\hbox to 0pt{\hss $\bu$\hss}}%
    \graphtemp=.5ex\advance\graphtemp by 0.745in
    \rlap{\kern 3.517in\lower\graphtemp\hbox to 0pt{\hss $\bu$\hss}}%
    \graphtemp=.5ex\advance\graphtemp by 0.538in
    \rlap{\kern 3.517in\lower\graphtemp\hbox to 0pt{\hss $\bu$\hss}}%
    \graphtemp=.5ex\advance\graphtemp by 0.124in
    \rlap{\kern 3.517in\lower\graphtemp\hbox to 0pt{\hss $\bu$\hss}}%
    \graphtemp=.5ex\advance\graphtemp by 1.366in
    \rlap{\kern 3.931in\lower\graphtemp\hbox to 0pt{\hss $\bu$\hss}}%
    \graphtemp=.5ex\advance\graphtemp by 0.745in
    \rlap{\kern 3.931in\lower\graphtemp\hbox to 0pt{\hss $\bu$\hss}}%
    \graphtemp=.5ex\advance\graphtemp by 0.124in
    \rlap{\kern 3.931in\lower\graphtemp\hbox to 0pt{\hss $\bu$\hss}}%
    \special{pn 28}%
    \special{pa 2483 952}%
    \special{pa 3310 952}%
    \special{fp}%
    \special{pa 3310 952}%
    \special{pa 3517 745}%
    \special{fp}%
    \special{pa 3517 1366}%
    \special{pa 3310 1572}%
    \special{fp}%
    \special{pa 3310 1572}%
    \special{pa 2897 952}%
    \special{fp}%
    \special{pa 2897 952}%
    \special{pa 3310 331}%
    \special{fp}%
    \special{pa 3310 331}%
    \special{pa 3517 124}%
    \special{fp}%
    \special{pn 11}%
    \special{pa 3310 1572}%
    \special{pa 3517 1779}%
    \special{fp}%
    \special{pa 3517 1366}%
    \special{pa 3931 1366}%
    \special{fp}%
    \special{pa 3310 952}%
    \special{pa 3517 1159}%
    \special{fp}%
    \special{pa 3517 745}%
    \special{pa 3931 745}%
    \special{fp}%
    \special{pa 3310 331}%
    \special{pa 3517 538}%
    \special{fp}%
    \special{pa 3517 124}%
    \special{pa 3931 124}%
    \special{fp}%
    \graphtemp=.5ex\advance\graphtemp by 0.828in
    \rlap{\kern 2.483in\lower\graphtemp\hbox to 0pt{\hss $u$\hss}}%
    \graphtemp=.5ex\advance\graphtemp by 0.828in
    \rlap{\kern 2.897in\lower\graphtemp\hbox to 0pt{\hss $v$\hss}}%
    \graphtemp=.5ex\advance\graphtemp by 1.631in
    \rlap{\kern 3.252in\lower\graphtemp\hbox to 0pt{\hss $z_3$\hss}}%
    \graphtemp=.5ex\advance\graphtemp by 1.010in
    \rlap{\kern 3.252in\lower\graphtemp\hbox to 0pt{\hss $z_2$\hss}}%
    \graphtemp=.5ex\advance\graphtemp by 0.231in
    \rlap{\kern 3.252in\lower\graphtemp\hbox to 0pt{\hss $z_1$\hss}}%
    \graphtemp=.5ex\advance\graphtemp by 1.266in
    \rlap{\kern 3.459in\lower\graphtemp\hbox to 0pt{\hss $y_3$\hss}}%
    \graphtemp=.5ex\advance\graphtemp by 0.645in
    \rlap{\kern 3.459in\lower\graphtemp\hbox to 0pt{\hss $y_2$\hss}}%
    \graphtemp=.5ex\advance\graphtemp by 0.024in
    \rlap{\kern 3.459in\lower\graphtemp\hbox to 0pt{\hss $y_1$\hss}}%
    \special{pn 8}%
    \special{ar 3621 1779 103 103 0 6.28319}%
    \special{ar 3621 1159 103 103 0 6.28319}%
    \special{ar 3621 538 103 103 0 6.28319}%
    \special{ar 4034 1366 103 103 0 6.28319}%
    \special{ar 4034 745 103 103 0 6.28319}%
    \special{ar 4034 124 103 103 0 6.28319}%
    \graphtemp=.5ex\advance\graphtemp by 0.952in
    \rlap{\kern 4.552in\lower\graphtemp\hbox to 0pt{\hss $\bu$\hss}}%
    \graphtemp=.5ex\advance\graphtemp by 0.952in
    \rlap{\kern 4.552in\lower\graphtemp\hbox to 0pt{\hss $\bu$\hss}}%
    \graphtemp=.5ex\advance\graphtemp by 1.572in
    \rlap{\kern 4.966in\lower\graphtemp\hbox to 0pt{\hss $\bu$\hss}}%
    \graphtemp=.5ex\advance\graphtemp by 0.952in
    \rlap{\kern 4.966in\lower\graphtemp\hbox to 0pt{\hss $\bu$\hss}}%
    \graphtemp=.5ex\advance\graphtemp by 0.331in
    \rlap{\kern 4.966in\lower\graphtemp\hbox to 0pt{\hss $\bu$\hss}}%
    \graphtemp=.5ex\advance\graphtemp by 1.779in
    \rlap{\kern 5.172in\lower\graphtemp\hbox to 0pt{\hss $\bu$\hss}}%
    \graphtemp=.5ex\advance\graphtemp by 1.366in
    \rlap{\kern 5.172in\lower\graphtemp\hbox to 0pt{\hss $\bu$\hss}}%
    \graphtemp=.5ex\advance\graphtemp by 0.952in
    \rlap{\kern 5.379in\lower\graphtemp\hbox to 0pt{\hss $\bu$\hss}}%
    \graphtemp=.5ex\advance\graphtemp by 0.952in
    \rlap{\kern 5.793in\lower\graphtemp\hbox to 0pt{\hss $\bu$\hss}}%
    \graphtemp=.5ex\advance\graphtemp by 0.538in
    \rlap{\kern 5.172in\lower\graphtemp\hbox to 0pt{\hss $\bu$\hss}}%
    \graphtemp=.5ex\advance\graphtemp by 0.124in
    \rlap{\kern 5.172in\lower\graphtemp\hbox to 0pt{\hss $\bu$\hss}}%
    \graphtemp=.5ex\advance\graphtemp by 1.366in
    \rlap{\kern 5.586in\lower\graphtemp\hbox to 0pt{\hss $\bu$\hss}}%
    \graphtemp=.5ex\advance\graphtemp by 0.124in
    \rlap{\kern 5.586in\lower\graphtemp\hbox to 0pt{\hss $\bu$\hss}}%
    \special{pn 28}%
    \special{pa 4552 952}%
    \special{pa 5379 952}%
    \special{fp}%
    \special{pa 5172 1366}%
    \special{pa 4966 1572}%
    \special{fp}%
    \special{pa 4966 1572}%
    \special{pa 4552 952}%
    \special{fp}%
    \special{pa 4552 952}%
    \special{pa 4966 331}%
    \special{fp}%
    \special{pa 4966 331}%
    \special{pa 5172 124}%
    \special{fp}%
    \special{pn 11}%
    \special{pa 4966 1572}%
    \special{pa 5172 1779}%
    \special{fp}%
    \special{pa 5172 1366}%
    \special{pa 5586 1366}%
    \special{fp}%
    \special{pa 4966 331}%
    \special{pa 5172 538}%
    \special{fp}%
    \special{pa 5172 124}%
    \special{pa 5586 124}%
    \special{fp}%
    \special{pa 5379 952}%
    \special{pa 5793 952}%
    \special{fp}%
    \graphtemp=.5ex\advance\graphtemp by 0.828in
    \rlap{\kern 4.552in\lower\graphtemp\hbox to 0pt{\hss $v$\hss}}%
    \graphtemp=.5ex\advance\graphtemp by 0.828in
    \rlap{\kern 4.966in\lower\graphtemp\hbox to 0pt{\hss $u$\hss}}%
    \graphtemp=.5ex\advance\graphtemp by 0.828in
    \rlap{\kern 5.379in\lower\graphtemp\hbox to 0pt{\hss $u'$\hss}}%
    \graphtemp=.5ex\advance\graphtemp by 1.838in
    \rlap{\kern 5.073in\lower\graphtemp\hbox to 0pt{\hss $x'$\hss}}%
    \graphtemp=.5ex\advance\graphtemp by 0.596in
    \rlap{\kern 5.093in\lower\graphtemp\hbox to 0pt{\hss $x$\hss}}%
    \graphtemp=.5ex\advance\graphtemp by 1.631in
    \rlap{\kern 4.866in\lower\graphtemp\hbox to 0pt{\hss $z'$\hss}}%
    \graphtemp=.5ex\advance\graphtemp by 0.273in
    \rlap{\kern 4.866in\lower\graphtemp\hbox to 0pt{\hss $z$\hss}}%
    \graphtemp=.5ex\advance\graphtemp by 1.283in
    \rlap{\kern 5.131in\lower\graphtemp\hbox to 0pt{\hss $y'$\hss}}%
    \graphtemp=.5ex\advance\graphtemp by 0.041in
    \rlap{\kern 5.110in\lower\graphtemp\hbox to 0pt{\hss $y$\hss}}%
    \graphtemp=.5ex\advance\graphtemp by 0.041in
    \rlap{\kern 5.379in\lower\graphtemp\hbox to 0pt{\hss $a$\hss}}%
    \graphtemp=.5ex\advance\graphtemp by 0.476in
    \rlap{\kern 5.028in\lower\graphtemp\hbox to 0pt{\hss $b$\hss}}%
    \graphtemp=.5ex\advance\graphtemp by 0.828in
    \rlap{\kern 5.793in\lower\graphtemp\hbox to 0pt{\hss $u''$\hss}}%
    \special{pn 8}%
    \special{ar 5276 1779 103 103 0 6.28319}%
    \special{ar 5276 538 103 103 0 6.28319}%
    \special{ar 5690 1366 103 103 0 6.28319}%
    \special{ar 5690 124 103 103 0 6.28319}%
    \special{ar 5897 952 103 103 0 6.28319}%
    \hbox{\vrule depth1.883in width0pt height 0pt}%
    \kern 6.000in
  }%
}%
}
\caption{Cases {\bf K1,K2,K3}  for Lemma~\ref{83red3}\label{figK3}}
\end{figure}

{\bf Subcase 3:} {\it $v$ is a $\gamma_{3a}$-vertex.}
Let $u$ be the $\alpha$-neighbor of $v$ (with $N_G(u)=\{v,u'\}$).
By {\bf C}, $v$ does not have another $2$-neighbor.
Let $N_G(z)=\{v,x,y\}$ and $N_G(z')=\{v,x',y'\}$, with $d_G(y)=d_G(y')=2$.
By {\bf J}, each $\beta$-neighbor of $v$ is not a $\gamma$-vertex, which means
that the other neighbors of $y$ and $y'$ are $3^+$-vertices.

Let $G'=G-\Gamma_G(v)-\{uu',zy,z'y'\}$.  Let $a$ and $b$ be the weights under
$w'$ of the edges incident to $y$ and $z$ in $G'$, respectively.
Set $w(vz')=3$ to ensure satisfying $z'y'$, and choose $w(z'y')$
to satisfy the edges incident to $z'y'$ other than $vz'$.

If $a\le b$, then $zy$ is automatically satisfied.  Choose $w(zy)$
to satisfy $\Gamma_{G'}(y)$.  Now choose $w(vu)$ to satisfy $vz$ and $uu'$, and
then choose $w(vz)$ to satisfy $vz'$ and $zx$.  Finally, choose $uu'$ to
satisfy $vu$ and $u'u''$.

If $a>b$, then setting $w(zy)=1$ ensures satisfying the other edge at $y$
(since its other endpoint has degree at least $3$).  With $b\le2$ and
$w(vz')=3$, the edge $vz$ is automatically satisfied.  Now choose $w(vz)$
to satisfy the other edges at $z$, choose $w(vu)$ to satisfy $vz'$ and $uu'$,
and choose $w(uu')$ to satisfy its incident edges.
\end{proof}

\begin{thm}
Every graph $G$ with $\Mad(G)<\FR83$ has a proper $3$-weighting.
\end{thm}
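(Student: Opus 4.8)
The plan is to follow the pattern of Theorem~\ref{aved} and of the $1,2$-Conjecture result for $\Mad(G)<\FR83$: combine the structural result obtained by discharging with the reducibility result. A minimal counterexample $G$ contains no $3$-reducible configuration (in particular none of those in Lemma~\ref{83red3}); being a $3$-bad graph it has no isolated edge; and $\Mad(G)<\FR83$ gives $G$ average degree less than $\FR83$. By Lemma~\ref{83disch3}, $G$ then contains one of the configurations \textbf{A}--\textbf{K} listed there. The proof consists in checking that each of these configurations forces the presence of a configuration that is $3$-reducible by Lemma~\ref{83red3}, contradicting the choice of $G$.

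Most of this is immediate: configurations \textbf{A}--\textbf{G}, \textbf{I}, and \textbf{K} of Lemma~\ref{83disch3} appear verbatim in Lemma~\ref{83red3}. For configuration \textbf{H} of Lemma~\ref{83disch3}, a $6$-vertex or $7$-vertex $v$ with a $1$-neighbor and four $\gamma$-neighbors has $p_1\ge1$ and $q\ge4$ in the notation of Lemma~\ref{83red3}\textbf{H}; hence $p_1+q\ge5>4$ and $p_1+2q\ge9\ge d(v)$, so $v$ realizes configuration \textbf{H} of Lemma~\ref{83red3}.

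The one step requiring genuine bookkeeping is configuration \textbf{J} of Lemma~\ref{83disch3}: a $4$-vertex $v$ with $p+q+r\ge5$, where $p,q,r$ count its $2$-neighbors, $\gamma$-neighbors, and $\alpha$-neighbors. I would split on whether $v$ has a $1$-neighbor. If it does, then among the three remaining neighbors the $2$-neighbors and $\gamma$-neighbors form disjoint sets, so $p+q\le3$; since $r\le p$, the hypothesis forces $p\ge2$, so $v$ has a $2^-$-neighbor and configuration \textbf{D} of Lemma~\ref{83red3} occurs. If $v$ has no $1$-neighbor, then every neighbor has degree at least $2$; again $p+q\le4$ and $r\le p$, so $p+q+r\ge5$ forces $p\ge1$. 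If $p=4$, all four neighbors are $2$-vertices and configuration \textbf{B} occurs. If $1\le p\le3$, a short check of the finitely many admissible triples $(p,q,r)$ shows that $v$ has two $\alpha$-neighbors, or an $\alpha$-neighbor together with another $2$-neighbor and a $\gamma$-neighbor, or a $2$-neighbor together with three $\gamma$-neighbors; these are precisely cases (1), (2), and (3) of configuration \textbf{J} of Lemma~\ref{83red3}.

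There is no serious obstacle here: all the real work lies in Lemmas~\ref{83disch3} and~\ref{83red3}, and the argument above merely verifies that their hypotheses are compatible. The most error-prone point is the enumeration in the previous paragraph, where one must remember that an $\alpha$-neighbor is counted in both $p$ and $r$, and confirm that every distribution of four neighbors with $p+q+r\ge5$ lies in the reducible list.
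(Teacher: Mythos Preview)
Your proposal is correct and follows essentially the same approach as the paper's proof: verify that each unavoidable configuration of Lemma~\ref{83disch3} is covered by a $3$-reducible configuration in Lemma~\ref{83red3}, with only \textbf{H} and \textbf{J} needing translation. Your treatment of \textbf{J} is organized a bit differently (you split first on whether $v$ has a $1$-neighbor, invoking \textbf{D} in that case, whereas the paper first observes $r\ge1$ and branches on $r\ge2$ versus $r=1$), but the case analysis reaches the same reducible configurations and is equally valid.
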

\begin{proof}
It suffices to show that every configuration in the unavoidable set in
Lemma~\ref{83disch3} is shown to be $3$-reducible in Lemma~\ref{83red3}.  The
configurations are the same in the two lemmas except for {\bf H} and {\bf J}.

For {\bf H}, if $d(v)\in\{6,7\}$ and $v$ has a $1$-neighbor and four
$\gamma$-neighbors, then $p_1+2q\ge d(v)$ and $p_1+q>4$.  For {\bf J}, a
$4$-vertex $v$ with $p+q+r\ge 5$ must have an $\alpha$-neighbor.
If $v$ has another $\alpha$-neighbor, then {\bf J1} applies.
If $v$ has a $\gamma$-neighbor and another $2$-neighbor, then {\bf J2} applies.
Otherwise, all other neighbors are $\gamma$-neighbors (reducible by {\bf J3})
or all are $2$-neighbors (reducible by {\bf B}).
\end{proof}

Some of the $3$-reducible configurations in Lemma~\ref{83red3} are more general
than the configurations forced in Lemma~\ref{83disch3}.  Also, there are other
$3$-reducible configuration we have not used, such as (1) a $4$-vertex having
two $2$-neighbors and one $\gamma$-neighbor and (2) a more general version of
configuration {\bf H}.  This suggests that with more work this approach could
be pushed to prove the conclusion under a weaker restriction on $\Mad(G)$.

{\small

}


\begin{thebibliography}{m}
\frenchspacing

\bibitem{admrt}
L. Addario-Berry,  K. Dalal,  C. McDiarmid, B. A. Reed, and A. Thomason,
Vertex-colouring edge-weightings,
\textit{Combinatorica} 27 (2007), 1--12. 

\bibitem{adr}
L. Addario-Berry,  K. Dalal,  and B. A. Reed,
Degree constrained subgraphs.
\textit{Discrete Appl. Math.} 156 (2008), 1168--1174. 

\bibitem{a}
N. Alon, Combinatorial Nullstellensatz.
\textit{Combin. Probab. Comput.} 8 (1999), 7--29.

\bibitem{bgn}
T. Bartnicki,  J. Grytczuk, S. Niwczyk, Weight choosability of graphs,
\textit{Journal of Graph Theory} 60 (2009), 242--256.

\bibitem{K}
M. Kalkowski, A note on the 1,2-conjecture, submitted
(also in Ph.D. Thesis, 2009).

\bibitem{kkp}
M. Kalkowski, M. Kar\'onski, and F. Pfender, Vertex-coloring edge-weightings:
towards the 1-2-3-conjecture.
\textit{J. Combin. Theory Ser. B} 100 (2010), 347--349.

\bibitem{KLT}
M. Kar\'onski, T. {\L}uczak, and A. Thomason, Edge weights and vertex colours,
\textit{J. Combin. Theory Ser. B} 91 (2004), 151--157. 

\bibitem{pw}
J. Przyby{\l}o and M. Wo\'zniak, On a 1,2-conjecture,
\textit{Discrete Math. Theor. Comput. Sci.} 12 (2010), 101--108.

\bibitem{S}
B. Seamone,
The 1-2-3 Conjecture and related problems: a survey,
submitted (http://arxiv.org/abs/1211.5122).

\bibitem{wy}
T. Wang and Q. Yu,
On vertex-coloring 13-edge-weighting. 
\textit{Front. Math. China} 3 (2008), 581--587. 

\bibitem{wyz}
T.-L. Wong, D. Yang, and X. Zhu,
List total weighting of graphs,
\textit{Bolyai Soc. Math. Stud.} 20, (J\'anos Bolyai Math. Soc., 2010),
337--353.

\bibitem{wz}
T.-L. Wong and X. Zhu, Total weight choosability of graphs,
\textit{Journal of Graph Theory} 66 (2011), 198--212.

\bibitem{wz2}
T.-L. Wong and X. Zhu,
Every graph is $(2,3)$-choosable, submitted.

\end{thebibliography}
\end{document}